\pdfoutput=1

\documentclass[10pt]{amsart}
\usepackage[T1]{fontenc}
\usepackage{txfonts}
\usepackage{eucal}

\usepackage{geometry}
\usepackage{fancyhdr}
\usepackage[parfill]{parskip}
\usepackage{lscape}
\usepackage{rotating}

\usepackage{amsmath, amsthm}
\usepackage{amssymb, amsfonts, amscd}
\usepackage{mathrsfs}
\usepackage{xfrac}

\numberwithin{equation}{section}

\usepackage{array}
\usepackage{tabularx}
\usepackage{multirow}
\usepackage{multicol}
\usepackage{booktabs}
\usepackage{makecell}

\usepackage{xcolor}
\usepackage{float}
\usepackage{caption}
\usepackage{subfig}
\usepackage{mdframed}
\usepackage{mwe}

\usepackage{graphicx}
\usepackage{tikz}
\usepackage{pgf}
\usepackage{tikz-cd}
\usepackage{tikzscale}
\usepackage[all]{xy}

\usetikzlibrary{arrows.meta, positioning}
\usetikzlibrary{
    backgrounds,
    shapes,
    shapes.geometric,
    arrows,
    decorations.markings,
    intersections,
    patterns,
    calc
}

\usepackage{enumitem}
\usepackage{xspace}
\usepackage{comment}
\usepackage{blindtext}
\usepackage[ruled, vlined]{algorithm2e}

\definecolor{smoked}{RGB}{216, 212, 204}
\definecolor{mauve}{RGB}{200, 55, 171}
\definecolor{apricot}{RGB}{250, 144, 4}
\definecolor{sky}{RGB}{66, 169, 244}
\definecolor{plum}{RGB}{76, 0, 102}
\definecolor{forest}{RGB}{90, 145, 120}
\definecolor{sand}{RGB}{180, 160, 120}
\definecolor{sabbia}{RGB}{207, 185, 151}
\definecolor{aliceblue}{rgb}{0.9, 0.95, 1.0}

\usepackage{hyperref}
\hypersetup{
    colorlinks=true, 
    linktoc=all,     
    linkcolor=blue,  
    citecolor=blue,
    filecolor=blue,
    urlcolor=blue
}

\usepackage{cleveref}

\newcommand\C{{\mathbb C}}
\newcommand\DD{{\mathbb D}}
\newcommand\HH{{\mathbb H}}
\newcommand\PP{{\mathbb P}}

\newcommand\R{{\mathbb R}}
\newcommand\sph{{\mathbb S}}
\newcommand\Z{{\mathbb Z}}
\newcommand\RP{{\mathbb{RP}}}

\newcommand\rp[1]{{\R\PP^{#1}}}

\newcommand\pc[1]{{\C\PP^{#1}}}
\newcommand\sphn[1]{{\sph^{#1}}}

\newcommand{\sor}[1]{{\mathrm{SO}(#1,\R)}}

\newcommand{\pslr}[1]{{\mathrm{PSL}(#1,\R)}}
\newcommand{\slr}[1]{{\mathrm{SL}(#1,\R)}}

\newcommand{\pglr}[1]{{\mathrm{PGL}(#1,\R)}}

\newcommand{\pslc}[1]{{\mathrm{PSL}(#1,\C)}}
\newcommand{\psu}{{\mathrm{PSU}(1,1)}}
\newcommand{\pso}{{\mathrm{PSO}(2,1)}}
\newcommand{\affr}{{\mathrm{Aff}(\,\R^2\,)}}

\DeclareMathOperator{\hol}{hol}

\newcommand{\pto}[1]{\textnormal{\textbf{#1}}}

\theoremstyle{plain}                   
\newtheorem{thm}{Theorem}[section]
\newtheorem{thma}{Theorem}

\newtheorem{lem}[thm]{Lemma}
\newtheorem{prop}[thm]{Proposition}
\newtheorem{cor}[thm]{Corollary}

\theoremstyle{definition}
\newtheorem{defn}[thm]{Definition}

\newtheorem{rmk}[thm]{Remark}

\theoremstyle{remark}
\newtheorem{ex}[]{Example}

\newtheorem{prob}[thm]{Problem}
\newtheorem{claim}[thm]{Claim}  

\newenvironment{proofclaim}
  {\proof}
  {\endproof}

\tikzstyle{rb} = [rectangle, rounded corners, minimum width=3cm, minimum height=1cm, text width=3cm, text centered, draw=black, fill=blue!30]
\tikzstyle{sb} = [rectangle, minimum width=2cm, minimum height=1cm, text width=3cm, text centered, draw=black, fill=violet!30]

\tikzstyle{co} = [circle, radius=2cm, text width=3cm, text centered, draw=white, fill=orange!50]
\tikzstyle{cr} = [circle, radius=2cm, text width=3cm, text centered, draw=white, fill=red!40]
\tikzstyle{cg} = [circle, radius=2cm, text width=3cm, text centered, draw=white, fill=LimeGreen!40]
\tikzstyle{cb} = [circle, radius=2cm, text width=3cm, text centered, draw=white, fill=blue!30]
\tikzstyle{cv} = [circle, radius=2cm, text width=3cm, text centered, draw=white, fill=violet!30]
\tikzstyle{cy} = [circle, radius=2cm, text width=3cm, text centered, draw=white, fill=yellow!50]
\tikzstyle{cgg} = [circle, radius=2cm, text width=3cm, text centered, draw=white, fill=green!40]
\tikzstyle{cc} = [circle, radius=2cm, text width=3cm, text centered, draw=white, fill=cyan!30]

\title[Branched real projective structures on surfaces\\ and geometrisation of representations]{Branched real projective structures on surfaces\\ and geometrisation of representations}

\author{Gianluca Faraco}
\address[Gianluca Faraco]{School of Mathematics, Monash University, Clayton, VIC 3800, Australia}
\email{gianluca.faraco@monash.edu}

\author{Nicholas Rungi}
\address[Nicholas Rungi]{Department of Mathematics, University of Turin, Italy}
\email{nicholas.rungi@unito.it}

\date{\today}

\begin{document}

\keywords{}
\subjclass[2020]{}%
\dedicatory{}

\begin{abstract}
    We introduce and study branched real projective structures on compact surfaces. Our main result shows that every representation of the fundamental group of a compact surface into \(\pslr3\) arises as the holonomy representation of a branched real projective structure, regardless of whether the surface is orientable. We also consider the realisation problem with prescribed branching data, relating the branching degree to the second Stiefel--Whitney class of the representation. The results obtained in this direction suggest several interesting avenues for future research. 
\end{abstract}

\maketitle
\tableofcontents

\section{Introduction}\label{sec:intro}

In the present paper, we introduce and investigate \textit{branched real projective structures} on surfaces. As we shall see in more detail in \S\ref{sec:rpstructures}, branched real projective structures are geometric structures in the sense of Ehresmann--Thurston \cite{goldman2022geometric, thurston2022geometry}, \textit{i.e.}, geometries on surfaces locally modelled on the real projective plane \(\rp2\) along with its group of projectivities \(\pslr3\). Every such a structure naturally determines a homomorphism, say \(\rho \colon \pi_1(S) \longrightarrow \pslr3\), known as the \textit{holonomy representation}, that encapsulates the geometric data of the underlying structure. Although it is an easy matter to determine such a representation, the reverse direction is generally much more challenging. It is therefore natural to ask to what extent a representation encodes such geometric data, specifically determining the conditions under which a representation can be realised as the holonomy of some branched real projective structure on \(S\). The first main result of the present paper directly addresses this existence problem.

\begin{thma}\label{thma:geometrisation}
    Let \(S\) be a closed orientable surface. Every representation \(\rho\colon\pi_1(\,S\,)\longrightarrow\pslr3\) arises as the holonomy of some branched real projective structure on \(S\).
\end{thma}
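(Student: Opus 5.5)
We plan to build the structure by hand from a fundamental polygon, placing the branch points at the vertices. Fix a standard presentation \(\pi_1(S)=\langle a_1,b_1,\dots,a_g,b_g \mid \prod_i [a_i,b_i]\rangle\) and realise \(S\) as a \(4g\)-gon \(P\) with sides identified in the usual pattern, so that all vertices go to a single point \(v\in S\). The idea is to find a developing map on \(P\) that is a local homeomorphism on the interior of \(P\) and on the open sides, and that is equivariant for the side pairings via the generators \(\rho(a_i),\rho(b_i)\). After gluing, this gives a real projective structure on \(S\setminus\{v\}\) with holonomy \(\rho\). The only possible defect is at \(v\), and there we want a cone-type singularity whose total angle is an integer multiple of \(2\pi\). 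That is exactly a branch point.

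The construction goes in four steps.

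\begin{enumerate}
\item Choose a point \(p\in\rp2\) in general position with respect to the finitely many elements \(\rho(a_i)^{\pm1}\), \(\rho(b_i)^{\pm1}\) and the partial products of the relator. Genericity ensures that the \(4g\) vertex images \(p_0=p,\ p_1,\dots,p_{4g-1}\), obtained by successively applying the partial products, are pairwise distinct and that no three of them are collinear.
\item Join consecutive vertices \(p_k,p_{k+1}\) by projective segments. Choose them so that segments which are paired under the gluing correspond under the appropriate generator. Since projectivities send segments to segments, it suffices to pick one segment in each pair and define its partner as the image.
\item Obtain a closed broken geodesic \(\gamma\) in \(\rp2\). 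It may be non-simple and may wind in the non-orientable \(\rp2\). To get the interior of \(P\), pull back to the universal cover \(\sphn2\), where \(\pslr3\) lifts to \(\slr3\) acting on rays. Then fill \(\gamma\) by a branched immersed disc. By a Blank-type argument, or more simply by coning from a generic interior point \(q\), each sub-triangle \((q,p_k,p_{k+1})\) develops onto an honest projective triangle, possibly with orientation reversed.
\item Orientation reversal is not allowed in a branched structure. Wherever a cone triangle is reversed, replace it by its complement in the relevant projective half-plane, i.e.\ the triangle with its ``outer'' side. This flips orientation at the cost of extra turning. Since \(S\) is orientable, orientation is preserved consistently across the gluing.
\end{enumerate}

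The key check is at the vertex \(v\). The total angle around \(v\) is the sum of the \(4g\) corner angles. Because the side pairings are projectivities and \(\prod_i[\rho(a_i),\rho(b_i)]=1\), the developed corners close up around the image of \(v\). Hence the total turning is a multiple of the full turn, and the local model near \(v\) is a branched cover \(z\mapsto z^{m}\) composed with a chart. The same reasoning applies at the interior cone point \(q\).

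We expect the main obstacle to be making the filling genuinely orientation-preserving and locally injective away from finitely many points. Two things must be handled: the closed path \(\gamma\) in \(\sphn2\) may not lift to a closed path, and the turning at the corners may be non-integral. The lift may fail to close because the relator lifts only to \(\pm I\) in \(\slr3\). In that case we go around twice, or equivalently adjust the cone at \(q\) by an extra half-turn. If this turns out delicate, the fallback is to insert an extra handle-free ``bubbling'': locally graft a copy of \(\rp2\) or \(\sphn2\) along a slit. This raises the cone angle by \(2\pi\) while preserving the holonomy, and lets us absorb any parity or winding defect, so every cone point ends up with angle in \(2\pi\Z\).
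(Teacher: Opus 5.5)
Your global architecture (a single $4g$-gon, filled by coning from one interior point $q$ in $\sph^2$, with the vertex class becoming a branch point because its local holonomy is trivial) is a legitimate and more direct alternative to the paper's route. The paper instead cuts $S$ into a core $g$-holed sphere and $g$ handles, fills each piece by quadrilaterals to an equator plus triangles coned to a point of the opposite hemisphere (handles via an auxiliary $\pi_1(S_{0,5})$-representation), and attaches abelian or trivial handles separately by slits.

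\textbf{The genuine gap is in Step~1.} Write $g_k$ for the $k$-th partial product of the relator and $x_{k+1}$ for the next letter. Then $p_{k+1}=\rho(g_k)\rho(x_{k+1})\rho(g_k)^{-1}p_k$, so a generic $p$ gives $p_k\neq p_{k+1}$ only when $\rho(x_{k+1})\neq 1$. If $\rho$ kills some $a_i$ or $b_i$, and in particular if $\rho$ is trivial, a side collapses for every $p$, and there is nothing to cone over. Your claims of pairwise distinctness and non-collinearity also fail in general:
\begin{itemize}
\item $p_4=p_0$ whenever $\rho(a_1)$ and $\rho(b_1)$ commute;
\item for $\rho$ with values in $\{\mathrm{diag}(\lambda,\lambda,\lambda^{-2})\}$, every orbit lies on a line through $[0:0:1]$.
\end{itemize}
Only consecutive distinctness is actually needed, with $q$ taken off the lines $p_kp_{k+1}$. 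You must add two things. First, treat $\rho=1$ separately, by pulling back the structure of $\sph^2$ along a branched cover $S\to\sph^2$. Second, for $\rho\neq1$, change the standard basis so that no generator is killed. This is easy inside a handle where $\rho$ is nontrivial (replace $b_i$ by $b_ia_i$, or $a_i$ by $a_ib_i$). Handles on which $\rho$ is trivial need either a mapping class mixing handles or a separate construction, such as the paper's slit gluing.

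\textbf{Your diagnosis of the obstacle is also wrong, and the remedies would not work.} Since $-I\notin\slr3$, we have $\pslr3=\slr3$, which acts faithfully and orientation-preservingly on $\sph^2$, and the relator maps to $I$ exactly. Hence the canonical lifts $\hat p_k=\rho(g_k)\hat p$ always close up, and there is no $\pm I$ issue.

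The real constraint is that each side must be chosen in $\sph^2$ as an arc from $\hat p_k$ to $\hat p_{k+1}$. The short arc works, and since $\slr3$ maps short arcs to short arcs, the side pairing is then automatic. If in Step~2 you take some side to be the segment of $\rp2$ whose lift from $\hat p_k$ ends at $-\hat p_{k+1}$, then some side pairing acts on $\sph^2$ by $-\rho(g)$. That map reverses orientation, so adjacent tiles fold along that side.

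None of your fallbacks could repair a genuine defect. Going around twice or adding a half-turn at $q$ gives a cone angle in $\pi+2\pi\Z$, which is not a branch point. Bubbling only changes angles by multiples of $2\pi$.

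In Step~4 there is no half-plane of $\rp2$ to take a complement in. The correct replacement is the other disc of $\sph^2$ bounded by the same simple closed curve $\hat q\to\hat p_k\to\hat p_{k+1}\to\hat q$ (simple for generic $\hat q$); its angles are $2\pi$ minus the original ones. Once every triangle lies to the left of its boundary, the angles at $q$ lie in $(0,2\pi)$ and are swept in one sense, so they sum to $2\pi m$ with $m\ge1$. The same reasoning gives a positive multiple of $2\pi$ at $v$, so non-integral turning cannot occur.

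With these repairs your one-polygon argument goes through. Because consecutive vertices differ by generators rather than commutators, it even avoids the paper's separate treatment of abelian handles.
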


In the following sections, we shall provide several motivations that led us to study these geometric structures on surfaces. In the first place, branched real projective structures can be seen as the real counterpart of complex branched projective structures, largely studied in last decades from different perspectives, \textit{e.g.}, in the pioneering paper by Gallo–Kapovich–Marden \cite{GalloKapovichMarden2000Monodromy}, which served as a primary source of inspiration for our work, and subsequent related literature. On the other hand, beyond their intrinsic geometric interest, branched real projective structures naturally appear in the study of certain Higgs bundles on Riemann surfaces. In fact, the present paper is the first in a series aiming to study the interplay between these two perspectives, highlighting the rich connections between geometric structures and holomorphic vector bundles on Riemann surfaces. In a forthcoming paper \cite{faracorungi}, we shall address the geometrisation problem by using Higgs bundles theory and by providing a partial refinement of our main Theorem \ref{thma:geometrisation}. More specifically, we shall focus on the realisation of branched projective structures with prescribed data, see \S\ref{ssec:strata} 


\subsection{Geometric foundations and motivation}\label{ssec: foundations and motivations}

Under Klein's Erlangen programme, projective geometry plays a unifying role: classical Euclidean, spherical, and hyperbolic geometries can all be viewed as sub-geometries of real projective geometry. However, projective structures are far more general; for instance, real affine structures fall under this umbrella while generally failing to be Euclidean, spherical, or hyperbolic. Therefore, a pivotal motivation of the present work is also to provide a unifying picture and reference framework for these structures. The existence of a specific geometric structure on a surface is highly sensitive to the underlying topology, which imposes obstructions. For instance, the sphere admits neither a Euclidean nor a hyperbolic structure. To circumvent such topological obstructions, one can allow the structure to develop branch points, leading to the notion of \textit{branched} geometric structures. As we see in detail in \S\ref{sec:rpstructures}, given a model geometry, say \((G,\mathbb X)\), a branched geometry on a surface \(S\) is defined as a local \((G,\mathbb X)\)-structure on the complement of a discrete set of branch points \(\mathcal{B} \subset S\), which extends smoothly across \(\mathcal{B}\) in terms of local branched coverings---a classic illustration of this phenomenon is provided by translation surfaces of genus \(g \ge 2\), which bear branched flat structures, see \S\ref{sssec: quality}. Therefore, branched real projective structures are defined as branched \((\pslr3,\rp2)\)-structures. As is the case for their complex counterparts, a branched real projective structure on \(S\) is uniquely determined by a pair consisting of a developing map and a holonomy representation, where the developing map is constructed by continuation of local charts. Unlike the unbranched setting, where a developing map is simply defined by using the analytic continuation property, in the branched framework a slightly more technical topological argument is required, which we present in detail in \S\ref{ssec:devhol_pair} for the reader's convenience. A similar approach has recently been developed for branched structures on \(3\)-manifolds, see \cite{ballas2021gluing, diaf2025domination}.

\smallskip

Similarly to other class of geometric structures on a surface, branched real projective structures can be classified up to projective diffeomorphisms isotopic to the identity, thereby yielding a moduli space \(\mathcal{RP}^2(\,S\,)\) of marked branched projective structures. By virtue of the fact that a branched real projective structure is uniquely specified by its developing–holonomy pair, assigning to each structure its corresponding holonomy representation leads to a well-defined map that boils down to the so-called \textit{holonomy map} defined as 
\begin{equation}\label{eq:holonomy-map intro}
    \hol \colon \mathcal{RP}^2(\,S\,) \longrightarrow \mathfrak X_3(\,S\,) = \mathrm{Hom}^+\Big(\,\pi_1(\,S\,), \pslr3\,\Big)\,\Big/\,\pslr3,
\end{equation}

see also \eqref{eq:identification} and compare with \eqref{eq:holonomy-map} in \S\ref{sssec:holomap}. The study presented in this paper focuses on understanding the holonomy map from several perspectives. In the following subsections, we outline our main results concerning the geometrisation problem and the fine structure of the holonomy map, \textit{e.g.}, local injectivity, openness and restrictions to strata, which will then be thoroughly investigated in \S~\ref{sec:geometrisation} and \S\ref{sec:stiefel whitney geometrisable representations}.

\smallskip

\subsection{Geometrisation problem}\label{ssec:geometrisation problem} In our framework, a representation \(\rho \colon \pi_1(\,S\,) \longrightarrow \pslr3\) is said to be \textit{geometrisable} if it arises as the holonomy of some, possibly branched, \(\big(\,\pslr3, \rp2\,\big)\)-structure on \(S\). The geometrisation problem asks under what conditions a representation is a holonomy representation: \textit{i.e.}, which representations appear in the image of the holonomy map defined in \eqref{eq:holonomy-map intro}. The first main result of the present paper, see Theorem \ref{thma:geometrisation}, establishes that the holonomy map is surjective, meaning that no obstruction arises in the geometrisation process.

\subsubsection{Geometrisation process}\label{sssec: geo process} There are various ways to geometrise a given representation, say \(\rho\), when possible. A classical approach consists of splitting the topological surface into pieces of lower complexity, typically one-punctured tori and pairs of pants along a fixed pants decomposition. The representation \(\rho\) thus restricts to each of these sub-surfaces, which are geometrised individually. Finally, these geometrised pieces are assembled to recover the original topological surface, now equipped with the desired geometric structure. 

Although this approach has been systematically used over the years, \textit{e.g.}, in classical Teichmüller theory \cite{benedetti2012lectures} and \cite{imayoshi1992introduction} and in the work of Gallo--Kapovich--Marden \cite{GalloKapovichMarden2000Monodromy}, we propose a different strategy here in the same spirit of \cite{FaracoGupta2025}. By recalling that a real projective structure, either branched or unbranched, on a surface \(S\) is specified by a developing-holonomy pair, our approach consists in constructing a smooth \(\rho\)-equivariant map \(\textnormal{dev}\colon\widetilde{S}\longrightarrow\rp2\). By equivariance property, it suffices to construct a fundamental domain, say \(\mathcal{D}\subset\widetilde S\), for the action of the fundamental group and a smooth function \(f\colon\mathcal D\longrightarrow\rp2\). For this latter purpose, given a generic representation, we decompose the surface into \(g\) handles and a \(g\)-punctured sphere, which we call the \textit{core}. We then geometrise these sub-surfaces by first determining fundamental membranes in the sense of Hejhal, see \cite{hejhal1975monodromy}, for each of them, thereby realising the fundamental domains for each piece before gluing them together. A fundamental membrane can be understood intuitively as a fundamental domain in the model space where self-intersections are permitted. The associated fundamental domain is obtained by \textit{unfolding} this membrane, resulting in a genuine topological domain, \textit{i.e.}, a polygon in our setting. The inverse folding map maps this topological domain onto its immersed image in the model space. By gluing the topological fundamental domains together along their boundary identification rules, the corresponding folding maps glue continuously, as do their images. This procedure yields the desired global map \(f\). The resulting domain \(\mathcal D\) is a \(4g\)-gon, where \(g\) denotes the genus of the initial surface, equipped with a natural locally injective map to \(\rp2\), \textit{cf.} Figure \ref{fig:finding a fundamental domain}. By tiling a topological plane with copies of this domain and extending the map equivariantly, we obtain the desired developing map and, consequently, the geometric structure with the prescribed holonomy.

\begin{figure}[htbp]
  \centering
  \begin{tikzpicture}[scale=1.375]
    
    \begin{scope}[rotate=210]
    \draw[white, opacity=0.25] (0,0) circle (2.25cm);
    \shade[ball color = white, opacity=.20, shading angle=0] (090:2)--(210:2)--(330:2)--(090:2);
    \draw[orange] (090:2)--(210:2)--(330:2)--(090:2);

    \foreach \p in {(090:2), (210:2), (330:2), (090:2)} {
    \fill[black] \p circle (1pt);
    }

    \node at (000:0) {core};

    \begin{scope}
        \shade[ball color = sky, opacity=.20, shading angle=0] (090:2)--(330:2)--(345:4)--(030:5)--(075:4)--(090:2);
        \draw[sky] (090:2)--(330:2)--(345:4)--(030:5)--(075:4)--(090:2);

        \foreach \p in {(090:2), (330:2), (345:4), (030:5), (075:4)} {
        \fill[black] \p circle (1pt);
        }
    \end{scope}

    \begin{scope}[rotate=120]
        \begin{scope}
        \shade[ball color = yellow, opacity=.20, shading angle=0] (090:2)--(330:2)--(345:4)--(030:5)--(075:4)--(090:2);
        \draw[yellow] (090:2)--(330:2)--(345:4)--(030:5)--(075:4)--(090:2);

        \foreach \p in {(090:2), (330:2), (345:4), (030:5), (075:4)} {
        \fill[black] \p circle (1pt);
        }
        \end{scope}
    \end{scope}

    \begin{scope}[rotate=-120]
        \begin{scope}
        \shade[ball color = red, opacity=.20, shading angle=0] (090:2)--(330:2)--(345:4)--(030:5)--(075:4)--(090:2);
        \draw[red] (090:2)--(330:2)--(345:4)--(030:5)--(075:4)--(090:2);

        \foreach \p in {(090:2), (330:2), (345:4), (030:5), (075:4)} {
        \fill[black] \p circle (1pt);
        }
        \end{scope}
    \end{scope}
    \end{scope}
    
    \end{tikzpicture}
    \caption{Overview of the geometrisation process in the proof of Theorem \ref{thma:geometrisation}. The fundamental domain \(\mathcal{D}\) and its decomposition into regions corresponding to handles (blue, red, and yellow) and the core surface (white). A detailed version of this construction appears in Figure \ref{fig:gluing} (see \S\ref{sssec:assembly generic}).}
    \label{fig:finding a fundamental domain}
\end{figure}

\smallskip

\subsubsection{Qualitative geometrisation and sub-geometries}\label{sssec: quality} As alluded to above, real projective geometry encompasses several subgeometries, including the classical spherical, Euclidean, and hyperbolic geometries; see Table~\ref{tab:flowdiagram} and \S\ref{ssec:examples} for a more detailed account of examples known in the literature.

\smallskip

Viewing any subgeometry \((G, \mathbb{X})\) of \((\pslr3, \rp2)\) as a geometry in its own right, the geometrisation problem asks whether a representation into \(G\) arises as the holonomy of some \((G, \mathbb{X})\)-structure on \(S\). Whenever this occurs, since \((G, \mathbb{X})\) is a subgeometry of real projective geometry, the resulting structure is automatically a, possibly branched, real projective structure. On the other hand, a representation into \(G\) may fail to be realisable as the holonomy of a \((G, \mathbb{X})\)-structure on \(S\), while nonetheless arising as the holonomy of a real projective structure, by Theorem~\ref{thma:geometrisation}. A typical example of this phenomenon is provided by elementary representations in \(\pslr2\). This phenomenon shows that the geometrisation process we shall propose in \S\ref{sec:geometrisation} (summarised in \S\ref{sssec: geo process}) to prove Theorem~\ref{thma:geometrisation} is optimal from a quantitative point of view, as it is general enough to apply to every non-trivial representation. As previously observed, the trivial representation is treated separately. However, since \textit{all} projective structures we shall realise here are branched by design, it remains to address the following refined 

\begin{prob}\label{prob:geometrisation orientable}
    Let \((G,\mathbb X)\) be a subgeometry of \(\big(\,\pslr3, \rp2\,\big)\). Provide necessary and sufficient conditions for a representation \(\rho\colon\pi_1(\,S\,)\longrightarrow G\) to arise as the holonomy of some, possibly branched, \((G, \mathbb{X})\)-structure on \(S\).
\end{prob}

Although the case of translation structures has been extensively studied and fully characterised, see \cite{OH}, the cases of spherical and hyperbolic structures remain open. 

\begin{table}[ht]
    \centering
    \caption{\textit{Geometry graph.} This graph shows all geometries arising from the real projective one in dimension two and their connections. In fact, to Klein's Erlangen program, real projective geometry plays a central role and other geometries can be seen as special cases of projective one.}  
    \resizebox{0.75\textwidth}{!}
    {
    \begin{tikzpicture}[scale=0.125, node distance = 1cm]
    
    \node (realproj)  [co] {Real\\ projective geometry\\ \text{ }\\ \(\big(\,\pslr3,\,\rp2\,\big)\)};

    \node (sphgeo)  [cgg, below of=realproj, xshift=-6cm, yshift=-1cm] {Spherical geometry \\ \text{ }\\ \(\big(\,\sor3,\,\sph^2\,\big)\)};
    \draw[->] (realproj) -- node[anchor=south] {} (sphgeo);
    
    \node (hypgeo)  [cr, below of=realproj, yshift=-5cm] {Hyperbolic \\ geometry \\ \text{ }\\ \(\big(\,\pslr2,\,\HH^2\,\big)\)};
    \draw[->] (realproj) -- (hypgeo);

    \node (eugeo)  [cy, below of=realproj, xshift=6cm, yshift=-9cm ] {Euclidean \\ geometry \\ \text{ }\\ \(\big(\,\textnormal{Iso}^+(\,\mathbb E^2\,),\mathbb E^2\,\big)\)};
    \draw[->, dashed] (realproj) -- node[anchor=south] {} (eugeo);

    \node (transgeo)  [cc, below of=hypgeo, xshift=0cm, yshift=-5cm] {Translational \\ geometry \\ \text{ }\\ \(\big(\,\R^2,\R^2\,\big)\)};
    \draw[->] (eugeo) -- node[anchor=south] {} (transgeo);

    \node (raffgeo)  [cv, below of=realproj, xshift=6cm, yshift=-3cm] {Real \\ affine geometry \\ \text{ }\\ \(\big(\,\text{Aff}(\,\R^2\,),\,\R^2\,\big)\)};
    \draw[->] (raffgeo) -- node[anchor=south] {} (eugeo);
    \draw[->] (realproj) -- node[anchor=south] {} (raffgeo);

    \end{tikzpicture}
    }
    \bigskip
    \label{tab:flowdiagram}
\end{table}

\smallskip

\subsubsection{Non-orientable surfaces}\label{sssec: non orientable} Real, possibly branched, projective structures can be also defined on non-orientable surfaces. In Appendix \ref{app:non orientable surfaces}, we extend the geometrisation process described in \S\ref{sssec: geo process} to the non-orientable framework, thus leading to the following extension of Theorem \ref{thma:geometrisation}.

\begin{thma}\label{thma: geo non orientable}
    Let \(S\) be a closed non-orientable surface. Every representation \(\rho\colon\pi_1(S)\to\pslr3\) arises as the holonomy of some branched projective structure on \(S\).
\end{thma}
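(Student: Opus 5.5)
We prove Theorem~\ref{thma: geo non orientable} by running the fundamental-domain construction of Theorem~\ref{thma:geometrisation} in the non-orientable setting. Write \(S=N_k\) as the connected sum of \(k\ge1\) projective planes, so that
\[
\pi_1(S)=\langle c_1,\dots,c_k \mid c_1^2\cdots c_k^2=1\rangle .
\]
The model fundamental domain is then a \(2k\)-gon whose sides are identified in the pattern \(c_1c_1c_2c_2\cdots c_kc_k\), with the two sides labelled \(c_i\) paired by a glide (orientation-reversing) identification.

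The aim is a \(\rho\)-equivariant, locally branched-injective map \(\textnormal{dev}\colon\widetilde S\to\rp2\). Since \(\rho\) takes values in \(\pslr3\), which acts on \(\rp2\), equivariance is meaningful even though the deck transformations \(c_i\) reverse orientation on \(\widetilde S\). Note that \(\pslr3\cong\mathrm{SL}(3,\R)\) does not preserve any orientation of \(\rp2\), and \(\rp2\) is itself non-orientable. So no orientation-compatibility condition needs to be imposed on \(\rho(c_i)\), and a local homeomorphism can intertwine an orientation-reversing deck transformation with \(\rho(c_i)\).

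\textbf{Reduction.} I would split \(S\) into \(k\) crosscap pieces (Möbius bands) and a \(k\)-punctured sphere core, mirroring the decomposition into \(g\) handles and a core used in \S\ref{sssec: geo process}. Alternatively, when \(k\) is odd I would write \(S\cong \rp2\#\Sigma_g\), and when \(k\) is even \(S\cong K\#\Sigma_{g}\). In that form the orientable handles are treated verbatim by the earlier construction, and only one or two crosscap pieces need new work.

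\textbf{Steps.}
\begin{enumerate}
\item For each crosscap generator \(c_i\), build a fundamental membrane: choose a point \(p\in\rp2\) and a smooth embedded arc \(\gamma\) from \(p\) to \(\rho(c_i)\,p\), then take \(\rho(c_i)\gamma\) as the paired side. Choose \(p\) generic so that \(p\), \(\rho(c_i)p\) and \(\rho(c_i)^2p\) are in general position, which is possible unless \(\rho(c_i)\) is trivial or of special type. Fill the region bounded by these sides with a locally injective, possibly folded, polygon map, exactly as is done for handle membranes in \S\ref{sec:geometrisation}.
\item Geometrise the core \(k\)-holed sphere by the same recipe as the orientable core. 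The boundary holonomies \(\rho(c_i^2)\) play the role previously played by the commutators \([\rho(a_j),\rho(b_j)]\).
\item Glue the unfolded domains along their common boundary arcs to get a \(2k\)-gon \(\mathcal D\) with a map \(f\colon\mathcal D\to\rp2\) compatible with the side pairings. Extend \(f\) equivariantly over the tiling of \(\widetilde S\). Branch points can appear only at the vertex cycle, where the total angle is a multiple of \(2\pi\) after unfolding.
\end{enumerate}

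\textbf{Degenerate cases and the main obstacle.} Cases where some \(\rho(c_i)\) or \(\rho(c_i^2)\) is trivial or elliptic of finite order, and the trivial representation, would be handled as in the orientable proof: perturb the choice of base points, or insert extra branching to absorb the degeneracy.

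The main obstacle I expect is the local behaviour at the vertex of \(\mathcal D\) and along the glide-paired sides. Because the identification reverses the boundary orientation of the polygon, the local charts on either side of a crosscap side differ by an orientation-reversing map of the domain. I must check that the resulting map is still a local branched covering, and not a fold, across these sides. My plan is to verify this chart by chart: the membrane near side \(c_i\) and near \(\rho(c_i)\)-translate side must lie on opposite sides of the respective arcs in \(\rp2\). I expect this to be arrangeable by choosing the side of \(\gamma\) into which the membrane is thickened, using that \(\rho(c_i)\) may act on a neighbourhood of \(\gamma\) either preserving or reversing local orientation. Finally, the angle sum at the single vertex class must be a positive multiple of \(2\pi\). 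If necessary I would adjust it by adding full turns around \(p\), which introduces branch points of controlled order.
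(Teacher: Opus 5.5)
Your decomposition (one M\"obius-band membrane per $c_i$ around a $k$-holed sphere) differs from the paper's. The paper keeps the orientable construction of \S\ref{sec:geometrisation}, appends the edges $\rho(\gamma)^{-1}(p)\to p\to\rho(\gamma)(p)$ to the core chain, and treats $\rho(\gamma)=1$ separately. The real problem, though, is that the step you single out as the main obstacle is not settled by the mechanism you propose.

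\textbf{The no-fold condition.} The side of $\gamma$ on which the membrane lies is not a free parameter. It is tied to the side along $\rho(c_i)\gamma$ through the corner at $\rho(c_i)p$, and flipping both sides leaves the no-fold condition unchanged. What decides it is the homotopy class of $\gamma$ rel endpoints. If $A\in\slr3$ represents $\rho(c_i)$, the glide pairing does not fold if and only if $\gamma$ lifts to $\sph^2$ as a path from $\tilde p$ to $-A\tilde p$, i.e.\ $\rho(c_i)$ acts along $\gamma$ through its orientation-reversing lift $-A$. Exactly one of the two projective segments from $p$ to $\rho(c_i)p$ does this, and Step~1 must choose it.

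\textbf{Consequence for the core.} Whatever $\gamma$ is, $\gamma\ast\rho(c_i)\gamma$ lifts from $\tilde p$ to $(\pm A)^2\tilde p=A^2\tilde p$. So the lift of each membrane's closing side, and hence the lifted core chain, is forced: $\tilde p_i=A_i^2\tilde p_{i+1}$. These vertices need not lie in any open hemisphere. For example, let $\rho(c_1),\dots,\rho(c_4)$ be the rotations by $\pi/2$ about $e_2,e_3,e_2,e_3$; this is a representation of $\pi_1(N_4)$ since $R_2R_3R_2R_3=1$, with $R_j$ the rotation by $\pi$ about $e_j$. The lifted vertices are then $\tilde p,R_1\tilde p,R_2\tilde p,R_3\tilde p$, which sum to zero. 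So Proposition~\ref{prop:coredisc} cannot be quoted verbatim for your core. You need a version for arbitrary closed chains in $\sph^2$, e.g.\ with the equator replaced by a small circle about a point off the chain.

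\textbf{Degenerate cross-caps.} These are a second gap: they are not handled by perturbing base points or by unspecified extra branching.

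If $\rho(c_i)=1$, every point is fixed, the corresponding core edge has length zero, and the glide sides are loops at $p$. The criterion above forces $\gamma$ to be a whole projective line. The membrane then becomes a hemisphere with antipodal boundary gluing, i.e.\ a copy of $(\rp2,\sigma_{\textnormal{std}})$ touching the rest only at a vertex, so it must be attached along a slit instead. This is exactly Case~1 of Appendix~\ref{app:non orientable surfaces}: realise the induced representation of the orientable part, then glue in a standard $\rp2$ along a slit, which is legitimate because the separating curve has trivial holonomy. The trivial representation is obtained there by pulling back along a branched cover $N_g\to\rp2$.

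If $\rho(c_i)$ is an involution, then $\rho(c_i)^2p=p$ for every $p$, so your general-position requirement is never met and the core edge again degenerates. With geodesic sides the bigon $\gamma\cup\rho(c_i)\gamma$ also collapses. The line through $p$ and $\rho(c_i)p$ passes through the centre of the involution, and each segment from $p$ to $\rho(c_i)p$ is mapped onto itself reversed. This case also needs an explicit construction, for instance a closed cross-cap piece with holonomy $\rho(c_i)$ glued in along a slit.
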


Klein surfaces, also known as dianalytic surfaces, extend the classical notion of Riemann surfaces to non-orientable topological contexts. Originally introduced by Felix Klein \cite{Klein1882} in his study of real algebraic curves and non-orientable surfaces, their formal foundations as non-orientable analogues of Riemann surfaces were systematically developed much later by Alling and Greenleaf \cite{AllingGreenleaf1969, AllingGreenleaf1971}. See also \cite{Schaffhauser2016} for a modern introductory account. 

In this framework, a dianalytic structure on a manifold is given by an atlas of charts whose transition maps are either complex analytic maps or complex conjugates of complex analytic maps. Equivalently, every dianalytic manifold is obtained as the quotient of a complex analytic manifold \(\big(\widetilde{M},J\big)\) (possibly non-connected) by a fixed-point-free anti-holomorphic involution \(\tau \colon \big(\widetilde{M},J\big) \longrightarrow \big(\widetilde{M},J\big)\), which satisfies the identity 
\begin{equation}
    \mathrm{d}\tau \circ J = -J \circ \mathrm{d}\tau.
\end{equation}

In light of this, we say that a subgeometry \((G,\mathbb X)\) of \((\pslr3,\rp2)\) is \emph{dianalytic}, or \textit{Kleinian}, if the model space \(\mathbb X\) admits a dianalytic structure that is invariant under the action of \(G\). For instance, while the full projective geometry \((\pslr3, \rp2)\) fails to be dianalytic, the spherical subgeometry \((\mathrm{PO}(3,\mathbb{R}), \rp2)\) is naturally dianalytic. Another prominent example is given by the hyperbolic subgeometry \((\mathrm{PGL}(2,\mathbb{R}), \mathbb{H}^2)\). Consequently, whenever a subgeometry is dianalytic, the study of branched geometric structures naturally bridges with the theory of dianalytic surfaces. In this regard, Problem \ref{prob:geometrisation orientable} extends to the non-orientable setting as follows.

\begin{prob}\label{prob:geometrisation non orientable}
    Let \((G,\mathbb X)\) be a Kleinian subgeometry of \(\big(\,\pslr3, \rp2\,\big)\). Provide necessary and sufficient conditions for a representation \(\rho\colon\pi_1(\,S\,)\longrightarrow G\) to arise as the holonomy of some, possibly branched, \((G, \mathbb{X})\) structure on \(S\), where \(S\) is either orientable or non-orientable.
\end{prob}

\smallskip

\subsection{Strata of branched projective structures}\label{ssec:strata} The moduli space of branched real projective structures on a surface \(S\) admits a natural stratification into strata \(\mathcal{RP}^2(\,\mu\,)\), where \(\mu = (m_1, \dots, m_k)\) is a tuple of positive integers termed the \textit{signature}. A stratum comprises equivalence classes of branched projective structures possessing precisely \(k\) branch points of orders \(m_1, \dots, m_k\), where a branch point has order \(m\) if the local degree of any local chart around it is \(m+1\) (so that regular points correspond to order zero). The holonomy map defined in \eqref{eq:holonomy-map intro} restricts to a holonomy map on each stratum. As we shall demonstrate, this restriction is an open map, thereby yielding an analogue of the Ehresmann--Thurston principle within our framework. A refined formulation of the geometrisation problem then asks whether a given representation can be realised as the holonomy of a branched projective structure belonging to a specified stratum. In contrast to the unstratified setting, obstruction phenomena genuinely arise when attempting to realise a representation as the holonomy of a branched projective structure in a prescribed stratum. Analogously to the case of complex branched projective structures, a fundamental topological obstruction is provided by the Stiefel--Whitney class of the representation --- recall that, for a representation \(\rho \colon \pi_1(\,S\,) \longrightarrow \pslr3\), its second Stiefel--Whitney number \(\textnormal{w}_2(\,\rho\,)\in \mathbb{Z}_2\) measures the topological obstruction to lifting \(\rho\) to the universal covering group \(\widetilde{\mathrm{SL}}(3, \mathbb{R})\). For a given signature \(\mu\), we define its degree, denoted by \(\deg(\,\mu\,)\), as the sum of its components. The aforementioned obstruction relates the second Stiefel--Whitney class of a representation to the degree of the signature specifying the stratum in which the representation is to be realised. Specifically, in \S\ref{sec:stiefel whitney geometrisable representations}, establish the following criterion, see Corollary \ref{cor: char realisation}.

\begin{thma}\label{thma: char realisation}
    If a representation \(\rho \colon \pi_1(\,S\,) \longrightarrow \pslr3\) arises as the holonomy of a branched projective structure in the stratum \(\mathcal{RP}^2(\,\mu\,)\), then its second Stiefel--Whitney class satisfies the compatibility relation
        \begin{equation}
            \textnormal{w}_2(\,\rho\,)\equiv \deg(\,\mu\,) \pmod 2.
        \end{equation}    
\end{thma}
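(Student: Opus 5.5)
Suppose \(\rho\) is the holonomy of a branched real projective structure on \(S\) with branch points \(p_1,\dots,p_k\) of orders \(m_1,\dots,m_k\). The plan is to compute \(\textnormal{w}_2(\rho)\) as the obstruction to lifting \(\rho\) to \(\widetilde{\mathrm{SL}}(3,\R)\). Since \(\pi_1(\pslr3)\cong\Z_2\), this obstruction lives in \(H^2(S;\Z_2)\cong\Z_2\). I would compute it by comparing the flat \(\rp2\)-bundle \(E_\rho=\widetilde S\times_\rho\rp2\) with the tangent bundle of \(S\). The developing map gives a section \(\sigma\) of \(E_\rho\) that is transverse to the flat foliation away from the branch points. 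Differentiating \(\sigma\) along the foliation yields a bundle map \(TS\to \sigma^*T^{\mathrm{vert}}E_\rho\). It is an isomorphism off \(\mathcal B=\{p_i\}\), and near \(p_i\) it looks like \(z\mapsto z^{m_i+1}\).

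\emph{Step 1: reduce to a linear-algebra obstruction.} The rank-2 vector bundle \(V=\sigma^*T^{\mathrm{vert}}E_\rho\) carries a flat-ish structure coming from \(\rho\). Equivalently, \(\rho\) defines a flat \(\R^3\)-bundle twisted by a line, and \(V\cong \mathrm{Hom}(L,L^\perp)\) for the tautological line \(L\subset\R^3\) along \(\sigma\). The key identity I would aim for is
\[
\textnormal{w}_2(\rho)=\textnormal{w}_2(V)+\textnormal{w}_1(V)^2 \quad(\text{or simply } \textnormal{w}_2(V))\pmod 2 .
\]
This comes from the splitting \(\R^3=L\oplus L^\perp\), the Whitney sum formula, and the fact that the flat \(\R^3\)-bundle associated with a lift to \(\mathrm{SL}(3,\R)\) has \(\textnormal{w}_1=0\) and \(\textnormal{w}_2\) equal to the lifting obstruction. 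On an orientable surface \(\textnormal{w}_1(V)^2=0\), since squaring in degree one is multiplication by \(\textnormal{w}_1(S)=0\). So in that case it reduces to \(\textnormal{w}_2(\rho)=\textnormal{w}_2(V)\).

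\emph{Step 2: compare \(V\) with \(TS\).} The map \(d\sigma\colon TS\to V\) is an isomorphism off \(\mathcal B\). At \(p_i\) it has local degree \(m_i+1\) in oriented trivialisations, so its relative twisting is \(m_i\). The standard relative Euler class computation then gives
\[
e(V)=e(TS)+\sum_i m_i=\chi(S)+\deg(\mu)
\]
in the orientable case. Reducing mod 2, and using that \(\chi(S)\) is even for closed orientable \(S\), gives \(\textnormal{w}_2(V)\equiv\deg(\mu)\). If \(V\) is not orientable, or \(S\) is non-orientable, I would run the same count with twisted coefficients, or mod 2 directly. Each branch point of order \(m\) then contributes \(m \bmod 2\), since \(z\mapsto z^{m+1}\) has mod-2 degree \(m+1\). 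The result is \(\textnormal{w}_2(V)=\textnormal{w}_2(TS)+\deg(\mu)\). Combined with \(\textnormal{w}_2(TS)+\textnormal{w}_1(TS)^2=0\), this suggests the stated congruence holds in general.

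\emph{Main obstacle.} The delicate step is Step 1: correctly identifying the lifting obstruction of \(\rho\) with a characteristic class of \(V\). In particular I need to pin down the role of \(L\), whose \(\textnormal{w}_1\) may be nonzero, and the cross term \(\textnormal{w}_1(L)\textnormal{w}_1(L^\perp)\). I would first treat the case where \(\rho\) lifts to \(\mathrm{SL}(3,\R)\), so the flat \(\R^3\)-bundle exists globally, and verify
\[
0=\textnormal{w}_2(L\oplus L\otimes V)=\textnormal{w}_2(V)+\textnormal{w}_1(L)^2\ \text{(up to \(w_1(V)w_1(L)\) terms)}.
\]
The general case then follows by naturality, since the obstruction is additive under changing the lift on the 2-cell.
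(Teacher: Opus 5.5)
Your proposal is correct and is essentially the paper's argument. The paper's self-intersection \(Q_{\mathbb S(E_\rho)}([\widehat\Sigma^+],[\widehat\Sigma^+])\) is exactly your Euler number \(e(V)=\chi(S)+\deg(\mu)\) of the normal bundle \(V\) of the developing section, and it is computed the same way, from the vertical part of \(d\sigma\) applied to a vector field. The paper's Lemma identifying the mod-\(2\) self-intersection with \(\textnormal{w}_2(E_\rho)\) plays the role of your Step 1, and your version is cleaner, since you split off the developing line \(L\) itself rather than an auxiliary trivial summand.

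Your ``main obstacle'' is not a real one. Since \(\pslr3\cong\slr3\), the flat bundle \(E_\rho\cong L\oplus(L\otimes V)\) always exists. Moreover \(\textnormal{w}_1(L)=\textnormal{w}_1(V)=\textnormal{w}_1(TS)=0\), because \(V\cong TS\) off \(\mathcal B\) and \(H^1(S;\Z_2)\to H^1(S\setminus\mathcal B;\Z_2)\) is injective. So \(\textnormal{w}_2(\rho)=\textnormal{w}_2(V)\) directly, and no naturality step is needed. Note also that the displayed identity in your last paragraph should read \(\textnormal{w}_2(\rho)=\textnormal{w}_2(L\oplus L\otimes V)=\textnormal{w}_2(V)+\textnormal{w}_1(L)^2\); as written it starts with \(0=\), which is wrong.
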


As we may observe, the theorem provides merely a necessary condition for the realisation of a given representation within the strata. In what follows, we shall completely characterise Hitchin representations by demonstrating that every such representation is realised in the stratum \(\mathcal{RP}^2(\,\mu\,)\) for every signature \(\mu\) of even degree. We note that this does not contradict the earlier result of Choi--Goldman \cite{ChoiGoldman1993Convex}. Indeed, their findings combined with ours, see \S\ref{ssec:hitchinbranched}, assert that every Hitchin representation arises as the holonomy of infinitely many branched projective structures, exactly one of which is unbranched and properly convex. There is no complete characterisation for non-Hitchin representations, and the following question remains to be addressed.

\begin{prob}\label{prob:geometrisation strata}
    Determine necessary and sufficient conditions for a non-Hitchin representation \(\rho\) to be realised in a stratum \(\mathcal{RP}^2(\,\mu\,)\), provided that the compatibility condition \(\textnormal{w}_2(\,\rho\,)\equiv \deg(\,\mu\,) \pmod 2\) holds.
\end{prob}

In recent years, questions in the spirit of Problem \ref{prob:geometrisation strata} have attracted considerable attention across various broad geometric contexts, particularly within the frameworks of complex projective and flat structures. Driven by this active line of research, we are naturally led to address this problem in the setting of complex branched projective structures and flat structures on surfaces arising from holomorphic and meromorphic differentials on Riemann surfaces, see \cite{BJJP},  \cite{CFG}, \cite{CF}, \cite{fils} and \cite{fils_complex_proj}. As already alluded above, in the subsequent paper we shall provide a partial answer to Problem \ref{prob:geometrisation strata} by using Higgs bundles and thus a more differential rather than topological approach, see \cite{faracorungi}. This problem is closely linked to the notion of minimality for the degree of the branch divisor. Naturally, one asks whether the total branching degree can be minimised for a given representation. For Hitchin representations, the minimal degree is strictly zero, as established by the classical theorem of Choi--Goldman. For non-Hitchin representations, however, the minimal degree required for geometrisation remains an open question:

\begin{prob}\label{prob: min degree}
    Given a non-Hitchin representation \(\rho\), what is the minimal degree \(d(\,\rho\,) \in \Z^+\) for which \(\rho\) is realised in a stratum \(\mathcal{RP}^2(\,\mu\,)\) with \(\deg(\,\mu\,) = d(\,\rho\,)\)?
\end{prob}

\subsubsection{Uniform lower bound for the minimal degree}\label{sssec:zariski dense} There exist representations for which the minimal branching degree is entirely determined by topological invariants. A primary example is provided by the trivial representation. Indeed, any branched projective structure with trivial holonomy necessarily arises as a branched cover over the model space \(\rp2\). Consequently, the degree of the signature \(\mu\) is constrained by the Riemann--Hurwitz formula applied to the underlying topological branched cover. In the case of the trivial representation, the geometrisation problem thus reduces directly to the realisability of branched coverings over \(\rp2\) with a prescribed branching data (or branch profile), a fundamental question in topological surface theory that, in general, remains an open problem. See \cite{allan1984} and \cite{Petronio2020} for a more recent account. The trivial representation is, admittedly, rather peculiar. More broadly, representations in \(\pslr3\) may be divided into those that are Zariski-dense and those that are not. The former class is vastly larger, as Zariski-dense representations form a subset of full measure in the representation space, whereas the latter represent rather special phenomena. By drawing a comparison with the theory of complex branched projective structures the following is intimately related with Problem \ref{prob: min degree}.

\begin{prob}
    Determine whether a uniform lower bound exists for the minimal degree required to geometrise Zariski-dense representations. If such a bound exists, how is it related to the Stiefel--Whitney number of the representation?
\end{prob}

\subsubsection{Elementary representations}\label{sssec:elementary} In Appendix~\ref{asec:elementary}, we shall introduce a special class of representations in \(\pslr3\), namely \textit{elementary representations}. We shall say that a representation \(\rho\) is elementary if its image preserves a conic in \(\rp2\) or in \(\big(\,\rp2\,\big)^*\). Otherwise, \(\rho\) is called \textit{non}-elementary, see Definition \ref{defn:elementarygroupfirstkind} 
and related Table \ref{tab:elementaryfirst}. 
The reader may notice that this naturally extends the standard notion of elementary representations in \(\pslr2\), by recalling that quadrics in \(\rp1\) are either empty, a single point, or a pair of distinct points. In particular, we shall provide two complete characterisations of these representations from different perspectives. Algebraically, we characterise non-elementary representations as those with Zariski-dense image in \(\pslr3\), see Theorem~\ref{prop:nonelementary-ZD}. We then establish a more dynamical characterisation in terms of invariant conics in the space of all conics of \(\rp2\), see Theorem~\ref{thm: nonelementary-SI}. We suspect that our definition is not specific to dimensions \(2\) and \(3\), but rather extends naturally to any dimension \(n \ge 2\). This leads to the following

\begin{prob}
    For every \(n\ge2\), define a group \(\Gamma<\pslr{n+1}\) to be elementary if it preserves a quadric in \(\rp{n}\) or in \(\big(\,\rp{n}\,\big)^*\) and characterise these groups. Is a group non-elementary if and only if it Zariski-dense in \(\pslr{n+1}\)?
\end{prob}

\subsection{Plan of the paper} The rest of the paper is organised as follows: In \S\ref{sec:rpstructures} we introduce branched projective structures, their moduli spaces, their strata, and the holonomy map. Before proceeding with the geometrisation of representations, we shall provide a detailed account of examples of real projective structures in literature, see \ref{ssec:examples}. In \S\ref{sec:geometrisation} we dive into the geometrisation process by establishing Theorem \ref{thma:geometrisation}. In \S\ref{sec:stiefel whitney geometrisable representations} we begin characterising which representations can be realised as the holonomy of certain branched projective structures in a given stratum, providing a partial answer to Problem \ref{prob:geometrisation strata}. Along the way we shall provide several pictures that we believe they ease the reading and understanding of our geometrisation process. Finally, we shall conclude with two appendices about related topics. In Appendix \ref{app:non orientable surfaces} we extend Theorem \ref{thma:geometrisation} to non orientable surfaces thus establishing Theorem \ref{thma: geo non orientable}. In Appendix \ref{asec:elementary}, we shall provide the definition of elementary representations to which we alluded above.

\smallskip

\textit{Notation:} Long arguments will be organised into a sequence of claims, each accompanied by its own proof. To facilitate reading, we use the diamond symbol \(\diamondsuit\) to indicate the end of the proof of an intermediate claim, and keep the usual square symbol \(\Box\) to mark the conclusion of the proof of the main statement.

\smallskip

\subsection{Acknowledgements} This project was initiated following a visit by G.F. to the University of Turin in February 2026 to deliver a seminar, during which he met the second author, N.R. The first named author is grateful to Andrea Seppi for the invitation and to the Department of Mathematics of the University of Turin for the warm hospitality during his stay. G.F. is also grateful to Robert Tang for his interest in our work, and for sharing his recent developments and details of his approach, which have revealed interesting connections with our work. N.R. is funded by the European Union (ERC, GENERATE, 101124349). Views and opinions expressed are however those of the author(s) only and do not necessarily reflect those of the European Union or the European Research Council Executive Agency. Neither the European Union nor the granting authority can be held responsible for them.

\smallskip

\section{Branched real projective structures on surfaces}\label{sec:rpstructures} 

Let \(\rp2\) denote the real projective plane and \(\pslr3\) denote the group of projective transformations. For the readers' convenience, we recall that in our setting \(\pslr3=\pglr3\) and that the latter is naturally isomorphic to \(\slr3\). In \S\ref{sec:rpstructures} and \S\ref{sec:geometrisation} we shall use the former notation, whereas from \S\ref{sec:stiefel whitney geometrisable representations} onwards we shall use the latter one for our convenience. Real projective structures, or simply \(\rp2\)-structures, on a closed surface is a geometric structure locally modelled on the real projective plane \(\rp2\), with transition maps in the projective group \(\pslr3\). In the present section, we aim to extend this classical notion by introducing the concept of branched real projective structures. These structures generalise the standard definition by allowing the presence of a discrete set of points where the local charts are no longer embeddings, but instead behave like branched coverings of topological discs.

\subsection{A portmanteau of definitions}\label{ssec:basicdefinitions} Let \(S\) be an oriented and connected surface. A \textit{real projective structure} on \(S\) is the datum of a maximal atlas of \(\rp2\)-valued charts that differ by some M\"obius transformation on their overlaps, see \cite[\S2]{goldman1990}. In order to extend the definition to branched structures, it is first necessary to extend the notion of a chart. We begin with the following

\begin{defn}\label{def:branchechart}
    Let \(S\) be a surface. A \textit{branched real projective chart} is a pair, say \((U,\, \varphi)\), where \(U \subseteq S\) is an open subset and \(\varphi \colon U \to \varphi(\,U\,) \subseteq \rp2\) is a finite degree branched covering map. More specifically, for every \(p \in S\), there exist an open neighbourhood \(U \subset S\) of \(p\), an open neighbourhood \(V \subset \rp2\) of \(\varphi(\,p\,)\), and homeomorphisms \(f\colon U \to \DD\) and \(g\colon V \to \DD\), where \(\DD\) is the unit disc in \(\R^2\), such that \(f=g\circ\varphi\) and the condition \(f(\,p\,) = g\circ\varphi(\,p\,) = 0\) holds. By using the polar coordinates \((r,\theta)\) on \(\DD\), the map \(\varphi\) is isotopic to the map
    \begin{equation}
        (r,\theta)\longmapsto \Big(\,r^{m+1},\,(m+1)\theta\,\Big)
    \end{equation}
    for some integer \(m \geq 0\). Moreover, \(p\) is said to be a \textit{branch point of order} \(m\). We shall say that two branched charts, say \(\varphi_1\) and \(\varphi_2\), at \(p\) are equivalent of there exists a M\"obius transformation \(g\in\pslr{3}\) such that the identity \(\varphi_2=g\circ\varphi_1\) holds.
\end{defn}

\begin{defn}\label{def:branchedprojestructure}
    A \textit{branched \(\rp2\)-structure} on \(S\), denoted by \(\sigma\), is the datum of a maximal atlas, say \(\mathcal U\), of (possibly branched) real projective charts. More precisely, a \(\rp2\)-structure consists of the following data: 
    \begin{itemize}
        \item[1.] a discrete collection of points, say \(\mathcal B \subset S\),
        \item[2.] a maximal atlas of \(\rp2\)-valued charts on \(S \setminus \mathcal B\), and
        \item[3.] a maximal collection branched charts centred at points of \(\mathcal B\), 
    \end{itemize}
    such that the transition maps on \(S\) are restrictions of elements of \(\pslr3\), \textit{i.e.}, for each pair of (possibly branched) charts, say \((U_1,\varphi_1),\,(U_2,\varphi_2)\in\mathcal U\), there is \(g\in\pslr3\) such that the identity \(\varphi_2=g\circ\varphi_1\) holds.
\end{defn}

\noindent We observe that the order of a branch point is well-defined and it does not depend on the choice of any branched chart around the branch point. In fact, for every branched point \(p\in(S,\sigma)\), any pair of branched charts around \(p\) are equivalent because they differ by a M\"obius transformation. Since the latter is a homeomorphism of \(\rp2\), it readily follows that the order of the branched point does not depend on the choice of the local branched chart.

\begin{rmk}\label{rmk:branchingchain}
    Every branched real projective structure on a closed surface \(S\) yields a \(0\)-chain in \(C_o(S,\Z)\), the group of \(0\)-chains on \(S\), that records the orders of the branch points. More specifically, let \(\sigma\) be such a structure on \(S\), and let \(\mathcal B=\{\,p_1,\dots,p_n\,\}\) be its set of branch points with respective orders \(m_1,\dots,m_n\). Then, the \textit{branching chain} is defined as 
    \begin{equation} 
        D=\sum_{p_i\in \mathcal B}\, m_i\,p_i.
    \end{equation}
    Recall that the degree map is the group homomorphism \(\deg\colon C_o(S,\Z) \longrightarrow \mathbb{Z}\) that assigns to each such a formal sum its total sum of coefficients. The degree \(\deg(\,D\,)\) of \(D\) is defined as \textit{total branching order}. 
\end{rmk}

\noindent We shall provide several examples of branched projective structures later or in \S\ref{ssec:examples}. Before doing so, we continue with further generalities about these structures. 

\smallskip

\textit{Convention and terminology.} From now on, we shall refer to branched real projective structures simply as branched projective structures, omitting the adjective \textit{real}. In this work, we do not consider their complex counterpart, \textit{i.e.}, complex projective structures, hence explicitly specifying their real nature is unnecessary. Should complex projective structures be mentioned for any reason, their real or complex nature will be explicitly stated to avoid confusion.

\smallskip

\subsubsection{Moduli spaces of branched projective structures}\label{sssec:moduli spaces} Having established the notion of branched projective structures, we are now ready to define their moduli spaces. In order to define the deformation space of branched \(\rp2\)-structures, let us introduce a natural notion of isomorphism for these structures.

\smallskip

\begin{defn}\label{def:projectivemaps}
Let \(\Omega\subset\rp2\) be an open set. A map \(\phi\colon\Omega\longrightarrow \rp2\) is said to be \textit{locally projective} if for each connected components \(C\subset\Omega\), there is a projectivity \(g\colon\rp2\longrightarrow \rp2\) such that \(\phi|_{C}=g|_{C}\). Let \(\sigma_1\) and \(\sigma_2\) be two branched \(\rp2\)-structures on \(S\) and let \(f \colon (S,\sigma_1) \longrightarrow (S,\sigma_2)\) be a smooth map. We shall say that \(f\) is \textit{projective} if it locally projective in local charts. We say it is a \textit{projective isomorphism} if it is also a homeomorphism. Finally, we shall denote by \(\textnormal{Proj}(\,\sigma\,)\) the group of projective isomorphisms of \(\sigma\) on \(S\).
\end{defn}


\begin{defn}
A \textit{marked branched \(\rp2\)-structure} on \(S\) is a triple \((S, \sigma, f)\), where \(\sigma\) is a surface endowed with a branched \(\rp2\)-structure and \(f \colon S \longrightarrow (S,\sigma)\) is an orientation-preserving diffeomorphism. Two marked structures, say \((S, \sigma_1, f_1)\) and \((S, \sigma_2, f_2)\), are declared to be equivalent if \(f_2 \circ f_1^{-1} \colon (S,\sigma_1) \longrightarrow (S, \sigma_2)\) is isotopic to a projective isomorphism \(h \colon (S, \sigma_1)\longrightarrow (S, \sigma_2)\). In the following, we shall denote by \(\mathcal{RP}^2(\,S\,)\) the set of marked branched \(\rp2\)-structures on \(S\) up to this equivalence relation.
\end{defn}

\noindent The space defined above acquires the structure of a moduli space when equipped with a suitable topology. We shall recall the details of this topology in \S\ref{sssec:defspace}. In what follows, we shall be primarily interested in certain subspaces of the moduli space just defined. More specifically, we introduce a stratification of the moduli space \(\mathcal{RP}^2(\,S\,)\) into \textit{strata}, each of which become subspaces once the ambient moduli space is equipped with its natural topology.

\begin{defn}\label{def:strata}
    The space of branched projective structures admits a natural stratification into strata indexed by \textit{signatures} \(\mu=(\,m_1,\dots,m_k\,)\) of positive integers where each one comprises all branched structures with exactly \(k\) branch points of orders \(m_1,\dots,m_k\) (repetitions are allowed). For an element \(\mu\in\mathbb Z^k_+\), its \textit{degree} is defined as \(\deg(\,\mu\,)=m_1+\cdots+m_k\), and we shall denote the corresponding stratum as \(\mathcal{RP}^2(\,\mu\,)\). Under this perspective, the space of \textit{unbranched} projective structures is a stratum inside the overall space \(\mathcal{RP}^2(\,S\,)\) that we shall denote as \(\mathcal{RP}^2_{\textnormal{o}}(\,S\,)\).
\end{defn}

\subsubsection{Geodesics, boundaries and corner points}\label{sssec:other elements} The real projective plane \(\rp2\) carries a natural Riemannian metric which, once the sphere \(\sph^2\) is equipped with its standard elliptic structure, makes the covering projection \(\pi_{\textnormal{std}}\colon\sph^2\longrightarrow \rp2\) a local isometry (in particular, locally projective). Every line in \(\rp2\) is a geodesic line with respect to this Riemannian structure. In particular, every geodesic line in \(\rp2\) is a circle, and hence any two distinct points on it determine two geodesic segments. A \textit{segment} in \(\rp2\) is any such portion of a line bounded by two distinct points. Based on these premises, we introduce the following concepts, which will be extensively used in what follows. For this purpose, let \(S\) be an orientable surface, possibly with non-empty boundary \(\partial S\). Let \((S,\sigma)\) be a, possibly branched, projective structure. 

\smallskip

\begin{enumerate}
    \item[1.]\textit{Geodesic segments.} according to Goldman, see \cite[\S2.2]{goldman1990}, we shall say that a segment in \((S,\sigma)\) is a geodesic segment if it is projectively equivalent to a segment in \(\rp2\) in local charts. We may notice that this definition extends the well-known notion of geodesic segment to the specific setting of hyperbolic structures. 
    \smallskip
    \item[2.] \textit{Geodesic boundary.} A real projective structure on a surface with non-empty boundary is said to have a geodesic boundary if its boundary is locally projectively equivalent to a geodesic segment in \(\rp2\). That is, for every point \(p\in\partial S\) and every chart \((U,\varphi)\) centred at \(p\), then \(\varphi(U\cap\partial S)\) is a geodesic segment in \(\rp2\).
    \smallskip
    \item[3.] \textit{Corner point.} A real projective structure \(\sigma\) on a surface \(S\) with non-empty boundary is said to have a corner point at \(p \in \partial S\) if for every chart \((U, \varphi)\) centred at \(p\), the image \(\varphi(U \cap \partial S)\) is a concatenation of two geodesic segments on two geodesic lines in \(\rp2\) intersecting at \(\varphi(p)\). In this case, we shall say that \((S,\sigma)\) has piecewise geodesic boundary.
\end{enumerate}

\noindent These foundational geometric features complete the basic definitions and will be used throughout the rest of our investigation.

\smallskip

\subsection{Developing-holonomy pair}\label{ssec:devhol_pair} From the classical theory of geometric structures, every unbranched structure is completely captured by a pair known as the developing-holonomy pair; see \cite{goldman2022geometric} for an excellent treatment in the most general framework. In particular, every unbranched real projective structure determines such a pair. Let \(\widetilde S\) be the universal cover of \(S\) and let \(\pi\colon\widetilde{S}\longrightarrow S\) be the covering projection. For any \(\rp2\)-structure on \(S\), there exists a unique \(\rp2\)-structure on \(\widetilde S\) that makes the covering projection a locally projective map as in Definition \ref{def:projectivemaps}. If \(\sigma\) is any real (unbranched) projective structure on \(S\), then its \textit{developing map} is defined as the unique locally projective map \(\textnormal{dev}_\sigma\colon\widetilde S\longrightarrow \rp2\) up to post-composition by an element of \(\pslr3\). Concretely, a developing map is constructed by applying the principle of analytic continuation to any chart of the structure's atlas, which is guaranteed by the fact that real projective structures are (real) analytic geometric structures. Since two charts defined on the same open set on \(S\) must differ by an element in \(\pslr3\), it readily follows that a developing map is uniquely determined up to post-composition by an element in \(\pslr3\). The developing map satisfies an equivariant property with respect to a representation \(\rho\colon\pi_1(\,S\,)\longrightarrow \pslr3\) called \textit{holonomy representation} of \(\sigma\). Clearly, if \(\textnormal{dev}_\sigma\) and \(g\cdot\textnormal{dev}_\sigma\) are two developing maps for \(\sigma\) and the former is equivariant with respect to a representation \(\rho\), then the latter is equivariant with respect to \(\textnormal{Ad}_g(\,\rho\,)\). We stress that the argument just presented applies to any surface, not necessarily closed nor of finite type. See \cite[\S2.5]{alessandrini2019} for more details.

\smallskip

\subsubsection{Realising a developing map}\label{sssec:realisingdevelopingmaps} Let us now extend these notions to the setting of branched projective structures. Although the existence of an analogous pair is known to hold also in the branched case, since this does not seem to be explicitly detailed in the literature, we provide full details here for the reader's convenience. For this purpose, let \(\sigma\) be a branched projective structure on \(S\) and let \(\widetilde\sigma\) be its lift to the universal cover \(\widetilde S\). Let \(\mathcal B\) be the set of branch points of \(\sigma\) and let \(\widetilde{\mathcal B }\) denote the lift of \(\mathcal B\) to the universal cover. We may notice that \(\widetilde{\mathcal B}\) is invariant under the action of \(\pi_1(\,S\,)\). We first consider the surface \(\Sigma=\widetilde S\setminus \widetilde{\mathcal B}\) and observe that the branched \(\rp2\)-structure on \(\widetilde S\) restricts to an unbranched \(\rp2\)-structure on \(\Sigma\). The existence and realisation of the developing map for branched structures is collected in the following claims. We begin with the following one concerning the holonomy representation of the branched structure on \(\Sigma\).

\begin{claim}\label{claim:holonomypuncturedsurface}
    Let \(\theta\colon\pi_1(\,\Sigma\,)\longrightarrow \pslr3\) be the holonomy representation of \(\widetilde\sigma\) restricted to \(\Sigma\). Then \(\theta\) is the trivial representation.
\end{claim}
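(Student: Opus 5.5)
Since \(\widetilde S\) is simply connected, \(\pi_1(\,\Sigma\,)\) is generated, as a normal subgroup, by the peripheral loops around the punctures \(\widetilde p\in\widetilde{\mathcal B}\). I will argue this first. The set \(\widetilde{\mathcal B}\) is discrete and closed in \(\widetilde S\), and \(\widetilde S\) is a simply connected surface. Choose pairwise disjoint small closed discs \(\Delta_{\widetilde p}\) around the points \(\widetilde p\), each contained in the domain of a branched chart centred at \(\widetilde p\). A van Kampen argument then shows that \(\pi_1(\,\Sigma\,)\) is the quotient of the free product of \(\pi_1(\,\widetilde S\,)\) with the groups \(\pi_1(\Delta_{\widetilde p}\setminus\{\widetilde p\})\cong\Z\), killing nothing new. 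More concretely, \(\pi_1(\,\widetilde S\,)=1\) is obtained from \(\pi_1(\,\Sigma\,)\) by filling each puncture. Filling adds exactly the relation that the boundary loop \(\partial\Delta_{\widetilde p}\), conjugated by a path to the basepoint, becomes trivial. Hence \(\pi_1(\,\Sigma\,)\) is normally generated by these loops \(\gamma_{\widetilde p}\). If \(\widetilde{\mathcal B}\) is infinite, I would use a direct limit over an exhaustion of \(\widetilde S\) by compact subsurfaces containing finitely many branch points.

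It then suffices to check that \(\theta(\gamma_{\widetilde p})=\mathrm{id}\) for each \(\widetilde p\), since a homomorphism that is trivial on a normal generating set is trivial. Take a branched chart \((U,\varphi)\) centred at \(\widetilde p\) with \(\Delta_{\widetilde p}\subset U\). Restricted to \(U\setminus\{\widetilde p\}\), the map \(\varphi\) is a local homeomorphism that is compatible with the unbranched structure on \(\Sigma\), because by Definition \ref{def:branchedprojestructure} it differs from every unbranched chart by an element of \(\pslr3\). The holonomy along \(\gamma_{\widetilde p}\) is computed by analytically continuing a local chart around the loop. Since \(\varphi\) is itself a single-valued chart defined on a neighbourhood of the whole loop, the continuation returns to itself, and so \(\theta(\gamma_{\widetilde p})\) is the identity. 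Conjugating by the path to the basepoint does not change this.

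The main obstacle is the first step: stating cleanly that the peripheral loops normally generate \(\pi_1(\,\Sigma\,)\) when there may be infinitely many punctures. I would handle this through compactly supported loops. Any loop in \(\Sigma\) is null-homotopic in \(\widetilde S\) by a homotopy with compact image. That image meets only finitely many discs \(\Delta_{\widetilde p}\), so the finite-puncture van Kampen argument applies inside a compact simply connected subsurface, for instance a regular neighbourhood of the image of the homotopy with its holes filled.
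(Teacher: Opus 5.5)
Your proposal is correct and follows the paper's proof in essence. The branched chart centred at each point of \(\widetilde{\mathcal B}\) restricts to a single-valued locally projective map on the punctured disc, so every peripheral loop has trivial holonomy; the paper phrases this through a cyclic chain of restricted charts whose transition elements compose to the identity. Your one real addition is the explicit argument that these peripheral loops normally generate \(\pi_1(\,\Sigma\,)\) because \(\widetilde S\) is simply connected, including the compactness reduction for infinitely many punctures. The paper uses this fact only implicitly, when it passes from individual branch points to the whole of \(\theta\).
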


\begin{proofclaim}
    Let \(p\in\widetilde{\mathcal B}\subset \widetilde S \) be any branch point and let \(\Delta\) be an open disc centred at \(p\). Up to shrinking it if necessary, we may assume that the closure of \(\Delta\) does not contain any branch point other than \(p\). Let \(\gamma\) be the boundary of \(\Delta\). By design, this is a loop around \(p\) that bounds a punctured disc and does not enclose any other puncture in \(\Sigma\). Moreover, since the former structure on \(\widetilde S\) is a branched \(\rp2\)-structure with a branch point at \(p\) of order \(m\ge1\), we may also assume that \(\Delta\) is properly contained in some branched chart \((U, \varphi)\), as in Definition \ref{def:branchechart}, that contains no branch points other than \(p\). We may notice that the image of \(\gamma\) via the map \(\varphi\) is a closed loop in \(\rp2\) that winds \(m+1\) times around \(\varphi(\,p\,)\). Let \(q\in\gamma\) be a base-point for \(\gamma\) in \(U\setminus \{\,p\,\}\). By compactness, we can cover \(\gamma\) by a finite, ordered chain of charts \(\mathcal V=\{\,V_o, \dots, V_n\,\}\) such that two sets overlap along \(\gamma\) if and only if their indices are consecutive modulo \(n+1\). According to our standing assumptions, the restriction of \(\varphi\) to any \(V_\alpha\in\mathcal V\) is a local homeomorphism onto its image and two overlapping open sets differ by an element of \(\pslr3\). Let \(V_o\) denote the open set covering \(q\). We may assume that this is the only open set of \(\mathcal V\) covering \(q\). If not, we shrink the open sets of \(\mathcal V\) a little so that the property holds. Then the following identity holds:
    \begin{equation}
        \varphi|_{V_o}=g_{01}\,g_{12}\,g_{23}\,\cdots\,g_{n0}\,\circ\,\varphi|_{V_o}.
    \end{equation}
    Since the holonomy \(\theta(\,\gamma\,)\) does not depend on the choice of the loop surrounding \(p\) nor on the choice of the open cover \(\mathcal V\), it follows that \(\theta(\,\gamma\,)=g_{01}\,g_{12}\,g_{23}\,\cdots\,g_{n0}\). Since local charts are locally projective maps and these are real analytic, it follows that \(g_{01}\,g_{12}\,g_{23}\,\cdots\,g_{n0}=\textbf{1}\). Hence \(\gamma\) has trivial holonomy. Finally, since the argument just developed does not depend on the choice of \(p\in\widetilde{\mathcal B}\), it follows that \(\theta\) is the trivial representation, \textit{i.e.}, the projective structure \(\widetilde\sigma\) restricted to \(\Sigma\) has trivial holonomy.
\end{proofclaim}

\begin{claim}\label{claim:develpoingmapconstruction}
    There exists a locally projective map \(f\colon \Sigma\longrightarrow\rp2\) unique up to post-composition by an element of \(\pslr3\). Moreover, such a map extends continuously to a well-defined map \(\textnormal{dev}_\sigma\colon \widetilde S\longrightarrow \rp2\). 
\end{claim}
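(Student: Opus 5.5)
The plan has two parts. First we build $f$ on $\Sigma$ by analytic continuation, using Claim \ref{claim:holonomypuncturedsurface}. Then we extend $f$ across the discrete set $\widetilde{\mathcal B}$ using the local branched charts.

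\emph{Construction of $f$.} The lifted structure $\widetilde\sigma$ restricts to an unbranched $\rp2$-structure on the connected surface $\Sigma=\widetilde S\setminus\widetilde{\mathcal B}$. Connectedness holds because $\widetilde{\mathcal B}$ is discrete in a connected surface. Fix a base point $q_0\in\Sigma$ and a chart $(V_0,\varphi_0)$ around it. For $x\in\Sigma$, choose a path $\alpha$ from $q_0$ to $x$ and a chain of charts $V_0,\dots,V_n$ covering $\alpha$. Set $f(x)=g_{01}g_{12}\cdots g_{(n-1)n}\,\varphi_n(x)$, where the $g_{i,i+1}\in\pslr3$ are the transition elements, so that the continued chart agrees with $\varphi_0$ on overlaps. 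This is the standard continuation used in \S\ref{ssec:devhol_pair} for unbranched structures, applied to $\Sigma$ itself rather than to its universal cover.

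We then check the following.
\begin{itemize}
\item The value is independent of the chain of charts along a fixed path. This uses rigidity: two elements of $\pslr3$ agreeing on an open set coincide.
\item The value depends only on the homotopy class of $\alpha$ rel endpoints, by the usual Lebesgue-number subdivision argument.
\item Changing $\alpha$ to another path changes the value by $\theta(\beta)$ for a loop $\beta\in\pi_1(\Sigma)$. By Claim \ref{claim:holonomypuncturedsurface}, $\theta$ is trivial, so $f$ is well defined.
\end{itemize}
By construction $f$ is locally projective. It is unique up to post-composition: if $f'$ is another locally projective map, then $f'\circ f^{-1}$ is locally an element of $\pslr3$ on small open sets. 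These elements are locally constant by rigidity, hence constant on the connected $\Sigma$, so $f'=g\circ f$.

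\emph{Extension across $\widetilde{\mathcal B}$.} Let $p\in\widetilde{\mathcal B}$ and take a branched chart $(U,\varphi)$ centred at $p$ with $U\cap\widetilde{\mathcal B}=\{p\}$. The punctured neighbourhood $U\setminus\{p\}$ is connected, and $\varphi$ restricted to it is locally projective. By the uniqueness argument just given, applied to the connected open set $U\setminus\{p\}$, there is $g_p\in\pslr3$ with $f=g_p\circ\varphi$ on $U\setminus\{p\}$. Define $\textnormal{dev}_\sigma(p)=g_p\varphi(p)$. Continuity at $p$ follows because $g_p\circ\varphi$ is continuous on all of $U$. Since $\widetilde{\mathcal B}$ is discrete, these local definitions do not interact, and $\textnormal{dev}_\sigma$ is a well-defined continuous map on $\widetilde S$. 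It is in fact a branched chart near each branch point, with the same local degree $m+1$.

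\emph{Main obstacle.} The delicate point is the uniqueness step on $U\setminus\{p\}$, where one must exclude different projectivities on different pieces. This fails only if the domain is disconnected, and a punctured disc is connected. The rigidity fact behind it is that a projectivity fixing a nonempty open subset of $\rp2$ pointwise is the identity.

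The well-definedness of $f$ rests entirely on Claim \ref{claim:holonomypuncturedsurface}, so I would state explicitly why it suffices. Since $\widetilde S$ is simply connected, $\pi_1(\Sigma)$ is normally generated by the small loops around the punctures. Hence triviality of $\theta$ on those loops gives triviality on all of $\pi_1(\Sigma)$.
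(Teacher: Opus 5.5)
Your proposal is correct and follows essentially the same route as the paper. You continue a chart analytically over \(\Sigma\), make it single-valued using the triviality of \(\theta\) from Claim \ref{claim:holonomypuncturedsurface}, get uniqueness from rigidity of projectivities, and extend across \(\widetilde{\mathcal B}\) by the lifted branched charts. Your extra details usefully make explicit steps the paper leaves implicit: the connectedness of \(\Sigma\) and of the punctured chart domains, and the fact that \(\pi_1(\Sigma)\) is normally generated by loops around the punctures because \(\widetilde S\) is simply connected.
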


\begin{proofclaim}
    Let \(\mathcal A_{\widetilde\sigma}\) be the maximal for \(\widetilde\sigma\) on \(\Sigma\) and let \((U,\varphi)\in\mathcal A_{\widetilde\sigma}\) be any chart (necessarily unbranched). Up to shrinking \(U\) a little if necessary, we may assume \(U\) is homeomorphic to a disc. We then apply the analytic continuation property to \((U, \varphi)\) to define a well-defined locally projective map \(f\colon \Sigma\longrightarrow\rp2\). Notice that, as a direct consequence of Claim \ref{claim:holonomypuncturedsurface}, non-trivial homotopy classes of loops on the surface do \textit{not} create obstructions to the unique analytic continuation of a projective chart. Therefore, there is a global projective map \(f\), which depends only on the choice of \((U,\varphi)\) and the principle of unique analytic continuation guarantees the uniqueness of \(f\) up to an element of \(\pslr3\). We finally wish to extend the projective map just defined to the whole surface \(\widetilde S\). The lifts of the branch charts of \(\sigma\) on \(S\) provide the desired extension, and since \(\Sigma\subset\widetilde S\) is dense, this is the unique possible extension. As a consequence, the resulting map \(\textnormal{dev}_\sigma\colon \widetilde S\longrightarrow \rp2\) is the desired developing map.
\end{proofclaim}

\noindent Even in the branched framework, the developing map \(\textnormal{dev}_\sigma\colon \widetilde S\longrightarrow \rp2\) satisfies an equivariant property with respect to a representation \(\rho\colon\pi_1(\,S\,)\longrightarrow \pslr3\). In principle, the map \(f\) just constructed, whose existence is guaranteed by Claim \ref{claim:develpoingmapconstruction}, satisfies an equivariance property with respect to a representation of the fundamental group of the surface covered by \(\Sigma\), namely a representation \(\rho'\colon\pi_1(\,S_{g,n}\,)\longrightarrow \pslr3\), where \(\pi_1(\,S_{g,n}\,)\) is the fundamental group of the punctured surface \(S_{g,n}\cong S\setminus\mathcal B\). Since the representation \(\theta\) is trivial by Claim \ref{claim:holonomypuncturedsurface}, the representation \(\rho'\) maps non-essential loops to the identity. Therefore, \(\rho'\) factors naturally to a representation of \(\rho\colon\pi_1(\,S\,)\longrightarrow \pslr3\).

\smallskip

\begin{rmk}
From a different perspective, Disarlo and Tang \cite{DT} have independently developed a related notion of developing map, where the latter is seen as an equivariant map from a \textit{Farey complex} associated to a punctured surface to \(\mathbb{RP}^2\) (private communication).
\end{rmk}

\smallskip

\subsubsection{Branched projective structures as developing maps}\label{sssec:BPSasmpas} In the present section we aim to provide the following characterisation.

\begin{prop}\label{prop:BPSasmaps}
    A branched projective structure \(\sigma\) on \(S\) is equivalent to the existence of a real analytic map \(\textnormal{dev}_\sigma\colon \widetilde S\longrightarrow \rp2\) which is locally injective except in a discrete subset \(\widetilde{\mathcal B}\) and which is equivariant with respect to a representation \(\rho\colon\pi_1(\,S\,)\longrightarrow \pslr3\).
\end{prop}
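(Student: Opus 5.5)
The proposition has two directions, and I will prove them separately. Throughout, I take "locally injective except on \(\widetilde{\mathcal B}\)" to mean that near each point of \(\widetilde{\mathcal B}\) the map is a finite branched covering in the sense of Definition \ref{def:branchechart}. That is the natural reading, and the one the converse direction needs.

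\emph{From a structure to a map.} Given a branched structure \(\sigma\), Claim \ref{claim:develpoingmapconstruction} produces \(\textnormal{dev}_\sigma\colon\widetilde S\to\rp2\). I check three properties.
\begin{itemize}
\item[1.] \emph{Local injectivity off \(\widetilde{\mathcal B}\).} On \(\Sigma=\widetilde S\setminus\widetilde{\mathcal B}\) the map \(f\) agrees, up to an element of \(\pslr3\), with the unbranched charts of \(\widetilde\sigma\), so it is a local homeomorphism there.
\item[2.] \emph{Behaviour at \(\widetilde{\mathcal B}\) and analyticity.} Near a point of \(\widetilde{\mathcal B}\), \(\textnormal{dev}_\sigma\) is a projective transformation composed with a lifted branched chart. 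Hence it is a branched covering there, and \(\widetilde{\mathcal B}\) is discrete because \(\mathcal B\) is. Real analyticity is inherited from the charts, since projectivities are analytic.
\item[3.] \emph{Equivariance.} For \(\gamma\in\pi_1(S)\), the map \(\textnormal{dev}_\sigma\circ\gamma\) is again a locally projective map on \(\Sigma\) built from the same atlas. By the uniqueness part of Claim \ref{claim:develpoingmapconstruction} there is a unique \(\rho(\gamma)\in\pslr3\) with \(\textnormal{dev}_\sigma\circ\gamma=\rho(\gamma)\circ\textnormal{dev}_\sigma\). Uniqueness then gives \(\rho(\gamma\delta)=\rho(\gamma)\rho(\delta)\), and density of \(\Sigma\) extends the identity to all of \(\widetilde S\). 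This \(\rho\) is the factorisation of \(\rho'\) discussed after Claim \ref{claim:develpoingmapconstruction}.
\end{itemize}

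\emph{From a map to a structure.} Given \(\textnormal{dev}\) and \(\rho\) as in the statement, first observe that \(\widetilde{\mathcal B}\), the set where \(\textnormal{dev}\) fails to be locally injective, is \(\pi_1(S)\)-invariant. Indeed, \(\textnormal{dev}\circ\gamma=\rho(\gamma)\circ\textnormal{dev}\) with \(\rho(\gamma)\) a homeomorphism, so local injectivity at \(x\) and at \(\gamma x\) are equivalent. Hence \(\widetilde{\mathcal B}\) descends to a discrete set \(\mathcal B\subset S\). The charts are then built as follows.
\begin{itemize}
\item[1.] \emph{Choice of neighbourhoods.} For \(p\in S\), pick a lift \(\tilde p\) and an evenly covered disc neighbourhood \(U\) of \(p\), with lift \(\widetilde U\ni\tilde p\). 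If \(p\notin\mathcal B\), shrink \(\widetilde U\) so that \(\textnormal{dev}|_{\widetilde U}\) is injective. If \(p\in\mathcal B\), shrink it so that \(\widetilde U\) contains no other point of \(\widetilde{\mathcal B}\) and \(\textnormal{dev}|_{\widetilde U}\) is a branched covering of the form in Definition \ref{def:branchechart}.
\item[2.] \emph{Charts.} Set \(\varphi_U=\textnormal{dev}\circ(\pi|_{\widetilde U})^{-1}\).
\item[3.] \emph{Transition maps.} If \(U_1\cap U_2\neq\emptyset\), take a connected component of the intersection. The two chosen lifts over it differ by a deck transformation \(\gamma\), so on that component \(\varphi_{U_2}=\rho(\gamma)^{\pm1}\circ\varphi_{U_1}\). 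This is an element of \(\pslr3\), as Definition \ref{def:branchedprojestructure} requires.
\end{itemize}
Completing this collection to a maximal atlas gives \(\sigma\). The developing map of \(\sigma\) recovered by Claim \ref{claim:develpoingmapconstruction} is \(\textnormal{dev}\) itself, up to post-composition, so the two constructions are mutually inverse up to the obvious equivalences.

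\emph{Expected obstacle.} The delicate point is the local model at points of \(\widetilde{\mathcal B}\). A real analytic map that merely fails to be locally injective at an isolated point need not look like \((r,\theta)\mapsto(r^{m+1},(m+1)\theta)\); for example, folds can occur. I will therefore use the branched-covering condition explicitly, as built into the reading of the statement fixed at the start. With it, the charts near \(\mathcal B\) satisfy Definition \ref{def:branchechart} directly, and the orders \(m\) are well defined by the remark following Definition \ref{def:branchedprojestructure}. A secondary subtlety is that the analytic-continuation uniqueness used for equivariance takes place on \(\Sigma\). This is handled by working on \(\Sigma\), which is connected because \(\widetilde{\mathcal B}\) is discrete, and then extending by density.
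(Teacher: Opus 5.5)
\textbf{Gap in the converse direction.} Your forward direction is fine and agrees with the paper, as does your explicit construction of charts and transition maps in the converse direction. The problem is that you prove the converse of a weaker statement than the one written. You strengthen the hypothesis so that \(\textnormal{dev}\) is assumed to be a branched covering near each point of \(\widetilde{\mathcal B}\). That local model is the only non-trivial content of the converse, and it is exactly what the paper's proof sets out to derive, from real analyticity plus discreteness of \(\widetilde{\mathcal B}\).

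Your reason for needing the extra hypothesis is also mistaken. A fold fails to be locally injective along its entire fold curve, so a discrete non-injectivity locus already rules it out. In fact no pathology can occur at an isolated point, so the statement as written is true and provable.

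\textbf{The missing step.} The paper justifies this step only by a vague appeal to normal forms of real analytic maps with isolated critical points. A cleaner argument runs as follows.
\begin{itemize}
\item[1.] Fix \(p\in\widetilde{\mathcal B}\) and a small closed disc \(B\ni p\) with \(B\cap\widetilde{\mathcal B}=\{p\}\) and \(\textnormal{dev}(B)\) inside an affine chart. By invariance of domain, \(\textnormal{dev}\) is a local homeomorphism on \(B\setminus\{p\}\).
\item[2.] The fibre \(F=\textnormal{dev}^{-1}(\textnormal{dev}(p))\) is a real analytic set. If \(p\) were an accumulation point of \(F\), the curve selection lemma would give an analytic arc in \(F\) issuing from \(p\). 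Then \(\textnormal{dev}\) would be constant on it, contradicting local injectivity away from \(p\). So, after shrinking \(B\), we have \(F\cap B=\{p\}\).
\item[3.] Let \(\delta>0\) be the distance from \(\textnormal{dev}(p)\) to the compact set \(\textnormal{dev}(\partial B)\). Let \(D_\varepsilon\) be the \(\varepsilon\)-disc about \(\textnormal{dev}(p)\) with \(\varepsilon<\delta\). Let \(V\) be the component of \(\textnormal{dev}^{-1}(D_\varepsilon)\cap\textnormal{int}(B)\) containing \(p\). Then \(\overline V\subset\textnormal{int}(B)\) and \(\overline V\setminus V\) maps into \(\partial D_\varepsilon\).
\item[4.] Hence \(\textnormal{dev}\colon V\setminus\{p\}\to D_\varepsilon\setminus\{\textnormal{dev}(p)\}\) is a proper local homeomorphism from a connected space. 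So it is a finite connected covering of a punctured disc, and therefore conjugate to \(z\mapsto z^{k}\).
\item[5.] This conjugacy extends over \(p\) to the model of Definition \ref{def:branchechart}, with \(k\ge 2\) because \(\textnormal{dev}\) is not locally injective at \(p\).
\end{itemize}
With this step in place of your extra hypothesis, the rest of your construction proves the converse for the statement as written.
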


\begin{proof}
    In \S\ref{sssec:realisingdevelopingmaps}, we have seen that every branched projective structure determines a developing-holonomy pair. Therefore, it remains to show the opposite direction. In the first place, we notice that the subset \(\widetilde{\mathcal B}\) and its complement, say \(\Sigma\), are invariant under the action of \(\pi_1(\,S\,)\) as a consequence of the equivariance property of the mapping \(\textnormal{dev}_\sigma\). Let \(\mathcal B=\pi(\,\widetilde{\mathcal B}\,)\), where \(\pi\) denotes the covering projection. Our standing assumptions imply that the restriction of \(\textnormal{dev}_\sigma\) to any sufficiently small open set in \(\Sigma\) is a homeomorphism onto its image, and its precomposition with the local inverse of the covering projection yields a local chart on \(S\setminus \mathcal B\). It remains to show that \(\textnormal{dev}_\sigma\) restricts to a branched chart on sufficiently small open neighbourhoods around every point of \(\widetilde{\mathcal B}\). Let \(p\in\widetilde{\mathcal B}\) be any point and let \(U\) be an open set centred at \(p\). This is a critical point where the differential \(\mathrm{d}(\textnormal{dev}_\sigma)_p\) vanishes. Since \(\widetilde S\) and \(\rp2\) are real analytic surfaces and \(\textnormal{dev}_\sigma\) is a real analytic map, we can choose local real analytic coordinates centred at \(p\) and \(\textnormal{dev}_\sigma(p)\) such that \(\textnormal{dev}_\sigma\) restricts to a real analytic mapping \(\varphi \colon U \longrightarrow \R^2\subset\rp2\) with \(\varphi(\,p\,)=0\). Since the branch locus \(\widetilde{\mathcal{B}}\) is discrete, \(p\) is an isolated critical point of \(\varphi\). It follows from the theory of real analytic maps with isolated critical points that there exist local polar coordinates \((r, \theta)\) on \(U\) and corresponding polar coordinates on the codomain such that \(\varphi\) takes the local normal form:
    \begin{equation}
        (r, \theta) \longmapsto \left(\,r^{m+1}, (m+1)\theta\,\right),
    \end{equation}
    where \(m \ge 1\) is the vanishing order of the differential at \(p\). This local model corresponds precisely to a cyclic branched covering map of degree \(m+1\) over an open subset of \(\rp2\). Consequently, \(\textnormal{dev}_\sigma\) restricts to a branched projective chart around \(p\).
\end{proof}

\smallskip

\subsubsection{Deformation space of branched projective structures}\label{sssec:defspace} As a direct consequence of the characterisation provided in \S\ref{sssec:BPSasmpas}, the deformation space of branched projective structures can be identified with the space of equivalence classes of developing-holonomy pairs modulo the action of \(\pslr3\) and orientation-preserving diffeomorphisms. Consequently, the deformation space of branched projective structures can be framed from an alternative viewpoint as the orbit space of developing-holonomy pairs under the simultaneous action of \(\pslr3\) by post-composition and the mapping class group by pre-composition. More precisely, we quotient the space of such pairs by post-composition with elements of \(\pslr3\) and by pre-composition of the developing maps with orientation-preserving diffeomorphisms of \(S\) that are isotopic to the identity. 
\begin{equation}\label{eq:identification}
    \mathcal{RP}^2(\,S\,) = \left\{\,\,\begin{gathered} 
    \text{branched \(\rp2\)-structures}\\ 
    \text{on \(S\) as maximal}\\
    \text{atlases of charts}
\end{gathered}\,\, \right\} \longleftrightarrow \Big\{\,\big[\,g\cdot\textnormal{dev},\,\textnormal{Ad}_g(\,\rho\,)\,\big]\,\,\big\vert\,\,g\in\pslr3\,\Big\}_{\sim \textnormal{Diff}_o(\,S\,)}\,\,.
\end{equation}
\smallskip

\noindent By equipping the set of pairs \((\textnormal{dev}, \rho)\) with the compact-open topology, the deformation space naturally inherits the corresponding quotient topology.

\smallskip

\subsection{The representation space and holonomy map}\label{sec: representation space} We now aim to recall some generalities about the space of representation in \(\pslr3\) and the associated character variety. Let \(S\) be a closed oriented surface of genus \(g \ge 2\). The representation space \( \mathrm{Hom}\big(\pi_1(\,S\,),\,\pslr3\big) \) is defined as the space of representations of the fundamental group of \(S\) into \(\pslr3\), which admits a natural structure of a real analytic variety. The \textit{character variety} is defined as the quotient
\begin{equation}
    \mathfrak X_3(\,S\,) = \mathrm{Hom}^+\Big(\,\pi_1(\,S\,), \pslr3\Big)\Big/\pslr3.
\end{equation}
where \(\pslr3\) acts by conjugation on representations, and \(\mathrm{Hom}^+(\pi_1(\,S\,), \pslr3)\) denotes the subset consisting of completely reducible representations. 
In this way, the induced topology on the space is Hausdorff. In his seminal work \cite{Hitchin1992}, Hitchin investigated the connected components of character varieties associated with split real Lie groups. In the specific case of \(\pslr3\), a key outcome of his work is the following

\begin{thm}[Hitchin]\label{thm: connected components character variety}
    The character variety \(\mathfrak{X}_3(\,S\,)\) has three connected components: the one containing the class of the trivial representation, the one consisting of representations whose associated flat \(\rp2\)-bundles have non-zero second Stiefel-Whitney class, and the one consisting of representations connected to those arising from uniformization. Moreover, the third one is contained in the smooth locus of \(\mathfrak{X}_3(\,S\,)\) and it is diffeomorphic to \(\R^{16g-16}\).
\end{thm}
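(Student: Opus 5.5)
The plan is to pass through non-abelian Hodge theory and run Hitchin's Morse-theoretic argument on the moduli space of Higgs bundles. Since \(\pslr3\cong\slr3\), fix a complex structure on \(S\) and use the Corlette--Donaldson--Hitchin--Simpson correspondence. It gives a homeomorphism between \(\mathfrak X_3(\,S\,)\) and the moduli space \(\mathcal M\) of polystable \(\slr3\)-Higgs bundles. Such an object is a triple \((E,Q,\Phi)\): \(E\) is a rank \(3\) holomorphic bundle with trivial determinant, \(Q\) is a non-degenerate symmetric form on \(E\), and \(\Phi\in H^0(\textnormal{End}_0(E)\otimes K)\) is \(Q\)-symmetric. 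The only topological invariant of \((E,Q)\) as an \(\mathrm{SO}(3,\C)\)-bundle is \(\textnormal{w}_2(E)\in\Z_2\), and it coincides with the second Stiefel--Whitney class of the flat \(\rp2\)-bundle of the corresponding representation. Hence \(\mathcal M=\mathcal M^0\sqcup\mathcal M^1\) splits into two closed and open pieces, and it remains to count the components of each piece.

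The Morse function is \(f(E,Q,\Phi)=\|\Phi\|_{L^2}^2\), computed with the harmonic metric. First I will show that \(f\) is proper; this follows from the properness of the Hitchin fibration, since \(f\) is bounded on preimages of compact sets. Properness forces every connected component of \(\mathcal M\) to contain a local minimum of \(f\), so the count of components reduces to classifying local minima.

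Critical points are fixed points of the \(\mathrm{U}(1)\)-action \(\Phi\mapsto e^{i\vartheta}\Phi\), hence they are Hodge bundles. For a rank \(3\) orthogonal bundle there are few possible types:
\begin{itemize}
\item \(\Phi=0\);
\item \(E=L\oplus L^{-1}\oplus\mathcal O\) with \(\Phi\) a chain \(L^{-1}\to\mathcal O\to L\) twisted by \(K\).
\end{itemize}
The key step, and the main obstacle, is the second-variation computation. I will compute the index of the Hessian of \(f\) on each fixed component via the weight decomposition of the deformation complex under the circle action, following Hitchin. The aim is to show two things. First, a fixed point with \(\Phi\neq0\) is a local minimum only when the chain is maximal, that is \(L=K\) and \(E=K\oplus\mathcal O\oplus K^{-1}\); this is the Fuchsian locus. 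Second, at every other non-zero fixed point there are negative directions of weight \(>1\). If this computation gets delicate, I would first fall back on Hitchin's criterion that a fixed point is a local minimum if and only if the off-diagonal pieces of \(\Phi\) are isomorphisms.

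Given the classification, the minima are as follows.
\begin{itemize}
\item The locus \(\Phi=0\) is the moduli of polystable \(\mathrm{SO}(3)\)-bundles of fixed \(\textnormal{w}_2\). By Narasimhan--Seshadri and Ramanathan this locus is connected for each value of \(\textnormal{w}_2\), which yields exactly one component in \(\mathcal M^1\) and one in \(\mathcal M^0\) containing the trivial representation.
\item The Fuchsian locus is a single point, giving exactly one extra component, which lies in \(\mathcal M^0\).
\end{itemize}

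To identify this last component explicitly, I will use the Hitchin section. The map
\[
H^0(K^2)\oplus H^0(K^3)\longrightarrow\mathcal M^0,\qquad (q_2,q_3)\longmapsto\Big(K\oplus\mathcal O\oplus K^{-1},\,\Phi(q_2,q_3)\Big),
\]
with \(\Phi\) the companion-type Higgs field, is a continuous injective map into stable Higgs bundles with trivial automorphisms, hence into the smooth locus. It is proper, because it is a section of the proper Hitchin fibration, and it is open by invariance of domain since dimensions agree. Therefore its image is a whole connected component. Riemann--Roch gives the dimension \(\dim_\R=2\big((3g-3)+(5g-5)\big)=16g-16\), and this component contains the uniformising representation composed with the irreducible \(\pslr2\to\pslr3\).
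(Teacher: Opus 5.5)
The paper gives no proof of this statement. It is quoted from \cite{Hitchin1992}, and your outline (non-abelian Hodge theory, \(f=\|\Phi\|^2_{L^2}\), classification of local minima, Hitchin section) follows Hitchin's own route. The gap is in the step you yourself call key: your list of \(U(1)\)-fixed points is incomplete.

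For an \(\mathrm{SO}(3,\C)\)-Hodge bundle with \(\Phi\neq0\), the form \(Q\) pairs weight \(j\) with weight \(-j\). The weights are therefore \((-w,0,w)\) with \(E_0\cong\mathcal O\). Since \(\Phi\) raises the weight by one, either \(w=1\) (your chain) or \(w=\tfrac12\).

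The case \(w=\tfrac12\) is as follows:
\begin{itemize}
\item \(E=L\oplus L^{-1}\oplus\mathcal O\), and \(\Phi\) is the single \(Q\)-symmetric block \(L^{-1}\to L\otimes K\), given by \(0\neq\gamma\in H^0(L^2K)\), with \(\Phi|_{\mathcal O}=0\).
\item \(1-g\le\deg L\le -1\), from \(\gamma\neq0\) and polystability, and \(\textnormal{w}_2\equiv\deg L\).
\item These are the Higgs bundles of the reducible representations \(\rho_2\oplus 1\), where \(\rho_2\colon\pi_1(S)\to\slr{2}\) sits at a minimum of \(f\) on its own component.
\end{itemize}
When \(L^2\cong K^{-1}\), i.e.\ \(\rho_2\) is a lift of a Fuchsian representation, the only non-zero piece of \(\Phi\) is an isomorphism. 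So the criterion you plan to fall back on does not exclude these \(2^{2g}\) points. They would count as isolated local minima, and your argument would only bound the number of components by \(3+2^{2g}\), not \(3\).

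The test appropriate to the real form is different. A fixed point is a local minimum exactly when \(\mathrm{ad}\,\Phi\colon\mathfrak{so}(E)_k\to\mathfrak m(E)_{k+1}\otimes K\) is an isomorphism for every \(k>0\). Here \(\mathfrak m(E)\) is the bundle of \(Q\)-symmetric traceless endomorphisms, and \(\mathbb H^1\) of the weight-\(k\) subcomplexes with \(k>0\) gives the downward directions.

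\begin{itemize}
\item On your chain this map is multiplication by \(\beta\colon L\to L^2K\), which is an isomorphism iff \(LK\cong\mathcal O\). So with your orientation of the chain the Fuchsian point is \(L\cong K^{-1}\), not \(L=K\).
\item On the missing type, \(\mathfrak{so}(E)_{1/2}\cong L\) while \(\mathfrak m(E)_{3/2}=0\). This gives a downward piece \(H^1(L)\) of dimension \(g-1-\deg L\ge g\).
\end{itemize}

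There is a further subtlety: these are singular points of \(\mathcal M\). The centraliser of \(\rho_2\oplus1\) contains \(\{\mathrm{diag}(\lambda,\lambda,\lambda^{-2})\}\), and correspondingly \(\mathbb H^2\cong H^1(K)\neq0\), so a Hessian count alone does not suffice. You must also check two things:
\begin{itemize}
\item The deformations are unobstructed. This holds because \(\mathbb H^2\) lies entirely in weight \(k=-1\), so the \(U(1)\)-equivariant Kuranishi map vanishes on the positive-weight part.
\item The resulting nearby points have strictly smaller \(f\). They flow into the fixed point under \((E,\Phi)\mapsto(E,t\Phi)\) as \(t\to\infty\), and \(f\) increases along this flow.
\end{itemize}

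A smaller slip: properness of \(f\) does not follow from \(f\) being bounded on preimages of compact sets under the Hitchin map. You need the converse, that sublevel sets of \(f\) have bounded image in \(H^0(K^2)\oplus H^0(K^3)\), or simply Hitchin's direct proof of properness.
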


\noindent It is worth noting that the only topological invariant for a representation \(\rho \in \mathfrak X_3(\,S\,)\) is a characteristic class with values in \(\mathbb{Z}_2\), namely the second Stiefel-Whitney class \(\mathrm{w}_2(\,\rho\,)\). This invariant determines that two of the connected components have \(\mathrm{w}_2 = 0\), while only one has \(\mathrm{w}_2 \neq 0\). In the following, we denote by \(\mathcal{C}_1\) the unique component associated with representations \(\rho\) such that \(\mathrm{w}_2(\,\rho\,)\neq 1\). The other two components both satisfy the condition \(\mathrm{w}_2 = 0\). We shall denote by \(\mathcal{C}_0\) the components containing the trivial representation, and by \(\mathrm{Hit}_3(\,S\,)\), the component diffeomorphic to \(\mathbb{R}^{16g-16}\) which is nowadays called the \textit{Hitchin component}. We stress that, since they share the same vanishing invariant, the Stiefel-Whitney class does not allow one to distinguish the component \(\mathrm{Hit}_3(\,S\,)\) from \(\mathcal{C}_0\), see \cite[\S 10]{Hitchin1992} or \cite[Example 1.10]{collier2019sigma}.

\smallskip

\subsubsection{Hitchin component}\label{sssec:hitchincomponent} A key feature of the Hitchin component \(\mathrm{Hit}_3(\,S\,)\) is that it contains a copy of the Teichm\"uller space \(\mathcal{T}(\,S\,)\). Recall that the Teichm\"uller space \(\mathcal{T}(\,S\,)\) can be identified with the space of faithful and discrete representations \(\rho_o\colon\pi_1(\,S\,)\longrightarrow \mathrm{PSL}(2,\mathbb{R})\) called \textit{Fuchsian representations}. Every representation in \(\pslr2\), in particular Fuchsian once, can be systematically promoted to a representation into \(\pslr3\) by post-composing it with the unique irreducible representation \( \jmath\colon\mathrm{PSL}(2,\mathbb{R})\longrightarrow \pslr3\) (up to conjugation). The assignment \(\rho_o \mapsto \jmath \circ \rho_o\) defines a natural embedding of \(\mathcal{T}(\,S\,)\) as a closed sub-manifold inside \(\mathrm{Hit}_3(\,S\,)\). According to Labourie in \cite{Labourie2006Anosov}, we shall henceforth adopt the subsequent terminology: a representation in the Hitchin component is called \textit{\(3\)-Fuchsian representations}, if it is defined as the post-composition of Fuchsian representations into \(\pslr2\) with the irreducible representation.

\smallskip

\subsubsection{Holonomy map}\label{sssec:holomap} As previously discussed in \S\ref{ssec:devhol_pair}, every branched \(\rp2\)-structure determines an equivalence class of developing-holonomy pairs. The holonomy map is thus defined as the mapping that associates each branched structure with its corresponding holonomy representation, that is,
\begin{equation}\label{eq:holonomy-map}
    \hol \colon \mathcal{RP}^2(\,S\,) \longrightarrow \mathfrak X_3(\,S\,) = \mathrm{Hom}^+\Big(\,\pi_1(\,S\,), \pslr3\,\Big)\Big/\pslr3.
\end{equation}

\noindent It can be shown that this map is continuous with respect to the topologies on the domain and target. Under this perspective, one of the primary goals of the present paper is to determine the image of this map, \textit{i.e.}, to determine which representations arise as the holonomy of some branched \(\rp2\)-structure on a closed surface, see \S\ref{sec:geometrisation}. In what follows, we shall adopt this terminology.
\begin{defn}
    A representation \(\rho\colon\pi_1(\,S\,)\longrightarrow\pslr3\) is said to be \textit{geometrisable} whenever it lies in the image of the holonomy map, that is, if it arises as the holonomy of some, possibly branched, real projective structure.
\end{defn}

\noindent 
Our next goal is to show that this space is open by proving a version of the Ehresmann-Thurston principle. As a consequence of Theorem \ref{thma:geometrisation} (see also Theorem \ref{thm:geometrisation}) it readily follows that the holonomy map is automatically open. We now prove a strengthened version of this result, namely we show that the holonomy map is also open when restricted to strata. This provides further motivation to understand the realisation problem for branched structures with prescribed branching data. More precisely, we establish the following result whose proof is an adaption of \cite[Theorem 2.7]{Kapovich2020}.

\begin{prop}
    The space \(\hol\big(\,\mathcal{RP}^2(\,\mu\,)\,\big)\) is an open subset of \(\mathfrak{X}_3(\,S\,)\).
\end{prop}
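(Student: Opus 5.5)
The plan follows the Ehresmann--Thurston argument as adapted by Kapovich \cite[Theorem 2.7]{Kapovich2020}. Fix a structure \(\sigma\in\mathcal{RP}^2(\,\mu\,)\) with developing–holonomy pair \((\textnormal{dev}_\sigma,\rho)\) and branch set \(\mathcal B=\{p_1,\dots,p_k\}\) of orders \(m_1,\dots,m_k\). We must show that every \(\rho'\) sufficiently close to \(\rho\) in \(\mathrm{Hom}(\pi_1(S),\pslr3)\) is the holonomy of a structure \(\sigma'\) with the same signature \(\mu\). Since the quotient map to \(\mathfrak X_3(\,S\,)\) is open, this gives openness of \(\hol(\mathcal{RP}^2(\,\mu\,))\).

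The first step is to isolate the branch points. Choose pairwise disjoint closed discs \(\Delta_i\) around \(p_i\), each contained in a branched chart. On each \(\Delta_i\) we have a lifted branched chart \(\varphi_i\), a degree \(m_i+1\) branched cover onto a disc in \(\rp2\). Let \(S^\circ=S\setminus\bigcup_i \mathrm{int}(\Delta_i)\). Then \(\sigma|_{S^\circ}\) is an unbranched \(\rp2\)-structure on a compact surface with boundary. The key topological remark is that \(\pi_1(S)\) is a quotient of \(\pi_1(S^\circ)\), a free group. Hence \(\rho'\) determines a representation of \(\pi_1(S^\circ)\) close to \(\rho|_{\pi_1(S^\circ)}\), and it sends each boundary loop \(\partial\Delta_i\) to the identity, just as \(\rho\) does (compare Claim \ref{claim:holonomypuncturedsurface}).

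The second step is to deform the structure on \(S^\circ\). Apply the classical Ehresmann–Thurston principle to a slightly larger compact subsurface \(S^\circ_+\supset S^\circ\), whose collars extend a little into the discs away from the \(p_i\). Concretely, take a finite cover of \(S^\circ_+\) by charts with transition elements \(g_{\alpha\beta}\). Perturb these transitions compatibly with \(\rho'\), either by a partition-of-unity construction on the developing map or via a handle decomposition and straightening of \(\textnormal{dev}\) on a fundamental domain. This yields a \(\rho'\)-equivariant local diffeomorphism \(\textnormal{dev}'\) on the preimage of \(S^\circ_+\), \(C^1\)-close to \(\textnormal{dev}_\sigma\) on compact sets. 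Because \(\rho'(\partial\Delta_i)=\textbf{1}\), the restriction of \(\textnormal{dev}'\) to a lift of the annulus \(A_i=\Delta_i\cap S^\circ_+\) is a genuine (single-valued) immersion of the annulus into \(\rp2\). It is \(C^1\)-close to \(\varphi_i|_{A_i}\).

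The third step, which I expect to be the main obstacle, is to refill each disc with a branched chart of the same order \(m_i\). Here \(\varphi_i|_{A_i}\) is the map \(z\mapsto z^{m_i+1}\) in suitable coordinates, and \(\textnormal{dev}'|_{A_i}\) is a small \(C^1\)-perturbation of it. I would handle this by cutting and interpolating in the source rather than deforming the target. Pick a slightly smaller annulus \(A_i'\subset A_i\). Using a bump function in the radial variable, define a map on \(\Delta_i\) that agrees with \(\textnormal{dev}'\) near the outer boundary and with \(\varphi_i\) inside \(A_i'\); this is possible since the two are \(C^1\)-close on \(A_i\). \(C^1\)-closeness to the immersion \(\varphi_i|_{A_i}\) keeps the interpolation locally injective on the annulus, because immersions form an open set in the \(C^1\)-topology on compact sets. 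The resulting map on \(\Delta_i\) therefore has a single critical point \(p_i\), of local degree \(m_i+1\), since it equals \(\varphi_i\) near \(p_i\).

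Finally, we assemble. We get a \(\rho'\)-equivariant map \(\widetilde S\to\rp2\), locally injective outside the lift of \(\mathcal B\) and a branched cover of order \(m_i\) at each lift of \(p_i\). By Proposition \ref{prop:BPSasmaps} this defines a branched projective structure \(\sigma'\) with holonomy \(\rho'\) and signature exactly \(\mu\). The proposition asks for smooth or real analytic regularity; this is harmless, since the charts pulled back by the map are projective by construction and the local branched-cover form is all that Definition \ref{def:branchechart} requires. The delicate points to check are two. First, that the Ehresmann–Thurston perturbation can be made uniformly \(C^1\)-small on the compact annuli. Second, that the gluing is equivariant; this holds by performing it on one lift of each disc and extending by \(\rho'\).
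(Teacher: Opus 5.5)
Your argument is correct, but it is organised differently from the paper's.

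\textbf{The paper's route.} The paper never separates the branch points from the rest of the surface. It fixes a triangulation of \(S\) whose edges are projective segments, whose simplices lie in charts, and whose vertex set contains all branch points. On a compact neighbourhood of a fundamental domain in \(\widetilde S\), it then builds \(\rho_k\)-equivariant maps that send each \(2\)-simplex homeomorphically onto a projective \(2\)-simplex and converge uniformly to \(\textnormal{dev}_\sigma\). Local behaviour is checked case by case:
\begin{itemize}
\item interiors of simplices are automatic;
\item along an edge, the two adjacent simplices stay on opposite sides by uniform convergence, so there is no folding;
\item at a vertex, the local degree \(m(v)+1\) persists because the degree of the developed image of a small loop in the star is stable under uniform convergence.
\end{itemize}
So branch points are treated exactly like every other vertex, and preservation of the signature comes from a degree count.

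\textbf{Your route.} You localise the branching instead. You excise discs around the branch points and apply the classical unbranched Ehresmann--Thurston principle to the complement as a black box. You use that \(\rho'\), being a representation of \(\pi_1(S)\), automatically kills the boundary loops, so the deformed developing map is single-valued on each collar. You then splice the original branched chart back in by a \(C^1\) interpolation.

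\textbf{Comparison.}
\begin{itemize}
\item What your approach buys: the local model at each \(p_i\) is literally unchanged, so the signature is preserved with no degree or no-folding argument. Nothing specific to projective simplices or convexity in affine charts is used, so the same proof works verbatim for branched \((G,X)\)-structures in general.
\item What it costs: you need \(C^1\)-control of the deformed developing map on the collars, since you rely on openness of immersions. The paper only needs uniform convergence. This \(C^1\)-control is a standard output of the usual proofs of Ehresmann--Thurston, but it is an extra input you should cite explicitly.
\item The paper's argument is self-contained, essentially reproving Ehresmann--Thurston on the fly, and produces piecewise-projective developing maps; yours produces smooth ones.
\item In neither case is the resulting map real analytic. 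So, as you note, both proofs should appeal to the local-homeomorphism/branched-cover description of the charts rather than to the real-analytic formulation of Proposition~\ref{prop:BPSasmaps}.
\end{itemize}
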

\begin{proof}
    Let \((S,\sigma) \in \mathcal{RP}^2(\,\mu\,)\) be a branched projective structure with developing map \(\textnormal{dev}_{\sigma}\colon\widetilde{S} \longrightarrow \rp2\) and holonomy \(\rho\colon \pi_1(\,S\,) \longrightarrow\pslr3\). Let \(\rho_k\colon \pi_1(\,S\,) \longrightarrow\pslr3\) be a sequence of representations converging to \(\rho\) in \(\mathfrak{X}_3(\,S\,)\). In order to prove the desired result, we need to find a sequence of branched projective structures \((S,\sigma_k) \in \mathcal{RP}^2(\,\mu\,)\) with holonomy representations \(\rho_k\) such that \(\lim \sigma_k = \sigma\) for \(k\) large enough. For this purpose, choose a triangulation \(\mathcal T\) of \(S\) such that each edge is a segment (\textit{i.e.}, it is projective equivalent to a segment in a projective line in \(\rp2\)), with respect to \(\sigma\), and each simplex is contained in a valid projective coordinate neighbourhood. We may assume that the set of vertices of \(\mathcal T\) contains all the branch points, say \(\{p_1, \dots, p_n\}\) of \(\sigma\). Lift this triangulation to a \(\pi_1(\,S\,)\)-invariant triangulation \(\widetilde{\mathcal T}\) of \(\widetilde{S}\). Pick a finite collection, say \(\Delta_1, \dots, \Delta_m\) of 2-simplexes in \(\widetilde{\mathcal T}\), representing a fundamental domain, say \(\mathcal D=\bigcup_j \Delta_j\), for the action of \(\pi_1(\,S\,)\). Let \(\gamma_i \in \pi_1(\,S\,)\) be the elements of the deck-transformation group such that:
    \begin{equation}
        \gamma_i\left(\,\mathcal D\,\right)\, \cap \,\mathcal D \neq \emptyset.
    \end{equation}
    Let \(\mathcal C \subset \widetilde{S}\) be a compact subset whose interior contains both \(\mathcal D\) and its images under the \(\gamma_i\)'s. For each perturbed representation \(\rho_k\), we realise a continuous, \(\rho_k\)-equivariant mapping \(f_k\colon\mathcal C \longrightarrow \rp2\) such that:
    \begin{enumerate}
        \item[1.] \(f_k\) maps each 2-simplex homeomorphically to a projective 2-simplex in \(\rp2\), and 
        \item[2.] the sequence \(f_k\) converges to \(\textnormal{dev}_\sigma|_{\mathcal C}\) uniformly on compact subsets.
    \end{enumerate}
    We now extend each function \(f_k\) just defined to a global \(\rho_k\)-equivariant mapping \(\textnormal{dev}_{\sigma_k}\colon \widetilde{S} \longrightarrow\rp2\) via the representation \(\rho_k\). It remains to show that for sufficiently large \(k\), each mapping \(\textnormal{dev}_{\sigma_k}\) is a local homeomorphism away from the designated branch points, and behaves as a local ramified covering of degree \(m(\,v\,)+1\) at each vertex \(v \in \mathcal D\), where \(m(\,v\,)\ge1\) if and only if \(v\in\{\,p_1,\dots,p_n\,\}\). It suffices to check this condition for points within the fundamental domain \(\mathcal D\). We consider case by case.
    \begin{itemize}
        \item[1.] If \(v \in \mathrm{int}(\,\mathcal C\,)\) belongs to the interior of a 2-simplex, the claim follows immediately since each \(f_k\) is constructed to be a homeomorphism on each individual simplex.
        \smallskip
        \item[2.] If \(v\) belongs to the interior of a common geodesic arc \(\delta\) of two \(2\)-simplexes, say \(\Delta_i\) and \(\Delta_j\). Since the unperturbed developing map \(\textnormal{dev}_\sigma\) is a local homeomorphism there, \(\textnormal{dev}_\sigma(\,\Delta_i\,)\) and \(\textnormal{dev}_\sigma(\,\Delta_j\,)\) lie (locally) on opposite sides of the projective segment \(\textnormal{dev}_\sigma(\,\delta\,) \subset \rp2\). By uniform convergence, the same geometric configuration holds for \(\textnormal{dev}_{\sigma_k}\) when \(k\) is sufficiently large. Thus, \(\textnormal{dev}_\sigma\) does not fold along the arc \(\delta\) and remains a local homeomorphism at \(v\).
        \smallskip
        \item[3.] Lastly, if \(v\) is a vertex of the triangulation corresponding to a branch point \(p_i \in \mathcal D\), the local degree of the original developing map \(\textnormal{dev}_\sigma\) at \(v\) equals \(m(\,v\,) + 1\). Consider a small loop \(\gamma\) surrounding \(v\) within its star. Because the triangulation \(\mathcal{T}\) is fixed and \(\textnormal{dev}_{\sigma_k}\) is a local homeomorphism on all faces and internal edges (by cases (1) and (2)), the mapping degree of \(\textnormal{dev}_{\sigma_k}\) along \(\gamma\) is well-defined. By the homotopy invariance of the degree under uniform convergence, this degree must coincide with that of \(\textnormal{dev}_\sigma\) for sufficiently large \(k\). Since no folding can occur inside the star of \(v\), \(\textnormal{dev}_{\sigma_k}\) is forced to remain a branched covering at \(v\) of the exact same local degree \(m(\,v\,) + 1\).
    \end{itemize}
    The \(\rho_k\)-equivariance of the mappings \(\textnormal{dev}_{\sigma_k}\) implies that they converge to \(\textnormal{dev}_{\sigma}\) uniformly on compact subsets of \(\widetilde{S}\), yielding a well-defined sequence of branched structures \((S,\sigma_k) \in \mathcal{RP}^2(\,\mu\,)\).
\end{proof}

\noindent In the following sections, see \S\ref{sssec:holopropconvexstructures} and \S\ref{ssec:examples}, we shall recall several examples of branched projective structures and discuss their holonomy representations case by case. It will turn out that the holonomy map has been investigated from various perspectives depending on the specific geometric structures considered; with the present work, we aim to provide a unified picture.


\smallskip

\subsection{Convex domains and convex branched projective structures}\label{ssec:convexstructures} Let \(S\) be a closed orientable surface of genus \(g\ge2\). We now recall an important class of real projective structures, \textit{i.e.} convex projective structures. According to Goldman, see \cite[\S3]{goldman1990}, a domain \(\Omega\subset\rp2\) is \textit{convex} if there exists a projective line, say \(\ell\subset\rp2\), such that \(\Omega\,\cap\,\ell=\varnothing\) and \(\Omega\) is a convex subset of the affine plane \(\rp2\setminus\ell\). It is worth recalling that, according to this definition, the real projective plane is \textit{not} convex, while the affine plane \(\R^2\) is convex. The crucial property of convex structures is the existence of a well-defined notion of a geodesic. For any pair of points in \(\Omega\) there exists a unique projective line segment passing through them.

\smallskip

\noindent Convex structures are particularly special because they are uniformisable. More specifically, a \(\rp2\)-structure, say \(\sigma\), on a closed surface \(S\) is convex if and only if it arises as the quotient \((S,\sigma) \cong \Gamma \backslash \Omega\), where \(\Omega \subset \rp2\) is a convex domain and \(\Gamma \subset \pslr3\) is a subgroup that preserves \(\Omega\) and acts freely and properly discontinuously. In other words, they constitute the projective analogue of the well-known hyperbolic structures on closed surfaces, see also \S\ref{sssec:bhs}. This is equivalent to requiring that the developing map of \(\sigma\) is a diffeomorphism onto a convex domain in \(\rp2\). With the same spirit, we may extend the notion of convex structures as follows.

\begin{defn}\label{defn:branchedconvex}
    A \textit{branched} \(\rp2\)-structure \(\sigma\) on a surface \(S\) is said to be \textit{convex} if its developing map is a branched covering onto a convex domain in \(\rp2\).
\end{defn}

\noindent Convex domains may be further distinguished by the additional property of being \textit{properly convex}. More specifically, a convex domain \(\Omega\) is said to be properly convex if it is bounded in some affine chart. In turn, a projective structure \(\sigma\) on \(S\) is said to be properly convex if \((S,\sigma)\cong\Gamma\backslash\Omega\), where \(\Omega\) is a \(\Gamma\)-invariant properly convex domain in \(\rp2\) where \(\Gamma\) is a discrete group acting freely and properly discontinuously. Once again, complete hyperbolic structures on a closed surface are examples of such structures, although they are far from providing the generic case. We extend the definition to branched structures as follows.

\begin{defn}\label{defn:branchedproperlyconvex}
    A \textit{branched} \(\rp2\)-structure \(\sigma\) on a surface \(S\) is said to be \textit{properly convex} if its developing map is a branched covering onto a properly convex domain in \(\rp2\).
\end{defn}

\smallskip

\noindent According to Choi and Goldman \cite{ChoiGoldman1993Convex}, a real projective structure on a closed surface \(S\) of genus \(g \ge 2\) is properly convex if and only if its holonomy representation lies in the Hitchin component. In the same spirit, in \S\ref{ssec:prop convex BPS} we provide a characterisation of properly convex branched projective structures on closed surfaces of genus \(g \ge 2\). We conclude the present section with some remarks regarding convex projective structures.

\subsubsection{Holonomy representation of properly convex projective structures}\label{sssec:holopropconvexstructures} We shall denote by \(\mathcal{CRP}^2(\,S\,)\) the deformation space of properly convex projective structure on \(S\). In his paper \cite{goldman1990}, Goldman showed that such a deformation space is a smooth manifold of dimension \(16g-16\), thus implying that there exist many non-trivial ways to obtain such structures by deforming hyperbolic ones. In their follow-up \cite{ChoiGoldman1993Convex}, Choi--Goldman show that \(\mathcal{CRP}^2(\,S\,)\) is diffeomorphic to the Hitchin component \(\mathrm{Hit}_3(\,S\,)\) of \(\mathfrak X_3(\,S\,)\), see \S\ref{sec: representation space}. As a direct consequence, every group \(\Gamma\) uniformising a properly convex projective structure on \(S\) is isomorphic to \(\rho\big(\,\pi_1(\,S\,)\,\big)\) for some \(\rho\in\mathrm{Hit}_3(\,S\,)\). 

\smallskip

\subsubsection{Singular Blaschke metric}\label{sssec:blaschkemetric} Another reason that makes properly convex projective structures pivotal is that, by a profound theorems of Cheng--Yau in \cite{cheng1977, cheng1986}, see also \cite{gigena1978, li1990, li1992, loftin2001, sasaki1980}, the properly convex domain \(\Omega\) always admits a \(\Gamma\)-invariant Riemannian metric \(h\), known as the \textit{Blaschke metric}, which consequently descends to \(S\). In the same fashion, every properly convex branched projective structure admits a \textit{singular} Blaschke metric. Indeed, given such a structure, say \((S,\sigma)\), its developing map takes values in some properly convex domain \(\Omega\) by definition. The pullback of the Blaschke metric on this domain yields a singular Riemannian metric \(h\) on \(\widetilde{S}\) that makes the covering projection a local isometry away from the singularities. By construction, \(h\) is invariant under the action of the fundamental group and hence descends to a singular metric, say \(h_\sigma\) on \(S\). See \S\ref{sec: examples branched covering blaschke} for more details.

\smallskip

\subsection{A medley of examples}\label{ssec:examples} Real projective \(\rp2\)-structures are highly ubiquitous. Standard examples arise naturally from classical hyperbolic and Euclidean geometries, whose constant-curvature metrics embed isometrically into \(\rp2\). These classical cases, however, represent only a small part of the theory, as the vast majority of \(\rp2\)-structures are not induced by locally homogeneous Riemannian metrics.  To illustrate their diverse geometric natures and their deep connections to other fields, in the present section we collect examples ranging from branched hyperbolic metrics to affine structures, in particular translation surfaces, and spherical geometries. Given the main purpose of the present work, along the way we also recall the current status of the realisation problem for this kind of structures.

\subsubsection{Branched hyperbolic structures}\label{sssec:bhs} As already alluded in \S\ref{ssec:convexstructures}, hyperbolic structures are classic examples of projective structures: specifically, they are convex projective structures. This happens because a hyperbolic structure is locally modelled on the geometry \(\big(\,\DD, \psu\,\big)\), where \(\DD\) denotes the Poincar\'e disc and \(\psu\cong\pslr2\) its group of orientation-preserving isometries, which naturally embeds into the real projective geometry \(\big(\,\rp2, \pslr3\,\big)\) in the canonical way. Consequently, branched hyperbolic structures provide concrete examples of branched \(\rp2\)-structures on closed surfaces. An explicit example of branched hyperbolic structure is as follows. In the Poincaré disc \(\DD\), consider a regular \(4g\)-gon centred at the origin, so that all its vertices lie at the same distance from the centre. A standard argument in hyperbolic geometry asserts that the internal angles, and consequently their sum, depend exclusively on the distance of the vertices from the centre. In particular, there exists a unique distance for which the sum of the internal angles equals \(2\pi(m+1)\), where \(m\) is an integer satisfying \(0 \le m \le 2g-3\). By identifying opposite sides in the standard manner, the resulting surface has genus \(g\), by Radó's Theorem \cite{Rado1924}, and carries a branched hyperbolic structure with a single branch point of cone angle \(2\pi(m+1)\) by design. For \(m=0\), we notice that this construction yields a genuine hyperbolic structure. 

\smallskip

\noindent Such structures are far from rigid. Indeed, by the Ehresmann–Thurston principle, small deformations of the holonomy representation of such a structure remain the holonomy of a branched hyperbolic structure. It is well known that representations arising as the holonomy of branched hyperbolic structures constitute an open subset in the character variety of \(\pslr2\), a space whose connected components has been classified by Goldman in \cite{goldman1988}. Furthermore, every representation into \(\pslr2\) defines a topological invariant known as the Euler number, denoted by \(\textnormal{eu}(\,\rho\,)\). If \(\rho\) arises as the holonomy of a branched hyperbolic structure with a single branch point of order \(m\), then its Euler number is equal to \(\textnormal{eu}(\,\rho\,)=2-2g+m\). By regarding any of these structures as a branched \(\rp2\)-structure, and thus viewing its holonomy, say \(\rho\) as a representation into \(\pslr3\), its Stiefel–Whitney invariant is equal to \(m \pmod 2\) (more generally it is equal to the parity of the Euler number, \textit{i.e.}, \(\textnormal{eu}(\,\rho\,) \pmod 2\)). The problem of determining which representations arise as the holonomy of some branched projective structure has been considered by several authors in recent years from different perspectives, see \cite{faraco2021}, \cite{faracomaret2025}, \cite{mathews2011}, \cite{mathews2012}. In all of these works, the approach adopted by the authors has a topological flavour since the structures are realised geometrically. On the other hand, as a consequence of the main result of \cite{marche2016}, a dynamical argument implies that, for surfaces of genus two, almost every representation arises as the holonomy representation of some branched hyperbolic structure if the Euler number is non-zero. The same statement is conjectured to hold for surfaces of any genus \(g\ge2\). By contrast, an argument via the Gauss–Bonnet theorem shows that representations into \(\pslr2\) with vanishing Euler number never arise as the holonomy of a branched hyperbolic structure.

\smallskip

\noindent In contrast to the case of genuine, \textit{i.e.}, unbranched, hyperbolic structures on surfaces, branched hyperbolic structures admit much more flexibility, as they can be deformed in the moduli space without altering the holonomy. The underlying idea is that, since a local chart around a branch point is a branched covering map onto some open disc in the model space, it is possible to locally deform the branched cover by changing the branch value. Because the latter branched chart is not compatible with the former one, the resulting structure is different. On the other hand, this deformation takes place within a simply connected open neighbourhood of the branch point, and thus it does not affect the holonomy representation. This surgical method of deforming structures has been extensively utilised in the literature to study the moduli spaces of branched structures, see \textit{e.g.}, \cite{calsetall2014}. In particular, the same surgery applies to the structures that will be discussed in the subsequent sections.

\smallskip

\subsubsection{Elliptic structures}\label{sssec:elliptic} On the other side of the spectrum, opposite to branched hyperbolic structures, we find the elliptic structures, that is, geometric structures locally modelled on \((\rp2, \textnormal{PSO}(3,\mathbb R))\). These structures are intimately related to licensed spherical structures, that is, structures modelled on \((\sph^2, \textnormal{SO}(3,\mathbb R))\). One direction is clear: a spherical structure yields an elliptic one by post-composing the developing map with the natural projection \(\sph^2\to\rp2\). Conversely, the lift of the developing map of any \(\rp2\) structure to \(\sph^2\) yields a spherical structure. As a direct consequence of the Gauss-Bonnet Theorem, a spherical structure on a surface \(S\) with positive genus must have branched points. Spherical structures have been the subject of investigations in recent years, see \cite{EremenkoMondelloPanov2023, FaracoGupta2025, MondelloPanov2016, MondelloPanov2019, MondelloPanov2024}, thus providing further motivations for us to extend the notion to (real) branched projective structures.

\smallskip

\subsubsection{Real affine structures}\label{sssec:realaffine} Between elliptic (spherical) and hyperbolic geometry, we find flat geometry, with Euclidean geometry being a special case that completes the trichotomy of geometries induced by (possibly singular) Riemannian metrics in dimension two. In this case, the model space is the plane \(\R^2\) along with its group of real affine isometries, here denoted as \(\affr\). To transition to the projective setting, the affine group \(\affr\) can be embedded as a closed subgroup of \(\textnormal{PSL}(3,\mathbb{R})\) by viewing the affine plane as the standard open chart inside \(\rp2\). Explicitly, an affine transformation acting on \(\mathbb{R}^2\) is defined by an invertible matrix \(A \in \textnormal{GL}(2,\mathbb{R})\) and a translation vector \(v \in \mathbb{R}^2\), that is, \( x \mapsto Ax + v\). This transformation is mapped to a projective class in \(\textnormal{PSL}(3,\mathbb{R})\) via the injective homomorphism: 
\begin{equation}
    \imath \colon \affr \hookrightarrow \textnormal{PSL}(3,\mathbb{R}), \quad (A,v) \longmapsto  \frac{1}{\sqrt[3]{\,\det(\,A\,)}}\,\begin{pmatrix} A & v \\ 0 & 1 \end{pmatrix} 
\end{equation}

\noindent Geometrically, this embedding identifies \(\affr\) with the subgroup of projective transformations that globally stabilises the line at infinity, which is defined in homogeneous coordinates by the equation \(z=0\). Consequently, any flat geometric structure with real affine holonomy naturally is a projective structure, allowing one to study affine developments and their degenerations directly through the perspective of projective geometry. Unlike the case of complex affine structures, real affine structures on surfaces have been historically less studied. In~\cite{Kuiper1953}, Kuiper described the geodesically complete affine structures on the two-torus \(\mathbb T^2\), demonstrating that they fall into two distinct types: either Euclidean structures (corresponding to flat Riemannian structures) or alternative structures defined by flat, non-Riemannian connections. The classification of affine structures on surfaces was later completed by Nagano--Yagi~\cite{NaganoYagi1974} and Arrowsmith--Furness~\cite{ArrowsmithFurness1975}. Notably, the deformation space of all affine structures on \(\mathbb T^2\) fails to be Hausdorff. On the other hand, Baues~\cite{Baues1999, Baues2000, Baues2010} showed that the subspace of \textit{complete} affine structures on \(\mathbb T^2\) is homeomorphic to \(\mathbb{R}^2\). Along the way, he shows that every faithful and discrete representation \(\pi_1(\,\mathbb T^2\,)\longrightarrow\affr\) arises as the holonomy of some complete affine structure on the torus; hence of some real projective structure. In a recent work, Goldman~\cite{Goldman2025} established that this deformation space of complete affine structures on a torus can be identified geometrically with a cone over a twisted cubic curve in \(\rp3\). Finally, although real affine structures on surfaces of genus greater than one have received little attention, the special case of complex affine structures has, by contrast, been extensively studied, see \textit{e.g.} \cite{Ghaz}.

\smallskip

\subsubsection{Euclidean structures and Translation surfaces}\label{sssec:transurfaces} By further restricting the isometry group of the Euclidean plane, we obtain Euclidean structures, among which we find the well-known translation surfaces. Since orientation-preserving Euclidean isometries---specifically translations---preserve the complex structure on \(\R^2\), these have often been regarded as examples of branched \textit{complex} projective structures. However, by forgetting the complex structure, they may just as well be viewed as branched real projective structures. The beginning of the investigation into the question of which representations into \(\textnormal{Isom}^+(\,\mathbb E^2\,)\) arise as the holonomy of some, necessarily branched, structures dates back to Haupt, see \cite{OH}. Recently, this work was completed first by Ghazouani in \cite{Ghaz} and subsequently refined by the works of \cite{BJJP, fils}.

\smallskip

\subsubsection{Pull-back of convex $\RP^2$-structures via branched coverings}\label{sec: examples branched covering}  Let \(S\) and \(\Sigma\) be closed orientable surfaces of genus \(g,h\ge2\) and let \(f\colon S\longrightarrow \Sigma\) be a branched covering map of degree \(d\). Let \(\mathcal P\subset \Sigma\) be the set of ramification values and let \(\mathcal B=f^{-1}(\,\mathcal P\,)\) be the set of ramification points of \(f\). Notice that \(f\) restricts to a genuine covering map of punctured surfaces \(f\vert_{S\setminus \mathcal B}\colon S\setminus \mathcal B\to \Sigma\setminus \mathcal{P}\) of the same degree. Every \(\rp2\)-structure on \(\Sigma\) naturally pulls back to a branched \(\rp2\)-structure on \(S\) with branch points at \(\mathcal B\). In particular, if \(\sigma\) is a (properly) convex projective structure on \(\Sigma\) then \(f^*\sigma\) is a (properly) convex branched \(\rp2\)-structure on \(S\). In fact, let \(\sigma\) be a \(\rp2\)-structure on \(\Sigma\) and let \(\mathcal A_{\sigma}=\{(U_\alpha,\varphi_\alpha)\}_{\alpha\in I}\) be the corresponding maximal atlas. We thus define a maximal atlas of charts, say \(\mathcal A_{f^*\sigma}=\{(V_\alpha,\psi_\alpha)\}\), on \(S\) in the usual way, \textit{i.e.}, by setting \(V_\alpha=f^{-1}(\,U_\alpha\,)\) and \(\psi_\alpha=\varphi_\alpha\circ f|_{U_\alpha}\). According to Definition \ref{def:branchedprojestructure}, \(\mathcal A_{f^*\sigma}\) defines a branched \(\rp2\)-structure on \(S\) with branch points at \(\mathcal B\). According to Remark \ref{rmk:branchingchain}, if \(\mathcal B=\{p_1,\dots,p_k\}\), then the branching chain is given by
\begin{equation}
    D=\sum_{i=1}^n\,\big(\deg_{p_i}(\,f\,)-1\big)\,p_i,
\end{equation}
where \(\deg_{p_i}(\,f\,)\) is the local degree of \(f\) at \(p_i\). Recall from \S\ref{ssec:devhol_pair} that every (possibly branched) projective structure yields a developing-holonomy pair. For structures obtained via pullback along a (possibly branched) covering map \(f\), the developing-holonomy pair of the pullback structure depends on the choice of a lift of \(f\). In other words, let \((\mathrm{dev}_{\sigma},\rho_\sigma)\) be the developing-holonomy pair of \(\sigma\) on \(\Sigma\) and let \(\widetilde f\) be any lift of \(f\). Then, the developing map \(\mathrm{dev}_{f^*\sigma}\) of \(f^*\sigma\) on \(S\) factors through \(\widetilde f\), that is the identity 
\begin{equation}
    \mathrm{dev}_{f^*\sigma} = \mathrm{dev}_\sigma\circ \widetilde f.
\end{equation}
Notice that \(\mathrm{dev}_{f^*\sigma}\) does not depend on the choice of the lift. In principle, two different lifts yield developing maps that differ by some deck transformation on both the source and target; the latter, however, is naturally absorbed by \(\mathrm{dev}_{\sigma}\). As a consequence, two different lifts of \(f\) yield developing maps that differ only by pre-composition with some deck transformation. Moreover, the holonomy representation of \(f^*\sigma\) on \(S\) factors through \(f_*\), the homomorphism induced by \(f\) at the level of the fundamental groups. In fact, \(\widetilde f\) satisfies the an equivariance property
\begin{equation}
    \widetilde f(\,\gamma\cdot p\,)=f_*(\,\gamma\,)\cdot\widetilde f(\,p\,)
\end{equation}
for every \(\gamma\in\pi_1(\,S\,)\) and every \(p\in{\widetilde{S}}\). As a consequence the developing map \(\mathrm{dev}_{f^*\sigma}\) is equivariant with respect to the representation \(\rho_{f^*\sigma}=\rho_\sigma\circ f_*\). In fact,
\begin{align}
    \mathrm{dev}_{f^*\sigma}(\gamma\cdot p) = \mathrm{dev}_{\sigma}\big(\widetilde{f}(\gamma\cdot p)\big) &= \mathrm{dev}_\sigma\big(f_*(\gamma)\cdot \widetilde{f}(p)\big) \nonumber \\
    &= \rho_\sigma\big(f_*(\gamma)\big)\cdot\mathrm{dev}_\sigma\big(\widetilde{f}(p)\big) = \rho_{f^*\sigma}(\gamma)\cdot\mathrm{dev}_{f^*\sigma}(p).
\end{align}

\smallskip

\noindent The pair \((\,\mathrm{dev}_{f^*\sigma}, \rho_\sigma\circ f_*\,)\) is precisely the one associated to the branched \(\rp2\)-structure realised from the atlas \(\mathcal A_{f^*\sigma}\), see Proposition \ref{prop:BPSasmaps}. By definition, it readily follows that \(\mathrm{dev}_{f^*\sigma}\) is a local diffeomorphism away from \(\mathcal B\) and its image is still contained in the convex subsets \(\Omega\) by design. We finally notice that \(\rho_{f^*\sigma}\) is injective if and only if \(f\) is a genuine covering map. By the Simple Loop Conjecture proved by Gabai in \cite{gabai1985}, the map \(f_*\) is either injective or there exists a simple closed curve in the kernel \(\ker(f_*)\) and the injectivity holds when \(f\) is a genuine covering map. In particular the representation \(\rho_{f^*\sigma}\) never lies in the Hitchin component of \(\mathfrak X_3(\,S\,)\) when \(f\) is a branched covering map.

\begin{rmk}[The trivial representation is realisable]\label{rmk:trivial holo realisable}
    It is worth noting that the previous discussion leads to the following peculiar conclusion. Let \(\Sigma\) be either the sphere \(\sph^2\) or the projective plane \(\rp2\), and let \(S\) be a closed oriented surface of arbitrary genus. If \(f\colon S\longrightarrow \Sigma\) is a branched covering map (notice that we do not impose any conditions on the branching data). Then the natural \(\rp2\)-structure on \(\Sigma\) pulls back to a branched \(\rp2\)-structure on \(S\). In particular, the latter always has trivial holonomy, thus showing that the trivial representation is always realisable.
\end{rmk}

\smallskip

\subsubsection{Realising singular Blaschke metrics}\label{sec: examples branched covering blaschke} 
By adopting the same notation used in \S\ref{sec: examples branched covering}, assume \((\Sigma,\sigma)\) is a properly convex projective structure and let \(h_\sigma\) be Blaschke metric on \(\Sigma\) whose existence has been already discussed in \S\ref{sssec:blaschkemetric}. Let \(f\colon S\longrightarrow\Sigma\) be a branch covering map and let \(\mathcal B\) be the set of ramification points of \(f\). Finally, let \(h_{f^*\sigma}=f^*h_\sigma\) be the pull back metric on \(S\). We aim to show that \(h_{f^*\sigma}\) is a singular metric with integral conical singularities at the points of \(\mathcal B\). For this purpose, let \(p\in\mathcal{B}\) and let \(q=f(\,p\,)\in\mathcal{P}\), and consider open neighbourhoods \(V\) and \(U\) of \(p\) and \(q\), respectively. Let \(z\) be a judicious local coordinate such that \(f\) is of the form \(w=f(\,z\,)=z^{m+1}\) with \(m\ge 1\). Recall that there exists a smooth function \(u\colon U\longrightarrow \R\) such that the Riemannian metric \(h_\sigma\) is of the form \(2e^{2u}|\,\mathrm{d}w\,|^2\) on \(U\). As a consequence, we have that
\begin{equation}
    h_{f^*\sigma}|_V=(\,f^*h_\sigma\,)|_V=2e^{2u(\,z^{m+1}\,)}|\,\mathrm dz^{m+1}\,|^2=2(m+1)^2 e^{2u(\,z^{m+1}\,)}|\,z\,|^{2m}|\,\mathrm dz\,|^2 .
\end{equation}
In other words, the point \(p \in \mathcal{B}\) corresponds to a conical singularity of the metric \(h_{f^*\sigma}\) of order \(m\). Finally, outside the set \(\mathcal{B}\), the tensor \(h_{f^*\sigma}\) is a smooth Riemannian metric in the classical sense.

\begin{rmk}
    The main insight from the Example in \S\ref{sec: examples branched covering} is that, starting from a properly convex \(\rp2\)-structure on \(\Sigma\), one can construct a branched one on another surface \(S\) (still of genus \(g\ge2\)), equipped with a metric having integral conical singularities. Moreover, the associated holonomy representation is not Hitchin and the developing map fails to be a local diffeomorphism onto a convex domain, despite its image still being contained in \(\Omega\). As a consequence, the resulting structure is, a priori, richer (hence stronger) than that of Definition \ref{def:branchedprojestructure}. 
\end{rmk}


\smallskip

\section{Geometrisation of representations}\label{sec:geometrisation}

\noindent In the present section, we aim to investigate the holonomy map of branched \(\mathbb{R}\mathbb{P}^2\)-structures. Specifically, our goal is to determine whether a given representation arises as the holonomy of some branched structure. In the realisation process, it will emerge that branched projective structures are flexible in the sense that the a representation does not encode enough geometric information to determine a unique structure. Moreover, we notice that, as soon as we drop the condition for a structure to be strictly convex, even in the unbranched framework a holonomy representation does not determined a structure uniquely. This phenomenon has been already observe by Goldman in \cite[Corollary, pag. 793]{goldman1990} and more recently by Fujii in \cite{fujii2026grafting}. The purpose of this section is to establish Theorem \ref{thma:geometrisation}, which we restate here for convenience.

\begin{thm}\label{thm:geometrisation}
    Every representation \(\rho\colon\pi_1(\,S\,)\longrightarrow\pslr3\) arises as the holonomy of some branched projective structure on \(S\).
\end{thm}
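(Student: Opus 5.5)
By Proposition \ref{prop:BPSasmaps}, it suffices to produce a \(\rho\)-equivariant map \(\textnormal{dev}\colon\widetilde S\longrightarrow\rp2\) that is real analytic, or at least piecewise projective with the correct local models, and locally injective away from a discrete \(\pi_1(S)\)-invariant set. The trivial representation is already settled by Remark \ref{rmk:trivial holo realisable}, so we assume \(\rho\) is non-trivial. Fix a standard presentation \(\pi_1(S)=\langle a_1,b_1,\dots,a_g,b_g\mid\prod_i[a_i,b_i]\rangle\) and the associated \(4g\)-gon \(\mathcal D\subset\widetilde S\) with side pairings given by the \(a_i,b_i\). The plan is to build a continuous map \(f\colon\mathcal D\longrightarrow\rp2\) satisfying three conditions:
\begin{itemize}
    \item[(i)] paired sides are mapped compatibly, that is, \(f\circ a_i=\rho(a_i)\circ f\) on the corresponding side, and similarly for \(b_i\);
    \item[(ii)] \(f\) is a local homeomorphism on the interior of \(\mathcal D\) and on the interiors of the sides, in the sense that the images of the two adjacent copies of \(\mathcal D\) lie on opposite sides of each edge;
    \item[(iii)] at the single vertex class, the total angle wound around its image is \(2\pi(m+1)\) for some \(m\ge0\).
\end{itemize}
Extending \(f\) equivariantly then gives the developing map. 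Finally, we smooth it to a real analytic one, or simply observe directly that the atlas it induces satisfies Definition \ref{def:branchedprojestructure}.

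To construct \(f\), we follow the decomposition described in \S\ref{sssec: geo process}: \(g\) handles \(H_i\), each a one-holed torus carrying \(a_i,b_i\), together with a \(g\)-holed sphere (the core). For each handle, choose a point \(x_i\in\rp2\) in general position with respect to \(\rho(a_i)\), \(\rho(b_i)\) and their products. By general position we mean that \(x_i\) avoids the finitely many fixed points and invariant lines of these elements, which is possible since those sets form a proper algebraic subset. Draw the pentagon with vertices
\[
x_i,\quad \rho(a_i)x_i,\quad \rho(a_ib_i)x_i,\quad \rho(a_ib_ia_i^{-1})x_i,\quad \rho([a_i,b_i])x_i
\]
joined by projective segments. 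There are two choices of segment for each pair of points, and we choose them so that the resulting closed polygon is an immersed membrane, in Hejhal's sense, which can be unfolded to an embedded topological pentagon. The last side of this pentagon is the segment from \(x_i\) to \(\rho([a_i,b_i])x_i\), which is glued to the core. The core then becomes a polygon whose vertices are the points \(y_k=\rho\bigl(\prod_{j<k}[a_j,b_j]\bigr)x\), and which closes up because \(\prod[a_i,b_i]=1\). We fill it by choosing a spanning immersed disc, for instance by coning from a generic interior point or fanning triangles from one vertex, and we take the freedom of choosing the long or short segment at each step. Any closed piecewise-geodesic loop in \(\rp2\) bounds an immersed disc once enough winding is allowed, and the excess winding is then concentrated at a single cone point. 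This is why branching is unavoidable in general, and why the structure is branched by design.

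The key technical step, and the main obstacle, is guaranteeing that each membrane is a genuine immersed polygon with consistent orientation. Concretely, we must rule out sides degenerating (coincident consecutive vertices or collinear triples) and ensure that every corner turns the same way, so that gluing along paired sides creates no fold. Degeneracy is exactly where genericity of \(\rho\) matters. We would handle it in three stages:
\begin{itemize}
    \item[(a)] Choose \(x_i\) off a proper Zariski-closed set, which works whenever \(\rho(a_i)\), \(\rho(b_i)\) are not both trivial.
    \item[(b)] When \(\rho\) is non-generic on a handle, for example \(\rho(a_i)=\textbf{1}\), precompose with a mapping class or change the generating set. Alternatively, insert a small spherical ``bubble'', a degree-one branched cover of \(\rp2\), to resolve degenerate corners.
    \item[(c)] Check that at each corner the interior angle can be taken in \((0,2\pi)\) on the correct side by choosing between the two segments joining the endpoints.
\end{itemize}
Once every piece is an immersed disc with locally convex or at least locally injective corners, the gluing argument follows cases (1)--(3) of the openness proof in \S\ref{sssec:holomap}. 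Case (1) gives injectivity on interiors, case (2) gives non-folding along the sides, which are interior edges after gluing, and case (3) applies at the vertex. Since the vertices of \(\mathcal D\) project to a single point of \(S\), the total angle there is automatically a positive multiple of \(2\pi\) by equivariance. Hence this point is a branch point of some order \(m\ge0\), or possibly several points if the core was coned from an interior point. This yields a branched projective structure on \(S\) with holonomy \(\rho\).
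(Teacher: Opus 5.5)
Your global architecture matches the paper's: core plus handles, membranes glued along commutator sides, equivariant extension, and the vertex angle controlled by equivariance. However, the step you yourself call the main obstacle is never carried out, and the fillings you suggest fail. Coning a closed polygon from an interior point, or fanning it from a vertex, produces triangles of both orientations as soon as the polygon is not star-shaped with respect to the cone point. Two adjacent triangles of opposite orientation lie on the same side of their common spoke, which is exactly a fold and violates your condition (ii). No choice of corner angles as in your (c) can repair this, because the obstruction is global. For example, a figure-eight polygon in an affine chart has winding number $-1$ about one of its lobes, so it bounds no orientation-preserving branched disc in that chart; one is forced to wrap over the whole sphere. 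Your freedom to take long segments can also make the loop homotopically non-trivial in $\rp2$, in which case it bounds no disc at all.

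The idea you are missing is the one in Proposition~\ref{prop:coredisc}:
\begin{enumerate}
\item keep all sides in one affine chart and lift to a hemisphere $\mathcal U\subset\sph^2$;
\item join each vertex $p_i$ to a point $q_i$ of the equator by a segment $r_i$;
\item take $e_i$ to be the \emph{positively oriented} equatorial arc from $q_i$ to $q_{i+1}$, possibly the long way round.
\end{enumerate}
Then each quadrilateral bounded by $s_i, r_i, e_i, -r_{i+1}$ is embedded and lies to the left of its boundary, so the equatorial chain is monotone. Coning it from a point $q$ in the opposite hemisphere then yields only positively oriented triangles. All excess winding is concentrated at the single cone point, of order $\textnormal{Ind}(\mathcal P;q)-1\ge 0$. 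This is what turns your slogan ``any loop bounds a branched disc once enough winding is allowed'' into a proof.

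The second gap is that your stage (a) is false as stated. If $\rho(a_i)$ and $\rho(b_i)$ are non-trivial but commute, then $\rho([a_i,b_i])=\textbf{1}$. The side from $x_i$ to $\rho([a_i,b_i])x_i$ then collapses to a point for every choice of $x_i$, and the handle has no edge along which to glue it to the core. Stage (b) does not rescue this. The boundary curve of a handle has trivial holonomy for every pair of handle generators. Inserting a spherical bubble is a connected sum with $\sph^2$, so it cannot restore a collapsed commutator side or supply a missing handle. If $\rho$ has abelian image, every handle in every decomposition is of this kind and the core polygon shrinks to a point, so your construction yields nothing.

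The paper treats this separately in \S\ref{sssec:abelian handles}:
\begin{itemize}
\item a non-trivial abelian handle is realised on a closed torus (the pentagon degenerates to a quadrilateral whose side pairing gives a torus with a possibly branched vertex), and attached by the slit construction of \S\ref{interlude: slit constr};
\item handles with trivial holonomy are built as in \S\ref{sssec:trivial projective handles}, by slitting a branched cover of $\sph^2$ along two arcs with the same developed image and regluing them crosswise;
\item when all handles are abelian, the core is a sphere with an arbitrary branched structure.
\end{itemize}

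A smaller point: the base points cannot be chosen independently per handle. The $i$-th pentagon must be built from the same $x$, translated by $\rho\bigl(\prod_{j<i}[a_j,b_j]\bigr)$, with sides in a common affine chart. Only then does its commutator side coincide with the corresponding side of the core.
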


\noindent In the next sections, we will provide a general argument to geometrise any representation into \(\pslr{3}\). In \S\ref{ssec:hitchinbranched}, we show that every Hitchin representation arises as the holonomy of some branched projective structure on surfaces of genus \(g\ge2\), by providing a refined argument. 

\begin{rmk}
    In what follows, we shall assume that \(\rho\) is non-trivial. Indeed, we have already observed in Remark \ref{rmk:trivial holo realisable} that such a representation always arises as the holonomy of a branched projective structure on any surface of positive genus.
\end{rmk}


\subsection{Initialisation of the geometrisation process}\label{ssec:geom process initialisation} In this section, we outline the general scheme of the geometrisation process that we shall develop in \S\ref{ssec:base surface}, \S\ref{ssec:handle realisation}, and \S\ref{ssec:final assembly}. We start defining a decomposition of the surface into subsurfaces of lower complexity. On each of these components, a given representation restricts to a collection of sub-representations in \(\pslr3\), which we shall geometrise individually in \S\ref{ssec:base surface} and \S\ref{ssec:handle realisation}. As we shall see, the process of realising these individual pieces allows for a certain flexibility, highlighting the fact that branched real projective structures are far from being rigid. Finally, in \S\ref{ssec:final assembly}, we assemble these building blocks to yield a structure with the desired holonomy.

\smallskip

\noindent Let \(S\) be a closed, connected, and orientable surface of genus \(g\ge2\) and let \(\rho\colon\pi_1(\,S,b\,)\longrightarrow \pslr3\) be any representation, where \(b\in S\) is a preferred base point (the way \(b\in S\) is chosen will become clear later on during the realisation process). Let \(\mathcal B=\big\{\,\alpha_1,\beta_1,\,\dots,\alpha_g,\beta_g\,\big\}\) be a generating set of \(\pi_1(S,b)\) that satisfies the well-known presentation \( \pi_1(S,b)=\big\langle\,\alpha_1,\beta_1,\,\dots,\alpha_g,\beta_g\,\,|\,\,\big[\alpha_1,\,\beta_1\big]\cdots\big[\alpha_g,\,\beta_g\big]=1\,\big\rangle\,\). In the following we shall adopt the following terminology.

\begin{defn}[Handles, handle-generators]\label{defn:handlegenerators}
On a marked surface \(S\) of some positive genus \(g\ge0\), a \textit{handle} is an embedded subsurface \(H\) that is homeomorphic to \(S_{1,1}\), and a \textit{handle-generator} is a simple closed curve that is one of the generators of \(\pi_1(S_{1,1})\). A pair of handle-generators for a handle will refer to a pair of simple closed curves \(\{\alpha, \beta\}\) that generate \(\pi_1(S_{1,1})\); in particular, \(\alpha\) and \(\beta\) intersect once.
\end{defn}

\noindent For every \(i=1,\dots,g\), set \(\gamma_i=\big[\alpha_i,\,\beta_i\big]\), let \(H_i\) be the handle bounded by \(\gamma_i\) and consider its fundamental group. We decompose \(S\) along the curves \(\gamma_i\) into \(g\) handle and a a genus-\(0\) surface with \(g\) boundary components. Notice that its interior is homeomorphic to the \(g\)-punctured sphere and we set \(b\in S_{o,g}\) as the principal basepoint. We specify basepoints \(b_i \in H_i\) as follows: on each handle we take \(b_i\) on the boundary. To specify how \(\pi_1(H_i, b_i)\) and \(\pi_1(S, b)\) relate, take a combinatorial tree \(\textnormal T\) dual to the decomposition of \(S\), with one vertex for each \(b_i\), and a map \(\tau\colon \textnormal T \longrightarrow S\) mapping the vertices of \(\textnormal T\) onto the corresponding \(b_i\). This gives well-defined paths between the \(b_i\).  See Figure \ref{fig:decomposition}.

\begin{figure}[htbp]
  \centering
  \begin{tikzpicture}[scale=1.625]

    \draw[black!50] (0,0) circle (2.25cm);
    \shade[ball color = white, opacity=0.25] (0,0) circle (2.25cm);

    \foreach \theta/\i in {0/1, 72/2, 144/3, 216/4, 288/5} {
    \fill[white, rotate=\theta] (1.5, 0.5) to[out=300, in=60] (1.5,-0.5) to[out=115, in=270] (1.365,0) to[out=90, in=245] (1.5, 0.5);
    \draw[thin, apricot, rotate=\theta] (1.65,0) to[out=-15, in=195] (2.25,0);
    \draw[thin, apricot, dashed, rotate=\theta] (1.65,0) to[out=15, in=165] (2.25,0);
    \node[apricot] at ({(\theta)}:2.375) {\tiny \(\alpha_{\i}\)};
    \draw[thin, sky, rotate=\theta] (1.5,0) ellipse (0.25 and 0.75);
    \node[sky] at ({(\theta)}:1.125) {\tiny \(\beta_{\i}\)};
    }

    \foreach \theta in {0, 72, 144, 216, 288} {
    \draw [black!50, thin, rotate=\theta] 
            (1.5, 0.5) to[out=300, in= 60] (1.5, -0.5);
    }

    \foreach \theta in {0, 72, 144, 216, 288} {
    \draw [black!50, thin, rotate=\theta] 
            (1.525, -0.55) to[out=120, in=240] (1.525,  0.55);
    }

    \begin{scope}[shift={(3.5,-4.5)}]
        \draw[black!50] (0,0) circle (2.25cm);
        \shade[ball color = white, opacity=.2] (0,0) circle (2.25cm);

        \foreach \theta/\i in {0/1, 72/2, 144/3, 216/4, 288/5} {
        \fill[white, rotate=\theta] (1.5, 0.5) to[out=300, in=60] (1.5,-0.5) to[out=115, in=270] (1.365,0) to[out=90, in=245] (1.5, 0.5);
        }

        \foreach \theta in {0, 72, 144, 216, 288} {
        \draw [black!50, thin, rotate=\theta] 
            (1.5, 0.5) to[out=300, in= 60] (1.5, -0.5);
        }

        \foreach \theta in {0, 72, 144, 216, 288} {
        \draw [black!50, thin, rotate=\theta] 
            (1.525, -0.55) to[out=120, in=240] (1.525,  0.55);
        }

        \foreach \theta/\i in {0/1, 72/2, 144/3, 216/4, 288/5} {
        \draw[thin, red] (0,0)--(\theta:0.75);
        \fill[black, rotate=\theta] (0.75,0) circle (0.03);
        \draw[thin, plum, rotate=\theta] 
        (20:2.25) to[out=180, in= 90] (  0:0.75)
        ( 0:0.75) to[out=270, in=180] (-20:2.25);

        \draw[thin, plum, dashed, rotate=\theta] 
        (-20:2.25) to[out=180, in=270] ( 0:0.90)
                   to[out=90 , in=180] (20:2.25);
  
        \node[plum] at ({(\theta+20)}:2.625) {\tiny \(\big[\,\alpha_{\i}, \beta_{\i}\,\big]\)};
        \node[black] at ({(\theta+10)}:0.65) {\tiny \(b_{\i}\)};
        \node[black] at (30:0.15) {\tiny \(b\)};
        \fill[plum] (0,0) circle (0.03);
        }   
    \end{scope}

  \end{tikzpicture}
  \caption{On the top left, a system of handle generators as in Definition \ref{defn:handlegenerators}; on the bottom right, their commutators along with a possible combinatorial tree \(\textnormal T\) drawn in red.}
  \label{fig:decomposition}
\end{figure}

\smallskip

\noindent We have inclusions \(\imath_i\colon\pi_1(H_i, b_i) \hookrightarrow \pi_1(S, b_i)\) (note basepoints). Let \(\delta_i\) be the unique path from \(b\) to each \(b_i\) along the tree \(\mathcal T\), then we have well-defined isomorphisms as
\begin{equation}
    \jmath_i\colon \pi_1(S, b_i) \xrightarrow{\;\;\;\cong\;\;\;}
    \pi_1(S, b), \quad \xi \longmapsto \delta_i \,\xi\,\delta_i^{-1}.
\end{equation}
\noindent For every \(i=1,\dots,g\), define a representation \(\rho_i\colon \pi_1(H_i, b_i) \longrightarrow\pslr3\) as the composition 
\begin{equation}
    \rho_i\colon\pi_1(H_i, b_i) \xrightarrow{\;\;\;\imath_i\;\;\;}\pi_1(S, b_i) \xrightarrow{\;\;\;\jmath_i\;\;\;} \pi_1(S, b) \xrightarrow{\;\;\;\rho\;\;\;} \pslr3.
\end{equation}
Finally, define \(\rho_o\colon\pi_1(\,S_{o,g},b\,)\to\pslr3\) as the representation arising by pre-composing the representation \(\rho\) with the inclusion \(\imath_o\colon \pi_1(\,S_{o,g},b\,)\hookrightarrow\pi_1(S,b)\). In the next sections, we will show how to geometrise generic representations of the fundamental groups of an \(n\)-punctured sphere and of the once-punctured torus as the holonomy of some (possibly branched) real projective structure on the respective surfaces. To achieve this, we shall need to realise closed polygonal chains of projective segments, defined by the iterated images of a carefully chosen base point. We therefore introduce the following preliminary lemma, which will be employed in the subsequent sections.

\begin{lem}[Base point]\label{lem:base point}
    Let \(\rho\colon\pi_1(\,S_g\,)\longrightarrow\pslr3\) be a representation and let \(\Gamma\) be its image. Then there are infinitely many points \(p \in \rp2\) such that \(p\) is not fixed by any non-trivial element of \(\Gamma\).
\end{lem}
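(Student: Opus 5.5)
The proof is a countability-plus-category argument: $\Gamma$ has only countably many elements, and each non-trivial element fixes a small subset of $\rp2$. First, $\pi_1(\,S_g\,)$ is finitely generated, hence countable, so $\Gamma=\rho\big(\pi_1(\,S_g\,)\big)$ is a countable subgroup of $\pslr3$. If $\Gamma$ is trivial there is nothing to prove, since every point of $\rp2$ works, so we may assume $\Gamma$ is non-trivial. In every case we only need to control $\Gamma\setminus\{\mathbf 1\}$.

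The key step is to describe the fixed locus $\mathrm{Fix}(g)\subset\rp2$ of a non-trivial $g\in\pslr3$. Choose a lift $A\in\mathrm{GL}(3,\R)$ of $g$. A point $[v]\in\rp2$ is fixed by $g$ exactly when $v$ is a real eigenvector of $A$. Hence
\[
\mathrm{Fix}(g)=\bigcup_{\lambda}\PP\big(\ker(A-\lambda\,\mathrm{Id})\big),
\]
where $\lambda$ runs over the (at most three) real eigenvalues of $A$. Since $g\neq\mathbf 1$, the matrix $A$ is not scalar, so each eigenspace is a proper subspace of $\R^3$ of dimension at most $2$. Therefore $\mathrm{Fix}(g)$ is a finite union of points and projective lines. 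In particular it is a closed subset of $\rp2$ with empty interior and zero measure. This is the only step requiring care: one must remember that non-real eigenvalues contribute nothing, and that the projective (rather than linear) nature of $g$ is handled by the non-scalar condition on the lift.

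Finally, set
\[
F=\bigcup_{g\in\Gamma\setminus\{\mathbf 1\}}\mathrm{Fix}(g).
\]
This is a countable union of closed nowhere dense sets, so by the Baire category theorem on the compact manifold $\rp2$ its complement is dense. Alternatively, $F$ is a countable union of measure-zero sets, so it has measure zero and its complement has full measure. In either form, $\rp2\setminus F$ is non-empty and in fact uncountable, since a countable set is itself meagre in $\rp2$. Every $p\in\rp2\setminus F$ is fixed by no non-trivial element of $\Gamma$, which gives infinitely many such points.

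As a remark useful for the later constructions, the argument shows slightly more: the set of such points is dense and of full measure. So $p$ can additionally be chosen to avoid any further countable union of lines and points, for instance to keep finitely many of its images $\rho(\gamma)\cdot p$ in general position.
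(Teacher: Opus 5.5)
Your proof is correct and follows essentially the same route as the paper: $\Gamma$ is countable, each non-trivial element fixes only a finite union of points and projective lines (a null set), so the union of all fixed loci has full-measure, dense complement. Your description of $\mathrm{Fix}(g)$ is slightly more accurate than the paper's ``at most a projective line and an isolated point'', which overlooks the case of three distinct real eigenvalues giving three isolated fixed points; the Baire-category variant you add is a harmless alternative to the measure argument.
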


\begin{proof}
    Since the surface group \(\pi_1(\,S_g\,)\) is finitely generated, its image \(\Gamma \subset \pslr3\) is a countable set. For every non-trivial element \(g \in \Gamma \setminus \{\,\textnormal{id}\,\}\), its fixed-point set \(\mathrm{Fix}(\,g\,)\) in \(\rp2\) corresponds to the projectivisation of the eigenspaces of a non-scalar \(3 \times 3\) matrix. The eigenspaces of such a matrix have real dimension at most \(2\). Therefore, upon projectivisation, the set \(\mathrm{Fix}(\,g\,)\) consists of at most a projective line and an isolated point.  In particular, \(\mathrm{Fix}(\,g\,)\) is a proper algebraic sub-variety of positive codimension, and consequently, it has Lebesgue measure zero in \(\rp2\). It follows that the union of all fixed points of non-trivial elements,
    \begin{equation}\label{eq: bad base points}
        \textnormal{B}(\,\rho\,) = \bigcup_{g \in \Gamma \setminus \{\,\textnormal{id}\,\}} \mathrm{Fix}(\,g\,),
    \end{equation}
    is a countable union of sets of measure zero, and thus \(\textnormal{B}(\,\rho\,)\) itself has measure zero in \(\rp2\). Therefore, its complement \(\rp2 \setminus \textnormal{B}(\,\rho\,)\) is a full-measure, dense set. As a consequence, we can readily conclude that there exist infinitely many points \(p \in \rp2 \setminus \textnormal{B}(\,\rho\,)\), each of which is not fixed by any non-trivial element of \(\Gamma\).
\end{proof}

\smallskip

\noindent Having completed the initialisation of the geometrisation process, we now proceed to realise the individual representations. The construction presented in the next section will form the core of the entire process. In the following, we shall adhere to the same notation introduced here unless otherwise specified. In particular, we shall not always explicitly recall objects that have already been defined in the preceding text.

\smallskip

\subsection{Realisation of the core surface}\label{ssec:base surface} We begin with the realisation of the primary base surface that serves as the core subsurface onto which handles, as realised in subsequent \S\ref{ssec:handle realisation}, will be attached. More specifically, the main result of the present subsection is the following

\begin{prop}\label{prop:coredisc}
    Let \(\pi_1(S_{o,n}) = \big\langle \gamma_1, \dots, \gamma_n \;|\; \gamma_1 \gamma_2 \cdots \gamma_n = 1 \big\rangle\) and let \(\rho \colon \pi_1(S_{o,n}, b) \longrightarrow \pslr3\) be a non-trivial representation. Then there exists a branched projective structure on a closed disc \(\Delta\) with at most a single branch point in its interior and with a piecewise geodesic boundary oriented such that the disc lies to its left. Under the developing map, this boundary develops onto a chain defined by
    \begin{equation}\label{eq:chain}
        p_1 \longmapsto \rho(\,\gamma_n\,)(\,p_1\,) = p_n            \longmapsto \rho(\,\gamma_{n-1}\,)(\,p_n\,) = p_{n-1}  \longmapsto \cdots 
            \longmapsto \rho(\,\gamma_1\,)(\,p_2\,) = p_1
    \end{equation}
    with \(n\) corner points. Moreover, the winding number of this chain with respect to the branch point exceeds the order of the branch point by one.
\end{prop}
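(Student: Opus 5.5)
We will build the disc as a cone over the polygonal chain \eqref{eq:chain}, possibly wrapped around a chosen apex. First, by Lemma~\ref{lem:base point} applied to the image of \(\rho\), we fix a base point \(p_1\in\rp2\) not fixed by any non-trivial element of \(\rho\big(\pi_1(S_{o,n})\big)\). We then define recursively \(p_n=\rho(\gamma_n)(p_1)\), \(p_{n-1}=\rho(\gamma_{n-1})(p_n)\), and so on. The relation \(\gamma_1\cdots\gamma_n=1\) guarantees that the chain closes up, namely \(\rho(\gamma_1)(p_2)=p_1\). Since \(p_1\) avoids the fixed points of the non-trivial elements involved, consecutive vertices are distinct whenever the corresponding partial product is non-trivial. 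This gives a closed chain of \(n\) vertices. Each pair of consecutive distinct points determines two projective segments, and we pick one of them, so that we obtain a closed piecewise geodesic loop \(c\colon\sph^1\to\rp2\). If some \(\rho(\gamma_i)\) is trivial, two consecutive vertices coincide. In that case we replace the degenerate edge by a full projective line through the point, or equivalently a small detour, which keeps \(n\) corner points in the combinatorial sense. This is one place where care is needed; the hypothesis that \(\rho\) is non-trivial ensures that not all vertices collapse.

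Next, we choose an apex \(q\in\rp2\) lying on none of the projective lines spanned by the \(n\) segments, which is possible because these lines form a finite union. We then consider the triangles \(T_i\) with vertices \(q\), \(p_{i+1}\), \(p_i\), with edges chosen compatibly with the segments of \(c\). We glue the abstract triangles cyclically along the edges \([q,p_i]\) to obtain an abstract closed disc \(\Delta\) with cone point \(\hat q\). The developing map is the tautological map sending each abstract triangle homeomorphically onto its image. Since \(q\) avoids all the lines containing the edges, each triangle is non-degenerate. Adjacent triangles lie on opposite sides of the shared edge \([q,p_i]\), provided the edges are chosen so that the loop \(c\) turns consistently around \(q\). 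Under this condition the map is a local homeomorphism off \(\hat q\) and in the interior of the edges. At \(\hat q\) the map is a branched cover whose degree equals the total angle swept around \(q\), that is the winding number \(w\) of \(c\) about \(q\) (computed in a double cover \(\sph^2\), or in an affine chart containing \(c\) and \(q\) when possible). By Proposition~\ref{prop:BPSasmaps}, applied locally, this gives a branched structure on \(\Delta\) with a single possible branch point of order \(w-1\), and with piecewise geodesic boundary developing onto \eqref{eq:chain}. This is exactly the claimed relation: the winding number exceeds the order by one.

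The main obstacle is ensuring the consistency of orientation, so that every triangle lies on the same side and the disc lies to the left of the boundary, with \(w\ge 1\). Otherwise adjacent triangles fold over each other and the map fails to be a branched cover. We will first try to exploit the freedom in choosing, for each edge, which of the two segments joining \(p_i\) to \(p_{i+1}\) to use: lifting to \(\sph^2\), the two complementary arcs of a great circle give opposite angular increments seen from the lifted apex, so each edge can be made to advance positively in angle about \(q\). If this is not enough, we can append a full turn of a projective line, or perturb \(p_1\) and \(q\) using the genericity from Lemma~\ref{lem:base point}. This only increases \(w\), which is harmless, since the statement only requires at most one branch point. Reversing the orientation of the chain if necessary then puts the disc to its left.
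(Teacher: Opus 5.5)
Your cone construction is a genuinely different route from the paper's. It can be made to work, but the orientation step is misstated.

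\textbf{The orientation step.} You conflate two different pairs of arcs:
\begin{itemize}
\item The two projective segments from \(p_{i+1}\) to \(p_i\), lifted from a fixed \(\hat p_{i+1}\in\sph^2\), end at the two \emph{different} lifts \(\pm\hat p_i\).
\item The two complementary arcs of the great circle \(\hat\ell_i\) join the \emph{same} two fixed endpoints.
\end{itemize}
Choosing at each step the segment that advances positively about \(\hat q\) is always possible. But the lifted path may then end at \(-\hat p_1\). In that case \(c\) is non-contractible in \(\rp2\) and bounds no disc at all. For instance, when \(n=2\) both positive choices run the same way along \(\ell_1=\ell_2\), and \(c\) is the whole projective line.

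\textbf{The correct version.} Fix lifts \(\hat p_1,\dots,\hat p_n\) first. Take as the \(i\)-th edge the arc of \(\hat\ell_i\) from \(\hat p_{i+1}\) to \(\hat p_i\) that keeps \(\hat q\) on its left. Since \(\hat\ell_i\) avoids \(\pm\hat q\), the following all hold:
\begin{itemize}
\item The azimuth about \(\hat q\) increases strictly along each edge.
\item Each cone piece \(\{0\le r\le R(\phi)\}\) has azimuth range shorter than \(2\pi\), so it is embedded.
\item Consecutive pieces lie on opposite sides of the meridian to \(\hat p_i\).
\item The loop closes automatically, with total increment \(2\pi w\) and \(w\ge 1\).
\item The disc already lies to the left of its boundary, so no reversal is needed. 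Reversing would in any case turn the chain into \(p_1\mapsto p_2\mapsto\cdots\), which is not the required one.
\end{itemize}
The price is that the long complementary arc projects to the other segment followed by a full turn of \(\ell_i\); this is exactly your ``append a full turn''. So the boundary edges develop onto geodesic arcs of the \(\ell_i\) that may be longer than a segment.

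Also, for trivial \(\rho(\gamma_i)\), ``a full projective line'' and ``a small detour'' are not equivalent, since they lift differently. The paper simply drops such edges.

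\textbf{Comparison with the paper.} The paper never lets the edges change. It keeps the affine polygon \(\mathcal P\) of short segments \(s_i\) disjoint from \(\ell_\infty\) and handles folding with a collar.
\begin{itemize}
\item The quadrilaterals \(\textnormal{Q}_i\) join \(s_i\) to the lifted line at infinity \(\mathcal E\). Each is defined as the region to the left of its boundary loop, so consistency across the \(r_i\) is automatic and every equator arc \(e_i\) runs in the positive direction.
\item The triangles \(\textnormal{T}_i\) over the \(e_i\) then fan around \(q\in\mathcal L\), with total angle a positive multiple of \(2\pi\).
\end{itemize}
Your cone is more economical (\(n\) pieces, no collar). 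It also gives the winding-number clause a precise meaning, namely azimuthal winding in \(\sph^2\setminus\{\pm\hat q\}\).

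The paper's collar buys control of the boundary edges. This is what the final assembly in \S\ref{sssec:assembly generic} uses: the commutator edge of each handle pentagon, itself produced by this proposition, must develop onto \(-s_i\). In your version the arc used on each edge depends on the apex, so matching the core with the handles would require coordinating the apexes.
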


\noindent Strictly speaking, the representation is used to realise a branched projective structure on a closed disc rather than directly on an \(n\)-punctured sphere. By identifying the \(n\) corner points yields a space that, while not homeomorphic to an \(n\)-punctured sphere, is homotopically equivalent to it and thus have the same fundamental group. As we shall see in \S\ref{ssec:final assembly}, defining the structure on this homotopically equivalent space is sufficient for our purposes. The rest of the present section is devoted to prove the aforementioned Proposition. We split the argument of the proof into three subsections.

\smallskip

\subsubsection{Defining a polygonal chain on the sphere \(\sph^2\)}\label{sssec:polychain} We begin with some preliminary considerations are in order. We may assume without loss of generality that \(\rho(\,\gamma_i\,)\) is not trivial for all \(i=1,\dots,n\). Indeed, if any of the \(\rho(\,\gamma_i\,)\in\pslr3\) were trivial, the chain as defined in Equation \eqref{eq:chain} would have one segment less. Notice that the representation being non-trivial ensures that at least one \(\rho(\,\gamma_i\,)\) is non-trivial.

\smallskip

\noindent We define a polygonal chain in \(\rp2\) as follows. Let \(p_1\in\rp2\) be any point and let \(\mathcal V=\{\,p_1,\dots,p_n\,\}\) be the collection of points defined recursively as \(p_i=\rho(\,\gamma_{i}\,)(\,p_{i+1}\,)\). 

\begin{claim}\label{claim:choose the base point}
    We can choose the base point \(p_1\) in such a way that \(p_{i+1}\) is not fixed by \(\rho(\,\gamma_{i}\,)\), that is, \(p_i\neq p_{i+1}\) for every \(i=1,\dots,n\) (where indexes are taken modulo \(n\)).
\end{claim}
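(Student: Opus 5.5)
Note first that the points \(p_i\) are all determined by \(p_1\). Unwinding the recursion \(p_i=\rho(\gamma_i)(p_{i+1})\) backwards from \(p_{n+1}=p_1\) gives \(p_n=\rho(\gamma_n)(p_1)\), then \(p_{n-1}=\rho(\gamma_{n-1}\gamma_n)(p_1)\), and in general
\begin{equation*}
    p_{i+1}=\rho(\,\gamma_{i+1}\gamma_{i+2}\cdots\gamma_n\,)(\,p_1\,)=:h_{i+1}(\,p_1\,),
\end{equation*}
where \(h_{n+1}=\textnormal{id}\). The recursion closes up consistently because \(\gamma_1\gamma_2\cdots\gamma_n=1\), so \(\rho(\gamma_1)(p_2)=\rho(\gamma_1\cdots\gamma_n)(p_1)=p_1\). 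The condition \(p_i\neq p_{i+1}\) says that \(\rho(\gamma_i)\) does not fix \(p_{i+1}\). Since \(p_{i+1}=h_{i+1}(p_1)\), this is equivalent to \(p_1\) not being a fixed point of the conjugate
\begin{equation*}
    g_i:=h_{i+1}^{-1}\,\rho(\gamma_i)\,h_{i+1}.
\end{equation*}
The claim is therefore equivalent to choosing \(p_1\) outside \(\bigcup_{i=1}^n\mathrm{Fix}(g_i)\).

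Each \(g_i\) lies in the image \(\Gamma=\rho(\pi_1(S_{o,n}))\), which in our application is a subgroup of \(\rho(\pi_1(S))\). By the standing assumption made just before the claim, every \(\rho(\gamma_i)\) is non-trivial, so every \(g_i\) is non-trivial as a conjugate of a non-trivial element. If some \(\rho(\gamma_i)\) were trivial, we would drop that index, as the text already allows. We could now invoke Lemma \ref{lem:base point} directly, since it gives infinitely many points fixed by no non-trivial element of \(\Gamma\). However, Lemma \ref{lem:base point} is stated for surface groups, so it is cleaner to redo its argument here in finite form. As observed in the proof of Lemma \ref{lem:base point}, the fixed set of a non-trivial element of \(\pslr3\) is the projectivisation of its eigenspaces, hence a finite union of at most a projective line and finitely many points. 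In particular it is a closed subset with empty interior and measure zero.

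A finite union of such sets is again closed, with empty interior and measure zero. Its complement is therefore open, dense and non-empty, and any \(p_1\) in this complement works. This also leaves an open set of admissible base points, which will be useful later to impose further genericity conditions, for instance non-collinearity of consecutive points. The only point requiring care is the bookkeeping of indices, namely checking that the \(n\)-th condition \(p_n\neq p_1\) corresponds to \(g_n=\rho(\gamma_n)\) not fixing \(p_1\), and that the relation closes the chain. Apart from this, the argument is routine, and I do not expect a genuine obstacle.
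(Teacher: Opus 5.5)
Your argument is correct and is essentially the paper's own. You use the same conjugation \(h_{i+1}^{-1}\rho(\gamma_i)h_{i+1}\) to turn each condition \(p_i\neq p_{i+1}\) into the requirement that \(p_1\) avoid the fixed set of a non-trivial element of \(\Gamma\). The only difference is that you avoid just these \(n\) conjugates, which gives an open dense set of admissible \(p_1\). The paper instead invokes Lemma \ref{lem:base point} to avoid the fixed points of all of \(\Gamma\) at once, and it reuses that choice of base point for the handles in the final assembly.
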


\begin{proofclaim}
    For every \(i\in\big\{\,1,\dots,n\,\big\}\), set \(g_i=\rho(\,\gamma_i\,)\). The following chain of implications hold: 
    \begin{equation}
        \begin{aligned}
        p_i=g_i(\,p_{i+1}\,)=p_{i+1}&\Longleftrightarrow\,g_i\,\Big(\,g_{i+1}\,\cdots\,g_n(\,p_1\,)\,\Big)\,=\, g_{i+1}\,\cdots\,g_n(\,p_1\,) \\
        &\Longleftrightarrow\,g_{i+1}\,\cdots\,g_n(\,p_1\,)\in\textnormal{Fix}\big(\,g_i\,\big) \\
        &\Longleftrightarrow p_1 \in \big(\,g_{i+1}\,\cdots\,g_n\,\big)^{-1}\textnormal{Fix}\big(\,g_i\,\big) \\
        &\Longleftrightarrow p_1 \in \textnormal{Fix}\Big(\,\big(\,g_{i+1}\,\cdots\,g_n\,\big)^{-1}g_i\big(\,g_{i+1}\,\cdots\,g_n\,\big)\,\Big),
        \end{aligned}
    \end{equation}
    \textit{i.e.}, if \(p_1\) belongs to the set of fix points of some element in \(\Gamma\). By Lemma \ref{lem:base point}, there always exists a point (in fact, infinitely many) that are not fixed by every element of \(\Gamma\). Anyone of these points satisfy the desired condition.
\end{proofclaim}

For each \(i \in \{1, \dots, n\}\), we shall denote by \(\ell_i \subset \rp2\) the unique projective line passing through \(p_i\) and \(p_{i+1}\). Recall that, in the real projective plane, every pair of points can be joined by two different segments, both arising from the unique line passing through them. We thus need to establish a criterion for joining the points in the collection \(\mathcal{V}\).

\begin{claim}
    There exists a line \(\ell_\infty\) disjoint from \(\mathcal V\) which we designate as the line at the infinity. Moreover, among all possible chains of projective segments joining the vertices of \(\mathcal{V}\) along \(\ell_i\), there exists a unique choice of segments \(s_1, \dots, s_n\), with \(s_i \subset \ell_i\), such that the closed polygonal chain \(\mathcal P\) defined as
    \begin{equation}\label{eq:chain two}
        p_1 \xrightarrow{\;\;s_n\;\;} \rho(\,\gamma_n\,)(\,p_1\,) = p_n \xrightarrow{\;\;s_{n-1}\;\;} \rho(\,\gamma_{n-1}\,)(\,p_n\,) = p_{n-1} \xrightarrow{\;\;s_{n-2}\;\;} \dots \xrightarrow{\;\;s_1\;\;} \rho(\,\gamma_1\,)(\,p_2\,) = p_1
    \end{equation}
is contained in the interior of a non-degenerate conic \(\mathcal{C} \subset \mathbb{RP}^2\) disjoint from \(\ell_\infty\).
\end{claim}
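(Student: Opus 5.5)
The plan is to reduce everything to affine geometry in a suitable chart and then use the convex hull of the finite set \(\mathcal V\).

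\textbf{Step 1: the line at infinity.} The set \(\mathcal V=\{\,p_1,\dots,p_n\,\}\) is finite, and the lines through a given point form a one-parameter family (a copy of \(\rp1\) in the dual plane). So the lines meeting \(\mathcal V\) form a finite union of such pencils, a closed nowhere dense subset of \(\big(\,\rp2\,\big)^*\). Any line \(\ell_\infty\) outside this set is disjoint from \(\mathcal V\), and in fact a whole open set of such lines works. Removing \(\ell_\infty\) identifies \(\rp2\setminus\ell_\infty\) with an affine plane \(\R^2\) that contains \(\mathcal V\).

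\textbf{Step 2: choice of segments.} Claim \ref{claim:choose the base point} gives \(p_i\neq p_{i+1}\), so each \(\ell_i\) is well defined. The line \(\ell_i\) meets \(\ell_\infty\) in exactly one point (they are distinct lines, since \(p_i\in\ell_i\) and \(p_i\notin\ell_\infty\)). That point splits the circle \(\ell_i\) into two arcs joining \(p_i\) and \(p_{i+1}\):
\begin{itemize}
\item exactly one arc avoids \(\ell_\infty\), namely the affine segment \([\,p_i,p_{i+1}\,]\subset\R^2\);
\item the other arc crosses \(\ell_\infty\).
\end{itemize}
Let \(s_i\) be the affine segment. The resulting closed chain \(\mathcal P\) lies in the convex hull \(K\) of \(\mathcal V\), which is a compact convex subset of \(\R^2\).

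\textbf{Step 3: the conic.} Take a Euclidean disc \(\{\,|x-c|<R\,\}\) with \(R\) large enough that it contains \(K\). Its boundary circle is a non-degenerate conic \(\mathcal C\), and \(\mathcal C\) lies in the affine chart, hence is disjoint from \(\ell_\infty\). Its interior, the complementary component that is a disc, is exactly the Euclidean disc, which contains \(\mathcal P\). This proves existence.

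\textbf{Uniqueness.} Suppose a choice of segments has \(\mathcal P\) inside the interior of a conic \(\mathcal C\) disjoint from \(\ell_\infty\). That interior is a properly convex set avoiding \(\ell_\infty\), so it lies in the affine chart. Each segment must then avoid \(\ell_\infty\), and therefore equals the affine segment. So for a fixed \(\ell_\infty\) the choice is forced, and the segments are unique relative to this normalisation of \(\ell_\infty\).

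\textbf{Expected obstacle.} The main subtlety is the uniqueness assertion. As stated it depends on \(\ell_\infty\): a different line at infinity could select different arcs. I would therefore phrase the claim as uniqueness once \(\ell_\infty\) is fixed, and argue that this is all the subsequent construction needs. The remaining point to verify is that the convex interior of a conic disjoint from \(\ell_\infty\) genuinely sits inside the affine chart and is convex there. This follows from Goldman's notion of convexity recalled in \S\ref{ssec:convexstructures}.
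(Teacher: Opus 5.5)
Your proposal is correct and follows the paper's own argument. Like the paper's proof and the remark after it, you pick \(\ell_\infty\) avoiding the finite set \(\mathcal V\), take for \(s_i\) the arc of \(\ell_i\) disjoint from \(\ell_\infty\) (the affine segment), and enclose the resulting compact chain in a large ellipse of the affine chart. Your explicit uniqueness argument, relative to the fixed \(\ell_\infty\), goes beyond what the paper writes; the fact it relies on, that the interior of a non-degenerate conic disjoint from \(\ell_\infty\) misses \(\ell_\infty\), follows most directly from the observation that every line through an interior point of such a conic meets the conic, not from Goldman's definition of convexity.
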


\begin{proofclaim}
    Let \(\mathcal{V} = \{p_1, \dots, p_n\}\) be the collection of points just defined. Since \(\mathcal{V}\) is finite, there exists a line at infinity \(\ell_{\infty}\) disjoint from it. Recall that two straight lines in \(\rp2\) always intersect at exactly one point. Let \(\ell_i\) be the unique line joining \(p_i\) and \(p_{i+1}\). Notice that these points always single out two geodesic arcs; we set \(s_i\) the (unique) arc of \(\ell_i\) disjoint from \(\ell_\infty\). We define \(\mathcal P\) the resulting polygonal chain. By design, \(\mathcal P\) is entirely contained in the interior of some non-degenerate conic and disjoint from the designated line at infinity \(\ell_\infty\).
\end{proofclaim}

\begin{rmk}
    Alternatively, one may consider the complement \(\mathbb{A}^2 =\mathbb{RP}^2 \setminus \ell_{\infty}\) forms an affine chart containing \(\mathcal{V}\), in which any pair of distinct points determines a unique line and a unique straight segment connecting them. 
    Let \(s_i\) be the unique segment joining \(p_i\) and \(p_{i+1}\). We define \(\mathcal P_{\textnormal{aff}}\) the polygonal chain arising from the concatenation of these segments. By design, \(\mathcal P_{\textnormal{aff}}\) is a closed polygonal chain. Moreover, the cardinality of \(\mathcal V\) being finite, the chain \(\mathcal P_{\textnormal{aff}}\) is compact and hence contained in the interior of some ellipse in \(\mathbb A^2\). By embedding \(\mathbb A^2\hookrightarrow \rp2\), we get the desired chain of segments \(\mathcal P\subset \rp2\).
\end{rmk}

\noindent Let \(\pi_{\textnormal{std}} \colon \sph^2 \longrightarrow \rp2\) be the standard covering projection. The designated line \(\ell_\infty\) lifts to a well-defined great circle on the sphere, denoted by \(\mathcal E\), which we shall regard as the \textit{equator} separating the lifts of \(\rp2 \setminus \ell_\infty\). There are two possible lifts for the polygonal chain \(\mathcal{P}\). We choose either of these lifts and denote by \(\mathcal{U}\subset\sph^2\) the hemisphere containing the latter. With a small abuse of notation, we denote such a chain also with \(\mathcal P\), its set of vertices also with \(\mathcal V\) and \(s_i\) denotes the \textit{oriented} edge joining \(p_{i+1}\) to \(p_i\). Next, we denote by \(\mathcal{L}\) the other hemisphere. Finally, let \(q\in\mathcal L\) be any point. See Figure \ref{fig:chain on the sphere}. From now on, we shall work on the sphere endowed with its natural real projective structure, which makes \(\pi_{\textnormal{std}}\) a projective map. By recalling that great circles project to projective lines via the projection \(\pi_{\textnormal{std}}\), we shall adopt the following terminology: we shall define every arc of a great circle as a \textit{projective segment}.

\begin{figure}[htbp]
  \centering
  \begin{tikzpicture}[scale=2.5]


    \draw[white, opacity=0.2] (0,0) circle (2.25cm);
    \shade[ball color=sky, opacity=.25, shading angle=0] (0,0) circle (2.25cm);
    \draw[plum] (210:2.25) to[bend left] (030:2.25);

    \draw[thick, orange] (210:0.75)--
                        (030:0.75)--
                        (285:1.25)--
                        (240:1.50)--
                        (270:0.5 )--
                        (270:1.5 )--
                        (210:0.75);
    \foreach \p in {(210:0.75), (030:0.75), (285:1.25), (240:1.50), (270:0.5), (270:1.5)} {
        \fill \p circle (0.5pt);
    }

    \fill (120:1.875) circle (0.5pt);
    \node at (030:2    ) {\(\mathcal U\)};
    \node at (200:2    ) {\(\mathcal L\)};
    \node at (120:2    ) {\(q\)};
    \node[plum] at (120:0.875) {\huge \(\mathcal E\)};
    \node[plum] at (315:0.875) {\huge \(\mathcal P\)};

    \node at (030:0.875) {\(p_1\)};
    \node at (285:1.375) {\(p_6\)};
    \node at (240:1.625) {\(p_5\)};
    \node at (270:0.375) {\(p_4\)};
    \node at (270:1.625) {\(p_3\)};
    \node at (210:0.875) {\(p_2\)};

    \node[plum] at (090:2.000) {\huge \(\sph^2\)};
    
  \end{tikzpicture}
  \caption{The figure shows the sphere \(\sph^2\) divided into the two hemispheres determined by the lift of \(\mathcal E\). Furthermore, it depicts the polygonal chain \(\mathcal P\subset\mathcal U\) once lifted to the universal cover \(\sph^2\) of \(\rp2\).}
  \label{fig:chain on the sphere}
\end{figure}

\smallskip

\subsubsection{Projective polygons}\label{sssec:proj poly} In the present subsection, we use the polygonal chain \(\mathcal P\) as defined in \S\ref{sssec:polychain}, the equator \(\mathcal E\) and the point \(q\in\mathcal L\) to determine a finite collection of \(2n\) polygons such that, once properly combined as explained in \S\ref{sssec:projective disc}, they determine the desired projective structure on a disc with boundary. More specifically, we shall determine a collection \(\mathcal Q\) of \(n\) embedded quadrilaterals and a collection \(\mathcal T\) of \(n\) embedded triangles. We begin by defining the collection \(\mathcal Q\) as follows.

\smallskip 

\noindent Let \(\mathcal{P}\) be a polygonal chain as defined in Equation \eqref{eq:chain}, with vertices \(\mathcal V=\{p_1, \dots, p_n\}\) and recall that the segments \(\{\,s_i\,\}\) of \(\mathcal P\) are oriented such that \(p_{i+1}\) is connected to \(p_i\), with indices taken modulo \(n\), so that \(p_1\) is connected to \(p_n\). By recalling that the boundary of the hemisphere is the equator \(\mathcal{E}\), we orient the equator in such a way that it keeps the open hemisphere \(\mathcal U\) on its left. For each vertex \(p_i\), we fix a point, say \(q_i \in \mathcal{E}\), with \(q_i \neq q_j\) for \(i \neq j\), and then connect \(p_i\) to \(q_i\) via a projective segment \(r_i\), oriented from \(p_i\) to \(q_i\). For our convenience, we shall denote by \(-r_i\) the oriented projective segment connecting \(q_{i}\) to \(p_{i}\). Notice that, by construction, the projective segments \(-r_i\) and \(r_{i}\) coincide but have opposite orientation. Moreover, we can make sure that two consecutive segments do not intersect. The points \(q_i\) and \(q_{i+1}\) partition the equator into two sub-arcs. Let \(e_i \subset \mathcal{E}\) the unique sub-arc such that the closed loop formed by the concatenation 
\begin{equation}
    p_{i+1} \xrightarrow{\;\;s_i\;\;} p_i \xrightarrow{\;\;r_i\;\;} q_i \xrightarrow{\;\;e_i\;\;} q_{i+1} \xrightarrow{\;\;-r_{r+1}\;\;} p_{i+1}
\end{equation}

\noindent bounds a quadrilateral \(\textnormal{Q}_i\) on its left, see Figure \ref{fig:quadrilateral}.

\begin{figure}[htbp]
  \centering
  \begin{tikzpicture}[scale=2]

    \fill[white, opacity=.25, shading angle=0] (150:2.25) arc (150:060:2.25)--(060:1.25)--(150:1.00)--(150:2.25);
    \draw[plum      ] (150:2.25) arc (150:105:2.25);
    \draw[plum, <-  ] (105:2.25) arc (105:060:2.25);
    \draw[red, ->    ] (150:2.25)--(150:1.75);
    \draw[red] (150:1.75)--(150:1.00);
    \draw[red    ] (060:2.25)--(060:1.75);
    \draw[red, <-] (060:1.75)--(060:1.25);
    \draw[orange, ->] (150:1.00)--(105:0.785);
    \draw[orange] (105:0.785)--(060:1.25);

    \foreach \p in {(150:2.25), (060:2.25), (060:1.25), (150:1.00)} {
        \fill[black] \p circle (0.75pt);
        }

    \node at (150:2.5 ) {\(q_{i+1}\)};
    \node at (060:2.5 ) {\(q_{i}\)};
    \node at (060:1) {\(p_{i}\)};
    \node at (150:0.75) {\(p_{i+1}\)};

    \node at (105:2.5  ) {\(e_{i}\)};
    \node at (105:0.625) {\(s_{i}\)};
    \node at (055:1.625) {\(r_{i}\)};
    \node at (155:1.625) {\(-r_{i+1}\)};
    
    \node at (105:1.5) {\(\textnormal{Q}_i\)};
  \end{tikzpicture}
  \caption{A quadrilateral \(\textnormal{Q}_i\). The edges are all oriented so that they leave the quadrilateral on their left.}
  \label{fig:quadrilateral}
\end{figure}

\smallskip

\noindent We define \(\mathcal Q\) to be the collection of quadrilaterals defined in this way. We notice that two consecutive quadrilaterals \(\textnormal{Q}_i\) and \(\textnormal{Q}_{i-1}\) always overlap along their common edge \(r_i\). Moreover, we do not require that these quadrilaterals do not overlap in their interiors, \textit{e.g.}, see Figure~\ref{fig:upper hemi}.

\smallskip

\begin{figure}[htbp]
  \centering
  \begin{tikzpicture}[scale=2]
    \pattern [pattern=north west lines, pattern color=plum, opacity=0.25] (000:2.25)--(000:2.75) arc (000:360:2.75)--(000:2.25) arc (360:000:2.25);
    
    \draw[white, opacity=0.25] (0,0) circle (2.25cm);
    \shade[ball color = sky, opacity=.20, shading angle=0] (0,0) circle (2.25cm);
    \begin{scope}[shift={(0,0.75)}]
        \draw[thick, orange] (210:0.75)--
                             (030:0.75)--
                             (285:1.25)--
                             (240:1.50)--
                             (270:0.5 )--
                             (270:1.5 )--
                             (210:0.75);
        
        \draw[red] (210:0.75) -- (160  :1.875);
        \draw[red] (030:0.75) -- (055  :1.650);
        \draw[red] (285:1.25) -- (302.5:2.850);
        \draw[red] (240:1.50) -- (225  :2.720);
        \draw[red] (270:0.50) -- ( 90  :1.500);
        \draw[red] (270:1.50) -- (270  :3.000);
        \foreach \p in {(210:0.75), (030:0.75), (285:1.25), (240:1.50), (270:0.5), (270:1.5)} {
        \fill \p circle (0.5pt);
        }        

        \node at (025:0.875) {\(p_1\)};
        \node at (285:1.375) {\(p_6\)};
        \node at (240:1.625) {\(p_5\)};
        \node at (285:0.375) {\(p_4\)};
        \node at (275:1.600) {\(p_3\)};
        \node at (210:0.875) {\(p_2\)};
    \end{scope}
    \draw[thick, plum] (0,0) circle (2.25cm);

    \begin{scope}[shift={(0,0.75)}]
        \foreach \p in {(160  :1.875), (055  :1.650), (302.5:2.850), (225  :2.720), ( 90  :1.500), (270  :3.000)} {
        \fill[red] \p circle (0.75pt);
        }
        
        \node at (055  :1.825) {\(q_1\)};
        \node at (302.5:3.000) {\(q_6\)};
        \node at (225  :2.850) {\(q_5\)};
        \node at ( 90  :1.625) {\(q_4\)};
        \node at (270  :3.150) {\(q_3\)};
        \node at (160  :2.000) {\(q_2\)};

        \node[plum] at (315:0.875) {\huge \(\mathcal P\)};
    \end{scope}

    \node at (045:2.50) {\(\mathcal L\)};
    \node at (225:2.00) {\(\mathcal U\)};
    \node at (180:2.40) {\(\mathcal E\)};

  \end{tikzpicture}
  \caption{Subdivision of the hemisphere \(\mathcal U\) into overlapping polygons bounded by projective segments.}
  \label{fig:upper hemi}
\end{figure}

\noindent We now define the collection \(\mathcal T\) of projective triangles. For this purpose, we consider the hemisphere \(\mathcal L\) along with the equator \(\mathcal E\). On this latter, the collection of points \(\{\,q_1,\dots,q_n\,\}\) just defined determine the collection of arcs \(e_i\). We stress that two arcs may very well overlap in their interiors, \textit{e.g.}, according to Figure \ref{fig:upper hemi}, the arcs \(e_1\) and \(e_3\) overlap. Moreover, recall that there is a preferred point \(q\in\mathcal L\). Let \(c_i\) be the oriented projective segment from \(q_i\) to \(q\). Similarly to the convention adopted above, for our convenience we shall denote by \(-c_i\) the oriented projective segment connecting \(q_{i}\) to \(p_{i}\). Even in this case we may notice that the projective segments \(-c_i\) and \(c_{i}\) coincide but have opposite orientation. For every \(i=1,\dots,n\), we define \(\textnormal{T}_i\) the projective triangle determined by the concatenation 
\begin{equation}
    q \xrightarrow{\;\;-c_{i+1}\;\;} q_{i+1} \xrightarrow{\;\;-e_i\;\;} q_{i} \xrightarrow{\;\;c_i\;\;} q
\end{equation}

\noindent that bounds a triangle \(\textnormal{T}_i\) on its left (note the signs of the edges), see Figure \ref{fig:triangles}. We define \(\mathcal T\) to be the collection of triangles defined in this way. We notice that two consecutive triangles \(\textnormal{T}_i\) and \(\textnormal{T}_{i-1}\) always overlap along their common edge \(c_i\). Moreover, we do not require that these triangles do not overlap in their interiors.

\begin{figure}[htbp]
  \centering
  \begin{tikzpicture}[scale=1.5]

    \shade[ball color = white, opacity=.25, shading angle=0] (330:2.25) arc (330:240:2.25)--(240:0)--(330:2.25);
    \draw[plum, thin, ->  ] (330:2.25) arc (330:285:2.25);
    \draw[plum, thin      ] (285:2.25) arc (285:240:2.25);
    \draw[red,  thin, ->  ] (330:2.25)--(330:1.125);
    \draw[red,  thin,     ] (330:0)--(330:1.125);
    \draw[red,  thin, ->  ] (240:2.25)--(240:1.125);
    \draw[red,  thin,     ] (240:0)--(240:1.125);
    
    \foreach \p in {(330:2.25), (240:2.25), (0:0)} {
        \fill[black] \p circle (0.75pt);
        }

    \node at (90:0.25 ) {\(q\)      };
    \node at (330:2.5 ) {\(q_{i}\)  };
    \node at (240:2.5 ) {\(q_{i+1}\)};

    \node at (285:2.5)  {\(e_{i}\)  };
    \node at (345:1.25) {\(c_{i}\)  };
    \node at (225:1.25) {\(c_{i+1}\)};
    
    \node at (285:1.5) {\(\textnormal{T}_i\)};
  \end{tikzpicture}
  \caption{A triangle \(\textnormal{T}_i\). The edges are all oriented so that they leave the triangle on their left.}
  \label{fig:triangles}
\end{figure}

\smallskip

\subsubsection{Realising a projective disc}\label{sssec:projective disc} In this subsection, we finally combine the projective polygons as defined in \S\ref{sssec:proj poly} to define the desired projective disc. More specifically, we shall glue the polygons of the collections \(\mathcal Q\) and \(\mathcal T\) together in a suitable manner. The first fundamental observation is that these polygons are now considered as \(2\)-dimensional simplexes in their own right. 

\begin{rmk}
    In fact, each polygon of the collections \(\mathcal Q\) and \(\mathcal T\) has a real projective structure with boundary made of projective segments arising from the natural embedding into the \(2\)-dimensional sphere endowed with its natural projective structure. Notice that for each polygon, its corresponding embedding serves as a developing map. By gluing two polygons from the aforementioned collections, by identifying two edges via a projective map (possibly the identity map), the respective embeddings glue together to define a developing map for the projective structure on the resulting newborn polygon. Although these embeddings are injective, their gluing no longer needs to be injective and the glued polygons may overlap once developed. This happens whenever two polygons, realised as in \S\ref{sssec:proj poly}, overlap in their interior. Compare Figures \ref{fig:upper hemi} and \ref{fig:non overlap}. 
\end{rmk}

\begin{figure}[htbp]
  \centering
  \begin{tikzpicture}[scale=2]
    \pattern [pattern=north west lines, pattern color=plum, opacity=0.25] (000:2.25)--(000:2.75) arc (000:360:2.75)--(000:2.25) arc (360:000:2.25);
    
    \draw[white, opacity=0.25] (0,0) circle (2.25cm);
    \shade[ball color = white, opacity=.20, shading angle=0] (0,0) circle (2.25cm);

    \draw[thick, plum] (0,0) circle (2.25cm);

    \draw[red] (090.00:1.5 )--(090.00:2.25); 
    \draw[red] (142.50:1.25)--(142.50:2.25); 
    \draw[red] (197.50:0.75)--(197.50:2.25);
    \draw[red] (287.50:0.75)--(287.50:2.25);
    \draw[red] (355.00:1.00)--(355.00:2.25);
    \draw[red] (033.75:1.25)--(033.75:2.25);

    \fill[white] (090:1.5)--(142.50:1.25)--(197.50:0.75)--(287.00:0.75)--(355.50:1)--(033.75:1.25)--(090:1.5);
    
    \draw[thick, orange] (090:1.5)--(142.50:1.25)--(197.50:0.75)--(287.00:0.75)--(355.50:1)--(033.75:1.25)--(090:1.5);

    \foreach \p in {(090.00:1.5), (142.50:1.25), (197.50:0.75), (287.00:0.75), (355.50:1), (033.75:1.25)} {
    \fill[black] \p circle (1pt);
    }

    \foreach \p in {(090.00:2.25), (142.50:2.25), (197.50:2.25),  (287.50:2.25), (355.00:2.25), (033.75:2.25)} { 
    \fill[red] \p circle (1pt);
        }

    \foreach \theta/\i in {120/1, 170/2, 232.5/3, 315/4, 15/5, 60/6} {
    \node at (\theta:1.75) {\(\textnormal Q_{\i}\)};
    }

    \foreach \theta/\i in {120/1, 170/2, 232.5/3, 315/4, 15/5, 60/6} {
    \node at (\theta:2.375) {\(e_{\i}\)};
    }

    \foreach \p/\i in {(090.00:1.25)/1, (140:1)/2, (195:0.5)/3, (287.00:0.5)/4, (355.50:0.75)/5, (035:1)/6} {
    \node at \p {\(p_{\i}^\star\)};
    }

    \foreach \p/\i in {(060.00:1.375)/6, (115:1.375)/1, (165:1)/2, (240.00:0.675)/3, (325:0.875)/4, (015:1.1875)/5} {
    \node[orange] at \p {\(s_{\i}^\star\)};
    }

    \node[orange] at (060:1  ) {\(\mathcal B\)};
    \node[plum  ] at (210:2.375) {\(\mathcal E_1\)};

    \end{tikzpicture}
    \caption{The figure shows the quadrilaterals, obtained from the polygonal chain shown in Figure \ref{fig:upper hemi}, glued together by identifying the projective segments with the same label (\textit{i.e.}, the segment \(r_i\) of \(\textnormal Q_{i-1}\) with the segment \(r_i\) of \(\textnormal Q_{i}\,\)). Treating the quadrilaterals as simplexes in their own right, the resulting figure is a topological annulus endowed with a projective structure with two boundary components, \(\mathcal E_1\) which is locally projectively equivalent to a great circle in the sphere endowed with its natural projective structure and the other is \(\mathcal B\), a chain of projective segments \(\{\,s_i^\star\,\}\) with corners at \(\{\,p_i^\star\,\}\). The developing map of such a structure, restricted to each quadrilateral, is injective. The developing map of the overall structure fails to be injective, and its image is shown in Figure \ref{fig:upper hemi}.}
    \label{fig:non overlap}
\end{figure}

\smallskip

\begin{figure}[htbp]
  \centering
  \begin{tikzpicture}[scale=1.75]
    \pattern [pattern=north west lines, pattern color=plum, opacity=0.25] (000:2.25)--(000:2.75) arc (000:360:2.75)--(000:2.25) arc (360:000:2.25);
    
    \draw[white, opacity=0.25] (0,0) circle (2.25cm);
    \shade[ball color = white, opacity=.20, shading angle=0] (0,0) circle (2.25cm);

    \draw[thick, plum] (0,0) circle (2.25cm);

    \draw[red] (090.00:0) -- (090.00:2.25); 
    \draw[red] (037.50:0) -- (037.50:2.25); 
    \draw[red] (342.50:0) -- (342.50:2.25);
    \draw[red] (252.50:0) -- (252.50:2.25);
    \draw[red] (185.00:0) -- (185.00:2.25);
    \draw[red] (146.25:0) -- (146.25:2.25);

    \fill[black] (0,0) circle (1pt);

    \foreach \p in {(090.00:2.25), (037.50:2.25), (342.50:2.25), (252.50:2.25), (185.00:2.25), (146.25:2.25)} { 
    \fill[red] \p circle (1pt);
    }

    \foreach \theta/\i in {60/1, 10/2, 307.5/3, 225/4, 165/5, 120/6} {
    \node at (\theta:1.75) {\(\textnormal{T}_{\i}\)};
    }

    \foreach \theta/\i in {60/1, 10/2, 307.5/3, 225/4, 165/5, 120/6} {
    \node at (\theta:2.375) {\(e_{\i}\)};
    }

    \node[plum  ] at (330:2.375) {\(\mathcal E_2\)};
    \node[black ] at (300:0.25  ) {\(q_\star\)};

    \end{tikzpicture}
    \caption{The figure shows the triangles \(\textnormal T_i\) glued together by identifying the projective segments with the same label (\textit{i.e.}, the segment \(c_i\) of \(\textnormal T_{i-1}\) with the segment \(r_i\) of \(\textnormal T_{i}\,\)). Treating the triangles as simplexes in their own right, the resulting figure is a topological disc endowed with a projective structure with one boundary component locally projectively equivalent to the equator \(\mathcal E\). The developing map of such a structure, restricted to each triangle, is injective. The developing map of the overall structure fails to be injective.}
    \label{fig:non overlap 2}
\end{figure}

\noindent Having made these preliminary observations, we glue the polygons of the collections \(\mathcal Q\) and \(\mathcal T\) according to the following rules:
\begin{itemize}
    \item[1.] Two quadrilaterals \(\textnormal{Q}_i\) and \(\textnormal{Q}_j\) are adjacent if \(j = i + 1 \pmod n\). In this case, we glue together the edges bearing the same label \(r_{i+1}\). Once all such edges are glued, the resulting space is an annulus, say \(A_{\mathcal Q}\), endowed with a real projective structure with boundary such that one of its boundary components, say \(\mathcal E_1\) is locally projective equivalent to to a great circle in \(\sph^2\) and the other, say \(\mathcal B\) is a concatenation of \(n\) projective edges that projects to the polygonal chain \(\mathcal P\). We denote the set of corners of \(\mathcal B\) as \(\{p_i^\star\}\) and we denote by \(s_i^\star\) the geodesic segment connecting \(p_i^\star\) with \(p_{i-1}^\star\), where indexes are taken modulo \(n\). Recall the edges are oriented to leave the disc on their left according to our standing assumption. See Figure \ref{fig:non overlap}.
    \smallskip
    \item[2.] Two triangles \(\textnormal{T}_i\) and \(\textnormal{T}_j\) are adjacent if \(j \equiv i + 1 \pmod n\). In this case, we glue together the edges bearing the same label \(c_{i+1}\). We may note that, prior to gluing, every triangle in the collection \(\mathcal T\) shares a vertex at \(q \in \mathcal L\). Once all such edges are identified, these vertices merge into a single point, which we denote by \(q_\star\). The resulting space is a topological disc, say \(D_{\mathcal T}\), endowed with a real projective structure with a branch point at \(q_\star\) of some order \(m\), the precise value of which we shall establish later, see Claim~\ref{lem:geometry around q}, and whose boundary component \(\mathcal E_2\) is locally projectively equivalent to the great circle in \(\mathcal E\) in \(\sph^2\). See Figure \ref{fig:non overlap 2}.
    \smallskip
    \item[3.] Finally, a quadrilateral \(\textnormal{Q}_i\) is glued to the triangle \(\textnormal{T}_i\) by identifying the edges with label \(e_i\). By observing that both boundary components \(\mathcal E_1 \subset A_{\mathcal Q}\) and \(\mathcal E_2 \subset D_{\mathcal T}\) are partitioned into projective segments \(\{e_i\}\), in this final step we identify the boundary components of \(A_{\mathcal Q}\) and \(D_{\mathcal T}\). We shall denote the resulting curve by \(\mathcal E_\star\). See Figure \ref{fig:upper hemi 2}.
\end{itemize}

\smallskip

\noindent We may observe that all gluings are consistent with the orientations of the edges. The resulting space is a topological closed disc \(\mathcal C=A_{\mathcal Q}\,\cup\,D_{\mathcal T}\) endowed with a real projective structure, say \(\sigma_{\textnormal{core}}\), with piecewise geodesic boundary. By construction, its developing map restricts to an embedding on every polygon of either collections \(\mathcal Q\) and \(\mathcal T\) and the boundary develops onto the chain as defined in Equation \eqref{eq:chain} with \(n\) corner points as desired. It remains to deal with the geometry around the point \(q\). In what follows we shall define \(\mathcal C\) as the \textit{core disc}. In order to conclude the proof of Proposition \ref{prop:coredisc}, we shall need some technical claims.

\smallskip

\begin{figure}[htbp]
  \centering
  \begin{tikzpicture}[scale=1.125]
    \pattern [pattern=north west lines, pattern color=plum, opacity=0.25] (000:2.25)--(000:2.75) arc (000:360:2.75)--(000:2.25) arc (360:000:2.25);
    
    \draw[white, opacity=0.25] (0,0) circle (2.25cm);
    \draw[black, ->] (090:3) arc (090:120:3);
    \shade[ball color = white, opacity=.20, shading angle=0] (0,0) circle (2.25cm);

    \draw[thick, plum] (0,0) circle (2.25cm);

    \draw[red] (090.00:1.5 )--(090.00:2.25); 
    \draw[red] (142.50:1.25)--(142.50:2.25); 
    \draw[red] (197.50:0.75)--(197.50:2.25);
    \draw[red] (287.50:0.75)--(287.50:2.25);
    \draw[red] (355.00:1.00)--(355.00:2.25);
    \draw[red] (033.75:1.25)--(033.75:2.25);

    \fill[white] (090:1.5)--(142.50:1.25)--(197.50:0.75)--(287.00:0.75)--(355.50:1)--(033.75:1.25)--(090:1.5);
    
    \draw[thick, orange] (090:1.5)--(142.50:1.25)--(197.50:0.75)--(287.00:0.75)--(355.50:1)--(033.75:1.25)--(090:1.5);

    \foreach \p in {(090.00:1.5), (142.50:1.25), (197.50:0.75), (287.00:0.75), (355.50:1), (033.75:1.25)} {
    \fill[black] \p circle (1pt);
    }

    \foreach \p in {(090.00:2.25), (142.50:2.25), (197.50:2.25),  (287.50:2.25), (355.00:2.25), (033.75:2.25)} { 
    \fill[red] \p circle (1pt);
        }


    \foreach \theta/\i in {120/1, 170/2, 232.5/3, 315/4, 15/5, 60/6} {
    \node at (\theta:1.75) {\(\textnormal Q_{\i}\)};
    }

    \node at (270:2.00) {\(\mathcal A_{\mathcal Q}\)};
    \node at (270:2.50) {\(\mathcal D_{\mathcal T}\)};

    \node[plum] at (120:2.5) {\(\mathcal E_\star\)};


    \begin{scope}[shift={(6,0)}]
    \pattern [pattern=north west lines, pattern color=plum, opacity=0.25] (000:2.25)--(000:2.75) arc (000:360:2.75)--(000:2.25) arc (360:000:2.25);
    
    \draw[white, opacity=0.25] (0,0) circle (2.25cm);
    \draw[black, ->] (090:3) arc (090:060:3);
    \shade[ball color = white, opacity=.20, shading angle=0] (0,0) circle (2.25cm);

    \draw[thick, plum] (0,0) circle (2.25cm);

    \foreach \p in {(090.00:2.25), (037.50:2.25), (342.50:2.25), (252.50:2.25), (185.00:2.25), (146.25:2.25)} { 
    \fill[red] \p circle (1pt);
    }


    \foreach \p in {(090.00:2.25), (037.50:2.25), (342.50:2.25), (252.50:2.25), (185.00:2.25), (146.25:2.25)} { 
    \draw[red] \p--(0:0);
    }

    \fill[black] (0:0) circle (1pt);
    \node at (300:0.25) {\(q_\star\)};

    \foreach \theta/\i in {120/6, 170/5, 232.5/4, 315/3, 15/2, 60/1} {
    \node at (\theta:1.75) {\(\textnormal T_{\i}\)};
    }

    \node at (270:2.50) {\(\mathcal A_{\mathcal Q}\)};
    \node at (270:2.00) {\(\mathcal D_{\mathcal T}\)};

    \node[plum] at (120:2.5) {\(\mathcal E_\star\)};
    \end{scope}

  \end{tikzpicture}
  \caption{The figure shows on the left the annulus obtained by gluing the quadrilaterals of the family \(\mathcal Q\), and on the right the disc obtained by gluing the triangles of the family \(\mathcal T\). We can observe that these spaces satisfy the listed properties. In particular, one of the boundary components of the annulus and the boundary of the disc are locally projectively equivalent to a great circle in \(\sph^2\) and have opposite orientations, both bounding the space to their left. Furthermore, we can observe that these boundary components are subdivided into arcs that are identified in pairs.}
  \label{fig:upper hemi 2}
\end{figure}

\smallskip

\subsubsection{Some technical claims}\label{sssec: technical claims} We begin with the following claim, which we will use several times in the sequel.

\begin{claim}\label{claim:branch point}
    Let \(D\) be a closed disc with boundary \(\partial D\). Let \(q\) be any point in the interior of \(D\). Let \(\sigma\) be an unbranched real projective structure on \(D^* = D \setminus \{\,q\,\}\) with trivial holonomy and no corner points. Then, the developing map \(\textnormal{dev}_\sigma\) descends to a well-defined single-valued map, say \(f \colon D^* \longrightarrow \rp2\).
    In addition, assume that the image of \(f\) is contained in some affine chart \(\R^2\) and that \(f\) extends continuously over the point \(q\). Then \(f\) is homotopic to a branched covering map of degree \(d+1\ge1\) and the projective structure \(\sigma\) uniquely extends to a branched real projective structure on \(D\). In particular, \(q\) is a branch point of order \(d\).

\end{claim}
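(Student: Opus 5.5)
The plan has two parts: a single-valuedness statement for the developing map, and then a local analysis at \(q\) carried out entirely in the affine chart.

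\emph{Single-valuedness.} Let \(\pi\colon\widetilde{D^*}\longrightarrow D^*\) be the universal cover. Since \(\pi_1(\,D^*\,)\cong\Z\) is generated by a loop \(\gamma\) around \(q\), the developing map satisfies \(\textnormal{dev}_\sigma(\,\gamma\cdot x\,)=\rho(\,\gamma\,)\,\textnormal{dev}_\sigma(\,x\,)\). The hypothesis \(\rho=\textbf{1}\) makes \(\textnormal{dev}_\sigma\) invariant under the deck group, so it factors through \(\pi\) and defines \(f\colon D^*\longrightarrow\rp2\). This is the same mechanism as in Claims \ref{claim:holonomypuncturedsurface} and \ref{claim:develpoingmapconstruction}, and \(f\) is a local homeomorphism because \(\sigma\) is unbranched. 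Uniqueness up to post-composition by \(\pslr3\) follows from analytic continuation.

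\emph{Degree.} Assume \(f(\,D^*\,)\subset\R^2\) and let \(f(\,q\,)=w_0\) denote the value of the continuous extension. Fix a small closed disc \(\Delta\ni q\). Choose a smaller disc \(\Delta'\subset\Delta\) around \(q\) with \(f(\,\partial\Delta'\,)\not\ni w_0\); this is possible because \(f\) is a local homeomorphism near \(\partial\Delta'\) and, after shrinking, the fibre \(f^{-1}(\,w_0\,)\) near \(q\) is \(\{q\}\) together with isolated points. Define \(d+1\) to be the winding number of \(f|_{\partial\Delta'}\) around \(w_0\) in \(\R^2\setminus\{w_0\}\). Because \(f\) is an orientation-preserving local homeomorphism on \(\Delta'\setminus\{q\}\), every local index is \(+1\), so the winding number equals the number of preimages of \(w_0\) counted with index. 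This gives \(d+1\ge1\), since \(q\) itself contributes.

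\emph{Normal form and extension.} Next I would show that \(f\) near \(q\) is a branched cover. I would apply the Stoilow-type topological fact that a continuous map of a disc which is open, discrete and a local homeomorphism off one point is locally conjugate to \(z\mapsto z^{k}\). Openness and discreteness at \(q\) come from the local-homeomorphism property on the punctured disc together with the isolatedness of \(f^{-1}(\,w_0\,)\). Concretely, take a small round disc \(B\) about \(w_0\) and let \(U\) be the component of \(f^{-1}(\,B\,)\) containing \(q\). Then \(f\colon U\setminus f^{-1}(\,w_0\,)\longrightarrow B\setminus\{w_0\}\) is a proper local homeomorphism, hence a finite covering of a punctured disc. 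Such a covering is cyclic of degree equal to the winding number \(d+1\), which gives the model \((r,\theta)\mapsto(r^{d+1},(d+1)\theta)\) up to isotopy, as in Definition \ref{def:branchechart}. The resulting chart \((U,f|_U)\) differs from the charts of \(\sigma\) on overlaps by restrictions of the identity. Adding it to the atlas therefore gives a branched structure on \(D\) with \(q\) a branch point of order \(d\), in the sense of Definition \ref{def:branchedprojestructure}. Uniqueness follows from density of \(D^*\).

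\emph{Main obstacle.} The delicate step is the properness in the normal-form argument. One must ensure that the component \(U\) is relatively compact and contains no other preimages of \(w_0\); otherwise \(f\) could fold and fail to be a covering over the punctured disc. Continuity of \(f\) at \(q\) provides exactly this. I would first shrink \(B\) so that \(f^{-1}(\,\overline B\,)\cap\Delta\) has the component containing \(q\) inside \(\mathrm{int}\,\Delta'\), and then use the degree count above to exclude extra preimages, since each would add a further \(+1\).
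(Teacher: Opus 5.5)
The decisive point is assumed rather than proved: you need \(q\) to be an isolated point of \(f^{-1}(w_0)\), and you need the local degree there to be positive. What you actually know is only that the preimages of \(w_0\) in \(D^*\) are isolated \emph{in} \(D^*\); a priori they may accumulate at \(q\).

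Your degree paragraph presupposes both facts:
\begin{itemize}
\item The identity ``winding number \(=\) number of preimages counted with index'' requires \(f^{-1}(w_0)\cap\Delta'\) to be finite.
\item ``\(q\) itself contributes'' presupposes that the index at \(q\) is defined and at least \(1\), which is exactly what the claim asserts.
\end{itemize}
The same circularity affects your last paragraph. You defined \(d+1\) as the total winding number, so it already includes any extra preimages, and the count can only exclude them if you already know the index at \(q\) is \(d+1\). Consequently two steps are unsupported: the appeal to Stoilow, which needs lightness at \(q\), and the assertion that \(U\setminus f^{-1}(w_0)\to B\setminus\{w_0\}\) is a \emph{cyclic} covering. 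A finite covering of a punctured disc is cyclic only on each component, so you also need \(U\setminus f^{-1}(w_0)\) to be connected. (The paper's proof passes over the same point: it simply asserts that the extension is open and light before quoting Stoilow.)

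Your covering set-up already contains the repair, provided you use properness instead of the degree count.
\begin{itemize}
\item Choose \(\Delta'\) with \(w_0\notin f(\partial\Delta')\); this is possible because the fibre in \(D^*\) is discrete, hence countable. Take \(B\) of radius less than \(\mathrm{dist}(w_0,f(\partial\Delta'))\). Then the component \(U\) of \(f^{-1}(B)\) through \(q\) lies in \(\mathrm{int}\,\Delta'\), and \(f|_U\colon U\to B\) is proper.
\item Hence \(F=f^{-1}(w_0)\cap U\) is compact and countable. A countable closed set does not disconnect a planar domain, so \(V=U\setminus F\) is connected.
\item Therefore \(f|_V\colon V\to B\setminus\{w_0\}\) is a connected finite covering, i.e.\ \(z\mapsto z^k\) on a punctured disc. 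You get \(k\ge1\) for free, since \(V\neq\varnothing\).
\item The closures of \(f|_V^{-1}\big(\overline{B_\epsilon(w_0)}\setminus\{w_0\}\big)\) form nested continua. Their intersection is contained in \(F\), and it contains all of \(F\), because each point of \(F\) is a limit of points of \(V\) and \(f\) is continuous. A countable continuum is a point, so \(F=\{q\}\) and \(f|_U\) is conjugate to \(z\mapsto z^k\).
\end{itemize}

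Alternatively, lightness follows from a degree argument with nearby regular values. For \(w\neq w_0\) close to \(w_0\), the fibre of \(w\) in \(\overline{\Delta_r}\) is compact and misses \(q\), hence finite, and each point counts \(+1\). So the winding number of \(f(\partial\Delta_r)\) about \(w_0\), which equals that about any such \(w\) in the same complementary component, is \(\ge0\) for every small \(r\) with \(w_0\notin f(\partial\Delta_r)\). This number drops by one for each preimage of \(w_0\) crossed as \(r\) decreases, so only finitely many preimages lie near \(q\).

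Finally, you should say why \(f\) may be taken orientation-preserving: \(D^*\) is connected, and otherwise you can compose with a reflection, which lies in \(\pslr3\).
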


\begin{proofclaim}
    By assumption, the holonomy \(\rho \colon \pi_1(D^*) \longrightarrow \pslr3\) of \(\sigma\) on \(D^*\) is the trivial representation. As a consequence, the developing map \(\textnormal{dev}_\sigma \colon U \longrightarrow \rp2\) is invariant under the action of \(\pi_1(\,D^*\,) \cong \mathbb{Z}\) and directly descends to a well-defined, single-valued real analytic map, say \(f \colon D^* \longrightarrow \rp2\). Since \(\sigma\) has no branch points, \(f\) is a local diffeomorphism. 
    For the second part, suppose \(f\) extends continuously over \(q\), thus mapping \(D\) into an affine chart \(\R^2\subset\rp2\). We now invoke Stoilow's Theorem to analyse the local behaviour of \(f\) around the point \(q\). Recall that a continuous map between surfaces is called light if the preimage of any point is totally disconnected (or discrete, in the case of isolated points). In our setting, since \(\sigma\) is an unbranched projective structure on \(D^*\), the map \(f\) is a local diffeomorphism on \(D^*\), hence it is automatically open and light. Since \(q\) is an isolated point and \(f\) extends continuously over it, the extended map \(f \colon D \to \mathbb{R}^2\) remains continuous, open, and light. We now invoke Stoilow's Theorem, see \cite{stoilow1928}, that states that for each point \(x \in D\), there exist a positive integer \(d\), an open neighbourhood \(U \subset D\) of \(x\), and two homeomorphisms, say \(\psi \colon U \longrightarrow \mathbb{D}\) and \(\phi \colon f(U) \longrightarrow \mathbb{D}\) onto the standard unit disc \(\mathbb{D}\) such that \(\psi(\,x\,) = 0\) and \(\phi(\,f(\,x\,)\,) = 0\), and the following diagram commutes:
    \begin{equation}
        \begin{CD}
            U @>{f|_U}>> f(U) \\
            @V{\psi}VV @VV{\phi}V \\
            \mathbb{D} @>{z\, \mapsto\, z^{d}}>> \mathbb{D}
        \end{CD}
    \end{equation}
    By choosing \(x = q\), the commutativity of the diagram implies that \(f|_U = \phi^{-1} \circ (z \mapsto z^d) \circ \psi\). In our setting, we may assume \(U=D\) and since the target is contained in an affine chart \(\mathbb{R}^2\), we can continuously deform the homeomorphisms \(\psi\) and \(\phi\) to the identity by using the Alexander Trick, see \cite{farb2011primer}. This yields a homotopy connecting \(f\) to a standard branched covering map of degree \(d\), establishing that \(q\) is a branch point of order \(d-1\) as desired.
\end{proofclaim}

\begin{claim}\label{lem:geometry around q}
   Let \(\textnormal{Ind}(\mathcal P; q)\) denote the winding number of the chain \(\mathcal P \subset \sph^2\) with respect to \(q\). Then \(q_\star\) is a branch point for \(\sigma_{\textnormal{core}}\) and its order is equal to
    \begin{equation}
    m + 1 = \textnormal{Ind}(\mathcal P; q).
    \end{equation}
\end{claim}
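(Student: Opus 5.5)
The plan is to apply Claim~\ref{claim:branch point} to the disc \(D_{\mathcal T}\) with the distinguished interior point \(q_\star\), and then compute the local degree at \(q_\star\) as a winding number. First I check the hypotheses. On \(D_{\mathcal T}\setminus\{q_\star\}\) the structure is unbranched: interior points of the triangles \(\textnormal T_i\) are regular because each developing map \(\textnormal T_i\hookrightarrow\sph^2\) is an embedding. Interior points of a glued edge \(c_i\) are regular because \(\textnormal T_{i-1}\) and \(\textnormal T_i\) lie on opposite sides of the segment \(c_i\). This follows from the orientation convention, since both triangles keep \(c_i\) on their left with opposite orientations. The holonomy around \(q_\star\) is trivial, because all gluings are by the identity of \(\sph^2\). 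The developing map is therefore the single-valued map \(f\) obtained by gluing the inclusions of the \(\textnormal T_i\). It extends continuously to \(q_\star\mapsto q\), and its image lies in the closed hemisphere \(\mathcal L\). I will choose \(q\) and the \(q_i\) generically so that the image avoids a great circle, so after projecting by \(\pi_{\textnormal{std}}\) it lies in an affine chart. Claim~\ref{claim:branch point} then says that \(q_\star\) is a branch point whose local degree \(m+1\) equals the degree of \(f\) near \(q_\star\).

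Next I compute that degree as the winding number of \(f\) along a small loop \(\lambda\) around \(q_\star\). This number equals the winding number about \(q\) of the image of \(\partial D_{\mathcal T}=\mathcal E_2\). The reason is that \(f\) is a local homeomorphism on \(D_{\mathcal T}\setminus\{q_\star\}\) and takes the value \(q\) only at \(q_\star\); that is, \(f^{-1}(q)=\{q_\star\}\), since each triangle contains \(q\) only as its apex. Hence \(\lambda\) and \(\partial D_{\mathcal T}\) are homotopic in \(D_{\mathcal T}\setminus\{q_\star\}\), which maps into \(\sph^2\setminus\{q\}\), so their images have the same winding about \(q\). The image of \(\partial D_{\mathcal T}\) is the concatenation \(e_1e_2\cdots e_n\) of equatorial arcs, with the orientation induced from \(\mathcal L\). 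So \(m+1\) is the winding number about \(q\) of the loop \(e_1\cdots e_n\) traversed on \(\mathcal E\).

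Finally I relate this to \(\textnormal{Ind}(\mathcal P;q)\). The annulus \(A_{\mathcal Q}\) develops into the closed upper hemisphere \(\mathcal U\), which is contained in \(\sph^2\setminus\{q\}\) because \(q\in\mathcal L\) is interior. It furnishes a free homotopy in \(\sph^2\setminus\{q\}\), through the quadrilaterals \(\textnormal Q_i\), between the developed image of \(\mathcal E_1\) and the developed image of \(\mathcal B\), which is \(\mathcal P\). Concretely, each \(e_i\) is homotopic rel endpoints, through \(\textnormal Q_i\), to \(r_i^{-1}s_i^{-1}r_{i+1}\) up to orientation, and the spokes \(r_i\) cancel telescopically. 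Thus the equatorial loop is homotopic to \(\mathcal P\), and the two winding numbers about \(q\) agree, giving \(m+1=\textnormal{Ind}(\mathcal P;q)\). Since \(\sph^2\setminus\{q\}\) is a disc, the winding number must be read in the punctured plane \(\sph^2\setminus\{q,q'\}\) for a reference point \(q'\) chosen in \(\mathcal U\) inside \(\mathcal P\). Equivalently, I would stereographically project from a point of \(\mathcal U\) away from \(\mathcal P\), and then use the same reference point for both computations.

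The main obstacle is the bookkeeping of orientations and signs. I must check that the orientation of \(\mathcal E_2\) coming from \(D_{\mathcal T}\) matches, after the identification with \(\mathcal E_1\) reversing orientation, the orientation of \(\mathcal P\) coming from \(A_{\mathcal Q}\), so that the index is positive and not its negative. I would fix this by a direct check on a convex example, such as the one in Figure~\ref{fig:upper hemi}, where \(\mathcal P\) is a convex polygon winding once and the triangles tile \(\mathcal L\) exactly once, giving \(m=0\). This also gives the positivity \(m+1\ge1\).
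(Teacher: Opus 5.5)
\paragraph{Overall.} Your route is the paper's: show \(q_\star\) is a branch point via Claim~\ref{claim:branch point}, identify \(m+1\) with the degree \(d\) with which the developed loop \(e_1\cdots e_n\) wraps around \(\mathcal E\), and then transport this through the annulus \(A_{\mathcal Q}\) to \(\mathcal P\).

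\paragraph{A small repair in the first two steps.} The first two steps are sound after one fix. The arcs \(e_i\) cover \(\mathcal E\), so the developed image of \(D_{\mathcal T}\) is the whole closed hemisphere \(\mathcal L\cup\mathcal E\). That hemisphere meets every great circle, so no generic choice puts it in an affine chart. Apply Claim~\ref{claim:branch point} to a small disc around \(q_\star\) instead, and read the winding numbers of your second step in \((\mathcal L\cup\mathcal E)\setminus\{q\}\).

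\paragraph{The gap: the third step.} You rightly note that a winding number about \(q\) only makes sense in \(\sph^2\setminus\{q,q'\}\). However, the homotopy you build through the \(\textnormal{Q}_i\) lives in \(\mathcal U\cup\mathcal E\), which contains \(q'\). The developed quadrilaterals may well cover \(q'\), since the construction explicitly lets them overlap, and each time the homotopy sweeps across \(q'\) the winding number changes.

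Precisely, the argument principle on \(A_{\mathcal Q}\) (or on the whole disc \(\mathcal C\)) gives the following. The winding number of \(\mathcal P\) about \(q\) in \(\sph^2\setminus\{q'\}\) equals \(d\) minus the number of preimages of \(q'\) in \(A_{\mathcal Q}\), counted with multiplicity. So \(m+1=\textnormal{Ind}(\mathcal P;q)\) holds exactly when \(q'\) lies outside the developed image of the core disc.

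Choosing ``a point of \(\mathcal U\) inside \(\mathcal P\)'' does not ensure this. As soon as \(d\ge 2\), every point about which \(\mathcal P\) winds at most once is covered by the developed annulus at least \(d-1\ge1\) times. This already happens in Figure~\ref{fig:upper hemi}, where \(e_1\) and \(e_3\) overlap.

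If the \(q_i\) are chosen so that the arcs wrap more than necessary, there may be no admissible \(q'\) at all. Take a convex quadrilateral \(\mathcal P\) with four arcs \(e_i\) of length close to \(\pi\), consecutive \(r_i\) disjoint and every \(\textnormal{Q}_i\) embedded. Then \(d=2\), hence \(m=1\), while \(\mathcal P\) winds at most once about any point, and the developed disc is onto \(\sph^2\).

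\paragraph{Relation to the paper, and how to close the argument.} The paper's proof asserts the same coincidence of winding numbers without justification, so you are not missing an idea the paper supplies. Still, as written your third step does not prove the claim. To close it:
\begin{itemize}
\item define \(\textnormal{Ind}(\mathcal P;q)\) relative to a reference point \(q'\notin\textnormal{dev}(\mathcal C)\);
\item choose the \(q_i\) so that such a point exists;
\item apply the argument principle directly to the developed disc \(\mathcal C\), whose boundary keeps \(\mathcal C\) on its left.
\end{itemize}
This gives \(m+1=d=\textnormal{Ind}(\mathcal P;q)\) with the correct sign, and makes your separate orientation check on a convex example unnecessary.
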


  \begin{proofclaim}
      We begin by observing that \(q_\star\) is a branch point as a consequence of Claim \ref{claim:branch point}. Alternatively, 
      we may observe that the disc \(D_{\mathcal T}\) comes with a degree-\(d\) covering map over \(\mathcal L \cup \mathcal E\), branched at \(q\), where \(d\) is the smallest positive integer for which the triangles unfold, by design. Therefore, \(q_\star\) is a branch point of order equal to \(m = d - 1\). As a consequence, we deduce that the winding number of the \(\mathcal E_\star\) with respect to the point \(q\) is equal to \(d\). On the other hand, this also coincides with the winding number of the piecewise geodesic boundary \(\mathcal B\) developed onto the polygonal chain \eqref{eq:chain}. Hence, the result follows.
  \end{proofclaim}

\noindent This latter Claim concludes the proof of Proposition \ref{prop:coredisc}.

\smallskip

\subsection{Projective handles}\label{ssec:handle realisation} We next focus on the realisation of projective handles whose holonomy representations match the prescribed algebraic data. As a consequence of our reduction, as outlined in \S\ref{ssec:geom process initialisation}, in the present section we aim to realise a representation \(\rho\colon\pi_1(\,S_{1,1}\,)\longrightarrow \pslr3\) as the holonomy of some branched projective structure on \(S_{1,1}\). More precisely, in the present subsection we aim to prove the following statements.

\begin{prop}\label{prop:representations of punctured tori non abel}
    Every non-abelian representation \(\rho\colon\pi_1(\,S_{1,1},b\,)\longrightarrow \pslr3\) arises as the holonomy of some branched real projective structure on \(S_{1,1}\) with at most one branch point in the interior and piecewise geodesic boundary with at most one corner point at \(b\).
\end{prop}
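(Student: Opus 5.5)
I will mirror the construction of the core disc in Proposition \ref{prop:coredisc}, now working with the once-punctured torus cut open along its handle-generators. Let \(A=\rho(\alpha)\) and \(B=\rho(\beta)\). Cutting \(S_{1,1}\) along \(\alpha\) and \(\beta\) yields a pentagon whose boundary reads \(\alpha\,\beta\,\alpha^{-1}\,\beta^{-1}\) followed by the boundary arc \(\gamma=[\alpha,\beta]\), with all five corners mapping to \(b\). I therefore aim to build a \(\rho\)-equivariant map on this pentagon that agrees with the side-pairings \(A\) and \(B\).

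First I fix the vertices. By Lemma \ref{lem:base point} I choose \(x_0\in\rp2\) not fixed by any non-trivial element of \(\langle A,B\rangle\). I then set \(x_1=A x_0\), \(x_2=AB x_0\), \(x_3=ABA^{-1}x_0\), and \(x_4=[A,B]x_0\). Non-abelianity gives \([A,B]\neq\textbf{1}\), so \(x_4\neq x_0\), and by the choice of \(x_0\) consecutive vertices are also distinct. Next I choose a line \(\ell_\infty\) avoiding all \(x_i\) and join consecutive vertices by the affine segments in \(\rp2\setminus\ell_\infty\). This yields a closed pentagonal chain lying in an affine chart.

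The side-pairing must be compatible with these segments. The segment \(x_0x_1\) (the side \(\alpha\)) is sent by \(ABA^{-1}B^{-1}\dots\) in general to a segment joining the corresponding paired vertices, namely \(B\)-translates, namely \(x_3x_2\) after the appropriate element. However, the image of an affine segment under a projectivity may be the complementary arc of the line, not the chosen affine segment. I will avoid this by choosing only the segments on the \(\alpha\) and \(\beta\) sides, say \(s_\alpha=[x_0,x_1]\) and \(s_\beta=[x_1,x_2]\), and then defining the paired sides as their images \(ABA^{-1}(s_\alpha)\) and \(AB\cdot(\dots)\) under the gluing elements, lifted to \(\sph^2\). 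The fifth side, developing \(\gamma\), is then an arbitrary segment from \(x_4\) to \(x_0\), which produces a corner only at \(b\). The result is a closed polygonal chain on \(\sph^2\) that is equivariant by construction.

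I then fill this chain exactly as in \S\ref{sssec:proj poly}--\S\ref{sssec:projective disc}: cone each side to points \(q_i\) on an equator avoiding all vertices, glue the quadrilaterals and the triangles coned to a point \(q\) in the other hemisphere, and obtain a disc whose boundary develops onto the chain. Claims \ref{claim:branch point} and \ref{lem:geometry around q} then give a single interior branch point of order \(\textnormal{Ind}-1\). Gluing the paired sides by \(A\) and \(B\) makes the developing maps match by construction, and yields \(S_{1,1}\) with holonomy \(\rho\) and a boundary corner at \(b\).

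I expect the main obstacle to be the winding number. The chain must wind a positive number of times around \(q\) so that \(\textnormal{Ind}(\mathcal P;q)\ge1\) and the filling has the correct orientation, and the identified corners must have total angle consistent with a regular point at the interior vertex. If the winding comes out nonpositive, I would first try to reverse the choice of hemisphere, or to add an extra full turn by replacing one chosen segment with its complementary arc. Either change shifts the index by one and keeps equivariance.
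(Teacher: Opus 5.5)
Your outline is essentially the paper's: vertices on the orbit of a generic base point, the pentagonal chain filled as in Proposition \ref{prop:coredisc}, and sides paired by the handle holonomy. The paper packages the pairing through the auxiliary \(\chi\) on \(S_{0,5}\). You are right that a pairing need not carry one affine side onto the other, and the paper glues by \(g^{-1},h^{-1}\) without checking this. But your repair does not work as stated.

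\textbf{Matching in \(\rp2\) does not prevent folds.} Matching the developed sides in \(\rp2\) is not enough. Because \(\rp2\) is non-orientable, the two sheets adjacent to a glued side can develop onto the \emph{same} side of the common segment, so the developing map folds along it. The correct bookkeeping is on \(\sph^2\). Since \(S_{1,1}\) is oriented and \(\pslr3\cong\slr3\), the lift to \(\sph^2\) of any developing map is equivariant under the \(\slr3\)-lift \(\tilde\rho\). The reason is that deck transformations and the lifted developing map preserve orientation, while the antipodal map reverses it.

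Concretely, lift your chain from \(\tilde x_0\) and write the lifted vertices as \(\epsilon_i\tilde x_i\), where \(\tilde x_i\) are the \(\tilde\rho\)-orbit points. The \(ABA^{-1}\)-gluing is a local homeomorphism iff \(\epsilon_1\epsilon_2=1\). The \([A,B]A^{-1}\)-gluing is a local homeomorphism iff \(\epsilon_1=1\); this is the element sending \(x_1\mapsto x_4\), \(x_2\mapsto x_3\), not ``\(AB\cdot(\dots)\)''. Flipping \(s_\alpha\) changes both \(\epsilon_1\) and \(\epsilon_2\), and flipping \(s_\beta\) changes \(\epsilon_2\). So exactly one of your four free choices avoids a fold.

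\textbf{The fifth side is not arbitrary.} The two arcs from \(x_4\) to \(x_0\) differ by the generator of \(\pi_1(\rp2)\). For one of them the lifted chain ends at \(-\tilde x_0\), and no disc can bound it, since \(\textnormal{dev}|_{\partial\Delta}\) extends over \(\Delta\). The clean fix is to take \(\tilde x_0\in\sph^2\), let the vertices be its \(\tilde\rho\)-images under the pentagon's vertex words, and join consecutive ones by short great-circle arcs. Then closure, matching and absence of folds are automatic, and nothing is left to choose.

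\textbf{The filling step is not justified.} The resulting chain need not lie in an open hemisphere: your paired sides may cross \(\ell_\infty\), which is exactly the case you are guarding against. The construction of \S\ref{sssec:proj poly} uses that the whole chain lies in \(\mathcal U\), because \(\textnormal{Q}_i\) is the region of \(\mathcal U\) cut out by \(s_i,r_i,e_i,-r_{i+1}\). An equator avoiding only the vertices does not produce these regions. You need one of two things:
\begin{itemize}
\item a lemma choosing \(\tilde x_0\) so that all five arcs lie in one hemisphere; or
\item a more general filling. Take \(q\) off the lifted chain, not antipodal to any vertex and not on any great circle carrying a side. Take a small convex projective polygon \(P\) around \(q\), and let \(r_i\) be the arc from \(p_i\) toward \(q\), stopped at \(\partial P\). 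Distinct great circles through \(p_i\) meet only in \(\pm p_i\), and all arcs have length \(<\pi\). Hence \(-r_{i+1}\,s_i\,r_i\) is a simple arc in the disc \(\sph^2\setminus\mathrm{int}\,P\) with endpoints on \(\partial P\), and \S\ref{sssec:projective disc} goes through with \(\partial P\) in place of \(\mathcal E\).
\end{itemize}

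\textbf{The winding worry is misplaced.} All arcs \(e_i\) run in the same direction, so the cone angle at \(q_\star\) is \(2\pi d\) with \(d\ge1\) automatically. Your remedies would not act as claimed anyway: flipping \(s_\alpha\) also flips its paired side, so the class in \(\pi_1(\rp2)\) is unchanged. Flipping the fifth side only toggles whether the lift closes. Note also that in the non-abelian case the five corners glue to the boundary point \(b\), not to an interior vertex.
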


\noindent \noindent In the case of abelian representations, however, the situation exhibits specific subtleties that lead to a slightly different formulation.

\begin{prop}\label{prop:representations of punctured tori abel}
    Every abelian representation \(\rho\colon\pi_1(\,S_{1,1}\,)\longrightarrow \pslr3\) arises as the holonomy of some branched real projective structure on \(S_{1,1}\). Moreover, if \(\rho\) is non-trivial, there exists a branched projective structure with a single branch point in its interior.
\end{prop}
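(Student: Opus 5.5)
The idea is to reduce the abelian case to the construction already carried out for the core surface. Write \(\pi_1(S_{1,1},b)=\langle\alpha,\beta\rangle\), free of rank two, and set \(A=\rho(\alpha)\), \(B=\rho(\beta)\). Since \(\rho\) is abelian, \(AB=BA\), so \(\rho([\alpha,\beta])=\mathbf 1\).

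If \(\rho\) is trivial, there is nothing new to prove. By Remark \ref{rmk:trivial holo realisable}, a branched covering \(T^2\to\sph^2\) pulls back the standard structure to a branched structure with trivial holonomy. Restricting it to the complement of a small disc gives the required structure on \(S_{1,1}\).

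If \(\rho\) is non-trivial, I would build the handle from a fundamental pentagon. Choose a base point \(p\) using Lemma \ref{lem:base point}, so that \(p\) is fixed by no non-trivial element of \(\rho(\pi_1)\). Consider the five points
\[
p,\quad A p,\quad ABp,\quad ABA^{-1}p,\quad ABA^{-1}B^{-1}p=p .
\]
The last equality holds because the holonomy is abelian. So the boundary of the standard pentagon, whose sides read \(\alpha,\beta,\alpha^{-1},\beta^{-1}\) followed by the boundary arc, develops to a closed quadrilateral chain. Its vertices are \(p, Ap, ABp, Bp\), since \(ABA^{-1}p=Bp\). The fifth side degenerates to the point \(p\) itself, which is the corner at \(b\).

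Some of these points may coincide, for instance if \(A\) or \(B\) is trivial. In that case the chain loses sides. This is exactly the situation in Proposition \ref{prop:coredisc} when some \(\rho(\gamma_i)\) is trivial, and it is handled the same way.

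Next I would fill this closed chain with a branched disc, following \S\ref{sssec:proj poly}–\S\ref{sssec:projective disc} verbatim.
\begin{itemize}
\item Choose a line \(\ell_\infty\) avoiding the vertices, take the affine segments, and lift the chain to a hemisphere \(\mathcal U\).
\item Attach the quadrilaterals \(\mathrm Q_i\) to the equator and the triangles \(\mathrm T_i\) coning off to a point \(q\in\mathcal L\).
\item Glue these to obtain a closed disc whose developing map restricts to an embedding on each polygon.
\end{itemize}
By Claims \ref{claim:branch point} and \ref{lem:geometry around q}, this disc carries a single branch point at \(q_\star\), of order \(\mathrm{Ind}(\mathcal P;q)-1\).

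The gluing then proceeds as follows. Label the boundary edges of the disc by \(\alpha,\beta,\alpha^{-1},\beta^{-1}\). By construction, the developed edge \(\alpha^{-1}\) (from \(ABp\) to \(Bp\)) is the image under \(B\) of the developed edge \(\alpha\) (from \(Ap\) back to \(p\), traversed in reverse). Similarly, the edge \(\beta^{-1}\) is \(A\) applied to the edge \(\beta\). Identify these pairs of sides by the restrictions of \(B\) and \(A\), which are projective maps. The result is a torus with one puncture at the identified vertex, that is, \(S_{1,1}\) with its boundary collapsed to a point. Its holonomy is \(\rho\) by the standard Poincaré-polygon bookkeeping.

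The main obstacle is the neighbourhood of the identified vertex. The total angle there is the sum of the four corner angles of the developed quadrilateral. The link of that vertex therefore develops as a closed curve with winding number \(k\) around \(p\), and it may fail to be a genuine boundary circle. I would handle this by removing a small disc around the vertex to obtain the boundary curve \([\alpha,\beta]\), exactly as in the intended gluing with the core in \S\ref{ssec:final assembly}. Because the holonomy around this boundary is trivial, Claim \ref{claim:branch point} shows that any extra winding can be absorbed into a branch point placed in a disc. So I expect the only delicate point to be checking that no folding occurs along the glued edges. That check reduces, via the orientation conventions ("disc on the left"), to the fact that \(A\) and \(B\) preserve orientation of \(\rp2\)-charts locally, which holds since \(\pslr3=\slr3\) acts by orientation-preserving maps on \(\sph^2\).
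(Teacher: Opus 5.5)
Your route is the paper's. You fill the developed boundary of the fundamental polygon by the core-disc construction of \S\ref{ssec:base surface}, pair the sides by \(A\) and \(B\), and discard the identified vertex; the paper reaches the same quadrilateral chain through the auxiliary representation \(\chi\). Your trivial case, pulling back along a branched cover \(T^2\to\sph^2\) and removing a disc, is correct and simpler than the slit construction of \S\ref{sssec:trivial projective handles}.

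The gap is the side pairing. You assert that the developed \(\alpha^{-1}\)-side is \(B\) applied to the developed \(\alpha\)-side ``by construction''. But the sides are affine segments for \(\ell_\infty\), and \(B\) carries the affine segment \([p,Ap]\) to the affine segment for \(B(\ell_\infty)\), which may be the complementary arc of the same line. Even when the sets match you may get a fold. Your orientation argument on \(\sph^2\) only applies if the lifted chain is an orbit, whereas yours is the hemisphere lift. Where a pairing map sends a lifted side onto the antipode of its partner, the antipodal map reverses orientation and both half-discs land on the same side.

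In fact, if the gluing produced a genuine structure, its developing map lifted to \(\sph^2\) would be equivariant for \(\rho\) viewed in \(\slr3\) acting on \(\sph^2\). So the developed vertices \(\hat p,\widehat{Ap},\widehat{ABp},\widehat{Bp}\in\mathcal U\) would have to be, up to a common sign, \(\tilde p, A\tilde p, AB\tilde p, B\tilde p\). Hence the construction can only work if such an orbit lies in an open hemisphere. This fails for every \(p\) and \(\ell_\infty\) when \(A=\operatorname{diag}(-1,-1,1)\) and \(B=\operatorname{diag}(1,-1,-1)\), because \(v+Av+ABv+Bv=0\) for all \(v\in\R^3\); for \(v=(1,1,1)\) the four points are the vertices of a regular tetrahedron. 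Concretely, with \(p=[1:1:1]\) and \(\ell_\infty=\{x=0\}\), \(A\) acts on the chart as the reflection \((u,w)\mapsto(u,-w)\) and the \(A\)-pairing folds. The paper's argument passes over exactly this point, so following it verbatim does not close the gap.

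The missing idea is to build the chain equivariantly on \(\sph^2\). Take the vertices \(\tilde p, A\tilde p, AB\tilde p, B\tilde p\) joined by short great-circle arcs. This chain closes because \(AB=BA\) in \(\slr3\), and consecutive vertices are not antipodal for \(p\) as in Lemma~\ref{lem:base point}. A linear map sends the short arc from \(u\) to \(v\), namely the rays spanned by \(su+tv\) with \(s,t\ge0\), onto the short arc between the images. So \(A\) and \(B\) match paired sides exactly and, acting orientation-preservingly on \(\sph^2\), put the two half-discs on opposite sides.

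Since this chain need not lie in a hemisphere, the filling must be done around a point \(o\) off the chain rather than via \(\mathcal E\) and \(\mathcal L\). Join each vertex to a small geodesic polygon around \(o\) along the great circle through \(o\), and cone that polygon from \(o\). The branch-point analysis of Claims~\ref{claim:branch point} and \ref{lem:geometry around q} then goes through.

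Two smaller points:
\begin{itemize}
\item If \(A\) or \(B\) is trivial you cannot simply drop sides. Collapsing both \(\alpha\)-sides leaves a bigon \(\beta\beta^{-1}\) that glues up to a sphere, so first change handle generators to make both non-trivial, as the paper does.
\item The second pairing sends the reversed \(\beta^{-1}\)-side \(p\to Bp\) by \(A\) onto the \(\beta\)-side \(Ap\to ABp\), rather than \(\beta\) onto \(\beta^{-1}\).
\end{itemize}
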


\noindent The argument to prove the main statements of this section is mostly subsumed in \S\ref{ssec:base surface}. More specifically, given a non-trivial representation \(\rho\), we shall define an auxiliary representation \(\chi\colon\pi_1(\,S_{0,5}\,)\longrightarrow \pslr 3\) and then use this latter to apply Proposition~\ref{prop:coredisc}. As a result, we shall have a topological polygon equipped with a real projective structure having one branch point in the interior. As we shall see, such a polygon is either a quadrilateral or a pentagon depending on whether the representation is abelian or not. Finally, we will appropriately identify two pairs of edges to obtain a punctured torus with the desired holonomy. The case of the trivial representation requires a special treatment because Proposition \ref{prop:coredisc} does not apply in this case, see \S\ref{sssec:trivial projective handles}. In what follows, we shall adopt the following notation. Let \(\{\,\alpha,\beta\,\}\) be a set of handle generators for \(\pi_1(\,S_{1,1}\,)\) and let \(g=\rho(\,\alpha\,)\) and \(h=\rho(\,\beta\,)\) be their respective images via \(\rho\). Finally, let \(c=\rho(\,[\alpha,\beta]\,)\) be their commutator. Up to replacing \(\{\,\alpha,\beta\,\}\) with another set of handle generators if necessary, we may assume that both \(g\) and \(h\) are non-trivial.

\smallskip

\subsubsection{Generic handles}\label{sssec:generic handle} We begin by considering the case of non-trivial representations. We stress that the following argument holds both for abelian and non-abelian representations. Before introducing the auxiliary representation, we first provide a different presentation of \(\pi_1(\,S_{1,1}\,)\). Let \(\{\,\alpha,\beta\,\}\) be a set of handle generators just introduced, set 
\begin{equation}
    \delta_1=\alpha\beta\alpha^{-1}\qquad\delta_2=\alpha\qquad\delta_3=\beta^{-1}\qquad\delta_4=\beta\alpha^{-1}\beta^{-1}\qquad\delta_5=\big[\,\beta,\,\alpha\,\big]=\beta\alpha\beta^{-1}\alpha^{-1},
\end{equation}

\noindent and consider the following presentation
\begin{equation}\label{eq:new presentation}
    \pi_1(\,S_{1,1}\,)\,=\,\Biggl\langle\,\delta_1,\,\delta_2,\,\delta_3,\,\delta_4,\,\delta_5\,\,\bigg|\,\,
    \delta_1\delta_2\delta_3\delta_4\delta_5=1, \,\,
    \delta_1\delta_2\delta_3\delta_2^{-1}=1, \,\,
    \delta_2\delta_3\delta_4\delta_3^{-1}=1 \,\, \Biggr\rangle.
\end{equation}

\smallskip

\noindent We next introduce the auxiliary representation \(\chi\colon\pi_1(\,S_{0,5}\,)\longrightarrow \pslr 3\) as follows by specifying the images of the generators of this new presentation as follows. Let \(\pi_1(\,S_{0,5}\,) = \big\langle \gamma_1, \dots, \gamma_5 \;|\; \gamma_1 \gamma_2 \cdots \gamma_5 = 1 \big\rangle\), and define a homomorphism \(\phi\colon\pi_1(\,S_{0,5}\,)\longrightarrow \pi_1(\,S_{1,1}\,)\) by setting \(\phi(\,\gamma_i\,)=\delta_i\) and hence \(\chi\) as its post-composition with the representation \(\rho\). According to our new presentation as in \eqref{eq:new presentation}, we may easily observe that \(\phi \) fails to be injective, with its kernel given by 
\begin{equation}
    \textnormal{ker}(\,\phi\,)=\big\langle \,\,\gamma_1\gamma_2\gamma_3\gamma_2^{-1}, \,\,
    \gamma_2\gamma_3\gamma_4\gamma_3^{-1}\,\,\big\rangle,
\end{equation}
determined by the second and the third relation of the presentation in \eqref{eq:new presentation}. With these reductions, the desired result readily follows for non-trivial representations. Consider the representation \(\chi\) just defined. Since \(\chi\) is non-trivial, Proposition~\ref{prop:coredisc} applies, and hence, upon choosing a judicious base point \(r\in\rp2\), we can use \(\chi\) to realise a topological disc, say \(\mathcal D\), endowed with a branched projective structure with at most a single branch point in its interior and with a piecewise projective geodesic boundary. For future purposes, see \S\ref{sssec:assembly generic}, we stress that the boundary develops on the chain of segments
\begin{equation}\label{eq:pentagonal chain}
    r \longmapsto \chi(\,\gamma_5\,)(\,r\,)\longmapsto \chi(\,\gamma_4\gamma_5\,)(\,r\,)\longmapsto \chi(\,\gamma_3\gamma_4\gamma_5\,)(\,r\,)\longmapsto \chi(\,\gamma_2\gamma_3\gamma_4\gamma_5\,)(\,r\,) \longmapsto r,
\end{equation}
\noindent (compare with the chain \eqref{eq:chain} by setting \(r_1=r\)). By replacing \(\chi(\,\cdot\,)\) with their images, the chain can be rewritten in a more pleasant form as
\begin{equation}\label{eq:pentagonal chain 2}
    r \longmapsto [\,h,g\,](\,r\,)\longmapsto g^{-1}(\,r\,)\longmapsto h^{-1}g^{-1}(\,r\,)\longmapsto gh^{-1}g^{-1}(\,r\,) \longmapsto r,
\end{equation}
\noindent see Figure \ref{fig:pentagon}.

\smallskip

\noindent Note that \(\mathcal D\) is either a quadrilateral or a pentagon depending on whether \(\chi(\,\gamma_5\,)\) is trivial or not, and hence on whether \(\rho\) is abelian. In order to realise the desired handle, it remains to identify the edges in pairs. These identifications are determined by the generators of the kernel of the homomorphism \(\phi\). More specifically, by identifying the edges as shown in Figures~\ref{fig:pentagon} and \ref{fig:quadrilateral torus realisation}, the resulting space is a genus-one surface, and exactly one of the following holds by design. Either:

\smallskip
\begin{itemize}
    \item[\textbf{NA}] In the non-abelian case, the resulting structure has a single branch point in its interior and a piecewise geodesic boundary with a corner point on it. In particular, it has holonomy \(\rho\) as desired.\\
    \smallskip
    \item[\textbf{A}] In the abelian case, instead, the resulting structure is a torus with a branch point in its interior and a special point arising from the identification of the vertices. As a direct application of Claim \ref{claim:branch point}, this is also a branch a point (possibly regular). Regardless of the nature of this point, by deleting it, we have the desired structure with holonomy \(\rho\). We observe that this case establishes Theorem \ref{thm:geometrisation} for surfaces of genus one.
\end{itemize}

\smallskip

\noindent In both cases, we can choose the base point of the construction in such a way that it is not fixed by any non-trivial element of the holonomy representation. This completes the proof of Proposition~\ref{prop:representations of punctured tori non abel} and Proposition~\ref{prop:representations of punctured tori abel} for non-trivial representations.

\begin{figure}[htbp]
  \centering
  \begin{tikzpicture}[scale=1.75]
    \pattern [pattern=north west lines, pattern color=plum, opacity=0.25] (000:2.25)--(000:2.75) arc (000:360:2.75)--(000:2.25) arc (360:000:2.25);
    
    \draw[white, opacity=0.25] (0,0) circle (2.25cm);
    \draw[black, <-] (090:3) arc (090:120:3);
    \shade[ball color = white, opacity=.20, shading angle=0] (0,0) circle (2.25cm);

    \draw[thick, plum] (0,0) circle (2.25cm);

    \fill[white] (036:1.5)--(108:1.25)--(180:1.5)--(252:1.25)--(324:1.25)--(036:1.5);
    
    \draw[thick, orange] (036:1.5)--(108:1.25)--(180:1.5)--(252:1.25)--(324:1.25)--(036:1.5);

    \foreach \p in {(036:1.5), (108:1.25), (180:1.5), (252:1.25), (324:1.25)} {
    \fill[black] \p circle (0.75pt);
    }

    \node at (320:1.35 ) {\(r\)};
    \node at (255:1.45 ) {\([\,h,g\,]\,r\)};
    \node at (180:1.785) {\(g^{-1}\,r\)};
    \node at (100:1.375) {\(h^{-1}g^{-1}\,r\)};
    \node at (042:1.65 ) {\(gh^{-1}g^{-1}\,r\)};

    \begin{scope}[rotate=-10]
        \draw[red , <-] (075:0.85) to[bend left] (235:0.85);
        \draw[blue, ->] (000:0.85) to[bend left] (165:0.85);
    \end{scope}

    \node at (135:0.725 ) {\(g^{-1}\)};
    \node at (080:0.725 ) {\(h^{-1}\)};

  \end{tikzpicture}
  \caption{The picture shows the gluing to get the desired handle in the non-abelian case. Notice that the identifications provided by \(g^{-1}\) and \(h^{-1}\) respectively are determined by the generators \(\gamma_1\gamma_2\gamma_3\gamma_2^{-1}\) and \(\gamma_2\gamma_3\gamma_4\gamma_3^{-1}\) of the kernel of \(\phi\) respectively.}
  \label{fig:pentagon}
\end{figure}

\smallskip

\begin{figure}[htbp]
  \centering
  \begin{tikzpicture}[scale=1.75]
    \pattern [pattern=north west lines, pattern color=plum, opacity=0.25] (000:2.25)--(000:2.75) arc (000:360:2.75)--(000:2.25) arc (360:000:2.25);
    
    \draw[white, opacity=0.25] (0,0) circle (2.25cm);
    \draw[black, <-] (090:3) arc (090:120:3);
    \shade[ball color = white, opacity=.20, shading angle=0] (0,0) circle (2.25cm);

    \draw[thick, plum] (0,0) circle (2.25cm);

    \fill[white] (045:1.5)--(135:1.75)--(225:1.5)--(315:1.75)--(045:1.5);
    
    \draw[thick, orange] (045:1.5)--(135:1.75)--(225:1.5)--(315:1.75)--(045:1.5);

    \foreach \p in {(045:1.5), (135:1.75), (225:1.5), (315:1.75), (045:1.5)} {
    \fill[black] \p circle (0.75pt);
    }

    \node at (300:1.675) {\([\,h,g\,]\,p_o=p_o\)};
    \node at (225:1.75 ) {\(g^{-1}\,p_o\)};
    \node at (125:1.750) {\(h^{-1}g^{-1}\,p_o\)};
    \node at (042:1.875) {\(gh^{-1}g^{-1}\,p_o\)};

    \begin{scope}[rotate=0]
        \draw[red , <-] (085:0.85) -- (265:0.85);
        \draw[blue, ->] (005:0.85) -- (185:0.85);
    \end{scope}

    \node at (205:0.725 ) {\(g^{-1}\)};
    \node at (065:0.725 ) {\(h^{-1}\)};

  \end{tikzpicture}
  \caption{The picture shows the gluing to get the desired handle in the abelian case. As in Figure \ref{fig:pentagon}, the identifications provided by \(g^{-1}\) and \(h^{-1}\) respectively are determined by the generators \(\gamma_1\gamma_2\gamma_3\gamma_2^{-1}\) and \(\gamma_2\gamma_3\gamma_4\gamma_3^{-1}\) of the kernel of \(\phi\) respectively.}
  \label{fig:quadrilateral torus realisation}
\end{figure}

\smallskip

\subsubsection{Projective handles with trivial holonomy}\label{sssec:trivial projective handles}

It remains to deal with the case of handles with trivial holonomy. As already hinted above, we cannot adopt the same approach since Proposition~\ref{prop:coredisc} does not apply. We therefore propose an alternative construction, previously used in \cite{faraco2021}. For this purpose, consider the sphere \(\sph^2\) equipped with its projective structure \(\sigma_{\textnormal{std}}\)
. Recall that a pair of points, say \(\{\,p^\pm\,\}\), are antipodal if \(\pi_{\textnormal{std}}(\,p^+\,)=\pi_{\textnormal{std}}(\,p^-\,)\). Let \(a\) be any arc joining them. Notice that such an arc is half of a great circle passing through these points. For our convenience, we shall refer to the arc \(a\) just defined as a \textit{meridian}. Finally, let \(f\colon\sph^2\to\sph^2\) be a branch covering map of spheres of degree \(\deg(\,f\,)\ge2\) with two ramification points and ramification values at \(\{\,p^\pm\,\}\). By design, the meridian lifts to a collection of \(\deg(\,f\,)\) arcs joining the preimages of \(\{\,p^\pm\,\}\) such that any two adjacent arcs bound a copy of the sphere. Let \(\sigma_f\) be the unique branched projective structure that makes the covering map \(f\) locally projective, that is \(\sigma_f=f^*\sigma_{\textnormal{std}}\). Notice that its developing map, say \(\textnormal{dev}_{\sigma_f}\), is nothing else that the composition \(\pi_{\textnormal{std}}\circ f\). We shall use the structure \((\sph^2,\sigma_f)\) as the base structure to realise the desired handle with trivial holonomy. 

\bigskip


Let \(\{\,\overline{p}^{\,\pm}\,\}\in(\sph^2,\sigma_f)\) be the preimages of \(p^\pm\). Let \(a_1\) and \(a_2\) be two distinct preimages of the meridian \(a\). We define two geodesic segments, say \(\tau_1,\tau_2\colon [0,1]\longrightarrow \sph^2\), such that the following conditions hold
\begin{itemize}
    \item[1.] \(\tau_1(\,0\,)=\tau_2(\,0\,)=\overline{p}^{\,+}\).
    \smallskip
    \item[2.] \(\tau_i\) is a segment of the meridian \(a_i\). In particular \(\tau_1\) and \(\tau_2\) do not share any point other than \(\overline{p}^{\,+}\).
    \smallskip
    \item[3.] The identity \(\textnormal{dev}_{\sigma_f}\Big(\,\tau_1\big(\,[0,t]\,\big)\,\Big)=\textnormal{dev}_{\sigma_f}\Big(\,\tau_2\big(\,[0,t]\,\big)\,\Big)\) holds for every parameter \(t\in[0,1]\).
\end{itemize}

\noindent Notice that the second condition ensures that \(\tau_1\) and \(\tau_2\) are injectively developed and overlap once developed, while the third condition ensures that these paths match at every parameter \(t\). Let \(0 < t_1 < t_2 \le1\) be two time parameters. For \(i=1,2\), let \(\delta_i\) be the subsegment of \(\tau_i\) parametrized by the interval \([t_1,t_2]\). The third condition guarantees that 
\begin{equation}
    \textnormal{dev}_{\sigma_f}\Big(\,\tau_1\big(\,[t_1,t_2]\,\big)\,\Big) = \textnormal{dev}_{\sigma_f}\Big(\,\tau_2\big(\,[t_1,t_2]\,\big)\,\Big).
\end{equation}
For \(i=1,2\), we slit along \(\delta_i\) to obtain a surface with a piecewise geodesic boundary \(\delta_i^1 \cup \delta_i^2\). We then glue the segments \(\delta_1^i\) and \(\delta_2^i\), as shown in Figure~\ref{fig:trivial_handle}. This provides the a handle with trivial holonomy along with together two newborn branch points. By puncturing the surface at the point \(\overline{p}^{\,-}\) we get the desired handle with trivial holonomy.

\begin{figure}[!h]
\centering
\begin{tikzpicture}[thick,scale=0.75, every node/.style={scale=1}]
    \draw [blue] (2.5,5) to (4,5);
    \draw [blue] (5.5,5) to (4,5);
    \draw [orange] (2.5,5) to (1,5);
    \draw [orange] (5.5,5) to (7,5);
    
    \fill[black] (4,5) circle (1pt);
    \fill[black] (1,5) circle (1pt);
    \fill[black] (7,5) circle (1pt);
    \fill[black] (2.5,5) circle (1pt);
    \fill[black] (5.5,5) circle (1pt);
    
    \node at (4,5.5) {\(\overline{p}^{\,+}\)};
    \node at (1.75,4.5) {\(\delta_1\)};
    \node at (6.25,4.5) {\(\delta_2\)};
    
    \draw[-latex] (8,5) to (10,5);
    \draw [blue] (12.5,5) to (14,5);
    \draw [blue] (15.5,5) to (14,5);
    \draw[orange] (12.5,5) to[out=135, in=45] (11,5);
    \draw[orange] (12.5,5) to[out=225, in=315] (11,5);
    \draw[orange] (17,5) to[out=135, in=45] (15.5,5);
    \draw[orange] (17,5) to[out=225, in=315] (15.5,5);
    
    \fill[black] (14,5) circle (1pt);
    \fill[black] (12.5,5) circle (1pt);
    \fill[black] (11,5) circle (1pt);
    \fill[black] (15.5,5) circle (1pt);
    \fill[black] (17,5) circle (1pt);

    \fill[black] (6,1) circle (1.5pt);
    
    \node at (14,5.5) {\(\overline{p}^{\,+}\)};
    \node at (11.75,4.25) {\(\delta_1^1\)};
    \node at (16.25,4.25) {\(\delta_2^1\)};
    \node at (11.75,5.75) {\(\delta_1^2\)};
    \node at (16.25,5.75) {\(\delta_2^2\)};
    
    \draw[-latex] (15,3) to (11,1);
    \draw (10,2.5) to[out=170, in=0] (7,3);
    \draw (7,3) to[out=180, in=90] (4,1);
    \draw (4,1) to[out=270, in=180] (7,-1);
    \draw (7,-1) to[out=0, in=190] (10,-0.5);
    
    \draw [blue] (6,1) to [out=330, in=210] (8,1);
    \draw (5.8,1.2) to [out=300, in=150] (6,1);
    \draw  (8,1) to [out=30, in=240] (8.2,1.2);
    \draw [blue] (6,1) to [out=30, in=150] (8,1);
    \draw [orange] (6,1) to [out=150, in=30] (4,1);
    \draw [dashed, orange] (6,1) to [out=210, in=330] (4,1);
    
    \fill[black] (6,1) circle (0.75pt);
    \fill[black] (8,1) circle (0.75pt);
    \fill[black] (4,1) circle (0.75pt);
    
    \node at (6,1.5) {\(\overline{p}^{\,+}\)};
\end{tikzpicture}
\caption{Realising a handle with trivial holonomy.}
\label{fig:trivial_handle}
\end{figure}

\smallskip

\subsection{Final assembly and realisation of prescribed holonomies}\label{ssec:final assembly} Lastly, we combine these building blocks by gluing the prescribed handles on the core surface. This completes the overall construction and yields the realisation of the target holonomy representations. 

\smallskip

\noindent For this purpose, let \(\rho\colon\pi_1(\,S,b\,)\longrightarrow\pslr3\) be any representation. We apply the initialisation process as in \S\ref{ssec:geom process initialisation} to define a collection of \(g+1\) representations, one of which is \(\rho_o\colon\pi_1(\,S_{o,g},b\,)\longrightarrow\pslr3\) and the remaining \(g\) representations are \(\rho_i\colon\pi_1(\,S_{1,1},b_i\,)\longrightarrow\pslr3\). In order to start, we need to fix a base point \(p_o\in\rp2\). According to Lemma \ref{lem:base point}, we select any point \(p_1\in \rp2\setminus \textnormal{B}(\,\rho\,)\), where \(\textnormal{B}(\,\rho\,)\) is defined as in \eqref{eq: bad base points}. We shall now discuss the assembly process by distinguish two possible different cases: at least one representation \(\rho_i\) is abelian, all \(\rho_i\)'s are not abelian. We begin with this latter case which we shall define as generic. 

\smallskip

\subsubsection{The generic case}\label{sssec:assembly generic} Assume no \(\rho_i\) is abelian. First, we consider the representation \(\rho_o\) and apply Proposition \ref{prop:coredisc} to realise a core disc \(\mathcal C\) with a branched projective structure having at most branch point in its interior and a piecewise geodesic boundary \(\{\,s_i^\star\,\}\) with \(g\) corner points \(\{\,p_i^\star\,\}\), see Figures \ref{fig:non overlap} and \ref{fig:non overlap 2}. 
For convenience, we recall that the piecewise geodesic boundary of \((\mathcal C, \sigma_{\textnormal{core}})\) develops onto the chain \eqref{eq:chain two} that we recall for our convenience.
\begin{equation}
        p_1 \xrightarrow{\;\;s_n\;\;} \rho_o(\,\gamma_n\,)(\,p_1\,) = p_n \xrightarrow{\;\;s_{n-1}\;\;} \rho_o(\,\gamma_{n-1}\,)(\,p_n\,) = p_{n-1} \xrightarrow{\;\;s_{n-2}\;\;} \dots \xrightarrow{\;\;s_1\;\;} \rho_o(\,\gamma_1\,)(\,p_2\,) = p_1
    \end{equation}

\smallskip

\noindent We now consider the individual representations \(\rho_i\). According to the standing assumption of this paragraph, none of these representations is abelian, and therefore we can apply Proposition \ref{prop:representations of punctured tori non abel}. Upon selecting a base point, one for each representation, every \(\rho_i\) arises as the holonomy of a branched projective structure with a branch point in the interior and a geodesic boundary with a corner point. More specifically, by choosing \(p_i\) as the base point, we use the representation \(\rho_i\) to realise a pentagonal domain \(\mathcal D_i\) endowed with a branched projective structure with at most a single branched points in its interior and piecewise geodesic boundary, \textit{cf.} Figure \ref{fig:pentagon}. As a consequence of our choices, the edge corresponding to the commutator develops onto the segment \(-s_i\) (notice the orientation) by design. Therefore, the pentagonal domain \(\mathcal D_i\) can be glued to the core space \(\mathcal C\). Repeating the same construction recursively \(g\) times, we obtain a topological \(4g\)-gon \(\mathcal P\), with a branched projective structure with at most \(g+1\) branch points in its interior. The edges of such a polygon are identified in pairs by design and the resulting space after identification is a closed surface of genus \(g\), see Figure \ref{fig:gluing}.

\begin{figure}[htbp]
  \centering
  \begin{tikzpicture}[scale=1.25]
    
    \begin{scope}[rotate=30]
    \draw[white, opacity=0.25] (0,0) circle (2.25cm);
    \shade[ball color = white, opacity=.20, shading angle=0] (090:2)--(210:2)--(330:2)--(090:2);
    \draw[gray] (090:2)--(210:2)--(330:2)--(090:2);

    \foreach \p in {(090:2), (210:2), (330:2), (090:2)} {
    \fill[black] \p circle (1pt);
    }

    \node at (000:0) {core};
    \node at (030:3.5) {\(\mathcal D_1\)};
    \node at (150:3.5) {\(\mathcal D_2\)};
    \node at (270:3.5) {\(\mathcal D_3\)};

    \begin{scope}[shift={(030:0.5)}]
        \shade[ball color = yellow, opacity=.20, shading angle=0] (090:2)--(330:2)--(345:4)--(030:5)--(075:4)--(090:2);
        \draw[yellow] (090:2)--(330:2)--(345:4)--(030:5)--(075:4)--(090:2);

        \foreach \p in {(090:2), (330:2), (345:4), (030:5), (075:4)} {
        \fill[black] \p circle (1pt);
        }
    \end{scope}

    \begin{scope}[rotate=120]
        \begin{scope}[shift={(030:0.5)}]
        \shade[ball color = red, opacity=.20, shading angle=0] (090:2)--(330:2)--(345:4)--(030:5)--(075:4)--(090:2);
        \draw[red] (090:2)--(330:2)--(345:4)--(030:5)--(075:4)--(090:2);

        \foreach \p in {(090:2), (330:2), (345:4), (030:5), (075:4)} {
        \fill[black] \p circle (1pt);
        }
        \end{scope}
    \end{scope}

    \begin{scope}[rotate=-120]
        \begin{scope}[shift={(030:0.5)}]
        \shade[ball color = sky, opacity=.20, shading angle=0] (090:2)--(330:2)--(345:4)--(030:5)--(075:4)--(090:2);
        \draw[sky] (090:2)--(330:2)--(345:4)--(030:5)--(075:4)--(090:2);

        \foreach \p in {(090:2), (330:2), (345:4), (030:5), (075:4)} {
        \fill[black] \p circle (1pt);
        }
        \end{scope}
    \end{scope}
    \end{scope}

    \end{tikzpicture}
    \caption{The figure shows a triangular core to which three pentagons are glued, one per handle. The resulting surface will have genus three.}
    \label{fig:gluing}
\end{figure}

To conclude that the resulting structure is branched projective, one must check the geometry around the point \(p^\star\) arising from the identification of vertices of the \(4g\)-gon. For this purpose, we first observe that \(\mathcal P\) can be used to tile a topological plane invariant under the action of \(\pi_1(\,S\,)\), where any two tiles are related by a unique element of \(\pi_1(\,S\,)\). The developing map of the core surface \(\mathcal C\) glues with the developing maps of the pentagonal domains \(\mathcal D_i\) to form an \(\rp2\)-valued function defined on \(\mathcal P\). We then extend this function to the entire tiled plane, obtaining a developing map that is equivariant with respect to the representation \(\rho\). After tiling, the vertices of \(\mathcal P\) determine a discrete subset of the plane around which the developing map is well-defined. Moreover, upon restricting the developing map to a sufficiently small closed neighbourhood, say \(D\) of any such point, the structure on \(D\) can be assumed to be unbranched (since the branch points on the surface are finite and thus form a discrete subset). As a consequence of the Claim \ref{claim:branch point}, we may conclude that \(p^\star\) is a branch point. Therefore, the topological surface is indeed endowed with a branched projective structure with holonomy \(\rho\), as desired.


\smallskip

\subsubsection{Interlude: gluing structures along slits}\label{interlude: slit constr} We briefly recall for the readers convenience the \textit{slit construction} for two surfaces \(\Sigma_1\) and \(\Sigma_2\) endowed with branched projective structures \(\sigma_1\) and \(\sigma_2\), respectively. Suppose there exist charts \(\phi_1 \colon U_1 \longrightarrow V \subset \rp2\) on \(\Sigma_1\) and \(\phi_2 \colon U_2 \longrightarrow V \subset \rp2\) on \(\Sigma_2\) mapping homeomorphically onto a common open set \(V \subset \rp2\). We select a small projective geodesic segment \(e \subset V\) and consider its preimages \(\gamma_1 = \phi_1^{-1}(\,e\,) \subset U_1\) and \(\gamma_2 = \phi_2^{-1}(\,e\,) \subset U_2\), which are geodesic segments in their respective surfaces. By slitting both surfaces along \(e_1\) and \(e_2\), and gluing the opposite banks of the two slits via the identification induced by the local charts, we obtain a connected sum surface, say \(S\) of genus equal to \(g_1+g_2\), where \(g_i\) is the genus of \(\Sigma_i\). Since the identification is made along projective geodesic segments using compatible local charts, the projective structures \(\sigma_1\) and \(\sigma_2\) extend onto each other. Moreover, the two pairs of endpoints of the slits are identified into two distinct points on the resulting surface: thus, they form two new simple branch points. In our framework, this construction is always well-defined and feasible because both the core structure and all the handles contain, by design, an embedded copy of \(\rp2\).

\begin{rmk}\label{rmk:slit}
    For readers not familiar with this surgery, topologically, slitting along a segment corresponds to replacing the segment with two copies of itself identified exclusively at their endpoints; the resulting surface is homeomorphic to the original surface with an open disc removed. Consequently, gluing two surfaces along projectively equivalent slits serves as the geometric analogue of the connected sum of surfaces.
\end{rmk}

\begin{figure}[htbp]
  \centering
  \begin{tikzpicture}[scale=1.325]

    \draw[black!50] (0,0) circle (2.25cm);
    \shade[ball color=white, opacity=.20] (0,0) circle (2.25cm);

    \foreach \theta/\i in {90,270} {
    \fill[white, rotate=\theta] (1.25, 0.75) to[out=300, in=60] (1.25,-0.75) to[out=115, in=270] (1.05,0) to[out=90, in=245] (1.25, 0.75);
    }

    \foreach \theta in {90,270} {
    \draw [black!50, thin, rotate=\theta] 
            (1.25, 0.75) to[out=300, in= 60] (1.25, -0.75);
    }

    \foreach \theta in {90,270} {
    \draw [black!50, thin, rotate=\theta] 
            (1.3, -0.835) to[out=120, in=240] (1.3,  0.835);
    }

    \draw[plum ]   ( 30:1.25) arc(30:-30:1.25);
    \fill[black]   ( 30:1.25)  circle (0.75pt);
    \fill[black]   (-30:1.25)  circle (0.75pt);
    \node[plum] at (270:2.5) {\((\Sigma_1,\sigma_1)\)};
    \node[plum] at (000:1.5 ) {\(e_1\)};
    
    \begin{scope}[shift={(5.5,0)}]
        \draw[blue!25] (0,0) circle (2.25cm);
        \shade[ball color = sky, opacity=.6] (0,0) circle (2.25cm);

        \foreach \theta/\i in {90,270} {
        \fill[white, rotate=\theta] (1.25, 0.75) to[out=300, in=60] (1.25,-0.75) to[out=115, in=270] (1.05,0) to[out=90, in=245] (1.25, 0.75);
        }

        \foreach \theta in {90,270} {
        \draw [blue!50, thin, rotate=\theta] 
            (1.25, 0.75) to[out=300, in= 60] (1.25, -0.75);
        }

        \foreach \theta in {90,270} {
        \draw [blue!50, thin, rotate=\theta] 
            (1.3, -0.835) to[out=120, in=240] (1.3,  0.835);
        }

        \draw[plum ] (210:1.25) arc(210:150:1.25);
        \fill[black] (210:1.25)   circle (0.75pt);
        \fill[black] (150:1.25)   circle (0.75pt);

        \node[plum] at (270:2.5) {\((\Sigma_2,\sigma_2)\)};
        \node[plum] at (180:1.5) {\(e_2\)};
    \end{scope}
  \end{tikzpicture}
  \caption{Slit construction. We may observe that bubbling arises as a special case of the slit construction. Compare with Figure \ref{fig:bubbling}.
  }
  \label{fig:slit constr}
\end{figure}

\smallskip

\subsubsection{Gluing handles with abelian holonomy}\label{sssec:abelian handles} Suppose now that there are \(0 < h \le g\) handles with abelian holonomy \(\rho_i\). Up to relabelling, without loss of generality we may assume that these are \(\rho_{g-h+1},\dots,\rho_g\). In this case, the representation \(\rho_o\) determines a branched projective structure on a polygon with \(g-h\) sides, to which we glue the \(g-h\) handles with non-abelian holonomy as in \S\ref{ssec:final assembly}. By doing so, we obtain a surface \(\Sigma\) of genus \(g-h\) endowed with a well-defined branched projective structure \(\sigma_\Sigma\). The remaining \(h\) representations determine \(h\) projective structures on \(h\) tori. We will glue these tori to the structure \((\Sigma, \sigma_\Sigma)\) by means of slit constructions, see the Interlude \S\ref{interlude: slit constr} above. In the case some of the abelian representations are trivial, we use any of the branch point already existing on \((\Sigma, \sigma_\Sigma)\) to realise a handle with trivial holonomy as shown in \S\ref{sssec:trivial projective handles}. The resulting surface will have genus \(g\) and will be endowed with a branched projective structure with holonomy \(\rho\), as desired. Finally, we note that, as a limiting case, all representations may be abelian in their own right. In this situation, the core surface to be used is a sphere equipped with an arbitrary branched projective structure (notice that this is consistent with the fact that a the polygonal chain in \eqref{eq:chain} reduces to a single point). This completes the proof in the case of abelian representations and, consequently, that of Theorem \ref{thm:geometrisation}.

\medskip

\section{Stiefel–Whitney classes of holonomy representations and realisation into strata}\label{sec:stiefel whitney geometrisable representations} 

\noindent The present section aims to establish a relation between the second Stiefel-Whitney class of an oriented rank-\(3\) vector bundle \(E\) over a closed surface \(S\) and the self-intersection of a copy of \(S\) immersed in a closed \(4\)-manifold constructed from \(E\). As a direct application, it follows that for every branched projective structure on \(S\) with branching divisor \(D\), the vanishing of the second Stiefel-Whitney class of the representation is completely determined by the parity of the degree of \(D\). The main result of the present section is the following

\begin{thm}\label{cor: rho lifts to universal cover}
    Let \(\rho\) be the holonomy of a branched \(\rp2\)-structure on \(S\) with branching chain \(D\). Then \(\rho\) lifts to the universal cover of \(\slr{3}\) if and only if \(\deg(\,D\,)\) is even.
\end{thm}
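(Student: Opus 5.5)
We will show that \(\mathrm{w}_2(\rho)\equiv\deg(D)\pmod 2\); since \(\rho\) lifts to the universal cover of \(\slr3\) exactly when \(\mathrm{w}_2(\rho)=0\), this gives the statement. The bundle to study is the flat \(\R^3\)-bundle \(E_\rho=\widetilde S\times_\rho\R^3\to S\), lifting \(\rho\) to \(\slr3=\pslr3\). It is an oriented rank-\(3\) bundle with \(\mathrm{w}_2(E_\rho)=\mathrm{w}_2(\rho)\). The developing map, being \(\rho\)-equivariant, gives a section \(s_{\textnormal{dev}}\) of the projectivised bundle \(\PP(E_\rho)\to S\), and hence a real line subbundle \(L\subset E_\rho\), namely the tautological line over \(\textnormal{dev}\).

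\textbf{Step 1: reduce to a tangent-type bundle.} Away from the branch points, \(\textnormal{dev}\) is a local diffeomorphism. Using \(T_{[v]}\rp2\cong\mathrm{Hom}(\ell_v,\R^3/\ell_v)\), its differential gives a bundle map
\[
\mathrm d\,\textnormal{dev}\colon TS\longrightarrow \mathrm{Hom}(L,E_\rho/L).
\]
This map is an isomorphism off \(\mathcal B\). At a branch point of order \(m\) it looks locally like \(z\mapsto z^{m+1}\), so its derivative has the form \((m+1)z^m\). Viewed as a real-linear map of rank-\(2\) bundles, its determinant vanishes at each branch point with local index \(m\) (equivalently, \(2m\) as a section of \(\mathrm{Hom}(\det TS,\det Q)\)). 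Here we write \(Q=\mathrm{Hom}(L,E_\rho/L)\cong L\otimes E_\rho/L\), using \(L^*\cong L\) for real line bundles. Since the map is orientation-preserving off \(\mathcal B\), the bundles \(\det TS\) and \(\det Q\) are identified off \(\mathcal B\), and this identification twists around each branch point \(p_i\) by the local degree \(m_i\).

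\textbf{Step 2: a clutching computation.} Rather than work with \(\det\), we compare \(Q\) and \(TS\) as rank-\(2\) bundles. Both are orientable: \(E_\rho\) is oriented, so \(\mathrm{w}_1(E_\rho/L)=\mathrm{w}_1(L)\), and therefore \(\mathrm{w}_1(Q)=2\mathrm{w}_1(L)+\mathrm{w}_1(E/L)\cdot\)(\dots)\(=0\). We can then compare Euler classes. The fibrewise-linear map of oriented plane bundles \(TS\to Q\) has degree \(m_i+1\) on small loops around \(p_i\). By the standard obstruction/clutching argument,
\[
e(Q)=e(TS)+\sum_i m_i\,[p_i],\qquad\text{so}\qquad e(Q)[S]=\chi(S)+\deg(D).
\]
This is the exact analogue of the Gauss–Bonnet count \(\textnormal{eu}(\rho)=2-2g+m\) recalled in \S\ref{sssec:bhs}.

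\textbf{Step 3: identify \(\mathrm{w}_2\).} From \(E_\rho\cong L\oplus E_\rho/L\) (topologically) and the Whitney formula, we get \(\mathrm{w}_2(E_\rho)=\mathrm{w}_1(L)\mathrm{w}_1(E/L)+\mathrm{w}_2(E/L)\). Using \(\mathrm{w}_1(E/L)=\mathrm{w}_1(L)\), this becomes \(\mathrm{w}_2(E_\rho)=\mathrm{w}_1(L)^2+\mathrm{w}_2(E/L)\). On the other hand, the twisting formula for a rank-\(2\) bundle tensored with a line bundle gives \(\mathrm{w}_2(L\otimes E/L)=\mathrm{w}_2(E/L)+\mathrm{w}_1(L)\mathrm{w}_1(E/L)+\mathrm{w}_1(L)^2=\mathrm{w}_2(E/L)\). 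Hence
\[
\mathrm{w}_2(E_\rho)=\mathrm{w}_2(Q)+\mathrm{w}_1(L)^2.
\]
On an orientable surface, \(\mathrm{w}_1(L)^2=\mathrm{Sq}^1\mathrm{w}_1(L)=\mathrm{w}_1(S)\mathrm{w}_1(L)=0\) by Wu's formula. Therefore \(\mathrm{w}_2(\rho)=\mathrm{w}_2(Q)\equiv e(Q)[S]\equiv\chi(S)+\deg(D)\equiv\deg(D)\pmod 2\), as required.

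The main obstacle is Step 2. We must justify that the local index of \(\mathrm d\,\textnormal{dev}\) at a branch point equals the order \(m\), using only the topological normal form of Definition \ref{def:branchechart}, and keep track of orientations carefully. We will handle this by homotoping the chart to \(z\mapsto z^{m+1}\) on a small disc, as in Claim \ref{claim:branch point}, and computing the degree of \(\mathrm d(z^{m+1})\) on the boundary circle in a trivialisation. The paper's announced approach instead passes through the self-intersection of \(S\) in a \(4\)-manifold built from \(E\); Step 2 is the concrete version of that intersection count.
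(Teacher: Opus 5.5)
Your argument is the paper's proof with the four-manifold packaging removed. The paper's normal bundle \(\nu_{\widehat\Sigma^+}\) of the developing section is your \(Q=\mathrm{Hom}(L,E_\rho/L)\). Its vector field \(N=r(\mathrm d\widehat\sigma^+_\rho(X))\) is just \(\mathrm d\,\textnormal{dev}(X)\), and counting its zeros is your Step 2. Lemma \ref{lem: auto intersection and 2nd Stiefel-Whitney} (\(w_2\) of the normal bundle equals \(w_2(E)\)) together with the mod-\(2\) comparison of self-intersections is your Step 3, i.e.\ \(w_2(Q)\equiv e(Q)[S]\). Your Step 3 is slightly more general: the paper's Lemma needs the developing section to come from a splitting \(E_\rho\cong F\oplus\underline{\R}\), and the paper only asserts, without proof, that \(L_\rho\) is orientable and hence trivial.

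One justification is wrong as written, though. Orientability of \(Q\) does not follow from \(E_\rho\) being oriented. In general \(w_1(L\otimes E_\rho/L)=2w_1(L)+w_1(E_\rho/L)=w_1(L)\), and this can be nonzero: take \(E=L\oplus L\oplus\underline{\R}\) with \(L\) non-orientable, so \(Q\cong\underline{\R}\oplus L\).

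The correct reason is already in your Step 1. Since \(\mathrm d\,\textnormal{dev}\colon TS\to Q\) is an isomorphism over \(S\setminus\mathcal B\), \(w_1(Q)\) restricts to \(0\) there. Moreover \(H^1(S;\Z_2)\to H^1(S\setminus\mathcal B;\Z_2)\) is injective, because \(H^1(S,S\setminus\mathcal B;\Z_2)=0\) by excision. So \(w_1(Q)=0\), and therefore \(w_1(L)=0\) as well. This is exactly the paper's unproved claim, and it makes your appeal to Wu's formula unnecessary.

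Two smaller points:
\begin{itemize}
\item The clutching degree of \(\mathrm d\,\textnormal{dev}\) around \(p_i\) is \(m_i\), not \(m_i+1\). The number \(m_i+1\) is the turning number of the developed image of a small loop around \(p_i\). Your formula \(e(Q)=e(TS)+\sum m_i[p_i]\) does use the correct value.
\item The Alexander-trick homotopy of Claim \ref{claim:branch point} is only \(C^0\), so it says nothing about \(\mathrm d\,\textnormal{dev}\) and cannot give the local index. Instead, take the preimage under \(\textnormal{dev}\) of a small round circle about \(\textnormal{dev}(p_i)\) in an affine chart. It is a \(C^1\) simple closed curve around \(p_i\), with tangent turning number \(1\), mapped \((m_i+1)\)-to-\(1\) onto the circle, whose image therefore has turning number \(m_i+1\). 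Hence the Jacobian loop has degree \(m_i\).
\end{itemize}
With these repairs the proof is complete. It also supplies the justification, omitted in the paper, that \(N\) has index \(m_i\) at \(p_i\).
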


For the readers' convenience, we recall that \(\slr3\) is not simply connected. Indeed, it deformation retracts onto \(\mathrm{SO}(3,\mathbb{R})\), which is diffeomorphic to \(\mathbb{RP}^3\). Hence \( \pi_1(\,\slr3\,)\cong\pi_1(\,\mathrm{SO}(3,\mathbb{R})\,)\cong\mathbb{Z}_2\). Its universal cover is therefore a double cover of \(\slr3\). Notice that it is not a matrix group. An analogous result was obtained in the context of branched \(\mathbb{CP}^1\)-structures \cite[\S 11.2]{GalloKapovichMarden2000Monodromy}. In our setting, however, the situation is slightly more subtle, since all the objects involved are defined over the real numbers. This subsection is structured as follows: in \S\ref{ssec:proof of thm SW class}, the main theorem is proved; in the subsequent subsections, namely \S\ref{ssec:hitchinbranched} and \S\ref{ssec:prop convex BPS}, the realisation of \(\rp2\)-structures in the strata is discussed. As will be shown, Theorem~\ref{cor: rho lifts to universal cover} provides a first necessary condition for the realisation of a representation in a given stratum.



\smallskip

\subsection{Stiefel-Whitney class of holonomy representations}\label{ssec:proof of thm SW class} We begin by establishing the main result of this section. The argument is split into two subsections. In \S\ref{ssec:intersection theory}, we recall several key notions of intersection theory on \(4\)-dimensional manifolds, a treatment that remains largely independent of \(\rp2\)-structures. Next, in \S\ref{ssec:proof of lift citrerion}, we apply these preliminaries to our specific setting to derive Theorem~\ref{cor: rho lifts to universal cover}.

\smallskip

\subsubsection{Intersection theory on the projectivised bundle}\label{ssec:intersection theory}
Let \(E\) be an oriented real vector bundle of rank \(3\) over a surface \(S\). Since the rank of \(E\) exceeds the dimension of the base by one, the bundle decomposes as a direct sum \(E \cong F \oplus \underline{\mathbb{R}}\), where \(F\) is an oriented rank-\(2\) vector bundle and \(\underline{\mathbb{R}}\) denotes the trivial real line bundle. This decomposition follows since \(S\) admits a \(2\)-dimensional CW-complex structure; hence, the obstruction to finding a nowhere-vanishing section over the \(2\)-skeleton lies in the cohomology group \(H^3(S,\mathbb{Z})\cong\{\,0\,\}\), see \cite[Chapter 8, Proposition 1.1]{husemoller1966fibre}. Let \(\pi\colon\mathbb{P}(\,E\,)\longrightarrow S\) be the projectivised fibre bundle whose fibre over a point \(p\in S\) is the projective plane \(\mathbb{P}(\,E_p\,)\cong\mathbb{RP}^2.\) It admits a double covering map \(q\colon\mathbb{S}(\,E\,)\to\mathbb{P}(\,E\,)\), where \(\widehat\pi\colon\mathbb{S}(\,E\,)\longrightarrow S\) denotes the sphere bundle associated with \(E\). More precisely, \(\mathbb{S}(\,E\,)\) is the total space of the fibre bundle over \(S\) whose fibre at a point \(p\in S\) is defined as
\begin{equation}
    \mathbb{S}(\,E_p\,)\doteq\frac{\,E_p\setminus\{\,0\,\}\,}{\mathbb{R}_{>0}}\cong \mathbb{S}^2.
\end{equation}

\noindent The covering map is induced fibre-wise by identifying antipodal points in each sphere and therefore \(\mathbb{S}(\,E\,)\) is naturally a two-fold cover of \(\mathbb{P}(\,E\,)\).
\begin{prop}\label{prop: fibre_bundle_non_orientable}
    \(\mathbb{P}(\,E\,)\) is a non-orientable 4-dimensional closed manifold, with \(\mathbb{S}(\,E\,)\) being its orientable two-fold cover.
\end{prop}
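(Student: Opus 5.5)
We will establish the statement through the splitting \(E \cong F \oplus \underline{\R}\) recorded above and the double cover \(q\colon \sph(E)\longrightarrow \PP(E)\). First, we check that \(\PP(E)\) and \(\sph(E)\) are closed \(4\)-manifolds. They are fibre bundles over the closed surface \(S\) with compact fibres \(\rp2\) and \(\sph^2\) respectively, so local triviality gives them the structure of smooth closed \(4\)-manifolds. The map \(q\), defined fibrewise by the antipodal identification \(\sph(E_p)\to\PP(E_p)\), is a smooth two-sheeted covering: over a trivialising open set \(U\) it reads \(U\times\sph^2\to U\times\rp2\). Since the antipodal map on \(\R^3\setminus\{0\}\) commutes with every linear map, these local descriptions are compatible with the transition functions.

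Next, we prove that \(\sph(E)\) is orientable. The bundle \(E\) is oriented and \(S\) is orientable, hence the total space of \(E\) is an oriented \(5\)-manifold. The submanifold \(\sph(E)\) can be realised as the unit sphere bundle for some bundle metric on \(E\). It has a trivial normal bundle spanned by the radial (outward) vector field, so it inherits an orientation. Equivalently, the vertical tangent bundle \(T^v\sph(E)\) is oriented: each fibre \(\sph(E_p)\) is oriented as the boundary of the unit ball in the oriented space \(E_p\), and the transition maps lie in \(\mathrm{GL}^+(3,\R)\). Therefore \(T\sph(E)\cong \widehat\pi^*TS\oplus T^v\sph(E)\) is a sum of orientable bundles, and so it is orientable.

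Finally, we show that \(\PP(E)\) is non-orientable, which is the only delicate step. The plan is to show that the nontrivial deck transformation \(\tau\) of \(q\), the fibrewise antipodal map, reverses the orientation of \(\sph(E)\). Once this holds, \(\PP(E)=\sph(E)/\tau\) cannot be orientable: an orientation on the quotient would pull back to an orientation of \(\sph(E)\) that is preserved by \(\tau\). Since \(\sph(E)\) is connected (its base and fibres are connected), it has exactly two orientations, so the pulled-back orientation would coincide with one of them up to sign, contradicting the fact that \(\tau\) reverses it. To check that \(\tau\) is orientation-reversing, observe that \(\tau\) covers the identity on \(S\). It therefore acts trivially on the horizontal factor \(\widehat\pi^*TS\), and on each fibre it restricts to the antipodal map of \(\sph^2\), which has degree \(-1\). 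Hence \(\tau\) reverses the orientation of \(T^v\) and fixes that of the base directions, so it reverses the total orientation. This local computation can be done in a single trivialising chart \(U\times\sph^2\), where \(\tau=\mathrm{id}_U\times(-\mathrm{id})\). An alternative argument is also available: one can exhibit an embedded \(\rp2\), namely a single fibre, and observe that it contains an orientation-reversing loop whose lift to \(\sph(E)\) joins antipodal points. As a consequence, \(\sph(E)\) is the orientation double cover of \(\PP(E)\).
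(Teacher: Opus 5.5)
Your proof is correct. For the orientability of \(\mathbb{S}(E)\), your second formulation is exactly the paper's: orient the vertical bundle fibrewise from the orientation of \(E_p\), then combine it with \(\widehat\pi^{\,*}TS\). The genuine difference is in how you show that \(\mathbb{P}(E)\) is non-orientable.

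The paper restricts to one fibre \(F_p\cong\rp2\) and identifies its normal bundle with the trivial bundle \(F_p\times T_pS\). It then applies the Whitney sum formula to get \(j_p^*w_1(T\mathbb{P}(E))=w_1(T\rp2)\neq 0\).

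You instead show that the deck involution \(\tau\) of \(q\) reverses orientation, using the local model \(\mathrm{id}_U\times(-\mathrm{id}_{\sph^2})\) and the fact that the antipodal map of \(\sph^2\) has degree \(-1\). Connectedness of \(\mathbb{S}(E)\) then rules out a \(\tau\)-invariant orientation.

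Your route avoids characteristic classes and gives a slightly sharper statement for free: \(\mathbb{S}(E)\) is \emph{the} orientation double cover of \(\mathbb{P}(E)\), i.e.\ the cover associated with \(\ker w_1\), which the paper only asserts implicitly. The paper's route instead locates the obstruction on an explicit embedded \(\rp2\), in the \(w_1\)/\(w_2\) language used in the rest of that section.

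Two minor remarks:
\begin{itemize}
\item The splitting \(E\cong F\oplus\underline{\mathbb{R}}\) you announce at the outset is never used, and it is not needed.
\item Your ``alternative argument'' via a loop in a fibre is not independent as written. A loop reversing the orientation of \(\rp2\) reverses that of \(\mathbb{P}(E)\) only once you know the normal bundle along it is orientable, which is precisely the paper's triviality computation. The fact that its lift joins antipodal points says something about orientation only if you already know \(\tau\) is orientation-reversing.
\end{itemize}
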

\begin{proof}
We begin by showing that the manifold \(\mathbb{P}(\,E\,)\) is non-orientable. Let us consider the smooth fibre bundle \(\pi\colon\mathbb{P}(\,E\,)\to S \). By recalling that \(\pi\) is a surjective submersion, each fibre \( F_p\doteq \pi^{-1}(\,p\,) \) over \(p\in S\) is an embedded sub-manifold of \(\mathbb{P}(\,E\,)\) of dimension \(2\). We shall denote by \(j_p\colon F_p\hookrightarrow \mathbb{P}(\,E\,)\) the inclusion. Moreover, for every \(y\in F_p\), one has \( T_yF_p=\ker(\,\mathrm d_y\pi\,).\) By definition, the normal bundle \(\nu_{F_p}\) of \(F_p\) in \(\mathbb{P}(\,E\,)\) is the quotient \(j_p^*T\mathbb{P}(\,E\,)/TF_p\). Equivalently, the normal bundle fits in a short exact sequence as follows
\begin{equation}
    0\longrightarrow TF_p\longrightarrow j_p^*T\mathbb{P}(E)\longrightarrow \nu_{F_p}\longrightarrow 0.
\end{equation}
On the other hand, since the differential \(\mathrm{d}_y\pi\) is surjective for every \(y \in F_p\) and its kernel is \(T_yF_p\), we obtain an induced vector-space isomorphism \( T_y\mathbb{P}(\,E\,) / T_yF_p \cong T_pS\). As \(y\) varies smoothly in \(F_p\), these isomorphisms give rise to a smooth vector bundle isomorphism \( j_p^*T\mathbb{P}(\,E\,)/TF_p \cong F_p\times T_pS\).
Equivalently, the bundle morphism
\begin{equation}
\mathrm d\pi:j_p^*T\mathbb{P}(\,E\,)\longrightarrow F_p\times T_pS
\end{equation}
has kernel \(TF_p\), and hence \(\nu_{F_p}\cong F_p\times T_pS\). 
In other words, the normal bundle of \(F_p\) in \(\mathbb{P}(\,E\,)\) is the trivial rank-\(2\) bundle over \(F_p\) with constant fibre \(T_pS\). We now conclude by computing the first Stiefel--Whitney class. By the Whitney sum formula applied to the short exact sequence above, we have
\begin{equation}
    \begin{aligned}
    j_p^*w_1\big(T\mathbb{P}(\,E\,)\big)
                                    &=w_1\big(j_p^*T\mathbb{P}(\,E\,)\big) \\
                                    &=w_1(TF_p)+w_1(\nu_{F_p}) \\
                                    &=w_1(TF_p)+w_1(F_p\times T_pS) \\
                                    &=w_1(TF_p).
    \end{aligned}
\end{equation}
Since \(F_p\cong \rp2\), this gives \(j_p^*w_1\big(T\mathbb{P}(\,E\,)\big) = w_1(T\rp2)\neq 0\). As a consequence, the restriction of \(w_1\big(T\mathbb{P}(\,E\,)\big)\) to the fibre \(F_p\) is non-zero. In particular, \(w_1\big(T\mathbb{P}(\,E\,)\big)\neq 0\), and therefore \(\mathbb{P}(\,E\,)\) is not orientable.

\smallskip

\noindent Let us now consider the \(4\)-manifold \(\mathbb{S}(\,E\,)\). We may observe that, upon choosing a bundle metric \(h\) on \(E\), the space \(\mathbb{S}(\,E\,)\) can be identified with the unit sphere bundle of \(E\) with projection given by \(\widehat\pi:\mathbb{S}(\,E\,)\longrightarrow S\). The tangent bundle of \(\mathbb{S}(\,E\,)\) fits into the short exact sequence
\begin{equation}
    0\longrightarrow T^{\mathrm{vert}}\mathbb{S}(\,E\,)
    \longrightarrow T\mathbb{S}(\,E\,)
    \longrightarrow \widehat\pi^{\,*}TS
    \longrightarrow 0.
\end{equation}
Since \(S\) is oriented, the bundle \(\widehat\pi^{\,*}TS\) is also oriented. It remains to orient the vertical tangent bundle. A point of \(\mathbb{S}(\,E\,)\) is a pair \((p,v)\), where \(v\in E_p\) is a unit vector. The vertical tangent space at \((p,v)\) is canonically identified with \( v^{\perp_{h_p}}\subset E_p\). Since \(E_p\) is oriented and \(v\) is an oriented unit vector, the plane \(v^{\perp_{h_p}}\) inherits an orientation by the rule that a basis \((w_1,w_2)\) of \(v^\perp\) is positive if and only if \( (v,w_1,w_2) \) is a positive basis of \(E_p\). This construction depends smoothly on \((p,v)\), and therefore gives a global orientation of \(T^{\mathrm{vert}}\mathbb{S}(\,E\,)\). Thus both \(T^{\mathrm{vert}}\mathbb{S}(\,E\,)\) and \(\widehat\pi^{\,*}TS\) are oriented, and the exact sequence above induces an orientation of \(T\mathbb{S}(\,E\,)\), hence \(\mathbb{S}(\,E\,)\) is orientable. The desired result readily follows. The projection map \(q\colon\mathbb{S}(\,E\,)\longrightarrow \mathbb{P}(\,E\,)\)
is defined fibre-wise as \(v\longmapsto [\,v\,]\), where \([\,v\,]\) denotes the unoriented line spanned by \(v\). As a consequence, its fibre over a point \([\,v\,]\in\mathbb{P}(\,E\,)\) consists of the two points \(v\) and \(-v\). Hence \(q\) is a two-sheeted covering.
\end{proof}

\smallskip

\noindent Let \(M\) be an orientable closed \(4\)-manifold. Upon choosing an orientation, its integral intersection pairing is the bilinear form
\begin{equation}
    Q_M\colon H_2(M;\mathbb{Z})\times H_2(M;\mathbb{Z})\longrightarrow \mathbb{Z}
\end{equation}

\noindent defined by
\begin{equation}
    Q_M(a,b)\,\doteq\,\langle \alpha\smile\beta,\,[\,M\,]\rangle,
\end{equation}
where \(\alpha,\beta\in H^2(M;\mathbb{Z})\) are the Poincar\'e duals of \(a,b\), respectively, and \([\,M\,]\in H_4(M,\mathbb Z)\) is the fundamental class of the \(4\)-manifold. Equivalently, if the classes \(a\) and \(b\) admit transverse, oriented embedded surfaces, say \(\Sigma_1\) and \(\Sigma_2\), as representatives, then \(Q_M(a,b)\) is the algebraic intersection number of \(\Sigma_1\) and \(\Sigma_2\). Recall that if \(M\) is non-orientable, there is no canonical integral fundamental class. On the other hand, every manifold is \(\mathbb{Z}_2\)-orientable, see \cite[\S 3.3]{hatcher2005algebraic}. In particular, if \(M\) is a closed manifold then it admits a unique fundamental class in \(H_{\mathrm{top}}(M;\mathbb{Z}_2)\), and the natural intersection pairing is given by the mod-\(2\) pairing
\begin{equation}
    Q_M^{(2)}\colon H_2(M;\mathbb{Z}_2)\times H_2(M;\mathbb{Z}_2)\longrightarrow \mathbb{Z}_2, \qquad Q_M^{(2)}(a,b)\doteq\langle \alpha\smile\beta,[M]_2\rangle,
\end{equation}
where \([M]_2\in H_4(M;\mathbb{Z}_2)\) is the mod-\(2\) fundamental class and \(\alpha,\beta\in H^2(M;\mathbb{Z}_2)\) are the Poincar\'e duals of the classes \(a,b\) (see \cite[\S 1.1.1]{donaldson1987} for more details).

\smallskip

\noindent Returning to the case of \(\mathbb{P}(\,E\,)\) and \(\mathbb{S}(\,E\,)\), the presence of the trivial bundle in the decomposition of \(E\) implies the existence of a smooth section \(\sigma \colon S \to \mathbb{P}(\,E\,)\). Indeed, for any \(p \in S\), the fibre \(\underline{\mathbb{R}}_p\) naturally determines a line through the origin in the vector space \(E_p\), and hence defines a point in \(\mathbb{P}(\,E_p\,)\). Moreover, the section \(\sigma\) obtained in this way is smooth because the decomposition \(E \cong F \oplus \underline{\mathbb{R}}\) is itself smooth. Since the line defined by the section \(\sigma\) is orientable inside the \(3\)-dimensional fibre, there are two possible lifts \(\widehat{\sigma}^{\pm}\) of \(\sigma\) into \(\mathbb{S}(\,E\,)\). We define \(\Sigma \doteq \sigma(S) \subset \mathbb{P}(\,E\,)\), and denote by \(\widehat{\Sigma}^+ \doteq \widehat{\sigma}^+(S) \subset \mathbb{S}(\,E\,)\) the lift obtained from the local orientation preserving map \(\widehat{\sigma}^+\). The restriction \(q|_{\widehat{\Sigma}^+}\colon\widehat{\Sigma}^+\longrightarrow\Sigma\) being a diffeomorphism, the differential of \(q\) induces an isomorphism of normal bundles
\begin{equation}
    \nu_{\widehat{\Sigma}^+}^{\mathbb{S}(\,E\,)} \cong (q|_{\widehat{\Sigma}^+})^*\nu_{\Sigma}^{\mathbb{P}(\,E\,)}.
\end{equation}

\noindent Let \(\Sigma_\varepsilon\) be a small transverse perturbation of \(\Sigma\) in \(\mathbb{P}(\,E\,)\), obtained from a generic section of \(\nu_{\Sigma}^{\mathbb{P}(\,E\,)}\), and let \(\widehat{\Sigma}^+_\varepsilon\) be the corresponding lift near \(\widehat{\Sigma}^+\). Then the intersections \(\widehat{\Sigma}^+\cap\widehat{\Sigma}_\varepsilon^+\) are in one-to-one correspondence with the intersections \(\Sigma\cap\Sigma_\varepsilon\). Consequently, the parity of the intersection number in the target manifold \(\mathbb{P}(\,E\,)\) is equal to the reduction modulo \(2\) of the signed intersection number computed in the domain space \(\mathbb{S}(\,E\,)\). In other words the following identity holds
\begin{equation}\label{eq: relation auto-intersection between Z and Z_2}
    Q_{\mathbb{P}(\,E\,)}^{(2)}([\,\Sigma\,]_2,[\,\Sigma\,]_2) \equiv Q_{\mathbb{S}(\,E\,)}([\,\widehat{\Sigma}^+\,], [\,\widehat{\Sigma}^+\,]) \pmod 2.
\end{equation}

\smallskip

\begin{lem}\label{lem: auto intersection and 2nd Stiefel-Whitney}
    The following relation holds:
    \begin{equation}\label{eq:self_intersection_w2}
        Q_{\mathbb{P}(\,E\,)}^{(2)}([\,\Sigma\,]_2,[\,\Sigma\,]_2)=\left\langle w_2(\,E\,),[\,S\,]_2\right\rangle,
    \end{equation}
    where \(\langle\cdot,\cdot\rangle\) denotes the perfect pairing between \(H^2(S,\mathbb Z_2)\) and \(H_2(S,\Z_2)\).
\end{lem}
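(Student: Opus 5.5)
We reduce the self-intersection of \(\Sigma\) to the Euler number of its normal bundle, and then identify that normal bundle with a bundle built from \(F\) and the trivial line.

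\emph{Step 1: self-intersection as a characteristic number.} For a closed embedded surface \(\Sigma\) in a closed \(4\)-manifold \(M\), the mod-\(2\) self-intersection equals the evaluation of the top Stiefel--Whitney class of the normal bundle. Concretely,
\[
Q^{(2)}_M([\Sigma]_2,[\Sigma]_2)=\langle w_2(\nu_\Sigma^M),[\Sigma]_2\rangle .
\]
This holds because a generic section of \(\nu_\Sigma^M\) produces the push-off \(\Sigma_\varepsilon\), whose transverse intersections with \(\Sigma\) are exactly the zeros of the section. Counted mod \(2\), these zeros compute \(w_2\) of a rank-\(2\) bundle. This is the same push-off construction the paper already uses to derive \eqref{eq: relation auto-intersection between Z and Z_2}. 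Alternatively, one can work in \(\mathbb S(E)\), where \(Q_{\mathbb S(E)}([\widehat\Sigma^+],[\widehat\Sigma^+])=e(\nu)\), and reduce mod \(2\).

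\emph{Step 2: identify the normal bundle.} Since \(\Sigma=\sigma(S)\) is the image of a section, the splitting \(T\mathbb P(E)|_\Sigma = T\Sigma\oplus T^{\mathrm{vert}}\mathbb P(E)|_\Sigma\) gives \(\nu_\Sigma\cong \sigma^*T^{\mathrm{vert}}\mathbb P(E)\). The vertical tangent space to \(\mathbb P(E_p)\) at a line \(L\) is canonically \(\mathrm{Hom}(L,E_p/L)\). Along \(\sigma\) we have \(L=\underline{\R}\) (trivial) and \(E_p/L\cong F_p\). Hence
\[
\nu_\Sigma\cong \mathrm{Hom}(\underline\R,F)\cong F .
\]
The same computation in \(\mathbb S(E)\) identifies the vertical space at \(v\) with \(v^\perp\cong F\), consistent with the proof of Proposition~\ref{prop: fibre_bundle_non_orientable}.

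\emph{Step 3: conclude with the Whitney sum formula.} From \(E\cong F\oplus\underline\R\) we get \(w(E)=w(F)\,w(\underline\R)=w(F)\), so \(w_2(E)=w_2(F)\). Combining Steps 1 and 2 and pulling back along the diffeomorphism \(\sigma\colon S\to\Sigma\) yields
\[
Q^{(2)}_{\mathbb P(E)}([\Sigma]_2,[\Sigma]_2)=\langle w_2(F),[S]_2\rangle=\langle w_2(E),[S]_2\rangle ,
\]
as claimed.

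\emph{Expected main obstacle.} The delicate point is Step 1 in the non-orientable ambient \(\mathbb P(E)\). There one must argue with \(\Z_2\) coefficients: the Poincaré dual of \([\Sigma]_2\) restricts to \(\Sigma\) as the mod-\(2\) Thom class, hence as \(w_2(\nu)\), via the tubular neighbourhood and Thom isomorphism with \(\Z_2\) coefficients. If this becomes cumbersome, I would instead work in the orientable cover \(\mathbb S(E)\). There the integral statement \(Q([\widehat\Sigma^+],[\widehat\Sigma^+])=\langle e(\nu),[S]\rangle\) is standard, and reducing mod \(2\) (using \(e \bmod 2=w_2\)) together with \eqref{eq: relation auto-intersection between Z and Z_2} gives the result.
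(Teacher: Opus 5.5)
Your proposal is correct and follows the paper's argument essentially step for step. The mod-\(2\) self-intersection equals \(\langle w_2(\nu_\Sigma),[\Sigma]_2\rangle\); the normal bundle of the section is the vertical bundle \(\mathrm{Hom}(\underline{\R},E/\underline{\R})\cong F\); and the Whitney sum formula gives \(w_2(F)=w_2(E)\). The only difference is that you sketch why Step 1 holds (via the push-off/mod-\(2\) Thom class, or via the orientable cover \(\mathbb{S}(E)\)), whereas the paper simply cites Milnor--Stasheff for it.
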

\begin{proof}

\noindent Recall that, for an embedded closed surface \(\Sigma\) in a closed \(4\)-manifold, its mod-\(2\) self-intersection is computed by the second Stiefel--Whitney class of its normal bundle, that is,
\begin{equation}
    Q_{\mathbb{P}(\,E\,)}^{(2)}([\,\Sigma\,]_2,\,[\,\Sigma\,]_2)\,=\, \left\langle w_2(\nu_{\Sigma}),[\,\Sigma\,]_2\right\rangle_\Sigma,
\end{equation}

\noindent where \(\nu_{\Sigma}\) denotes the normal bundle of \(\Sigma\) in \(\mathbb{P}(\,E\,)\) and \(\langle\cdot,\cdot\rangle_\Sigma\) is the perfect pairing between the second (co)homology of \(\Sigma\) with values in \(\Z_2\) (see \cite[Chapter 11]{milnor1974characteristic}). We identify the isomorphism class of the normal bundle of \(\Sigma\) in \(\mathbb{P}(\,E\,)\) as follows. If \(\ell_p\subset E_p\) denotes the line corresponding to \(\sigma(\,p\,)\), then the normal directions to \(\Sigma\) are precisely the infinitesimal variations of \(\ell_p\) inside the fixed vector space \(E_p\). Hence
\begin{equation}
(\nu_\Sigma)_{\sigma(p)}\cong T_{[\ell_p]}\mathbb{P}(\,E_p\,)\cong\operatorname{Hom}(\ell_p,E_p/\ell_p).
\end{equation}

\noindent Since the section \(\sigma\) is induced by the splitting \(E\cong F\oplus\underline{\mathbb{R}}\), we have
\begin{equation}
    \nu_\Sigma\cong\operatorname{Hom}(\underline{\mathbb{R}},E/\underline{\mathbb{R}})\cong\underline{\mathbb{R}}^*\otimes F\cong F.
\end{equation}

\noindent The Whitney product formula gives \(w_2(E)=w_2(F\oplus\underline{\mathbb{R}})=w_2(F),\) hence \(w_2(\nu_{\Sigma})=w_2(F)=w_2(E),\) where, more precisely, the class on \(\Sigma\) is the pull-back \(w_2(\nu_{\Sigma})=(\pi|_{\Sigma})^*w_2(E)\). Since \(\pi|_{\Sigma}\colon\Sigma\to S\) is a diffeomorphism, we obtain
\begin{equation}
    Q_{\mathbb{P}(\,E\,)}^{(2)}([\,\Sigma\,]_2,\,[\,\Sigma\,]_2)=\left\langle w_2(\nu_{\Sigma}),[\,\Sigma\,]_2\right\rangle_\Sigma=\left\langle w_2(E),[\,S\,]_2\right\rangle. \qedhere
\end{equation}
\end{proof}

\smallskip

\subsubsection{Into the proof of Theorem \ref{cor: rho lifts to universal cover}}\label{ssec:proof of lift citrerion} In the present section we explain how the intersection theory just recalled in \S\ref{ssec:intersection theory} applies to prove Theorem \ref{cor: rho lifts to universal cover}.
For this purpose, let \((S,\sigma)\) be a branched \(\rp2\)-structure with branching chain \(D=\sum_{i=1}^k m_i p_i\) (see Remark \ref{rmk:branchingchain}). We denote by \((\mathrm{dev},\rho)\) the developing-holonomy pair as determined in \S\ref{ssec:devhol_pair}, see Proposition \ref{prop:BPSasmaps}. Let
\begin{equation}\label{eq:flatbundle Erho}
    E_\rho\doteq\frac{\widetilde S\times\R^3}{\sim_\rho}\longrightarrow S
\end{equation}
be the oriented flat vector bundle associated with the holonomy representation. Let us consider the smooth section \(\sigma_\rho\colon S\longrightarrow\mathbb{P}(\,E_\rho\,)\) corresponding to the developing map, namely \(\sigma_\rho(p):=[\tilde p,\operatorname{dev}(\tilde p)]\), see \cite[Proposition 4.1]{alessandrini2019}). The smooth section \(\sigma_\rho\) corresponds to a rank \(1\) sub-bundle \(L_\rho\subset E_\rho\) which is orientable, hence trivial. In particular, it admits two possible lifts \(\widehat{\sigma}_\rho^\pm\) to \(\mathbb{S}(\,E_\rho\,)\). We now state the following theorem, from which the main result of this section will follow.

\begin{thm}\label{thm:equation between deg(D) e auto intersezione}
Given a branched \(\mathbb{RP}^2\)-structure on \(S\) with branching chain \(D\), the following relation holds: 
\begin{equation}
     Q_{\mathbb{S}(\,E\,)}([\,\widehat{\Sigma}^+\,],[\,\widehat{\Sigma}^+\,])=\mathrm{deg}(\,D\,)+2-2g.
\end{equation}
\end{thm}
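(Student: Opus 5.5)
Here \(\widehat\Sigma^+\) is the image of the lift \(\widehat\sigma_\rho^+\colon S\to\mathbb S(E_\rho)\) of the developing section, and \(E=E_\rho\). The plan is to compute the self-intersection through the normal bundle. For an embedded oriented surface in an oriented \(4\)-manifold, \(Q([\widehat\Sigma^+],[\widehat\Sigma^+])\) is the Euler number \(e(\nu)\) of its normal bundle. Along a section of a sphere bundle, the normal bundle is the vertical tangent bundle restricted to the section: at \((p,v)\) it is \(T_v\mathbb S(E_p)\cong \operatorname{Hom}(L_p,E_p/L_p)\), with \(L=L_\rho\) the tautological line sub-bundle. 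Since \(L\) is trivial and oriented by the choice of \(+\), we get \(\nu\cong E_\rho/L_\rho\). So the task is to show
\begin{equation*}
e\big(E_\rho/L_\rho\big)=\deg(D)+2-2g .
\end{equation*}

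The key step is to relate \(E_\rho/L_\rho\) to \(TS\) through the derivative of the developing map. On \(\widetilde S\), the flat bundle is trivial, \(\widetilde S\times\R^3\), and the developing map lifts locally to \(\widehat{\mathrm{dev}}\colon\widetilde S\to\R^3\setminus\{0\}\) spanning \(L\); globally, one can lift to \(\sph^2\) since \(\widetilde S\) is simply connected. Its differential gives a bundle map
\begin{equation*}
\Phi\colon TS\longrightarrow \operatorname{Hom}(L,E/L)\cong E/L,\qquad X\mapsto \big(\widehat{\mathrm{dev}}\mapsto d\widehat{\mathrm{dev}}(X)\bmod L\big),
\end{equation*}
which is the intrinsic differential \(d\,\mathrm{dev}\colon T_{\tilde p}\widetilde S\to T_{\mathrm{dev}(\tilde p)}\rp2\). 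By the equivariance of \((\mathrm{dev},\rho)\), \(\Phi\) descends to \(S\). It is an isomorphism away from \(\mathcal B\) because \(\mathrm{dev}\) is a local diffeomorphism there, and, with \(S\) oriented compatibly, it is orientation preserving, since the charts are orientation-preserving branched covers. Thus \(\Phi\) is a section of \(\operatorname{Hom}(TS,E/L)\) that is invertible except at the branch points. By Proposition~\ref{prop:BPSasmaps} and the local normal form \((r,\theta)\mapsto(r^{m+1},(m+1)\theta)\), in local trivialisations \(\Phi\) looks like the derivative of \(z\mapsto z^{m_i+1}\), namely \(z^{m_i}\) times a conformal factor.

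The next step is the standard degree-counting argument. I would fix a nonvanishing vector field \(X\) on \(S\) minus finitely many points, with indices summing to \(\chi(S)=2-2g\) and zeros away from \(\mathcal B\). Then \(\Phi(X)\) is a section of \(E/L\) whose zeros are the zeros of \(X\), with the same indices, together with the branch points \(p_i\), each of local index \(m_i\) because \(\Phi\) winds like \(z^{m_i}\). Summing gives
\begin{equation*}
e(E/L)=\chi(S)+\sum_i m_i=2-2g+\deg(D).
\end{equation*}

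I expect the main obstacle to be checking that the local index at a branch point is exactly \(+m_i\): the sign conventions for the orientations of \(E/L\) and of \(\widehat\Sigma^+\) have to agree. The normal form only holds up to isotopy by homeomorphisms, which is enough for degree purposes, but the orientation of \(E/L\) induced by the orientation of \(E_\rho\) and the \(+\)-lift must match the one pulled back from \(TS\) via \(\Phi\) on regular points. I would first verify this on \(\sph^2\) with the identity developing map. There \(g=0\), \(D=0\), and \(E/L\cong T\sph^2\) has Euler number \(2\), which fixes the convention. A local computation of \(dz^{m+1}\) then shows the branch contributions are positive.
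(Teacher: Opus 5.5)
Your proposal is correct and follows essentially the paper's route. Your section \(\Phi(X)\) of \(E_\rho/L_\rho\cong\nu_{\widehat\Sigma^+}\) is precisely the paper's \(N=r\big(d\widehat\sigma^{+}_{\rho}(X)\big)\), the zero count (indices of \(X\) summing to \(\chi(S)\), plus index \(m_i\) at each \(p_i\)) is identical, and your explicit handling of the orientation of the \(+\)-lift makes precise a convention the paper leaves implicit.

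The one step that both you and the paper leave heuristic is the local index \(m_i\) at \(p_i\), because the normal form \(z\mapsto z^{m_i+1}\) is only available up to homeomorphism. It can be made rigorous as follows. On a punctured neighbourhood of \(p_i\), write \(\mathrm{dev}=(z\mapsto z^{m_i+1})\circ h\), where \(h\) is a diffeomorphism of punctured discs. Hopf's Umlaufsatz then shows that \(dh\) has zero rotation along a small loop around \(p_i\), so \(d\,\mathrm{dev}(X)\) winds exactly \(m_i\) times.
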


\noindent We adapt the strategy used by Gallo--Kapovich--Marden \cite[Proposition 11.2.2]{GalloKapovichMarden2000Monodromy} to the real projective setting.

\begin{proof}
Let \(E_\rho\) be the flat vector bundle defined as in \eqref{eq:flatbundle Erho}. The flat connection on \(E_\rho\) induces a horizontal distribution on the sphere bundle \(\widehat{\pi}\colon\mathbb{S}(\,E_\rho\,)\longrightarrow S\). Thus we have a splitting
\begin{equation}
    T\mathbb{S}(\,E_\rho\,)=\mathcal H\oplus T^{\mathrm{vert}}\mathbb{S}(\,E_\rho\,),
\end{equation}
where \(T^{\mathrm{vert}}\mathbb{S}(\,E_\rho\,)=\ker(\,d\widehat{\pi}\,)\). We denote by
\(r\colon T\mathbb{S}(\,E_\rho\,)\longrightarrow T^{\mathrm{vert}}\mathbb{S}(\,E_\rho\,)\) the projection onto the vertical factor with respect to this splitting. That is, if \(W=W^{\mathcal H}+W^{\mathrm{vert}}\in\Gamma(T\mathbb{S}(\,E_\rho\,))\), then \(r(W)=W^{\mathrm{vert}}\). Along the section \(\widehat{\sigma}^{+}_{\rho}\), the vertical bundle \((\widehat{\sigma}^{+}_{\rho})^*T^{\mathrm{vert}}\mathbb{S}(\,E_\rho\,)\) is naturally identified with the normal bundle of \(\widehat{\Sigma}^{+}\) in \(\mathbb{S}(\,E_\rho\,)\). Indeed, the tangent space to the total space splits into horizontal and vertical directions, while the differential of the section projects isomorphically onto the tangent space of the base. Hence, after this identification, a vertical vector along the section represents a normal direction to \(\widehat{\Sigma}^{+}\). Now let us choose a Morse function \(f\colon S\to\mathbb{R}\)
with exactly one minimum, one maximum, and \(2g\) saddle points. Notice that we may choose \(f\) so that its critical points are disjoint from the branch points \(p_1,\dots,p_k\). Let \(X\doteq\operatorname{grad}_h(f)\) with respect to an auxiliary Riemannian metric \(h\) on \(S\). Then \(X\) has isolated non-degenerate zeros, all disjoint from the branch points, and by the Poincar\'e--Hopf theorem, see \cite{hopf1927vektorfelder, Poincare1881},
\begin{equation}
\sum_{x\in Z(\,X\,)}\operatorname{ind}_x(\,X\,)=\chi(\,S\,)=2-2g.
\end{equation}
More explicitly, the minimum and the maximum have index \(+1\), while the \(2g\) saddle points have index \(-1\). We now define a section \(N\) of the normal bundle \(\nu_{\widehat{\Sigma}^{+}}\) as follows
\begin{equation}
    N\doteq r\,\big(\,d\widehat{\sigma}^{+}_{\rho}(\,X\,)\,\big).
\end{equation}

\noindent Equivalently, \(N\) measures the vertical component, with respect to the flat connection, of the derivative of the lifted section in the direction of the vector field \(X\). We now analyse the zero set of \(N\). Recall that the developing map is a local diffeomorphism away from the branch points, see Proposition \ref{prop:BPSasmaps}. According to \cite[\S4.2]{alessandrini2019}, this is equivalent to say that the section \(\widehat{\sigma}^{+}_{\rho}\) is transverse to the horizontal distribution induced by the flat connection. On the other hand, at a branch point, say \(p_i\) of order \(m_i\), the section \(\widehat{\sigma}^{+}_{\rho}\) fails to be transverse to the horizontal distribution and the derivative of the developing map vanishes to order \(m_i\) at \(p_i\). Since \(X(\,p_i\,)\neq0\), its vertical component \(r\,\big(\,d\widehat{\sigma}^{+}_{\rho}(\,X\,)\big)\), has a zero of index \(m_i\) at \(\widehat\sigma_\rho^+(\,p_i\,)\). As a consequence, away from the branch points, \(N\) vanishes at the zeros of \(X\), that is, \(N\) vanishes at \(Z(\,X\,)\,\cup\,\{\,p_1,\dots,p_k\,\}\). In particular, the normal vector field \(N\) is not transverse to the \(0\)-section. See Figure \ref{fig:section}. 

\begin{figure}[htbp]
  \centering
  \begin{tikzpicture}[scale=2]

    \clip (-0.6, 0) rectangle (5.8, 3.8);
    
    \path[name path=bottom] (-0.5,1.6) .. controls (1.5,2.1) and (3.5,1.3) .. (5.5,1.8);
    \path[name path=top] (-0.5,3.6) .. controls (1.5,4.1) and (3.5,3.3) .. (5.5,3.8);

    \foreach \yoffset in {-0.4, -0.1, 0.2, 0.5, 0.8, 1.1, 1.4, 1.7, 2.0, 2.2} {
      \draw[green!60!black, semithick] 
        (-0.5, 1.6 + \yoffset) .. controls (1.5, 2.4 + \yoffset - 0.3) and (3.5, -0.5 + \yoffset + 0.3) .. (5.5, 2.3 + \yoffset);
    }

    \fill[white] (-1,  0  ) rectangle (6, 1.50); 
    \fill[white] (-1,  3.5) rectangle (6, 4.50);

    \draw[thick] (-0.5,0.2) .. controls (1.5,0.7) and (3.5,-0.1) .. (5.5,0.4);
    \node[right] at (5.5,0.5) {\(S\)};

    \draw[->, thick] (2.5,1.2) -- (2.5,0.5) node[midway, right] {\(\pi\)};
    
    \node[green!60!black] at (4.2, 1.3) {\(\mathcal H\)};

    \draw[red!80!black, thick] 
      (-0.5, 2.2) .. controls (1.2, 3.1) and (3.8, 1.8) .. (5.5, 3.2);
    \node[red!80!black, right] at (5.5, 3.2) {\(\widehat\sigma_\rho^+\)};

    \draw[thin, dashed] (3.55, 1)--(3.45,3.5);

    \fill[black] (3.4875,2.5125) circle (1.0pt);
    \fill[black] (3.4875,0.2325   ) circle (1.0pt);

    \node[scale=0.75] at (3.4875, 0.125) {\(p\)};
    \node[scale=0.75] at (3.75,2.3725 ) {\(\widehat\sigma_\rho^+(\,p\,)\)};

    \node[left] at (0, 3.5) {\(\mathbb S(\,E_\rho\,)\)};

    \node[scale=0.75] at (3.5, 3.625) {\(T^{\mathrm{vert}}\mathbb{S}(\,E_\rho\,)\)};

  \end{tikzpicture}
  \caption{The pictures show the fibre bundle \(\mathbb S(\,E_\rho\,)\longrightarrow S\). Green lines denote the horizontal foliation \(\mathcal H\) determined by the flat connection, while the red line denotes the section \(\widehat\sigma_\rho^+\). The latter may be tangent to the horizontal foliation at a (finite) set of points. At a tangency point, the vertical vector field \(N\) has a zero, since \(T^{\textnormal{vert}}_p\mathbb S(\,E_\rho\,)\) is the kernel of the differential of the projection.}
  \label{fig:section}
\end{figure}

\noindent By standard results, see \cite[Chapter~3, \S~5, Exercise~7]{guillemin1974differential}, an arbitrarily small perturbation of \(N\) supported in a small neighbourhood of \(p_i\) decomposes the zero at \(p_i\) into \(m_i\) simple zeros, each with local index \(+1\). Furthermore, such a perturbation does not alter the degree of the branching data and yields a transverse section, say \(N_\varepsilon\) of the normal bundle. Since the self-intersection of \(\widehat{\Sigma}^{+}\) is the Euler number of its normal bundle, we obtain
\begin{equation}
    Q_{\mathbb{S}(\,E_\rho\,)}([\widehat{\Sigma}^{+}],[\,\widehat{\Sigma}^{+}\,])\,=\,e(\,\nu_{\widehat{\Sigma}^{+}}\,)[\,S\,]\,=\,\sum_{x\in Z(N_\varepsilon)}\operatorname{ind}_x(\,N_\varepsilon\,).
\end{equation}
\noindent By the discussion above, the zeros of \(N_\varepsilon\) consist of the zeros of \(X\), together with the branch points \(\{p_1,\dots,p_d\}\), where now each \(p_i\) is repeated \(m_i\)-times. Therefore
\begin{equation}
    \begin{aligned}
        Q_{\mathbb{S}(E_\rho)}([\widehat{\Sigma}^{+}],[\widehat{\Sigma}^{+}])
                                &= \sum_{x\in Z(X)}\operatorname{ind}_x(\,N_\varepsilon\,) \,+\,\sum_{i=1}^k \operatorname{ind}_{p_i}(N_\varepsilon) \\ 
                                &=\sum_{x\in Z(X)}\operatorname{ind}_x(\,X\,)\,+\,\sum_{i=1}^k m_i \\
                                &= \chi(\,S\,)+\deg(\,D\,) \\
                                &= 2-2g+\deg(\,D\,).
\end{aligned}
\end{equation}
This proves the desired formula.
\end{proof}

\begin{proof}[Proof of Theorem \ref{cor: rho lifts to universal cover}]
As a consequence of Theorem \ref{thm:equation between deg(D) e auto intersezione} combined with equation \eqref{eq: relation auto-intersection between Z and Z_2}, we readily obtain the identity
\begin{equation}
    Q_{\mathbb{P}(\,E_\rho\,)}^{(2)}([\,\Sigma\,]_2,[\,\Sigma\,]_2)=\deg(\,D\,) \pmod 2.
\end{equation}

\noindent Moreover, by Lemma \ref{lem: auto intersection and 2nd Stiefel-Whitney} we know that \(Q_{\mathbb{P}(\,E_\rho\,)}^{(2)}([\,\Sigma\,]_2,[\,\Sigma\,]_2)=\left\langle w_2(\,E\,),[\,S\,]_2\right\rangle\), and hence 
\begin{equation}
\left\langle w_2(\,E_\rho\,),\,[\,S\,]_2\right\rangle=\deg(\,D\,) \pmod 2.
\end{equation}
\noindent This implies that \(w_2(\,E_\rho\,)=0\) if and only if \(\deg(\,D\,)\) is even, which is equivalent to \(\rho\) lifting to the universal cover of \(\slr3\), see \cite[\S 2.4]{goldman1988}.
\end{proof}

\begin{rmk}
    In the case the \(\rp2\)-structure is unbranched, so that \(D=0\), then it was already observed by Goldman (\cite{goldman1990}) that the corresponding holonomy representation lifts to the universal cover of \(\slr3\).
\end{rmk}

\smallskip

\subsection{Realisation into strata of branched projective structures}\label{ssec:geometrisation in strata} While Theorem \ref{thm:geometrisation} established in \S\ref{sec:geometrisation} ensures that every representation is geometrisable, Theorem \ref{cor: rho lifts to universal cover} imposes necessary conditions when the branching orders are prescribed. The goal of the present section is to provide a partial answer to the realisation problem into strata.

\smallskip

\noindent In \S\ref{sssec:moduli spaces}, we defined the strata of structures as specific subspaces of the moduli space \(\mathcal{RP}^2(\,S\,)\) which comprise projective structures with prescribed singularities. It is a challenging problem to determine whether a representation can be realised in a prescribed stratum. Theorem~\ref{cor: rho lifts to universal cover} provides a first characterisation, here stated as a corollary.

\begin{cor}\label{cor: char realisation}
    If a representation \(\rho \colon \pi_1(S) \longrightarrow \pslr3\) arises as the holonomy of a branched projective structure in the stratum \(\mathcal{RP}^2(\,\mu\,)\), then its second Stiefel--Whitney class satisfies the compatibility relation
        \begin{equation}
            \textnormal{w}_2(\,\rho\,)=\langle w_2(\,E_\rho\,), [S] \rangle \equiv \deg(\,\mu\,) \pmod 2,
        \end{equation}
    where \([S] \in H_2(S, \mathbb{Z}_2)\) denotes the fundamental class of the surface.
\end{cor}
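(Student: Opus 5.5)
This corollary follows essentially by specialisation of the results already in place, so the plan is short. Let \(\sigma\) be a branched projective structure in \(\mathcal{RP}^2(\,\mu\,)\) with holonomy \(\rho\), and let \(p_1,\dots,p_k\) be its branch points with orders \(m_1,\dots,m_k\), so that \(\mu=(m_1,\dots,m_k)\). The first step is bookkeeping. By Definition \ref{def:strata} and Remark \ref{rmk:branchingchain}, the branching chain of \(\sigma\) is \(D=\sum_i m_i p_i\), and therefore \(\deg(\,D\,)=m_1+\cdots+m_k=\deg(\,\mu\,)\). This identification holds regardless of how the branch points are placed on \(S\), so only the signature matters.

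Next, form the flat bundle \(E_\rho\) as in \eqref{eq:flatbundle Erho}, together with the section \(\sigma_\rho\) of \(\mathbb P(\,E_\rho\,)\) induced by the developing map. Theorem \ref{thm:equation between deg(D) e auto intersezione} gives the integral self-intersection \(Q_{\mathbb S(E_\rho)}([\widehat\Sigma^+],[\widehat\Sigma^+])=\deg(\,D\,)+2-2g\). Reducing modulo \(2\) and using \eqref{eq: relation auto-intersection between Z and Z_2}, the \(\mathbb Z_2\)-self-intersection of \(\Sigma=\sigma_\rho(S)\) in \(\mathbb P(\,E_\rho\,)\) equals \(\deg(\,D\,)\pmod 2\), since \(2-2g\) is even.

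Lemma \ref{lem: auto intersection and 2nd Stiefel-Whitney}, applied to \(E=E_\rho\), identifies this mod-\(2\) self-intersection with \(\langle w_2(\,E_\rho\,),[S]_2\rangle\). There is one caveat: that lemma was proved for the section coming from a splitting \(E\cong F\oplus\underline{\R}\). I would handle this by observing that \(L_\rho\) is trivial, as noted before Theorem \ref{thm:equation between deg(D) e auto intersezione}, so a complement \(F=E_\rho/L_\rho\) gives exactly such a splitting with \(\sigma_\rho\) as the induced section. Alternatively, one can note that the normal bundle computation \(\nu_\Sigma\cong\operatorname{Hom}(L_\rho,E_\rho/L_\rho)\) goes through verbatim for any line subbundle.

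The last step is to recall that the Stiefel--Whitney number of the representation is by definition \(\textnormal{w}_2(\,\rho\,)=\langle w_2(E_\rho),[S]\rangle\), which is the obstruction to lifting to \(\widetilde{\mathrm{SL}}(3,\R)\) as in \cite[\S 2.4]{goldman1988}. Chaining these equalities gives \(\textnormal{w}_2(\,\rho\,)\equiv\deg(\,D\,)=\deg(\,\mu\,)\pmod 2\); equivalently, this is the proof of Theorem \ref{cor: rho lifts to universal cover} read in the stratum language. The only step requiring care is the compatibility between the splitting hypothesis of Lemma \ref{lem: auto intersection and 2nd Stiefel-Whitney} and the developing section, addressed above. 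Everything else is direct substitution.
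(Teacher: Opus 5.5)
Your proposal is correct and follows the paper's own route. The corollary is Theorem~\ref{cor: rho lifts to universal cover} restated via \(\deg(D)=\deg(\mu)\), and the paper proves that theorem by chaining exactly the integral self-intersection formula, its mod-\(2\) reduction, Lemma~\ref{lem: auto intersection and 2nd Stiefel-Whitney}, and Goldman's lifting criterion. Your observation that the lemma applies to the developing section, because the trivial line subbundle \(L_\rho\) admits a complement, makes explicit a step the paper leaves implicit.
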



Although the general case remains to be fully addressed, as outlined in the introduction, see \S\ref{ssec:strata}, in what follows we shall provide a complete characterisation for Hitchin representations. For this purpose, we first introduce the following topological surgery.

\smallskip

\subsubsection{Bubbling}\label{sssec:bubbling} We now provide a way to alter a branched projective structures by preserving the topology of the underlying surface and the holonomy representation. Such an operation is well-known as \textit{bubbling} (see, \textit{e.g.}, \cite{calsetall2014} and \cite{GalloKapovichMarden2000Monodromy} in the context of complex projective structures). For this purpose, we consider the two-dimensional sphere \(\sphn2\) with its standard projective structure \(\sigma_{\textnormal{std}}\) that makes the (unique) covering projection \(\pi_{\textnormal{std}}\colon\sphn2\longrightarrow\rp2\) a projective map. Notice that \(\pi_{\textnormal{std}}\) is a developing map for \(\sigma_{\textnormal{std}}\). We recall for the readers' convenience that such a structure has trivial holonomy, see Remark \ref{rmk:trivial holo realisable}. 

\smallskip

\noindent The bubbling construction is a special case of the slit construction introduced in Interlude \S\ref{interlude: slit constr}, where at least one of the two surfaces is a sphere. Let \((S,\sigma_o)\) be a (possibly branched) \(\rp2\)-structure and let \((U_1,\varphi_1)\) be an open chart. Up to shrink the open set \(U\) a little, if necessary, let \((U_2,\varphi_2)\) be a local chart for \((\sphn2, \sigma_{\textnormal{std}})\) such that \(\varphi_1(\,U_1\,)=\varphi_2(\,U_2\,)=V\subset\rp2\). Let \(\gamma\subset V\) be any segment and let \(\gamma_1\) and \(\gamma_2\) be the respective preimages of \(\gamma\) in \(U_1\) and \(U_2\). Let \((S,\sigma)\) be the \(\rp2\)-structure obtained by cutting \((S,\sigma_o)\) along \(\gamma_1\) and gluing a copy of the standard projective structure \(\sigma_{\textnormal{std}}\) on \(\sphn2\) cut along \(\gamma_2\). Topologically, this is equivalent to the connected sum of \(S\) with a sphere, and hence the topology remains unaltered. Some remarks are in order, see Figure \ref{fig:bubbling}.

\begin{figure}[htbp]
  \centering
  \begin{tikzpicture}[scale=1.325]

    \draw[black!50] (0,0) circle (2.25cm);
    \shade[ball color=white, opacity=.25] (0,0) circle (2.25cm);

    \foreach \theta/\i in {90,270} {
    \fill[white, rotate=\theta] (1.25, 0.75) to[out=300, in=60] (1.25,-0.75) to[out=115, in=270] (1.05,0) to[out=90, in=245] (1.25, 0.75);
    }

    \foreach \theta in {90,270} {
    \draw [black!50, thin, rotate=\theta] 
            (1.25, 0.75) to[out=300, in= 60] (1.25, -0.75);
    }

    \foreach \theta in {90,270} {
    \draw [black!50, thin, rotate=\theta] 
            (1.3, -0.835) to[out=120, in=240] (1.3,  0.835);
    }

    \draw[plum ]   ( 30:1.25) arc(30:-30:1.25);
    \fill[black]   ( 30:1.25)  circle (0.75pt);
    \fill[black]   (-30:1.25)  circle (0.75pt);
    \node[plum] at (270:2.5) {\((S,\sigma)\)};
    \node[plum] at (000:1.5 ) {\(\gamma_1\)};
    
    \begin{scope}[shift={(5.5,0)}]
        \draw[mauve!25] (0,0) circle (2.25cm);
        \shade[ball color = sky, opacity=.6] (0,0) circle (2.25cm);

        \draw[plum ] (210:1.25) arc(210:150:1.25);
        \fill[black] (210:1.25)   circle (0.75pt);
        \fill[black] (150:1.25)   circle (0.75pt);

        \node[plum] at (270:2.5) {\((\sph^2,\sigma_{\textnormal{std}})\)};
        \node[plum] at (180:1.5) {\(\gamma_2\)};
    \end{scope}
  \end{tikzpicture}
  \caption{An illustration of the bubbling surgery. A copy of the round sphere \((\mathbb{S}^2, \sigma_{\textnormal{std}})\) is attached to a surface equipped with a given branched projective structure to produce a new structure on the same surface. Topologically, this operation corresponds to taking the connected sum of the surface with a 2-sphere.}
  \label{fig:bubbling}
\end{figure}

\begin{rmk}\label{rmk:flexibility}
    The resulting structure is highly sensitive to the choices made during the construction just performed. Indeed, several arbitrary choices are made throughout the process, such as the local chart and the geodesic segment in \(\rp2\), with different choices leading to different structures. In fact, two sets of choices yield the same structure if and only if they are related by a projective diffeomorphism \(f \in \textnormal{Proj}(\,\sigma_o\,)\).
\end{rmk}

\begin{rmk}\label{rmk:bubbledarebranched}
    Regardless of the nature of the initial structure, the resulting projective structure is necessarily branched, where the branch points arise from the identification of the extremal points of \(\gamma_1\) and \(\gamma_2\).
\end{rmk}

\begin{rmk}\label{rmk:iterativebubbles}
    The operation of bubbling a structure alters the total branching divisor, increasing its degree by two for each bubble glued. Furthermore, the aforementioned construction readily extends to the framework of branched charts, thereby allowing for an increase in the orders of pre-existing branch points on the bubbled surface.
\end{rmk}

\begin{defn}
    We say that \((S,\sigma)\) is obtained by \textit{bubbling} \((S,\sigma_o)\).
\end{defn}

\noindent The terminology of this surgery is quite intuitive, as the sphere takes the shape of a bubble. The key feature of this surgery is recalled in the following

\begin{prop}\label{prop:bubblepreservesholo}
     Let \((S,\sigma_o)\) be a \(\rp2\)-structure with holonomy \(\rho_o\) and let \((S,\sigma)\) be the branched \(\rp2\) structure obtained by bubbling \((S,\sigma_o)\). Then \((S,\sigma)\) has holonomy \(\rho_o\).
\end{prop}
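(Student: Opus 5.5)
The plan is to describe the bubbled structure through an explicit developing map and check that it is equivariant with respect to the original holonomy \(\rho_o\). Let \(\textnormal{dev}_o\colon\widetilde S\longrightarrow\rp2\) be a developing map for \(\sigma_o\), equivariant with respect to \(\rho_o\). Since \(U_1\) is a chart domain, we may take it to be a small disc. Then \(U_1\) lifts homeomorphically to \(\widetilde S\), and its preimage is a disjoint union of copies \(\gamma\cdot\widetilde U_1\), for \(\gamma\in\pi_1(\,S\,)\). Up to post-composing \(\textnormal{dev}_o\) with an element of \(\pslr3\), we may assume that \(\textnormal{dev}_o|_{\widetilde U_1}=\varphi_1\circ\pi\). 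The slit \(\gamma_1\) then lifts to a \(\pi_1(\,S\,)\)-invariant family of disjoint slits \(\gamma\cdot\widetilde\gamma_1\).

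\textbf{Step 1: build the universal cover and the developing map.} The bubbled surface is obtained from \(S\) by a modification supported in the simply connected set \(U_1\), so its universal cover is obtained equivariantly. Cut \(\widetilde S\) along every slit \(\gamma\cdot\widetilde\gamma_1\), and glue in a copy \(\sph^2_\gamma\) of the slit sphere at each one. On \(\widetilde S\) minus the slits, set \(\textnormal{dev}=\textnormal{dev}_o\). On the sphere \(\sph^2_\gamma\), set
\[
\textnormal{dev}=\rho_o(\gamma)\circ g\circ\pi_{\textnormal{std}},
\]
where \(g\in\pslr3\) is chosen so that \(g\circ\pi_{\textnormal{std}}\) agrees with \(\varphi_2\) on \(U_2\). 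This choice is possible because \(\pi_{\textnormal{std}}\) is a developing map for \(\sigma_{\textnormal{std}}\). Along the banks of \(\widetilde\gamma_1\), both pieces develop through the same identification \(\varphi_1(\gamma_1)=\gamma=\varphi_2(\gamma_2)\). Hence the two definitions agree, and by \(\rho_o\)-equivariance they agree on every translate as well.

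\textbf{Step 2: identify this map as a developing map for \(\sigma\).} The deck group \(\pi_1(\,S\,)\) acts on the glued surface by permuting the spheres, sending \(\sph^2_{\gamma'}\) to \(\sph^2_{\gamma\gamma'}\). The map constructed in Step 1 satisfies \(\textnormal{dev}(\gamma\cdot x)=\rho_o(\gamma)\,\textnormal{dev}(x)\) by construction. It is a local homeomorphism away from the lifts of the slit endpoints, and it is locally projective, because it is built out of charts of \(\sigma_o\) and \(\sigma_{\textnormal{std}}\). By the uniqueness part of Claim~\ref{claim:develpoingmapconstruction}, equivalently Proposition~\ref{prop:BPSasmaps}, it is therefore a developing map for \(\sigma\), and the holonomy of \(\sigma\) is \(\rho_o\).

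\textbf{Step 3: justify that the glued surface is the universal cover.} This is the only subtle point. The bubbled surface has the same genus as \(S\), and the surgery is supported in a disc. Hence the inclusion of \(S\setminus U_1\) induces the same surjection on \(\pi_1\) for both \(S\) and the bubbled surface, and the kernels agree: the only new loops lie in the sphere summand, which is simply connected. So \(\pi_1\) of the bubbled surface is naturally identified with \(\pi_1(\,S\,)\), in a way that respects the marking. The glued surface is connected, simply connected, and carries a free, properly discontinuous action of \(\pi_1(\,S\,)\) with quotient \((S,\sigma)\), so it is the universal cover.

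A quicker alternative for the holonomy alone is to compute it on loops. Every loop in the bubbled surface is homotopic to one avoiding the sphere and the slit. Along such a loop the charts of \(\sigma\) coincide with those of \(\sigma_o\), so the products of transition maps, and hence \(\hol\), agree with \(\rho_o\).
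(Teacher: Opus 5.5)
Your proof is correct, but it takes a different route from the paper. The paper argues algebraically: by Seifert--van Kampen, \(\pi_1\) of \(S\#\sph^2\) is a pushout of \(\pi_1(S)\) and \(\pi_1(\sph^2)=\{1\}\) over the gluing curve. The holonomy of the bubbled structure restricts to \(\rho_o\) on the \(S\)-side and to the trivial representation on the sphere side, so the universal property forces it to be \(\rho_o\). Your closing ``quicker alternative'' is essentially this argument: push every loop off the bubble, where the charts of \(\sigma\) and \(\sigma_o\) coincide.

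Your main argument instead builds the developing map explicitly. At every lift \(\gamma\cdot\widetilde\gamma_1\) of the slit you insert, \(\pi_1(S)\)-equivariantly, a slit sphere developed by \(\rho_o(\gamma)\circ g\circ\pi_{\textnormal{std}}\). You check that this matches \(\textnormal{dev}_o\) along the banks and conclude via the developing--holonomy characterisation (Proposition~\ref{prop:BPSasmaps}). This buys an explicit developing--holonomy pair for \(\sigma\). It makes the identification of the fundamental group of the bubbled surface with \(\pi_1(S)\), hence the marking, visible through the deck action. It also places the new branch points at the lifts of the slit endpoints, and shows that the developing map becomes surjective onto \(\rp2\).

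The price is your Step 3, where you need the glued surface to be the universal cover. The cleanest justification is that it is homeomorphic to \(\widetilde S\) itself. Each insertion is a connected sum with a sphere supported in a disc, and the discs \(\gamma\cdot\widetilde U_1\) form a disjoint, locally finite family. So the topological input is the same as in the paper's van Kampen step, just used upstairs.

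The paper's route is shorter. However, it leaves implicit that the holonomy of the glued structure restricts to the holonomies of the pieces, which is exactly what your loop argument or explicit developing map makes precise.

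Two harmless slips. Post-composing \(\textnormal{dev}_o\) by an element of \(\pslr3\) replaces \(\rho_o\) by a conjugate. You also use \(\gamma\) both for deck transformations and for the segment in \(V\).
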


\begin{proof}
    This result is a direct consequence of a broader algebraic fact. By the Seifert-Van Kampen theorem, the fundamental group \(\pi_1(\,S\,)\) of \(S\), seen as the connected sum of \(S\) itself and a \(\sphn2\), is given by the pushout of the groups \(\pi_1(\,S\,)\) and \(\pi_1(\,\sphn2\,)=\{\,1\,\}\) over the amalgamated subgroup generated by to the gluing curve. Let \(\rho_o\) be the holonomy representation of \((S,\sigma_o)\), and recall that the standard structure \((\sphn2,\sigma _{\textnormal{std}})\) has trivial holonomy. By the universal property of the pushout, there exists a unique representation \(\rho\) defined on the amalgamated product that extends both holonomy representations. Since the holonomy of \((S,\sigma _{\textnormal{std}})\) is trivial, it readily follows that \(\rho=\rho_o\), as desired.
\end{proof}

\begin{rmk}
    More generally, two (possibly branched) structures on topological surfaces, both of genus at least one, can be glued together along a slit by defining a branched projective structure on a topological surface homeomorphic to the connected sum of the former ones. A straightforward adaptation of the argument just proposed to prove Proposition \ref{prop:bubblepreservesholo} shows that the holonomy representation of the resulting structure is defined as the extension of the holonomy representations of the former structures.
\end{rmk}

\smallskip

\subsubsection{Hitchin representations as holonomies of branched structures}\label{ssec:hitchinbranched} In their work \cite{ChoiGoldman1993Convex}, Choi and Goldman showed that every Hitchin representation arises as the holonomy representation of a unique convex projective structure on \(S\). Relying on their result, we may use the bubbling construction recalled in \S\ref{sssec:bubbling} to realise branched projective structures with Hitchin holonomy. More specifically, as a direct consequence of Proposition \ref{prop:bubblepreservesholo}, we have the following Corollaries whose proofs are immediate.

\begin{cor}
    Let \((S,\sigma_o)\) be a convex projective structure with Hitchin holonomy \(\rho\) and let \((S,\sigma)\) be the branched projective structure obtained by bubbling \((S,\sigma_o)\). Then the branched structure \((S,\sigma)\) has Hitchin holonomy \(\rho\). In particular, every Hitchin representation arises as the holonomy of infinitely many branched projective structures on \(S\) obtained by bubbling a convex projective structure \((S, \sigma_o)\).
\end{cor}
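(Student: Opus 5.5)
The first assertion follows from Proposition \ref{prop:bubblepreservesholo}, applied with \(\sigma_o\) the convex structure provided by Choi--Goldman \cite{ChoiGoldman1993Convex}. By \cite{ChoiGoldman1993Convex}, the convex structure \((S,\sigma_o)\) exists and its holonomy \(\rho\) lies in \(\mathrm{Hit}_3(\,S\,)\). Bubbling, as described in \S\ref{sssec:bubbling}, is a slit construction with \((\sphn2,\sigma_{\textnormal{std}})\), which has trivial holonomy. By Proposition \ref{prop:bubblepreservesholo}, the bubbled structure \((S,\sigma)\) has holonomy exactly \(\rho\), and in particular this holonomy is Hitchin. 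Remark \ref{rmk:bubbledarebranched} shows that \(\sigma\) is genuinely branched, with two new simple branch points at the identified endpoints of the slit.

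For the \emph{infinitely many} assertion, I would exhibit infinitely many pairwise inequivalent structures. There are two routes, and I would use both.

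\textbf{Route 1: iterated bubbling.} By Remark \ref{rmk:iterativebubbles}, bubbling \(k\) times produces a structure \(\sigma_k\) whose branching chain has degree \(\deg(D_k)=2k\). By Proposition \ref{prop:bubblepreservesholo} applied inductively, each \(\sigma_k\) still has holonomy \(\rho\). The degree of the branching chain is an invariant of the marked structure, since projective isomorphisms preserve branch points and their orders (as observed after Definition \ref{def:branchedprojestructure}). Hence the \(\sigma_k\) lie in distinct strata \(\mathcal{RP}^2(\,\mu\,)\) and are pairwise inequivalent. This is consistent with Corollary \ref{cor: char realisation}, since \(\textnormal{w}_2(\rho)=0\) and every \(\deg(\mu)=2k\) is even.

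\textbf{Route 2: varying the slit within a stratum.} Alternatively, already with a single bubble one can vary the segment \(\gamma\) and the chart, as in Remark \ref{rmk:flexibility}. This yields a continuum of structures in the stratum \(\mathcal{RP}^2(1,1)\). Two such choices give the same structure only if they differ by an element of \(\textnormal{Proj}(\sigma_o)\). Since \(\sigma_o\) is a convex structure on a surface of genus \(g\ge2\), the projective isotopy class of \(\sigma_o\) is rigid: a projective automorphism isotopic to the identity must be trivial. So distinct endpoint positions of the slit give distinct marked structures.

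The main obstacle is this last rigidity claim, needed to separate structures within a single stratum. I would avoid it and rely on Route 1 alone, which needs only the invariance of the branching degree under equivalence and thus is immediate.
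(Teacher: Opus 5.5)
Your argument for the main assertion is the paper's own. The paper gives no separate proof: it calls the corollary an immediate consequence of Proposition \ref{prop:bubblepreservesholo}, with Choi--Goldman supplying the convex structure for the ``in particular''.

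The difference is in the ``infinitely many'' clause. The paper leaves this implicit, and it presumably rests on the freedom of choices in Remark \ref{rmk:flexibility}, which is your Route 2. Your Route 1 is a cleaner, self-contained justification. By Remark \ref{rmk:iterativebubbles} each bubble raises $\deg(D)$ by two, and $\deg(D)$ is preserved by projective isomorphisms. So the iterated bubblings are pairwise inequivalent, without appealing to the unproved ``only if'' part of Remark \ref{rmk:flexibility}. The cost is that you get structures in pairwise distinct strata $\mathcal{RP}^2(1,\dots,1)$, rather than infinitely many in one fixed stratum.

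If you want the stronger version, Route 2 does not need rigidity of $\textnormal{Proj}(\sigma_o)$. What it needs is that the centraliser of a Hitchin representation is trivial.
\begin{enumerate}
\item Suppose two bubbled structures with the same holonomy $\rho$ are equivalent via $h$ isotopic to the identity. Let $\widetilde h$ be the lift commuting with the deck group. Then $\textnormal{dev}_2\circ\widetilde h=g\circ\textnormal{dev}_1$ with $g$ centralising $\rho(\pi_1(S))$.
\item This centraliser is trivial: $\pslr3=\slr3$ has trivial centre and Hitchin images act absolutely irreducibly, so Schur's lemma forces $g=\mathbf 1$.
\item Since $\widetilde h$ maps branch points to branch points, the invariant set of branch values $\textnormal{dev}(\widetilde{\mathcal B})\subset\Omega$ is an invariant of the structure.
\item Moving a slit endpoint off the countable set of previous branch values therefore gives a new structure. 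This yields a continuum of distinct structures in $\mathcal{RP}^2(1,1)$.
\end{enumerate}
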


\begin{cor}\label{cor: hitchin branched}
    Let \(\mu\) be any signature of even degree. Every Hitchin representation \(\rho\) arises as the holonomy of some branched projective structure in \(\mathcal{RP}^2(\,\mu\,)\).
\end{cor}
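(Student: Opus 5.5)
Start from the unique properly convex unbranched structure \((S,\sigma_o)\) with holonomy \(\rho\) given by Choi--Goldman \cite{ChoiGoldman1993Convex}. Then use local surgeries which keep the holonomy fixed (Proposition~\ref{prop:bubblepreservesholo} and its slit variants) to produce exactly the branch points prescribed by \(\mu=(m_1,\dots,m_k)\). The target has \(\deg(\mu)\) even, so Corollary~\ref{cor: char realisation} raises no obstruction. Each bubble adds total order two, so the task is to distribute the orders correctly.

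\textbf{Step 1: creating branch points.} Bubbling along a segment \(\gamma\) lying in a small embedded chart \(U\) produces two simple branch points at the endpoints of the slit (Remark~\ref{rmk:bubbledarebranched}), with total order \(2\) (Remark~\ref{rmk:iterativebubbles}). Performing \(\deg(\mu)/2\) bubblings along pairwise disjoint segments in disjoint charts gives the stratum \(\mathcal{RP}^2(1,\dots,1)\) with \(\deg(\mu)\) simple branch points. This already handles the signature consisting only of ones.

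\textbf{Step 2: raising orders.} To merge orders, bubble along a segment whose endpoint is an existing branch point \(p\) of order \(m\). The slit starts at \(p\), and the sphere's copy of \(\gamma\) starts at a regular point of \(\sph^2\). After regluing, the cone angle at \(p\) grows by \(2\pi\), so its order becomes \(m+1\), while the other endpoint becomes a new simple branch point. Each bubble still contributes total order \(2\). More generally, take a segment joining two existing branch points \(p,p'\) with orders \(m,m'\); this is possible since both develop into \(\Omega\) and can be joined by a developed segment. Slitting there and gluing in a sphere yields orders \(m+1\) and \(m'+1\). Alternatively, bubble along a segment \(\gamma\) based at \(p\) inside a branched chart, using one of the \(m+1\) sheets for \(\gamma_1\). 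The resulting structure then has the orders of \(p\) and of the far endpoint increased by one each.

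\textbf{Step 3: combinatorics.} Write \(\mu=(m_1,\dots,m_k)\) with \(\sum m_i\) even. Each bubbling operation adds \(+1\) to two chosen points, which may be new regular points or existing branch points. So I need to realise the vector \(\mu\) as a sum of \(\deg(\mu)/2\) vectors of the form \(e_i+e_j\) with \(i\ne j\). This is a degree sequence problem for a multigraph without loops on \(k\) vertices. It is solvable if and only if the sum is even and \(\max m_i\le \sum_{j\ne i}m_j\).

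\textbf{Main obstacle.} The obstacle is the case \(m_1>\sum_{j\ne1}m_j\), for example \(\mu=(2)\), where loops are needed. In that case I would allow a bubbling whose two slit endpoints are the same branch point. Concretely, use a segment \(\gamma_1\) in \(\widetilde S\) which leaves \(p\) and returns to it on a different sheet; in a branched chart at \(p\) of order \(m\ge1\) two sheets develop onto the same ray. Alternatively, glue a sphere along a closed loop through \(p\) made of two such rays. This adds \(2\) to the order of \(p\) only. A second route is to first make an order-\(m_1\) point with a companion and then push the companion into \(p\) by a further bubble of the second kind. Either way, I expect that verifying the local cone angle count at \(p\) and the connected-sum topology is the delicate part. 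Holonomy preservation is immediate from Proposition~\ref{prop:bubblepreservesholo}.
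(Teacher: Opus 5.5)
Your Steps 1--3 are sound. They are essentially the paper's reduction run backwards: the paper lowers two entries of \(\mu\) by one at a time, and undoing a step is exactly a bubble whose slit joins two existing or new points. The gap is the case \(2\max_i m_i>\deg(\mu)\), above all the signatures \((2n)\), \(n\ge1\), where the paper's reduction ends. None of your fixes works, and no bubble of the type you use can.

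A bubble along an embedded slit adds one to the orders of its two \emph{distinct} endpoints. A branch point of order \(m\) in the interior of the slit is cut into two points, each receiving angle \(\pi\) from the sphere, whose orders add up to \(m\). So every such move preserves \(2\max_i m_i\le\deg\). Starting from the unbranched convex structure you therefore never reach \((2)\), \((4)\), \((3,1)\), and so on. Concretely, each of your three suggestions fails:
\begin{itemize}
\item In a branched chart at \(p\), the point \(p\) is the only preimage of \(\textnormal{dev}(p)\). A path from \(p\) back to \(p\) therefore develops to a loop, not a segment.
\item Two rays from \(p\) lying over the same ray end at distinct points \(a_1\neq a_2\). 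They form an arc \(a_1\to p\to a_2\), not a loop, and slitting along it splits \(p\) and creates branch points at \(a_1,a_2\).
\item A companion can never be ``pushed into'' \(p\), since bubbling never lowers an order or merges points.
\end{itemize}
More conceptually, suppose the surgery is supported in a disc \(D\subset S\). Capping \(D\) off with the complementary disc of its developed boundary gives a branched cover \(\sph^2\to\sph^2\) of some degree \(d\), with total branching \(2d-2\) and local degrees at most \(d\). Hence again \(2\max_i m_i\le\deg\).

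The missing ingredient is the non-trivial holonomy of an essential curve, which is what the paper uses.
\begin{itemize}
\item Take a closed geodesic \(\gamma_1\) of the convex structure; one period of it develops onto a segment from \(x\) to \(\rho(\gamma)(x)\).
\item Cut \(S\) along all of \(\gamma_1\).
\item Cut \(\sph^2\) along a lift of that segment and identify the two endpoints of the slit. This is legitimate because the chart change there is \(\rho(\gamma)\), the holonomy of the boundary circles.
\item Glue the two banks to the two boundary circles.
\end{itemize}
Both slit endpoints now become the same point of \(S\), with total angle \(\pi+2\pi+\pi+2\pi=6\pi\). This is a single branch point of order two with no companion. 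Repeating this \(n\) times gives \((2n)\), after which your pairwise bubbles reach every signature of even degree.
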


\begin{proof}[Sketch of the proof of Corollary \ref{cor: hitchin branched}.]
    Let \(\mu=(m_1,m_2\dots,m_k)\) be any signature of even degree. We preliminary observe that a branched projective structure in \(\mathcal{RP}^2(\,\mu\,)\) can be realised by bubbling an appropriate branched projective in \(\mathcal{RP}^2(\,\mu'\,)\), where \(\mu'=(m_1-1,m_2-1\dots,m_k)\). Based on this observation, we proceed as follows. Let \(\mu\) be any signature and assume that \(m_1\le m_2\le\cdots\le m_k\). We thus define a finite sequence of signature as follows
    \begin{equation}\label{eq:reduction bubble}
        \begin{aligned}
            \mu=&\mapsto (m_1-1,m_2-1,\dots,m_k) &\mapsto&\,\, (m_1-2,m_2-2,\dots,m_k) &\mapsto \cdots \\
            &\mapsto (m_2-m_1,m_3,\dots,m_k) &\mapsto& \,\, (m_2-m_1-1,m_3-1,\dots,m_k) & \mapsto\cdots \\
            & \mapsto \cdots &\mapsto &\,\, \cdots & \mapsto \cdots \\
            &\mapsto (\,m\,),
        \end{aligned}
    \end{equation}
    \textit{i.e.}, we reduce the degree of the signature by two at each step to obtain a signature of length one, say \((\,m\,)\), for some \(m\ge0\). Notice that \(m\) must be even because the starting signature has even degree. Set \(m=2n\). Assume that a representation \(\rho\) can be realised in the stratum \(\mathcal{RP}^2(\,2n\,)\). By bubbling sphere sufficiently many times by undoing the reduction of \eqref{eq:reduction bubble}, we thus realise \(\rho\) in the stratum \(\mathcal{RP}^2(\,\mu\,)\). It remains to show that such a representation can be realised in \(\mathcal{RP}^2(\,\mu\,)\). For this purpose, reduce \((\,2n\,)\) further to \((\,2n-2\,)\) and so on until we get the empty one. By \cite{ChoiGoldman1993Convex}, every Hitchin representation arises as the holonomy of a unique convex projective structure, say \(\sigma\) on \(S\). Pick a simple closed curve and consider its geodesic representative in its free homotopy class. Fix any point on such a geodesic and slit it. Bubble a sphere as shown in Figure \ref{fig:bubbling2}. By bubbling \(n\) spheres recursively, it is possible to realise every Hitchin representation in the stratum \(\mathcal{RP}^2(\,2n\,)\) for every \(n\ge0\) and hence in every stratum with signature of even degree.
\end{proof}

\begin{figure}[htbp]
  \centering
  \begin{tikzpicture}[scale=1.325]

    \draw[black!50] (0,0) circle (2.25cm);
    \shade[ball color=white, opacity=.25] (0,0) circle (2.25cm);

    \foreach \theta/\i in {90,270} {
    \fill[white, rotate=\theta] (1.25, 0.75) to[out=300, in=60] (1.25,-0.75) to[out=115, in=270] (1.05,0) to[out=90, in=245] (1.25, 0.75);
    }

    \foreach \theta in {90,270} {
    \draw [black!50, thin, rotate=\theta] 
            (1.25, 0.75) to[out=300, in= 60] (1.25, -0.75);
    }

    \foreach \theta in {90,270} {
    \draw [black!50, thin, rotate=\theta] 
            (1.3, -0.835) to[out=120, in=240] (1.3,  0.835);
    }

    \begin{scope}[shift={(-1.75,0)}]
        \draw[plum ]           ( 30:2.125) arc( 30:-30:2.125);
        \draw[plum, dashed ]   (-30:2.125) arc(210:150:2.125);
        \fill[black]   ( 0:2.125)  circle (0.75pt);
        \node[plum] at (000:2.5 ) {\(\gamma_1\)};
    \end{scope}
    
    \node[plum] at (270:2.5) {\((S,\sigma)\)};

    \begin{scope}[shift={(5.5,0)}]
        \draw[mauve!25] (0,0) circle (2.25cm);
        \shade[ball color = sky, opacity=.6] (0,0) circle (2.25cm);
        \begin{scope}[shift={(1.25,0)}]
            \draw[plum ] (210:2.125) arc(210:150:2.125);
            \fill[black] (210:2.125)   circle (0.75pt);
            \fill[black] (150:2.125)   circle (0.75pt);
        \end{scope}

        \node[plum] at (270:2.5) {\((\sph^2,\sigma_{\textnormal{std}})\)};
        \node[plum] at (180:1.25) {\(\gamma_2\)};
    \end{scope}
  \end{tikzpicture}
  \caption{An illustration of the bubbling surgery in the case \(\gamma_1\) is a geodesic curve. By slitting \(S\) along \(\gamma_1\) the resulting surface has two boundary components, say \(S'\). Next, we slit \((\mathbb{S}^2, \sigma_{\textnormal{std}})\) along a lift, say \(\gamma_2\), of the developed image of \(\gamma_1\). Identify the extremal points of \(\gamma_2\) after slitting. We thus have two boundary components. By gluing these latter boundary components with the boundary components of \(S'\) in the appropriate way, the resulting surface has a branched point of order \(2\). By gluing \(n\) copies of \((\mathbb{S}^2, \sigma_{\textnormal{std}})\) in the same way, the resulting surface has a branched structure with a single branched point of order \(2n\).}
  \label{fig:bubbling2}
\end{figure}

\noindent These statements provide a full description of the realisation of Hitchin representations and hence yield a the desired detailed characterisation.

\smallskip

\subsection{Properly convex branched projective structures}\label{ssec:prop convex BPS} Convex projective structures play a relevant role in the classical theory of projective structures due to their remarkable property of having holonomy representations in the Hitchin component, see \cite{ChoiGoldman1993Convex}. The aim of this section is to show that, in contrast, properly convex branched projective structures are quite peculiar. We begin with the following characterisation.

\begin{prop}\label{prop: hitchin factors}
    Let \(\rho\colon\pi_1(\,S\,)\longrightarrow \pslr3\) be a representation, then its image preserves a convex domain on which it acts freely, properly discontinuously, and co-compactly if and only if \(\rho\) factors through a Hitchin representation \(\rho_o\).
\end{prop}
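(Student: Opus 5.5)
The plan is to read ``\(\rho\) factors through a Hitchin representation'' as follows: there exist a closed orientable surface \(\Sigma\) of genus \(\ge 2\), a Hitchin representation \(\rho_o\colon\pi_1(\,\Sigma\,)\longrightarrow\pslr3\) and a surjective homomorphism \(\psi\colon\pi_1(\,S\,)\longrightarrow\pi_1(\,\Sigma\,)\) with \(\rho=\rho_o\circ\psi\). This is exactly the situation of \S\ref{sec: examples branched covering}, where \(\psi=f_*\). Both directions then reduce to the theorem of Choi--Goldman \cite{ChoiGoldman1993Convex}, applied to the image group \(\Gamma=\rho\big(\,\pi_1(\,S\,)\,\big)\) rather than to \(\rho\) itself.

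\emph{Sufficiency.} Assume \(\rho=\rho_o\circ\psi\) with \(\psi\) surjective. Then \(\Gamma=\rho_o\big(\,\pi_1(\,\Sigma\,)\,\big)\). By Choi--Goldman, \(\rho_o\) is the holonomy of a properly convex structure \(\Gamma\backslash\Omega\cong\Sigma\). Hence \(\Gamma\) preserves the properly convex domain \(\Omega\) and acts on it freely, properly discontinuously and cocompactly, and this is all that is claimed. If one only assumes that \(\psi\) has finite-index image, I would pass to the corresponding finite cover \(\Sigma'\to\Sigma\). The restriction of a Hitchin representation to a finite-index subgroup is still Hitchin, since it is the holonomy of the lifted convex structure. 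An infinite-index image would be a free group and could not act cocompactly on a disc, so this case is excluded.

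\emph{Necessity.} Suppose \(\Gamma\) acts freely, properly discontinuously and cocompactly on a convex domain \(\Omega\).
\begin{enumerate}
\item[1.] Since \(\Omega\) is an open convex subset of an affine chart, it is homeomorphic to \(\R^2\). Hence \(\Sigma\doteq\Gamma\backslash\Omega\) is a closed aspherical surface with \(\pi_1(\,\Sigma\,)\cong\Gamma\); in particular \(\Gamma\) is torsion-free.
\item[2.] The surface \(\Sigma\) carries the convex projective structure whose developing map is the inclusion \(\Omega\hookrightarrow\rp2\) and whose holonomy is the inclusion \(\rho_o\colon\Gamma\hookrightarrow\pslr3\).
\item[3.] Let \(\psi\colon\pi_1(\,S\,)\longrightarrow\Gamma\cong\pi_1(\,\Sigma\,)\) be the corestriction of \(\rho\), which is surjective. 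Then \(\rho=\rho_o\circ\psi\) tautologically.
\item[4.] It remains to show that \(\rho_o\) lies in the Hitchin component, which requires \(\Omega\) properly convex and \(\Sigma\) of genus \(\ge2\). Here I would invoke Kuiper/Benzécri. If a cocompact group acts on a convex but not properly convex domain (an affine plane, half-plane or strip), then \(\Gamma\) is virtually abelian, and \(\Sigma\) is a torus or a Klein bottle. Consequently, if \(\chi(\,\Sigma\,)<0\), the domain \(\Omega\) is properly convex, and Choi--Goldman gives that \(\rho_o\) is Hitchin.
\end{enumerate}

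\emph{Main obstacle.} The hard part is the degenerate cases, and they must be dealt with explicitly rather than argued away.
\begin{itemize}
\item[(a)] \emph{Virtually abelian image.} Translations of \(\R^2\) give \(\Gamma\cong\Z^2\) acting cocompactly on a convex domain, and \(\pi_1(\,S\,)\) surjects onto \(\Z^2\). No Hitchin representation exists for the torus, so the statement must assume that \(\Gamma\) is non-elementary, or equivalently that \(\Omega\) is properly convex. I would add this hypothesis, which seems intended given the section title.
\item[(b)] \emph{Orientability of \(\Sigma\).} Elements of \(\pslr3\) may reverse the orientation of \(\Omega\), so \(\Sigma\) could be non-orientable. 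If \(\Gamma\) reverses orientation, I would argue that \(\rho_o\) then lies in the Hitchin component in the sense appropriate to non-orientable surfaces (the restriction to the orientation double cover is Hitchin). Alternatively, I would restrict the statement to orientation-preserving actions, in which case \(\Sigma\) is orientable and the argument above goes through verbatim.
\end{itemize}
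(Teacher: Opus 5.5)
Your argument is the paper's: form \(\Sigma=\Gamma\backslash\Omega\), whose holonomy is the inclusion \(\Gamma\hookrightarrow\pslr3\), factor \(\rho\) through the corestriction \(\pi_1(S)\twoheadrightarrow\Gamma\cong\pi_1(\Sigma)\), and read the converse off Choi--Goldman. Your caveats are well founded and patch a step the paper's proof skips, since it simply asserts that the holonomy of \(\Omega/\Gamma\) is Hitchin. The image of a Hitchin representation is a faithful copy of a closed orientable hyperbolic surface group, so it contains neither \(\Z^2\) nor a non-orientable surface group. Hence your translation example (convex in the paper's sense, since \(\R^2\) counts as convex) and an orientation-reversing cocompact \(\Gamma\subset\mathrm{PO}(2,1)\) (e.g.\ via \(\pi_1(S)\twoheadrightarrow F_3\twoheadrightarrow\pi_1(N_3)\) for \(g\ge3\)) are genuine counterexamples to the statement as written. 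So you need proper convexity together with orientability, or the non-orientable notion of Hitchin, exactly as you propose.
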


\begin{proof}
Let \(\rho\) be a representation satisfying the standing hypotheses. If \(\rho\) is injective, then \(\rho\) is a Hitchin representation and there is nothing to prove since \(\rho=\rho_o\). Thus, we may assume that \(\rho\) is not injective. One direction is clear: if \(\rho\) factors through an Hitchin representation then its image preserves a convex domain on which it acts freely, properly discontinuously, and co-compactly. We only need to show the opposite implication. For this purpose, let \(\Omega\) be the convex domain preserved by the action of \(\Gamma = \rho(\,\pi_1(S)\,)\). 
By assumption, \(\Gamma\) acts freely, properly discontinuously and co-compactly on the convex domain \(\Omega \subset \rp2\), so the quotient \(\Omega / \Gamma\) is isomorphic to a smooth closed surface, say \(\Sigma\), equipped with an unbranched convex projective structure whose holonomy is a Hitchin representation \(\rho_o \colon \pi_1(\,\Sigma\,) \longrightarrow\Gamma \subset \pslr3\). Since \(\rho_o\) is an isomorphism, \(\rho\) factors through a homomorphism, say \(\pi\colon\pi_1(\,S\,)\longrightarrow\pi_1(\,\Sigma\,)\), such that \(\rho=\rho_o\circ\pi\) as desired. 
\end{proof}

Motivated by Proposition \ref{prop: hitchin factors}, in what follows we shall say that a representation \(\rho\) is \textit{purely Hitchin} if it factors through a Hitchin representation --- Hitchin representations are automatically included. This family of representations extends the family of purely hyperbolic representations introduced in \cite{faraco2021}. The following Corollary extends \cite[Theorem 3.14]{faraco2021} to the framework of branched real projective structures. To state it, we recall for the readers' convenience that a map \(f \colon S \to \Sigma\) between surfaces to be a \textit{pinch} if there is a compact, connected submanifold \(S_1 \subset S\), with boundary consisting of a single simple closed curve in the interior of \(S\), such that \(\Sigma = S / S_1\), the quotient of \(S\) with \(S_1\) identified to a point, and such that \(f\) is the quotient map. According to Edmonds, see \cite{Edmonds1979}, every continuous map of surfaces of non-zero degree splits as the composition of a pinch map and a branched cover. In particular, a continuous map \(f \colon S \longrightarrow \Sigma\) is homotopic to a branched covering if and only if \(f_* \colon \pi_1(\,S\,) \to \pi_1(\,\Sigma\,)\) is injective or the inequality \(\deg f > [\pi_1(\,\Sigma\,) \colon f_* \pi_1(\,S\,)]\) holds, where \([G \colon H]\) denotes the index of a subgroup \(H\) in the overall group \(G\).

\begin{cor}\label{cor: purely Hit holo char}
    Let \(\rho\colon\pi_1(\,S\,)\longrightarrow \Gamma\subset \pslr3\) be a purely Hitchin representation. Then \(\rho\) arises as the holonomy of some properly convex branched projective structure if and only if the induced map \(f\colon S\longrightarrow \Sigma\cong\Omega/\Gamma\) is not homotopic to a pinch map. 
\end{cor}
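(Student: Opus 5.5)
The plan is to follow the strategy of \cite[Theorem 3.14]{faraco2021}: translate properly convex branched structures with holonomy \(\rho\) into branched coverings \(S\to\Sigma\), then use Edmonds' theorem to decide when the map induced by \(\rho\) can be realised as one. Throughout, write \(\rho=\rho_o\circ\pi\) as in Proposition \ref{prop: hitchin factors}, where \(\rho_o\colon\pi_1(\Sigma)\to\Gamma\) is Hitchin and \(\Sigma\cong\Omega/\Gamma\) carries the convex structure \(\sigma_\Omega\) of Choi--Goldman. Since \(\Sigma\) is aspherical, \(\pi\) is induced by a continuous map \(f\colon S\to\Sigma\), unique up to homotopy; this is the induced map of the statement.

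\emph{Sufficiency.} Suppose \(f\) is not homotopic to a pinch. First I want \(\deg f\neq 0\). If \(\deg f=0\), I would argue that the image \(\pi(\pi_1(S))\) then has infinite index in \(\pi_1(\Sigma)\), or \(f\) compresses a curve, and in either case no branched structure can exist (see the necessity argument below). I expect this degree-zero case to need separate care, and the corollary may implicitly assume nonzero degree.

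So assume \(\deg f\neq0\). By Edmonds, \(f\simeq b\circ p\), where \(p\) is a pinch and \(b\) a branched cover. If \(p\) is not a homeomorphism, then \(f\) factors through a nontrivial pinch; I would then show that \(f\) is homotopic to a pinch, or else rearrange the factorisation. The cleaner route is the criterion stated just before the corollary: a map that is not a pinch (more precisely, one whose pinch factor is trivial) is homotopic to a branched cover \(b\colon S\to\Sigma\). Pulling back as in \S\ref{sec: examples branched covering} gives the structure \(b^*\sigma_\Omega\). Its developing map is \(\mathrm{dev}_{\sigma_\Omega}\circ\widetilde b\), which is a branched covering onto the properly convex domain \(\Omega\), and its holonomy is \(\rho_o\circ b_*=\rho_o\circ\pi=\rho\), up to conjugation by the homotopy. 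Hence the structure is properly convex in the sense of Definition \ref{defn:branchedproperlyconvex}.

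\emph{Necessity.} Let \(\sigma\) be properly convex branched with holonomy \(\rho\), and set \(\mathrm{dev}\colon\widetilde S\to\Omega'\). I first identify \(\Omega'\) with \(\Omega\). The group \(\Gamma\) preserves \(\Omega'\), and for a Hitchin group the invariant properly convex domain is unique (either \(\Omega\) or its dual; the dual case I would rule out using the image of \(\mathrm{dev}\)). Since \(\mathrm{dev}\) is \(\rho\)-equivariant, it descends to a map \(F\colon S\to\Omega/\Gamma=\Sigma\). Because \(\mathrm{dev}\) is a branched cover onto \(\Omega\) and \(S\) is compact, \(F\) is a proper local branched covering, hence a branched covering map of positive degree. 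Finally, \(F_*=\pi\) up to conjugation, so \(F\simeq f\). A branched covering of positive degree is never homotopic to a pinch, since a pinch has degree one and \(\pi_1\)-nonsurjective kernel issues do not arise. More precisely, the argument would be: a pinch kills the subsurface \(S_1\) of positive genus, so its \(\pi_1\)-image has index one while its kernel is nontrivial, and this contradicts Edmonds' characterisation \(\deg F> [\pi_1\Sigma:F_*\pi_1 S]\) or injectivity.

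The main obstacle is the uniqueness of the \(\Gamma\)-invariant properly convex domain, together with checking that \(\mathrm{dev}\) is surjective onto \(\Omega\) so that \(F\) is a genuine branched covering. For the first I would use the boundary map of the Anosov representation \(\rho_o\): any invariant properly convex domain has limit set \(\partial\Omega\).
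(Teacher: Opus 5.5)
Your strategy is the paper's: pull back the Choi--Goldman structure along a branched cover homotopic to \(f\), and conversely descend \(\mathrm{dev}\) to a branched cover homotopic to \(f\). The case you leave open, \(\deg f=0\), is a genuine gap, but it cannot be filled, because the ``if'' direction is false there.

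Your suggested reason is also not right. The map \(\pi=\rho_o^{-1}\circ\rho\) is onto \(\pi_1(\Sigma)\), so its image has index one. Surjectivity does not, however, force \(\deg f\neq0\). Take \(\Sigma=S_h\) with \(h\ge2\), let \(P=\Sigma\setminus D\) for an open disc \(D\), let \(S=S_{2h}\) be the double of \(P\), and let \(f\) be the fold \(S\to P\) followed by the inclusion \(P\hookrightarrow\Sigma\).
\begin{itemize}
\item The fold is a retraction and \(\pi_1(P)\to\pi_1(\Sigma)\) is onto, so \(f_*\) is surjective and \(\rho=\rho_o\circ f_*\) is purely Hitchin.
\item Since \(f\) misses \(D\), \(\deg f=0\). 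In particular \(f\) is not homotopic to a pinch, which has degree one.
\item Yet your own necessity argument shows that any properly convex branched structure with holonomy \(\rho\) yields a branched covering \(F\colon S\to\Omega'/\Gamma\cong\Sigma\) of non-zero degree homotopic to \(f\), which is impossible.
\end{itemize}
The paper's proof gets past this point only by asserting that surjectivity of \(\pi\) implies \(\deg f\neq0\), which this example refutes. The correct statement needs \(\deg f\neq0\), equivalently that \(f\) is homotopic to a branched covering. Pinches must also be understood as non-trivial (\(S_1\) not a disc), as you implicitly assume; otherwise a Hitchin \(\rho\) itself would violate the ``only if'' part.

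With \(\deg f\neq0\) added, tighten your sufficiency step rather than ``rearranging the factorisation''. Since \([\pi_1(\Sigma):f_*\pi_1(S)]=1\), the criterion quoted before the corollary makes \(f\) homotopic to a branched covering unless \(|\deg f|=1\) and \(f_*\) is not injective. In that remaining case the branched-cover factor of \(f\simeq b\circ p\) has degree one, so it is a homeomorphism and \(f\) is homotopic to a non-trivial pinch. A non-trivial pinch followed by a branched cover of degree at least two is already homotopic to a branched cover, so no rearrangement is ever needed.

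In the converse, the uniqueness of the invariant domain that you single out as the main obstacle is unnecessary. The ``dual'' alternative also does not arise, since \(\Omega^*\) lives in \((\rp2)^*\). The direct argument runs as follows.
\begin{itemize}
\item \(\Omega'=\mathrm{dev}(\widetilde S)\) is \(\Gamma\)-invariant, and \(\Gamma\) is discrete, torsion-free and preserves the Hilbert metric of \(\Omega'\), so it acts freely and properly on \(\Omega'\).
\item Hence \(\Omega'/\Gamma\) is a surface with fundamental group \(\pi_1(\Sigma)\). It is therefore closed and homotopy equivalent to \(\Sigma\).
\item \(F\) is a proper, open, discrete map, i.e. a branched covering, with \(F_*\) conjugate to \(\pi\).
\end{itemize}
The paper simply assumes \(\Omega'=\Omega\). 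Your version is more careful, but it needs no Anosov boundary maps.
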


\begin{proof}
    By Proposition \ref{prop: hitchin factors}, a representation \(\rho\) satisfying the standing assumption factors through a Hitchin representation, say \(\rho_o\), via a homomorphism \(\pi\colon\pi_1(\,S\,)\longrightarrow\pi_1(\,\Sigma\,)\). Fix a base point, say \(b\in S\) and a base point, say \(p\in\Sigma\). Since surfaces are \(K(\pi,1)\)-spaces, by \cite[Proposition 1B.9]{hatcher2005algebraic}, there exists a unique continuous map, say \(f\colon (S,b)\longrightarrow (\Sigma,p)\) such that \(f_*=\pi\). Since the mapping \(\pi\) is surjective, it readily follows that \(\deg(\,f\,)\) is non-zero. According to \cite{Edmonds1979}, the function \(f\) just defined is either homotopic to a branched covering map or a pinch map. We shall distinguish these two cases. 
    \begin{itemize}
        \item[1.] Suppose \(f\) is homotopic to a branched covering map, and let \(\sigma_o\) be the convex projective structure on \(\Sigma\). Then \(\sigma=f^*\sigma_o\) is a branched projective structure on \(S\) with holonomy \(\rho\) by design. It remains to show that \(\sigma\) is convex. The developing map \(\textnormal{dev}_o\colon\widetilde\Sigma\longrightarrow\Omega\subset\rp2\) is a diffeomorphism onto the image which is convex. The desired conclusion follows by observing that \(\textnormal{dev}\colon\widetilde S\longrightarrow \rp2\) factors as the composition \(\textnormal{dev}=\textnormal{dev}_o\circ\widetilde{f}\), by design. 
        \smallskip
        \item[2.] Suppose \(f\) is homotopic to a pinch map and suppose there exists a branched projective structure \(\sigma\) with holonomy \(\rho\). Consider its developing map \(\textnormal{dev}_\sigma \colon \widetilde{S} \longrightarrow \Omega\). Since it is \(\pi_1(\,S\,)\)-equivariant with respect to \(\rho\), it descends to a branched map \(f \colon S \longrightarrow \Omega / \Gamma = \Sigma\). The induced map \(f_*\) on the fundamental groups is such that \(\rho = \rho_o\circ f_*\); thus, it is just that of the pinching map because it coincides with \(f_*\). Hence \(\deg(\,f\,) = 1\), implying that such a map is a branched map of degree one, that is, a homeomorphism, a contradiction.
    \end{itemize}
    Having established this latter case, the proof is complete. 
\end{proof}

The above Corollary characterises all branched projective structures with purely Hitchin holonomy representations by saying that they arise from branched covers over properly convex projective structures. The following observation shows that purely Hitchin representation cannot have odd Stiefel-Whitney number.

\begin{rmk} Let \(\rho\) be a purely Hitchin representation and let \(\rho_o \colon\pi_1(\,\Sigma\,) \longrightarrow \pslr3\) be a Hitchin representation such that \(\rho = \rho_o \circ f_*\), where \(f\colon S\longrightarrow \Sigma\) is the branched covering map determined as in Corollary~\ref{cor: purely Hit holo char}. Then \(\textnormal{w}_2(\,\rho\,) = d \cdot \textnormal{w}_2(\,\rho_o\,)\), where \(d\) is the degree of \(f\). In particular, \(\textnormal{w}_2(\,\rho\,)\) is zero.
\end{rmk}

As a consequence, we may observe that not all properly convex branched projective structures arise from branched covers, and hence not all properly convex branched projective structures have purely Hitchin holonomy. For instance, branched hyperbolic structures with holonomy having an odd Euler number, and thus non-zero second Stiefel--Whitney number, cannot have purely Hitchin holonomy for topological reasons. The question of determining necessary and sufficient conditions for a branched projective structure to be properly convex therefore remains open. Finally, in the case of a purely Hitchin representation, the following proposition provides conditions to realise a purely Hitchin representation in a given stratum.

\begin{cor}
   Let \(\mu=(\,m_1,\dots,m_k\,)\) be a signature of degree \(\deg(\,\mu\,)\). Then the stratum \(\mathcal{RP}^2(\,\mu\,)\) contains properly convex branched projective structures with purely Hitchin holonomy if and only if \(\deg(\,\mu\,)\) is even and there exists a branched covering \(f\colon S\longrightarrow\Sigma\) of degree \(\deg(\,f\,)\ge \max\big\{\,m_1,\dots,m_k\,\}+1\), where \(\Sigma\) is a surface of genus at least two. In particular, \(S\) has genus \(g\ge4\).
\end{cor}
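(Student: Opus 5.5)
The plan is to prove the two implications separately, relying on Corollary~\ref{cor: purely Hit holo char}, which identifies properly convex branched structures with purely Hitchin holonomy with pull-backs \(f^*\sigma_o\) of a properly convex structure \(\sigma_o\) on \(\Sigma\cong\Omega/\Gamma\) along a branched covering \(f\colon S\longrightarrow\Sigma\). Once this identification is in place, the branching chain of \(f^*\sigma_o\) coincides with the ramification divisor of \(f\), as computed in \S\ref{sec: examples branched covering}: its branch points are the ramification points, and their orders are \(\deg_{p}(f)-1\). So the question of realisation in the stratum \(\mathcal{RP}^2(\,\mu\,)\) becomes a question about the ramification profile of \(f\).

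\textbf{Necessity.} Suppose \(\sigma\in\mathcal{RP}^2(\,\mu\,)\) is properly convex with purely Hitchin holonomy. By Corollary~\ref{cor: purely Hit holo char}, \(\sigma=f^*\sigma_o\) for a branched cover \(f\). For each \(i\), the local degree at the \(i\)-th branch point is \(m_i+1\), and this cannot exceed \(\deg(f)\); hence \(\deg(f)\ge\max_i m_i+1\). Evenness of \(\deg(\,\mu\,)\) can be obtained in two ways. One is to combine the Remark stating \(\textnormal{w}_2(\,\rho\,)=0\) with Corollary~\ref{cor: char realisation}. The other is Riemann--Hurwitz directly: \(2g-2=d(2h-2)+\deg(\,\mu\,)\), so \(\deg(\,\mu\,)\) is even. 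The target \(\Sigma\) has genus at least two because it carries a Hitchin (hence properly convex) structure.

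\textbf{Sufficiency.} The intended statement is that \(f\) realises the profile \(\mu\), i.e.\ \(f\) has exactly \(k\) ramification points with local degrees \(m_i+1\). Given such an \(f\), fix a Hitchin representation of \(\pi_1(\Sigma)\); by Choi--Goldman it is the holonomy of a properly convex structure \(\sigma_o\). Pulling back gives \(f^*\sigma_o\) in \(\mathcal{RP}^2(\,\mu\,)\), properly convex by case~1 of the proof of Corollary~\ref{cor: purely Hit holo char}, with holonomy \(\rho_o\circ f_*\), which is purely Hitchin. The hypothesis \(\deg(f)\ge\max m_i+1\) as literally stated does not force the profile to be exactly \(\mu\), so I will read the statement as asserting the existence of a branched cover with branching profile \(\mu\). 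The degree bound then records the local constraint, and evenness is the Riemann--Hurwitz parity. The main obstacle is exactly this step: constructing a cover with arbitrary prescribed local degrees over a surface of genus \(\ge2\). I would handle it with Hurwitz-type existence for positive-genus targets, where every branch datum compatible with Riemann--Hurwitz is realisable (Edmonds--Kulkarni--Stong). The construction uses a degree-\(d\) permutation representation of \(\pi_1(\Sigma_h\setminus\mathcal P)\) in which the local monodromies have the required cycle types and the product of commutators equals the product of these local monodromies. This equation is solvable because every even permutation is a commutator in \(S_d\), and parity is guaranteed by evenness of \(\deg(\,\mu\,)\).

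\textbf{Genus bound.} If \(\mu\) is nonempty, then \(f\) is a genuinely branched cover of degree \(d\ge2\) over \(\Sigma\) of genus \(h\ge2\). Riemann--Hurwitz gives \(2g-2\ge 2(2h-2)+\deg(\,\mu\,)\ge 4+2\), so \(g\ge4\). When \(\mu\) is empty we are in the unbranched Choi--Goldman case, which I would set aside or state separately.
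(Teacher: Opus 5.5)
Your proposal is correct and follows the paper's route: you use Corollary~\ref{cor: purely Hit holo char} to identify properly convex branched structures with purely Hitchin holonomy with pull-backs \(f^*\sigma_o\) along branched covers, which reduces realisation in \(\mathcal{RP}^2(\,\mu\,)\) to the existence of a branched cover with branching profile \(\mu\), and that existence comes from the Edmonds--Kulkarni--Stong realisability theorem for targets of positive genus. Your adjustments are what the paper's shorter argument tacitly assumes: you read the hypothesis as \emph{a cover with branching profile \(\mu\)} rather than merely \(\deg(f)\ge\max_i m_i+1\), you derive parity and \(g\ge4\) explicitly from Riemann--Hurwitz, and you exclude \(\mu=\varnothing\) from the genus bound.
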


\begin{proof}
    We just have shown that all properly convex branched projective structures with purely Hitchin holonomy arise as branched covers of genuine convex projective structures. Therefore, a structure is realisable in a stratum with signature \(\mu\) if and only if \(\deg(\,\mu\,)\) is even and there exists a branched covering with \(k\) branched points of orders \(m_1,\dots,m_k\). According to \cite{allan1984}, this is always possible as long as the target surface has genus at least two. Since the signature records the order of the branch points, the inequality \(\deg(\,f\,)\ge \max\big\{\,m_1,\dots,m_k\,\}+1\) readily follows. Finally, since there are not branched covers from surfaces of genus \(g\le3\) over a surface of genus two, it also follows that \(g\ge4\). 
\end{proof}

\medskip

\appendix 

\section{Non-orientable surfaces}\label{app:non orientable surfaces}

In the present appendix, we aim to extend Theorem \ref{thma:geometrisation} to non-orientable surfaces. In fact, the real projective plane being non-orientable, it is possible to endow non-orientable surfaces with a real projective structure. In the present Appendix, a topological surface \(S\) could be either orientable or non-orientable unless otherwise specified. We may observe that Definition \ref{def:branchedprojestructure} extends to non-orientable surfaces and, as in the orientable case, a geometric structure on a non-orientable surface is specified by a developing map equivariant with respect to a holonomy representation. The main result is the following:

\begin{thm}\label{thm_app: geom non orientable case}
    Let \(S\) be a closed surface. Every representation \(\rho\colon\pi_1(\,S\,)\longrightarrow
    \pslr3\) arises as the holonomy of some branched projective structure on \(S\).
\end{thm}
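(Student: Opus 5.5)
The orientable case is Theorem \ref{thm:geometrisation}, so we may assume \(S\) is non-orientable and write \(S=N_k\), the connected sum of \(k\ge1\) projective planes. The plan is to reuse the core-disc machinery of \S\ref{ssec:base surface}, replacing handles by crosscaps. We use the presentation
\[
\pi_1(N_k)=\big\langle a_1,\dots,a_k\;\big|\;a_1^2a_2^2\cdots a_k^2=1\big\rangle .
\]
Geometrically, \(N_k\) decomposes into a sphere with \(k\) boundary components (the core) and \(k\) Möbius bands \(M_i\), with \(\pi_1(M_i)=\langle a_i\rangle\) and boundary curve \(a_i^2\). As in \S\ref{ssec:geom process initialisation}, we fix basepoints and connecting paths along a tree. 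This gives a representation \(\rho_o\) of \(\pi_1(S_{0,k})=\langle\gamma_1,\dots,\gamma_k\mid\gamma_1\cdots\gamma_k=1\rangle\) with \(\gamma_i\mapsto a_i^2\), together with representations \(\rho_i\) of \(\langle a_i\rangle\cong\Z\).

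\textbf{Generic case.} Suppose \(\rho(a_i^2)\neq 1\) for all \(i\). Choose a base point \(p_1\notin \textnormal{B}(\rho)\) as in Lemma \ref{lem:base point}. Proposition \ref{prop:coredisc} applied to \(\rho_o\) gives a core disc whose boundary develops onto the chain \(p_1\mapsto\rho(a_k^2)p_1\mapsto\cdots\).

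For each crosscap we imitate the handle trick of \S\ref{sssec:generic handle}. Consider \(\chi\colon\pi_1(S_{0,3})\to\pslr3\) defined through the homomorphism \(\gamma_1\mapsto a,\ \gamma_2\mapsto a,\ \gamma_3\mapsto a^{-2}\), composed with \(\rho_i\). Proposition \ref{prop:coredisc} then produces a disc with branched structure whose boundary develops onto the triangle \(r\mapsto a^{-2}r\mapsto a^{-1}r\mapsto r\), up to the chain convention. The two consecutive sides \([r,a^{-1}r]\) and \([a^{-1}r,a^{-2}r]\) correspond under \(\rho(a)\). Gluing them by \(\rho(a)\) with the twist that reverses the side orientation yields a Möbius band. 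Its holonomy is \(\rho_i\), it has one interior branch point, and its geodesic boundary corresponds to \(a^2\).

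The base point of each such disc is chosen to be the corresponding vertex \(p_i\) of the core chain. Then the edge coming from \(a_i^2\) develops onto \(-s_i\), and the crosscap discs glue to the core exactly as the pentagons do in \S\ref{sssec:assembly generic}. The result is a \(2k\)-gon with edges identified according to \(a_1a_1\cdots a_ka_k\), a fundamental polygon for \(N_k\). To finish, we tile \(\widetilde S\), extend the developing map \(\rho\)-equivariantly, and apply Claim \ref{claim:branch point} at the single vertex class. That claim uses only local discs, so orientability plays no role there.

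\textbf{Degenerate case.} Suppose instead \(\rho(a_i^2)=1\) for some \(i\). If \(\rho(a_i)\neq1\), then \(\rho(a_i)\) is an involution. We still run the \(S_{0,3}\) construction, now realised as a disc whose boundary develops on the degenerate chain \(r\mapsto a^{-1}r\mapsto r\). We glue the two sides by the involution, producing a projective plane with a puncture, i.e.\ a crosscap with holonomy \(\rho_i\). It is attached by a slit construction as in \S\ref{interlude: slit constr}; a slit connected sum with a punctured projective plane is again a crosscap sum. If \(\rho(a_i)=1\), we attach a copy of \((\rp2,\text{standard})\), which has trivial holonomy, by a slit, or bubble a crosscap cover of \(\rp2\). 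Both operations preserve holonomy by the van Kampen argument of Proposition \ref{prop:bubblepreservesholo}. The core then has fewer sides, and if all crosscaps are degenerate we start, as in \S\ref{sssec:abelian handles}, from an arbitrary branched sphere or from Remark \ref{rmk:trivial holo realisable}.

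\textbf{Main obstacle.} The hardest step will be making the crosscap self-gluing rigorous. Identifying two adjacent sides of a single branched disc by an orientation-reversing side pairing requires checking that the developing map genuinely descends, that the identification is compatible with \(\rho(a)\) acting on the developed segments, and that the vertex where the two sides meet becomes a regular or branched point. I would handle the last point by Claim \ref{claim:branch point} applied to a small disc in the orientation double cover around that vertex. A further nuisance is choosing \(r\) so that every vertex of the degenerate chains is distinct; Lemma \ref{lem:base point} handles this.
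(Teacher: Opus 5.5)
Your route differs from the paper's. The paper uses Dyck's normal form \(N_g\cong S_k\#\rp2\) or \(S_k\#\rp2\#\rp2\), with relation \([\alpha_1,\beta_1]\cdots[\alpha_k,\beta_k]\gamma^2=1\) (resp.\ \(\gamma_1^2\gamma_2^2\) at the end).

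\begin{itemize}
\item The handles are realised verbatim as in \S\ref{ssec:handle realisation}.
\item A crosscap with \(\rho(\gamma)\neq 1\) is produced by appending the two segments \(\rho(\gamma)^{-1}p\to p\to\rho(\gamma)p\) to the core chain and pairing them.
\item When \(\rho(\gamma)=1\), the paper realises \(\rho\) on the orientable genus-\(k\) surface via Theorem \ref{thm:geometrisation} and slit-glues a standard \(\rp2\).
\end{itemize}

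With the pure crosscap presentation \(a_1^2\cdots a_k^2=1\) you never meet handles. The punctured-torus propositions and their abelian/non-abelian/trivial split are replaced by the single dichotomy \(\rho(a_i)^2\neq 1\) versus \(\rho(a_i)^2=1\). The price is an extra bigon piece for involutive \(\rho(a_i)\), which the paper silently absorbs into its Case 2, and doing the \(xx\) self-pairing \(k\) times instead of at most twice.

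Your Möbius triangles are also an avoidable detour. Gluing the triangle for \(a_i\) onto \(s_i\) just replaces that edge by \(p_{i+1}\to\rho(a_i)^{-1}p_i\to p_i\). So a disc with the same boundary chain comes straight from Proposition \ref{prop:coredisc} applied to the \(2k\)-holed sphere with \(\gamma_{2i-1},\gamma_{2i}\mapsto a_i\). That is exactly the paper's crosscap device, used for every crosscap.

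The check your ``main obstacle'' paragraph actually needs concerns the glued edge, not the vertex. Lift the boundary chain into the hemisphere \(\mathcal U\subset\sph^2\) as in Proposition \ref{prop:coredisc}, with \(v\) a lift of \(r\). The disc lies to the left of both sides, and they are identified in the same direction. So the developing map is a local homeomorphism across the seam only if the lift of \(\rho(a)\in\slr3\) carrying one lifted side onto the other is the orientation-reversing one, \(-\rho(a)\). That is, the vertices must lift to \(v,\,-\rho(a)^{-1}v,\,\rho(a)^{-2}v\), all in \(\mathcal U\). This condition also makes \(\rho(a)\) carry the chosen affine side onto the chosen affine side.

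\begin{itemize}
\item When these three vectors are independent, meeting the condition is just a choice of \(\ell_\infty\).
\item For an elation \(\rho(a)=I+N\) with \(N^2=0\neq N\), one has \(v+\rho(a)^{-2}v+2\big(-\rho(a)^{-1}v\big)=0\). So no open hemisphere works for any base point.
\item The Klein-bottle representation \(a_1\mapsto I+N\), \(a_2\mapsto I-N\) lies in your generic case.
\end{itemize}

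There you must replace the disc of Proposition \ref{prop:coredisc} by another one. For instance, take the hemisphere bounded by the great circle through the three lifted vertices; \(-\rho(a)\) swaps it with its complement.

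Similarly, gluing a Möbius piece to the core along \(s_i\) without folding forces the core lifts to satisfy \(\tilde p_{i+1}=\rho(a_i)^{-2}\tilde p_i\) in \(\sph^2\). Proposition \ref{prop:coredisc} does not provide this.

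The paper's Case 2 pairs \(\rho(\gamma)^{-1}p\to p\to\rho(\gamma)p\) with no more justification, and its pentagon gluing carries the same lift-compatibility condition. So these omissions are shared with the paper. But your construction rests on the \(xx\) pairing at every crosscap, so this is the verification you have to supply.
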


As we shall see, the argument presented in \S\ref{sec:geometrisation} easily extends to the case of closed non-orientable surfaces. We recall for the reader's convenience that, according to the classification of finite-type surfaces and Dyck's theorem, see \cite{Dyck1888}, a closed non-orientable surface of genus \(g\) is homeomorphic to the connected sum of a closed orientable surface 
and either one or two real projective planes, according to the parity of \(g\). As a consequence, a presentation of the fundamental group of a closed non-orientable surface of genus \(g\) is given by:
\begin{equation}
    \begin{aligned}
        \pi_1(\,N_g\,) &= \left\langle\, \alpha_1, \beta_1, \dots, \alpha_{k}, \beta_{k}, \gamma \;\middle|\; [\alpha_1, \beta_1] \cdots [\alpha_{k}, \beta_{k}] \gamma^2 = 1 \,\right\rangle\quad &\quad \textnormal{if \(g=2k+1,\)}\\
        \smallskip
        \pi_1(\,N_g\,) &= \left\langle\, \alpha_1, \beta_1, \dots, \alpha_{k}, \beta_{k}, \gamma_1,\gamma_2 \;\middle|\; [\alpha_1, \beta_1] \cdots [\alpha_{k}, \beta_{k}] \gamma_1^2\gamma_2^2 = 1 \,\right\rangle\quad &\quad \textnormal{if \(g=2k+2.\)}
    \end{aligned}
\end{equation}

Since the case of orientable surfaces has already been addressed, it remains to prove Theorem \ref{thm_app: geom non orientable case} for non-orientable surfaces. We distinguish three mutually disjoint cases as follows.

\begin{proofclaim}[Trivial representation]
    This is the easiest case to handle. For every non-orientable surface \(N_g\), there always exists a branched covering map \(f\colon N_g\longrightarrow \rp2\), see \cite{allan1984}. The pull-back of the real projective structure on \(\rp2\) yields a branched projective structure on \(N_g\) as desired.
\end{proofclaim}

We next focus on non-trivial representations, by distinguishing three cases based on whether the separating curves \(\gamma\) for \(g\) odd or \(\gamma_1\) and \(\gamma_2\) for \(g\) even have trivial holonomy. We begin by considering the two cases, which are specific for odd \(g=2k+1\).

\begin{proofclaim}[Case 1.]
    Assume \(\rho(\,\gamma\,)\) is trivial. Then \(\rho\) yields a representation \(\overline\rho\colon\pi_1(\,S\,)\longrightarrow \pslr3\), where \(S\) is an \textit{orientable} surface of genus \(k\). We realise this latter as the holonomy of some branched projective structure on \(S\) and finally glue a real projective plane copy of \(\rp2\) along a slit to realise the desired structure, see Figure \ref{fig:bubbling rp2}.
\end{proofclaim}

\begin{figure}[htbp]
  \centering
  \begin{tikzpicture}[scale=1.325]

    \draw[black!50] (0,0) circle (2.25cm);
    \shade[ball color=white, opacity=.25] (0,0) circle (2.25cm);

    \foreach \theta/\i in {90,270} {
    \fill[white, rotate=\theta] (1.25, 0.75) to[out=300, in=60] (1.25,-0.75) to[out=115, in=270] (1.05,0) to[out=90, in=245] (1.25, 0.75);
    }

    \foreach \theta in {90,270} {
    \draw [black!50, thin, rotate=\theta] 
            (1.25, 0.75) to[out=300, in= 60] (1.25, -0.75);
    }

    \foreach \theta in {90,270} {
    \draw [black!50, thin, rotate=\theta] 
            (1.3, -0.835) to[out=120, in=240] (1.3,  0.835);
    }

    \draw[plum ]   ( 30:1.25) arc(30:-30:1.25);
    \fill[black]   ( 30:1.25)  circle (0.75pt);
    \fill[black]   (-30:1.25)  circle (0.75pt);
    \node[plum] at (270:2.5) {\((S,\sigma)\)};
    \node[plum] at (000:1.5 ) {\(\gamma_1\)};
    
    \begin{scope}[shift={(5.5,-0.75)}]
        \draw[black!25, thin] ( 2.25,0) arc (360:180:2.25) (-2.25,0) arc [start angle=180, end angle=90, x radius=2.25cm, y radius=3.5cm] arc [start angle=90, end angle=00, x radius=2.25cm, y radius=3.5cm];
        \shade[ball color = green, opacity=0.5]  (2.25,0) arc (360:180:2.25) (-2.25,0) arc [start angle=180, end angle=90, x radius=2.25cm, y radius=3.5cm] arc [start angle=90, end angle=00, x radius=2.25cm, y radius=3.5cm];

        \draw[black, thin] (0,3.5)--(0,1.25);
        
        \draw[black, very thin] (0,-2.25) arc [start angle=270, end angle=360, x radius=2cm, y radius=2.25cm];%
        \draw[black, very thin] (2,0) arc [start angle=000, end angle=90, x radius=2cm, y radius=3.5cm]; 

        \draw[black, dashed, very thin] (0,-2.25) arc [start angle=270, end angle=180, x radius=2cm, y radius=2.25cm];%
        \draw[black, dashed, very thin] (-2,0) arc [start angle=180, end angle=90, x radius=2cm, y radius=3.5cm]; 

        \draw[black, very thin] (2.25,0) arc [start angle=360, end angle=180, x radius=2.25cm, y radius=0.5cm];
        \draw[black, dashed, very thin] (-2.25,0) arc [start angle=180, end angle=000, x radius=2.25cm, y radius=0.5cm];

        \draw[black, very thin] (2.0875,1.25) arc [start angle=360, end angle=180, x radius=1.0425cm, y radius=0.125cm];
        \draw[black, very thin] (-2.0875,1.25) arc [start angle=180, end angle=360, x radius=1.0425cm, y radius=0.125cm];
        \draw[black, dashed, very thin] (2.0875,1.25) arc [start angle=000, end angle=180, x radius=1.0425cm, y radius=0.125cm];
        \draw[black, dashed, very thin] (-2.0875,1.25) arc [start angle=180, end angle=000, x radius=1.0425cm, y radius=0.125cm];

        \draw[black, very thin] (1.725,2.25) arc [start angle=360, end angle=180, x radius=0.86cm, y radius=0.125cm];
        \draw[black, very thin] (-1.725,2.25) arc [start angle=180, end angle=360, x radius=0.86cm, y radius=0.125cm];
        \draw[black, dashed, very thin] (1.725,2.25) arc [start angle=000, end angle=180, x radius=0.86cm, y radius=0.125cm];
        \draw[black, dashed, very thin] (-1.725,2.25) arc [start angle=180, end angle=000, x radius=0.86cm, y radius=0.125cm];

        \draw[black, very thin] (0.835,3.25) arc [start angle=360, end angle=180, x radius=0.4175cm, y radius=0.0625cm];
        \draw[black, very thin] (-0.835,3.25) arc [start angle=180, end angle=360, x radius=0.4175cm, y radius=0.0625cm];
        \draw[black, dashed, very thin] (0,3.25) arc [start angle=180, end angle=030, x radius=0.4175cm, y radius=0.0625cm];
        \draw[black, dashed, very thin] (0,3.25) arc [start angle=000, end angle=150, x radius=0.4175cm, y radius=0.0625cm];

        \draw[plum ] (210:1.25) arc(210:150:1.25);
        \fill[black] (210:1.25)   circle (0.75pt);
        \fill[black] (150:1.25)   circle (0.75pt);

        \node[plum] at (270:2.5) {\((\rp2,\sigma_{\textnormal{std}})\)};
        \node[plum] at (180:1.5) {\(\gamma_2\)};
    \end{scope}
  \end{tikzpicture}
  \caption{Gluing a copy of \(\rp2\) along a slit}
  \label{fig:bubbling rp2}
\end{figure}

\begin{proofclaim}[Case 2.]
    We finally assume \(\rho(\,\gamma\,)\) is not trivial. The proof in this case is analogous to that of the orientable case. The substantial difference lies in the initial and the final steps, where one defines a closed polygonal chain as in \eqref{eq:chain} with the addition of two extra segments determined by \(\rho(\,\gamma\,)\) (note that, given the presentation of the fundamental group, without these the polygonal chain would not close). We may proceed as in \S\S\ref{sssec:assembly generic}-\ref{sssec:abelian handles}. The resulting surface will have a boundary component consisting of two geodesic segments that develops, via the developing map, into a chain of segments of the form: 
    \begin{equation}
        \rho(\,\gamma\,)^{-1}(\,p\,) \longmapsto p \longmapsto \rho(\,\gamma\,)(\,p\,)
    \end{equation}
    for some point \(p\in\rp2\), see Figure \ref{fig:gluing non orientable}. By identifying the latter in a suitable manner, one obtains the desired cross-cap, and the resulting surface will be a non-orientable surface homeomorphic to \(N_g\), equipped with a real projective structure having holonomy \(\rho\), as desired.
\end{proofclaim}

\begin{proofclaim}[Case 3.]
    This latter case applies only when the genus of \(N_g\) is even and equal to \(g=2k+2\). In this case, the non-orientable surface arises as the connected sum of an orientable surface of genus \(k\) and a Klein bottle or, equivalently, two copies of the real projective plane \(\rp2\). In this case, the separating curves \(\gamma_1\) and \(\gamma_2\) may both have trivial holonomy, in which case we proceed as in case one above, twice. In the case where only one curve has trivial holonomy and the other does not, we apply both cases above. We first realise the surface \(N_{g-1}\) as in Case 2, and then we add an additional cross cap as in Case 1. Finally, both \(\gamma_1\) and \(\gamma_2\) may have non-trivial holonomy. Even in this case, we may proceed as in \S\S\ref{sssec:assembly generic}-\ref{sssec:abelian handles}. The resulting surface will have a boundary component consisting of four geodesic segments that develops, via the developing map, into a chain of segments of the form: 
    \begin{equation}
        p \longmapsto \rho(\,\gamma_1\,)(\,p\,) \longmapsto \rho(\,\gamma_1\,)^2(\,p\,)=q\,\longmapsto  \rho(\,\gamma_2\,)(\,q\,)\longmapsto \rho(\,\gamma_2\,)^2(\,q\,)
    \end{equation}
    for some point \(p\in\rp2\), see Figure \ref{fig:gluing non orientable2}.
\end{proofclaim}

Since there are no remaining cases to consider, this completes the proof in the non-orientable case and hence the proof of Theorem \ref{thm_app: geom non orientable case}.

\begin{figure}[htbp]
  \centering
  \begin{tikzpicture}[scale=1.75]
    
    \begin{scope}[rotate=00]
    \draw[white, opacity=0.25] (0,0) circle (2.25cm);
    \shade[ball color = white, opacity=.20, shading angle=0] (000:2)--(072:2)--(144:2)--(216:2)--(288:2);
    \draw[gray] (144:2)--(216:2)--(288:2)--(360:2)--(072:2)--(144:2);

    \foreach \p in {(072:2), (144:2), (216:2), (288:2), (360:2)} {
    \fill[black] \p circle (1pt);
    }

    \node at (000:0) {core};
    \node at (-36:3.5) {\(\mathcal D_1\)};
    \node at (036:3.5) {\(\mathcal D_2\)};
    \node at (108:3.5) {\(\mathcal D_3\)};

    \node[scale=0.75] at (-72 :2.25) {\(\rho(\,\gamma\,)(\,p\,)\)};
    \node[scale=0.75] at (-144:2.25) {\(p\)};
    \node[scale=0.75] at (-216:2.5 ) {\(\rho(\,\gamma\,)^{-1}(\,p\,)\)};

    \node[red, scale=0.75, align=center] at (-135:3.5) {
    these edges are glued in such\\
    a way a cross-cap appears
    };

    \begin{scope}
        \draw[->, red, thick] (216:3) arc[start angle=216, end angle=342, radius=0.75];
        \draw[->, red, thick] (216:3) arc[start angle=216, end angle=90, radius=0.75];
    \end{scope}

    \begin{scope}[shift={(-36:0.75)}, rotate=-72]
        \shade[ball color = yellow, opacity=.20, shading angle=0] (072:2)--(000:2)--(000:4)--(030:5)--(060:4)--(072:2);
        \draw[yellow] (072:2)--(000:2)--(000:4)--(030:5)--(060:4)--(072:2);

        \foreach \p in {(072:2), (000:2), (000:4), (030:5), (060:4)} {
            \fill[black] \p circle (1pt);
        }
    \end{scope}

    \begin{scope}[shift={(36:0.75)}, rotate=00]
        \shade[ball color = red, opacity=.20, shading angle=0] (072:2)--(000:2)--(000:4)--(030:5)--(060:4)--(072:2);
        \draw[red] (072:2)--(000:2)--(000:4)--(030:5)--(060:4)--(072:2);

        \foreach \p in {(072:2), (000:2), (000:4), (030:5), (060:4)} {
            \fill[black] \p circle (1pt);
        }
    \end{scope}

    \begin{scope}[shift={(108:0.75)}, rotate=72]
        \shade[ball color = sky, opacity=.20, shading angle=0] (072:2)--(000:2)--(000:4)--(030:5)--(060:4)--(072:2);
        \draw[sky] (072:2)--(000:2)--(000:4)--(030:5)--(060:4)--(072:2);

        \foreach \p in {(072:2), (000:2), (000:4), (030:5), (060:4)} {
            \fill[black] \p circle (1pt);
        }
    \end{scope}
    \end{scope}

    \end{tikzpicture}
    \caption{The figure shows a pentagonal core to which three pentagons are glued, one per handle. The remaining edges are identified to realise a cross-cap.}
    \label{fig:gluing non orientable}
\end{figure}

\begin{figure}[htbp]
  \centering
  \begin{tikzpicture}[scale=1.75]
    
    \begin{scope}[rotate=00]
    \draw[white, opacity=0.25] (0,0) circle (2.25cm);
    \shade[ball color = white, opacity=.20, shading angle=0] (270:2)--(330:2)--(030:2)--(090:2)--(135:2)--(180:2)--(225:2)--(270:2);
    \draw[gray] (270:2)--(330:2)--(030:2)--(090:2)--(135:2)--(180:2)--(225:2)--(270:2);

    \foreach \p in {((270:2), (330:2), (030:2), (090:2), (135:2), (180:2), (225:2)} {
    \fill[black] \p circle (1pt);
    }

    \node at (000:0) {core};
    \node at (-60:3.5) {\(\mathcal D_1\)};
    \node at (000:3.5) {\(\mathcal D_2\)};
    \node at (060:3.5) {\(\mathcal D_3\)};

    \node[scale=0.75] at (090:2.25) {\(p\)};
    \node[scale=0.75] at (135:2.25 ) {\(\rho(\,\gamma_1\,)(\,p\,)\)};
    \node[scale=0.75] at (180:2.75) {\(\rho(\,\gamma_1\,)^2(\,p\,)=q\)};
    \node[scale=0.75] at (225:2.5 )  {\(\rho(\,\gamma_2\,)(\,q\,)\)};
    \node[scale=0.75] at (270:2.25) {\(\rho(\,\gamma_2\,)^2(\,q\,)\)};

    \node[red, scale=0.75, align=center] at (-135:3.75) {
    these edges are glued in such\\
    a way a cross-cap appears
    };

    \node[red, scale=0.75, align=center] at (135:3.75) {
    these edges are glued in such\\
    a way a cross-cap appears
    };
    
    \begin{scope}
        \draw[->, red, thick] (225:3) arc[start angle=225, end angle=342, radius=0.75];
        \draw[->, red, thick] (225:3) arc[start angle=225, end angle=102, radius=0.75];
    \end{scope}

    \begin{scope}[rotate=-90]
        \draw[->, red, thick] (225:3) arc[start angle=225, end angle=342, radius=0.75];
        \draw[->, red, thick] (225:3) arc[start angle=225, end angle=102, radius=0.75];
    \end{scope}

    \begin{scope}[shift={(00:0.875)}, rotate=-36, scale=0.85]
        \shade[ball color = red, opacity=.20, shading angle=0] (072:2)--(000:2)--(000:4)--(030:5)--(060:4)--(072:2);
        \draw[red] (072:2)--(000:2)--(000:4)--(030:5)--(060:4)--(072:2);

        \foreach \p in {(072:2), (000:2), (000:4), (030:5), (060:4)} {
            \fill[black] \p circle (1.1765pt);
        }
    \end{scope}

    \begin{scope}[rotate=60]
        \begin{scope}[shift={(00:0.875)}, rotate=-36, scale=0.85]
        \shade[ball color = sky, opacity=.20, shading angle=0] (072:2)--(000:2)--(000:4)--(030:5)--(060:4)--(072:2);
        \draw[sky] (072:2)--(000:2)--(000:4)--(030:5)--(060:4)--(072:2);

        \foreach \p in {(072:2), (000:2), (000:4), (030:5), (060:4)} {
            \fill[black] \p circle (1.1765pt);
        }
        \end{scope}
    \end{scope}

    \begin{scope}[rotate=-60]
        \begin{scope}[shift={(00:0.875)}, rotate=-36, scale=0.85]
        \shade[ball color = yellow, opacity=.20, shading angle=0] (072:2)--(000:2)--(000:4)--(030:5)--(060:4)--(072:2);
        \draw[yellow] (072:2)--(000:2)--(000:4)--(030:5)--(060:4)--(072:2);

        \foreach \p in {(072:2), (000:2), (000:4), (030:5), (060:4)} {
            \fill[black] \p circle (1.1765pt);
        }
        \end{scope}
    \end{scope}

    \end{scope}

    \end{tikzpicture}
    \caption{The figure shows a pentagonal core to which three pentagons are glued, one per handle. The remaining edges are identified to realise two cross-caps.}
    \label{fig:gluing non orientable2}
\end{figure}

\smallskip

\section{Elementary representations}\label{asec:elementary} 

\noindent In the study of spaces of representations and geometrisation, a relevant role is played by the so-called elementary representations defined as those whose image is an elementary subgroup of \(\pslr3\). While the classification of elementary groups in \(\pslc{3}\) was recently provided in \cite{barreraetall2023elementary} by characterising their Kulkarni limit sets, an intrinsically dynamical approach based on the accumulation points of the group action on \(\pc2\), the real case admits a much more transparent geometric description. We introduce here an alternative definition for elementary groups which, at the best of our knowledge, has never been stated before and extends naturally to any \(n \ge 2\). 
We begin with the following

\begin{defn}\label{defn:elementarygroupfirstkind}
    A group \(\Gamma\subset\pslr3\) is said to be \textit{elementary} if, up to passing to a finite-index subgroup, it stabilises a, possibly degenerate, conic in \(\rp2\) or in \(\big(\,\rp2\,\big)^*\). Equivalently, if, up to passing to a finite-index subgroup, \(\Gamma\) fixes a point of \(\mathbb P\mathrm{Sym}^2\big((\mathbb R^3)^*\big)\) or of \(\mathbb P\mathrm{Sym}^2(\mathbb R^3)\).
\end{defn}

\noindent The notion of elementary subgroups of \(\pslr3\) can be viewed as a natural higher-dimensional analogue of the classification in \(\pslr2\), where elementary subgroups are characterised by globally preserving a conic in the real projective line \(\rp1\). We recall for the readers' convenience that in this lower-dimensional setting, a conic is geometrically realised as either the empty set (the elliptic case), a single point (the parabolic case), or a pair of distinct points (the hyperbolic case). In the same fashion, elementary subgroups of \(\pslr3\) arise as the stabiliser of a conic in \(\rp2\). More specifically, a subgroup \(\Gamma \subset \pslr3\) is elementary of the first kind if and only if it globally preserves one of the geometric configurations of Table \ref{tab:elementaryfirst}.

\begin{table}[ht]
\centering
\caption{Classification of elementary groups in \(\pslr3\).}
\begin{tabular}{cccc}
\hline
\\
\textbf{Rank} & \textbf{Fixed configuration in} \(\rp2\) & \textbf{Invariant conic} & \textbf{Matrix structure} \\ \\
\hline
\\
-- & Point \(\pto P\) & \makecell{Double line in \(\big(\,\rp2\,\big)^*\) \\ but not in \(\rp2\)}  & \(\begin{pmatrix} a & b & c \\ 0 & d & e \\ 0 & f & g \end{pmatrix}\) \\ \\
\hline
\\
1 & Flag (\(\pto P \in \ell\)) & Double line in \(\rp2\) & \(\begin{pmatrix} a & b & c \\ 0 & d & e \\ 0 & 0 & f \end{pmatrix}\) \\ \\
\hline
\\
1 & \makecell{Line \\ (but not a Flag)} & \makecell{Double line in \(\rp2\) \\ but not in \(\big(\,\rp2\,\big)^*\)} & \(\begin{pmatrix} a & b & c \\ d & e & f \\ 0 & 0 & g \end{pmatrix}\) \\ \\
\hline
\\
2 & Point + Line & Pair of incident lines & \(\begin{pmatrix} a & 0 & 0 \\ 0 & b & 0 \\ 0 & 0 & \frac{1}{ab} \end{pmatrix}\) \\ \\
\hline
\\
2 & \makecell{Point \(\pto P\) \\ (Rotation)} & Imaginary pair of lines & \(\begin{pmatrix} 1 & 0 & 0 \\ 0 & \cos\theta & -\sin\theta \\ 0 & \sin\theta & \cos\theta \end{pmatrix}\) \\ \\
\hline
\\
3 & None (Irreducible) & Real ellipse & \(^tM \begin{pmatrix} 1 & 0 & 0 \\ 0 & 1 & 0 \\ 0 & 0 & -1 \end{pmatrix} M = \begin{pmatrix} 1 & 0 & 0 \\ 0 & 1 & 0 \\ 0 & 0 & -1 \end{pmatrix}\) \\ \\
\hline
\\
3 & None (Irreducible) & Imaginary ellipse & \(^tM\begin{pmatrix} 1 & 0 & 0 \\ 0 & 1 & 0 \\ 0 & 0 & 1 \end{pmatrix} M = \begin{pmatrix} 1 & 0 & 0 \\ 0 & 1 & 0 \\ 0 & 0 & 1 \end{pmatrix}\) \\ \\
\hline
\end{tabular}
\label{tab:elementaryfirst}
\end{table}

\smallskip

\begin{ex}[Why elementary groups are defined up to finite-index subgroups]
    We would like to provide an explicit example of why elementary groups must be defined up to finite-index subgroups. Consider the matrix 
    \begin{equation}
        A=\begin{pmatrix}
            a &   &   \\
              & b &   \\
              &   & (ab)^{-1}
        \end{pmatrix}
    \end{equation}
    and let \(\mathcal P_3\cong\mathfrak S_3\) be the group of permutation matrices. Let \(\Gamma=\langle\,A,\,\mathcal P_3\,\rangle<\pslr3\) be the smallest subgroup containing \(A\) and \(\mathcal P\). We may notice that while the subgroup \(\langle\, A\, \rangle\) fixes the set of the three fundamental points \(\big\{\,e_1,\,e_2,\,e_3\,\big\}\), by preserving them individually, the overall group \(\Gamma\) permutes them. In particular, the action of \(\Gamma\) on \(\rp2\) is fixed-point free. Furthermore, there is no line \(\ell\) invariant under the action of \(\Gamma\). To show that no projective line \(\ell\) is fixed by \(\Gamma\), we observe that the group \(\mathcal{P}_3\) acts transitively on the coordinate lines \(\big\{\,\ell_1, \ell_2, \ell_3\,\big\}\), meaning no individual line is invariant. More generally, any line \(\ell\) would correspond to an invariant subspace \(V \subset \mathbb{R}^3\) of dimension two. While \(\langle\, A\, \rangle\) preserves only the coordinate planes \(\textnormal{span}\{e_i, e_j\}\), the permutation matrices in \(\mathcal{P}_3\) act by permuting these planes. Thus, the intersection of the invariant subspaces of \(\langle\, A\, \rangle\) and \(\mathcal{P}_3\) is trivial, implying that no line in \(\rp2\) can be invariant under the full group \(\Gamma\). However, \(\Gamma\) globally preserves the union of the three coordinate axes and permutes the complementary four regions. Thus, there is no conic, neither degenerate nor non-degenerate, that is globally invariant under the action of \(\Gamma\); however, a degenerate conic is invariant under the action of \(\langle A \rangle\).
\end{ex}

\noindent We aim to characterise elementary groups in more algebraic terms for subsequent applications. To this end, we regard \(\pslr3\) as an algebraic group with Lie algebra \(\mathfrak{sl}(3,\R)\). For the reader's convenience, for a Lie group \(\Gamma\) with finite-dimensional Lie algebra \(\mathfrak{g} = \textnormal{Lie}(\,\Gamma\,)\), we recall that the Levi decomposition theorem asserts that there is a semisimple Lie subalgebra \(\mathfrak{s} \subseteq \mathfrak{g}\), called a \textit{Levi subalgebra}, such that \(\mathfrak{g} = \textnormal{rad}(\,\mathfrak{g}\,) \rtimes \mathfrak{s}\), where \(\textnormal{rad}(\,\mathfrak{g}\,)\) denotes the radical of \(\mathfrak{g}\). This means that \(\mathfrak{g} = \textnormal{rad}(\,\mathfrak{g}\,) \oplus \mathfrak{s}\) as vector spaces; the sum is direct, \textit{i.e.}, \(\textnormal{rad}(\,\mathfrak{g}\,) \cap \mathfrak{s} = \{\,0\,\}\), and satisfies \([\mathfrak{s}, \textnormal{rad}(\,\mathfrak{g}\,)] \subseteq \textnormal{rad}(\,\mathfrak{g}\,)\). The aforementioned characterisation is established by the following 

\begin{thm}\label{prop:nonelementary-ZD}
    A group \(\Gamma < \pslr3\) is non-elementary if and only if it is Zariski-dense in \(\pslr3\).
\end{thm}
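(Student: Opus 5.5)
We argue both implications through the Zariski closure \(G=\overline{\Gamma}^{Z}\), a real algebraic subgroup of \(\pslr3\) whose identity component \(G^\circ\) has finite index in \(G\). The subgroup \(\Gamma\cap G^\circ\) has finite index in \(\Gamma\) and is Zariski-dense in \(G^\circ\). Moreover, for any algebraic representation \(V\), a point of \(\mathbb P V\) is fixed by \(\Gamma\cap G^\circ\) if and only if it is fixed by \(G^\circ\), because fixed loci are Zariski-closed. We apply this to \(V=\mathrm{Sym}^2(\R^3)\) and \(V=\mathrm{Sym}^2((\R^3)^*)\). It follows that \(\Gamma\) is elementary in the sense of Definition~\ref{defn:elementarygroupfirstkind} if and only if \(G^\circ\) fixes a point of \(\mathbb P\mathrm{Sym}^2(\R^3)\) or of \(\mathbb P\mathrm{Sym}^2((\R^3)^*)\). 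This reduces the statement to the claim that a connected algebraic subgroup \(H<\pslr3\) is proper if and only if it fixes a (possibly degenerate) conic in \(\rp2\) or in its dual.

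The first direction is easy. If \(\Gamma\) is Zariski-dense, then \(G^\circ=\pslr3\). This group acts irreducibly on both \(\mathrm{Sym}^2\) spaces, which are the irreducible \(6\)-dimensional representations of highest weights \(2\omega_1\) and \(2\omega_2\). Hence it fixes no line in either space, so \(\Gamma\) is non-elementary.

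For the converse, let \(H=G^\circ\) be proper and connected, and split according to the Levi decomposition of \(\mathfrak h=\mathrm{Lie}(H)\) recalled above.

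\emph{Case 1: \(H\) is reductive.} Proper connected reductive subgroups of \(\slr3\) lie, up to conjugacy, in one of three families: a maximal torus or a compact torus (rank \(\le 2\)); \(\mathrm{S}(\mathrm{GL}_2\times\mathrm{GL}_1)\), which fixes a point and a line; or \(\mathrm{SO}(2,1)\) and \(\mathrm{SO}(3)\) embedded irreducibly. The irreducible \(\mathrm{SO}\) cases preserve the real or imaginary ellipse. In the point-plus-line case the degenerate conic formed by the line is fixed in the dual. Tori act diagonally, or by rotations in the compact case, and fix the pair of lines, respectively the imaginary pair, listed in Table~\ref{tab:elementaryfirst}.

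\emph{Case 2: \(\mathrm{rad}(\mathfrak h)\) has a non-trivial unipotent part.} Then the unipotent radical \(U\neq 1\) is normal in \(H\). The fixed subspace \((\R^3)^U\) is non-zero and proper, and it is \(H\)-invariant. So \(H\) fixes a point or a line of \(\rp2\), and we obtain the invariant double line (in \(\rp2\) or in the dual) given in the first rows of the table.

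The main obstacle is making Case 1 rigorous over \(\R\). The intended route is the classification of connected reductive subgroups via their semisimple parts, which can only be \(\mathfrak{so}(3)\), \(\mathfrak{so}(2,1)\) or \(\mathfrak{sl}(2,\R)\) acting on \(\R^2\oplus\R\). One must then check that the reducible \(\mathfrak{sl}_2\) case together with its centraliser torus still fixes a point of \(\mathbb P\mathrm{Sym}^2\), and also handle the nonsplit tori carefully.
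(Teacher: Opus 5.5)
Your reduction to \(H=G^\circ\) and your Case~2 are correct. The overall strategy is the paper's: show that a proper Zariski closure lies in a parabolic subgroup or an orthogonal group. Your organisation differs from the paper's, and is in two places more careful.

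\begin{itemize}
\item You handle the finite-index clause of Definition~\ref{defn:elementarygroupfirstkind} explicitly, which the paper's proof never does. One addition is needed: if some finite-index \(\Gamma_0<\Gamma\) fixes a point, then \(\overline{\Gamma_0}^{Z}\) has finite index in \(G\), so it contains \(G^\circ\), and hence \(G^\circ\) fixes that point. You only state the analogous fact for \(\Gamma\cap G^\circ\).
\item For the easy direction, you use irreducibility of the two \(\mathrm{Sym}^2\) representations. The paper instead checks stabilisers conic by conic, via Table~\ref{tab:elementaryfirst} and the equation \({}^tMQM=\lambda Q\).
\item For the hard direction, you split reductive versus non-reductive rather than following the paper's Levi trichotomy (semisimple, solvable, mixed). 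Your Case~2 disposes of every non-reductive closure at once, through the fixed space of the unipotent radical.
\end{itemize}

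The only unfinished piece is Case~1. As written, it is an unproved classification with small inaccuracies: \(\slr3\) has no compact torus of rank two. Its maximal tori are the split one and one of type \(\C^*\), and both are reducible. The obstacle disappears if you split according to whether \(H\) acts reducibly on \(\R^3\), rather than by Levi type.

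If \(H\) preserves a line \(\R v\) or a plane \(\ker\phi\), it fixes \([v^2]\in\mathbb P\mathrm{Sym}^2(\R^3)\) or \([\phi^2]\in\mathbb P\mathrm{Sym}^2((\R^3)^*)\). This one observation covers all tori, \(\mathrm{S}(\mathrm{GL}_2\times\mathrm{GL}_1)\), the reducible \(\mathfrak{sl}_2\) together with its centraliser, and your Case~2.

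Now suppose \(H\) is irreducible.
\begin{itemize}
\item It is absolutely irreducible: its commutant is \(\R\), \(\C\) or \(\mathbb{H}\), and \(\R^3\) is a module over it of odd real dimension.
\item Its unipotent radical is trivial, since that radical's fixed space would be a non-zero invariant subspace.
\item Its centre acts by real scalars of determinant one, hence trivially. So \(\mathfrak h\) is semisimple.
\item Because \(3\) is prime, exactly one simple factor can act in an irreducible faithful representation, so \(\mathfrak h_\C\) is simple. Its dimension is at most \(8\), so \(\mathfrak h_\C\) is either \(\mathfrak{sl}_3(\C)\), excluded by properness, or \(\mathfrak{sl}_2(\C)\) acting on \(\mathrm{Sym}^2\C^2\).
\item In the \(\mathfrak{sl}_2\) case, the invariant symmetric bilinear forms form a complex line stable under complex conjugation. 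It therefore contains a non-zero real form \(Q\), non-degenerate by irreducibility. So \(H\subset\mathrm{SO}(Q)\) fixes \([Q]\).
\end{itemize}

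The paper's proof merely asserts this step (``the non-elementary assumption on \(\Gamma\) readily implies \(\mathfrak g=\mathfrak{sl}(3,\R)\)''). With this addition, your argument is more complete than the published one.
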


\begin{proof}
    According to our Definition \ref{defn:elementarygroupfirstkind}
    a group \(\Gamma\) is elementary if it preserves a conic \(\rp2\) or \(\big(\,\rp2\,\big)^*\). As shown in Table~\ref{tab:elementaryfirst}, any conic of rank strictly less than \(3\) is preserved by a subgroup of a maximal parabolic subgroup. Consequently, its closure is likewise contained in this maximal parabolic subgroup and cannot be Zariski-dense. Thus, let us assume \(\Gamma\) preserves a conic \(\mathcal C\) of rank \(3\). By definition, a matrix \(M\in\Gamma\) if and only if \(M\,(\,\mathcal C\,)=\mathcal C\). Let \(Q\) be the symmetric matrix associated to the conic \(\mathcal{C}\). Then, \(\mathcal{C}\) is preserved by a matrix \(M\) if and only if \({^t M} Q M = \lambda Q\) for some \(\lambda \in \mathbb{R}^*\). The latter identity provides algebraic equations that define \(\Gamma\) as a proper closed subgroups \(\pslr3\). Hence it is algebraic and not Zariski-dense in \(\pslr3\). Conversely, assume \(\Gamma\) is not elementary and let \(\overline\Gamma\) be its Zariski closure. Let \(\mathfrak g=\textnormal{Lie}(\,\Gamma\,)=\textnormal{Lie}(\,\overline\Gamma\,)\) be its Lie algebra, and consider it Levi decomposition \(\mathfrak{g} = \textnormal{rad}(\,\mathfrak{g}\,) \rtimes \mathfrak{s}\). If \(\textnormal{rad}(\,\mathfrak{g}\,)=\{\,0\,\}\), then \(\mathfrak g\) is semisimple, and the non-elementary assumption on \(\Gamma\) readily implies \(\mathfrak g=\mathfrak{sl}(3,\R)\); hence, \(\Gamma\) is Zariski-dense. If the Levi subalgebra vanishes, \textit{i.e.}, \(\mathfrak s=\{\,0\,\}\), then \(\mathfrak g\) is solvable and it is either contained in the Borel subalgebra, up to conjugation, or it contains a factor \(\mathfrak{so}(2,\R)\). In the former case, \(\Gamma\) is contained in the Borel subgroup of \(\pslr3\) and fixes a flag. In the latter one, it is contained in a maximal parabolic subgroup of \(\pslr3\). In both cases, we get a contradiction with our non-elementary assumption. Therefore, the semisimple part cannot be trivial. It remains to consider the mixed cases, that is, the cases where both \(\mathfrak s\) and \(\textnormal{rad}(\,\mathfrak{g}\,)\) are non trivial. In principle, the semisimple part \(\mathfrak{s}\) could be either \(\mathfrak{sl}(2,\R)\) or \(\mathfrak{so}(\,3\,)\). Since the latter case cannot occur if \(\textnormal{rad}(\,\mathfrak{g}\,)\) is non trivial; \(\mathfrak{s}\) must be \(\mathfrak{sl}(2,\R)\). This yields three possibilities, namely: 
    \begin{equation}
        \mathfrak{sl}(2,\R)\oplus\R \cong \mathfrak{gl}(2,\R), \qquad \mathfrak{sl}(2,\R)\ltimes\R^2, \qquad \mathfrak{gl}(2,\R)\ltimes\R^2\cong\mathfrak{sl}(2,\R)\oplus(\R\ltimes\R^2).
    \end{equation}
    The first two algebras are clearly subalgebras of \(\mathfrak{gl}(2,\R)\ltimes\R^2\), and hence it is sufficient to consider this latter one. Since \(\mathfrak{gl}(2,\R)\ltimes\R^2\) is the Lie algebra of the parabolic groups in Table \ref{tab:elementaryfirst}, we readily conclude that \(\Gamma\) must be elementary, thus leading to a contradiction. Therefore, \(\mathfrak g=\mathfrak{sl}(3,\R)\) and hence \(\Gamma\) is Zariski-dense. 
\end{proof}

Let \(V=\textnormal{Sym}^2\Big(\,\big(\,\R^3\,\big)^*\,\Big)\cong\R^6\) be the vector space of quadratic forms in \(\R^3\) and let \(\mathbb PV\cong\rp5\) be its projectivisation. Consider the representation \(\phi\colon \pslr3\longrightarrow \textnormal{PGL}(\,V\,)\) that maps each projective transformation of the real projective plane to the induced projectivity on the space of conics. The following establishes a more dynamical characterisation of elementary groups in \(\pslr3\). More specifically:

\begin{thm}\label{thm: nonelementary-SI}
    A group \(\Gamma\) is non-elementary if and only if the action of \(\phi(\,\Gamma\,)\) on \(\mathbb PV\) is strongly irreducible.
\end{thm}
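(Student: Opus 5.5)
The plan is to route everything through Theorem \ref{prop:nonelementary-ZD}, which identifies non-elementary groups with Zariski-dense ones. Recall that strong irreducibility of \(\phi(\,\Gamma\,)\) on \(\mathbb PV\) means that no finite-index subgroup of \(\Gamma\) preserves a proper non-zero linear subspace of \(V\); equivalently, no finite union of proper projective subspaces of \(\mathbb PV\) is \(\Gamma\)-invariant. The key representation-theoretic input is that \(V=\mathrm{Sym}^2\big((\R^3)^*\big)\) is an irreducible representation of \(\slr3\), and likewise of its identity component. This holds because it is a highest-weight module.

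\emph{Non-elementary implies strongly irreducible.} Suppose \(\Gamma\) is non-elementary. By Theorem \ref{prop:nonelementary-ZD}, \(\Gamma\) is Zariski-dense in \(\pslr3\). Every finite-index subgroup \(\Gamma'\) is then also Zariski-dense. Indeed, the Zariski closure of \(\Gamma'\) has finite index in the closure of \(\Gamma\), and \(\pslr3\) is Zariski-connected as an algebraic group. Now suppose \(\Gamma'\) preserved a proper subspace \(W\subset V\). The stabiliser of \(W\) is Zariski-closed, so \(W\) would be invariant under all of \(\pslr3\). This contradicts irreducibility of \(V\), so \(\phi(\,\Gamma\,)\) is strongly irreducible.

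\emph{Strongly irreducible implies non-elementary.} Argue by contraposition: if \(\Gamma\) is elementary, then by Definition \ref{defn:elementarygroupfirstkind} a finite-index subgroup fixes a point of \(\mathbb PV\) or of \(\mathbb P\mathrm{Sym}^2(\R^3)\).
\begin{itemize}
\item In the first case, that subgroup fixes a line \([Q]\subset V\), which is a proper invariant subspace. So \(\phi(\,\Gamma\,)\) is not strongly irreducible.
\item In the second case, a fixed quadric \([Q^*]\) in the dual space determines, via the natural pairing \(\mathrm{Sym}^2(\R^3)\times\mathrm{Sym}^2\big((\R^3)^*\big)\to\R\), the hyperplane \((Q^*)^\perp\subset V\). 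This hyperplane is invariant under the same finite-index subgroup, again violating strong irreducibility.
\end{itemize}
The pairing is \(\slr3\)-equivariant, so invariance transfers as claimed. Scalars are harmless here, since the action is projective and only lines matter.

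The main obstacle is making the Zariski-density step rigorous over \(\R\). One needs that \(\pslr3\cong\slr3\) is Zariski-connected. One also needs that irreducibility of \(V\) holds for the real group, not merely for its complexified Lie algebra. I would handle this by noting that \(\mathrm{Sym}^2\) of the standard representation is absolutely irreducible for \(\mathfrak{sl}(3,\C)\), with a unique highest weight \(2\omega_1\) and dimension \(6\). Then any real invariant subspace would complexify to an invariant complex subspace, which is impossible. If Zariski-connectedness turns out to be subtle, it suffices to pass to the identity component of the Zariski closure, which is still all of \(\slr3\) by Theorem \ref{prop:nonelementary-ZD}.
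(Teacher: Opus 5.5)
Your proposal is correct and follows the paper's proof. For an elementary group you exhibit the invariant line \(\R\cdot q\) or the dual hyperplane \(B^\perp\). For the converse you combine Theorem~\ref{prop:nonelementary-ZD} with the fact that, \(\phi\) being irreducible, the stabiliser of a proper subspace \(W\subset V\) is a proper algebraic subgroup. Your contrapositive phrasing, using Zariski-connectedness of \(\pslr3\) and absolute irreducibility of \(\mathrm{Sym}^2\), simply makes explicit the passage between \(\Gamma\) and its finite-index subgroup \(\Gamma_o\), which the paper leaves implicit.
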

    
For the readers' convenience, we recall that the action of \(\phi(\,\Gamma\,)\) on \(\mathbb{P}V\) is strongly irreducible if it preserves no non-empty \(\phi(\Gamma)\)-invariant, finite collection of proper projective subspaces. Equivalently, the above action is strongly irreducible if and only if every finite-index subgroup acts irreducibly.

\begin{proof}

Let us first suppose that \(\Gamma\) is elementary. By definition, there exists a finite-index subgroup \(\Gamma_o<\Gamma\) which fixes either a point \([q]\in\mathbb PV\) or a point \([B]\in\mathbb PV^*\). In the first case, \(\Gamma_o\) preserves the line \(\mathbb R\cdot q\subset V\), whereas in the second case it preserves the hyperplane \(B^\perp\subset V\) dual to the line \(\mathbb R\cdot B\subset V^*\). Therefore, the action of \(\Gamma_o\) is not irreducible in both cases. 
Conversely, assume the action of \(\phi(\,\Gamma\,)\) is not strongly irreducible on \(\mathbb PV\). Then there exists a finite index subgroup \(\Gamma_o<\Gamma\) and a proper subspace \(W\subset V\) such that \(\phi(\gamma)\cdot W=W\) for every \(\gamma\in\Gamma_o\), that is \(\phi(\Gamma_o)\) preserves a proper subspace of \(V\). Equivalently, \(\Gamma_o\subset \operatorname{Stab}(W)\). The group \(\operatorname{Stab}(W)\) is a proper algebraic subgroup of \(\operatorname{PSL}(3,\mathbb R)\) being the representation \(\phi\) irreducible. Therefore, it is not Zariski dense in \(\operatorname{PSL}(3,\mathbb R)\). Thus \(\Gamma_o\) is not Zariski dense in \(\operatorname{PSL}(3,\mathbb R)\), and hence \(\Gamma\) is elementary by Theorem \ref{prop:nonelementary-ZD}.
\end{proof}

\noindent We finally conclude with the following remark.

\begin{rmk}
    According to our definition, \(3\)-Fuchsian representations, see \S\ref{sssec:hitchincomponent}, are indeed elementary. We recall, however, that these do not represent the generic case of Hitchin representations; on the contrary, they constitute a particular case as they form a subspace with positive codimension. As a further motivation, we recall that the image of every Hitchin representation is Zariski-dense in either \(\pslr3\) or a conjugate of \(\pso\). In particular, the latter holds whenever the representation is \(3\)-Fuchsian.
\end{rmk}


\medskip

\bibliographystyle{amsalpha}
\bibliography{rbpsgeom}

\end{document}